\newtheorem{thm}{Theorem}[section]
\newtheorem{lem}[thm]{Lemma}
\newtheorem{prop}[thm]{Proposition}
\newtheorem{rmk}[thm]{Remark}
\DeclareMathOperator{\erfc}{erfc}
\let\P\relax\DeclareMathOperator{\P}{\mathbb{P}} 
\DeclareMathOperator{\E}{\mathbb{E}}
\newcommand{\p}[1]{{\mathbb P}\left(#1\right)}
\newcommand{\norm}[1]{\big\lVert #1 \big\rVert}
\def\R{\mathbb{R}}
\def\N{\mathbb{N}}
\def\diffd{\mathrm{d}}
\newcommand{\indic}[1]{\mathds{1}\raisebox{-.4ex}{$\scriptstyle\{#1\}$}}
\def\uppar#1{^{\scriptscriptstyle(#1)}}
\long\def\metanote#1#2{{\color{#1}\
\ifmmode\hbox\fi{\sffamily\mdseries\upshape [#2]}\ }}
\def\vl{v^\ell }
\def\B{{\mathcal B}}
\def\Zint#1{\mathchoice
{\ZZint\displaystyle\textstyle{#1}}%
{\ZZint\textstyle\scriptstyle{#1}}%
{\ZZint\scriptstyle\scriptscriptstyle{#1}}%
{\ZZint\scriptscriptstyle\scriptscriptstyle{#1}}%
\!\int}
\def\ZZint#1#2#3{{\setbox0=\hbox{$#1{#2#3}{\int}$}
\vcenter{\hbox{$#2#3$}}\kern-.5\wd0}}
\def\dashint{\Zint-}
\begin{document}

\allowdisplaybreaks

\author{Julien Berestycki\footnote{Department of Statistics, University of Oxford, UK}, Éric Brunet\footnote{Laboratoire de Physique de l’\'Ecole normale sup\'erieure, ENS, Universit\'e PSL, CNRS, Sorbonne Universit\'e, Universit\'e de Paris, F-75005 Paris, France}, James Nolen\footnote{Department of Mathematics, Duke University Box 90320, Durham, NC 27708, USA}, Sarah Penington\footnote{Department of Mathematical Sciences, University of Bath, UK}}

\title{A free boundary problem arising from \\branching Brownian motion with selection}

\date{\today}
\maketitle

\begin{abstract}
We study a free boundary problem for a parabolic partial differential equation in which the solution is coupled to the moving boundary through an integral constraint.  The problem arises as the hydrodynamic limit of an interacting particle system involving branching Brownian motion with selection, the so-called {\it Brownian bees} model which is studied in the companion paper \cite{BBNP2}. In this paper we prove existence and uniqueness of the solution to the free boundary problem, and we characterise the behaviour of the solution in the large time limit.
\end{abstract}

\section{Introduction and main results}
Given a probability measure $\mu_0$ on $\R^d$, $d \geq 1$, we consider the following free boundary problem: find $u(x,t)
: \R^d\times (0,\infty) \to [0,\infty)$ and $R_t:(0,\infty) \to [0,\infty]$ such that
\begin{equation}
\begin{cases}
\partial_t u =  \Delta u  +u ,  & \text{for $t>0$ and  $\|x\| < R_t$},   \\
u(x,t)=0, &\text{for $t>0$ and $\|x\|\ge R_t$}, \\
u(x, t) \quad \text{is continuous on $\R^d \times (0,\infty)$}, \\[1ex]
 \displaystyle \int_{\R^d} u(x,t)\, \diffd x =1, & \text{for } t > 0,
    \\[1ex]
u(\cdot,t)\to \mu_0 &\text{weakly as $t \searrow 0$}.
\end{cases}
\label{pbu}
\end{equation}
Although the PDE and Dirichlet boundary condition are linear in $u$, this system is non-linear in $u$ due to the coupling of the unknown boundary $R_t$ with the function $u$ through the integral constraint
\begin{align}
\int_{\R^d} u(x,t) \,\diffd x = \int_{\B(R_t)} u(x,t) \,\diffd x = 1, \label{intconstraint}
\end{align}
where $\B(r)=\{x\in\R^d\;:\; \|x\|<r\}$ is the open ball of radius $r$.
For a given function $R_t$ which is sufficiently smooth, existence of a unique solution to the associated Dirichlet initial/boundary value problem is classical, but analysis of the free boundary problem~\eqref{pbu}, where $R_t$ may be regarded as a control needed to guarantee~\eqref{intconstraint}, is more difficult.  

We say that a pair $(u,R)$ is a classical solution to \eqref{pbu} if $R$ is measurable, $u \in C(\R^d \times (0,\infty)) \cap C^{2,1}(\Omega^+)$ where $\Omega^+ = \{ (x,t) \in \R^d \times (0,\infty) \;:\; \|x\| < R_t \}$, and \eqref{pbu} holds. Our first main result is well-posedness of this problem:

\goodbreak

\begin{thm}\label{thm:exists u}
Let $\mu_0$ be a Borel probability measure on $\R^d$. Then
there exists a unique classical solution to the free boundary problem~\eqref{pbu}. Furthermore,
\begin{itemize}
\item $t\mapsto R_t$ is continuous (and finite) for $t > 0$. 
\item As $t \searrow 0$, $R_t \to R_0:=\inf\big\{r>0 : \mu_0\big( \B(r)\big) = 1 \big\}\in [0,\infty]$. 
\item For any $\alpha < 1/2$, there exists $C_\alpha<\infty$ such that $R_t - R_s \leq C_\alpha (t - s)^\alpha$ for all $s \geq 0$ and $t \in (s,s+1]$.
\item For $t>0$ and $\|x\|<R_t$,
$u(x,t)>0$.
\end{itemize}
\end{thm}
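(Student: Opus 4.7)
The plan is to decouple $u$ and $R_t$ via the parabolic comparison principle.  For any sufficiently regular $t\mapsto R_t$, the linear Dirichlet IBVP $\partial_t u^R=\Delta u^R+u^R$ on $\{(x,t) : \|x\|<R_t\}$ with zero boundary data and initial data $\mu_0$ has a unique classical solution $u^R$, and the total mass $R\mapsto \int u^R(x,t)\,\diffd x$ is monotone in $R$.  Existence and uniqueness of $(u,R)$ then reduce to enforcing $\int u^R\,\diffd x\equiv 1$; differentiating this constraint and using $u|_{\partial\B(R_t)}=0$ produces the Stefan-type flux identity $\int_{\partial\B(R_t)}\partial_n u\,\diffd\sigma=-1$, which will drive the regularity estimates.

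\emph{Uniqueness and positivity.}  The strong maximum principle applied to $\partial_t u=\Delta u+u$ gives $u>0$ on $\Omega^+$, yielding the last bullet.  Given two classical solutions $(u_1,R_1)$ and $(u_2,R_2)$ with common $\mu_0$, set $t^\star=\inf\{t>0 : R_1(t)\neq R_2(t)\}$; after swapping labels and using continuity of $R_i$, assume $R_1\le R_2$ on $(t^\star,t^\star+\eta)$ with $R_1(t^\star)=R_2(t^\star)$, so that uniqueness of the fixed-domain IBVP gives $u_1=u_2$ at $t=t^\star$.  Extending $u_1$ by $0$, it becomes a non-negative subsolution on $\B(R_2(t))$ (the jump of $\nabla u_1$ across $\partial\B(R_1)$ points outward, hence is favourable), so parabolic comparison gives $u_1\le u_2$ on $(t^\star,t^\star+\eta)$.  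Equality of masses then forces $u_1\equiv u_2$, contradicting strict positivity of $u_2$ on $\B(R_2)\setminus \B(R_1)$.  Hence $R_1\equiv R_2$ and $u_1=u_2$.

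\emph{Existence by time-discretisation.}  Partition $(0,T]$ into intervals of length $\delta$, and inductively choose $R_n$ so that the Dirichlet IBVP on $\B(R_n)$ over $[n\delta,(n+1)\delta]$, starting from the configuration at $n\delta$, has terminal mass exactly $1$; monotonicity and continuity of this terminal mass in $R_n$, with the intermediate value theorem, produces such $R_n$ with bounds controlled by $\mu_0$.  The initial step ($n=0$) is handled by first mollifying $\mu_0$ and using stability to pass to a limit.  Uniform bounds on $R^\delta$ together with the Hölder estimate below and local parabolic regularity on $u^\delta$ yield a subsequential classical limit $(R,u)$; the uniqueness argument then promotes this to full convergence and identifies the limit as the sought solution.

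\emph{Regularity and initial behaviour.}  Combining the Stefan flux identity with an explicit barrier argument---comparing $u$ near $\partial \B(R_t)$ with the Dirichlet heat kernel on nearby fixed balls---yields $u(x,t')\le C\,\mathrm{dist}(x,\partial\B(R_t))/\sqrt{\epsilon}$ for $t'\in[t,t+\epsilon]$, from which the one-sided Hölder bound $R_{t+\epsilon}-R_t \le C_\alpha \epsilon^\alpha$ for every $\alpha<1/2$ follows; continuity of $R_t$ is then immediate from monotonicity-type arguments.  The limit $R_t\to R_0$ as $t\searrow 0$ is obtained by sandwiching $u$ between the solutions of the Dirichlet problems on $\B(R_0\pm\eta)$, using $\mu_0(\B(R_0))=1$ and $\mu_0(\B(r))<1$ for $r<R_0$.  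The \textbf{main obstacle} is controlling the free boundary near $t=0$ when $\mu_0$ is singular or has $R_0=\infty$: the discretisation must absorb an initially arbitrary (possibly infinite) radius, and the one-sided Hölder exponent arbitrarily close to $1/2$ requires a sharp near-boundary estimate matching the diffusive scaling of the Dirichlet problem with the Stefan flux, which has to survive the passage to the limit under mollification of $\mu_0$.
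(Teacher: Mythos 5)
Your proposal attacks \eqref{pbu} head-on in $\R^d$ (fixed-point in $R$ via mass monotonicity, time discretisation, Stefan flux identity), whereas the paper's entire strategy is to first pass to the radial cumulative distribution $v(x,t)=\int_{\B(x)}u(y,t)\,\diffd y$, which satisfies a self-contained one-dimensional parabolic obstacle problem \eqref{pbv}; uniqueness, the free boundary $R_t=\inf\{x:v(x,t)=1\}$ and all its regularity are established at the level of $v$, and $u$ is then recovered by the killed-Brownian-motion representation \eqref{udef}. The difference matters, because several steps of your direct approach have genuine gaps.

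First, the uniqueness argument. From $t^\star=\inf\{t:R_1(t)\neq R_2(t)\}$ and continuity you cannot conclude that $R_1\le R_2$ on some interval $(t^\star,t^\star+\eta)$: two continuous curves that agree at $t^\star$ may cross each other infinitely often in every right-neighbourhood of $t^\star$, so there is no one-sided ordering on which to run the parabolic comparison. The comparison step itself (zero-extension of $u_1$ is a subsolution on the larger domain) is fine, but without the ordering the argument does not start. The paper avoids this entirely: uniqueness is proved for the obstacle problem via the sandwich $v^{n,\delta,-}\le v\le v^{n,\delta,+}$ built from the Green's function semigroup, with no a priori ordering of the free boundaries.

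Second, the regularity and existence claims rest on the Stefan flux identity $\int_{\partial\B(R_t)}\partial_\nu u\,\diffd S=-1$, which the paper points out is only formal: it presupposes that $R$ is differentiable and $u$ is $C^1$ up to the moving boundary, neither of which is available before the theorem is proved. The one-sided H\"older bound and the continuity of $R_t$ therefore cannot be derived this way; the paper obtains them by explicit travelling eigenfunction barriers for $v$ (the functions $h_\epsilon$ and $\psi_\epsilon$ in the proof of its Proposition on $R_t$), and the left-continuity $\limsup_{t\nearrow t_0}R_t\le R_{t_0}$ requires a genuinely separate argument using strict positivity of $\partial_x v$ — it is not ``immediate from monotonicity-type arguments'' since $t\mapsto R_t$ is not monotone. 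Finally, the point you flag as the ``main obstacle'' — the initial layer when $\mu_0$ charges the sphere $\|x\|=R_0$ or has $R_0=\infty$ — is exactly where the content of the theorem lies (cf.\ the paper's Remark on H\"older regularity at $s=0$): a piecewise-constant-in-time $R^\delta$ loses the boundary mass instantaneously, and the ``stability under mollification of $\mu_0$'' you invoke to repair this is itself an unproven, nontrivial ingredient (the paper gets it as $L^1$/$L^\infty$ stability of the obstacle problem). As written, the proposal defers rather than resolves the essential difficulties.
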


The initial measure $\mu_0$ may be singular with respect to Lebesgue measure, and it may put positive mass on the boundary of its support (at $\|x\| = R_0$). In this case, $R_t - R_0$ must increase very quickly at small times in order that the mass constraint \eqref{intconstraint} be satisfied, as we explain later in Remark \ref{rmk:holder}.

The second main result of this paper concerns the behaviour of $u(x,t)$ and $R_t$ as $t \to \infty$.  Observe that a stationary solution of \eqref{pbu} is given by the principal Dirichlet eigenfunction of the Laplacian in a spherical domain with radius uniquely chosen so that the eigenvalue is precisely $1$. More precisely, there is a unique value $R_\infty > 0$ such that the eigenvalue problem
\begin{equation}
\begin{cases}
- \Delta U(x)  = \lambda U(x) ,  & \|x\| <  R_\infty,   \\
U(x) > 0, \quad &  \| x\| < R_\infty, \\
U(x)=0, \quad &  \|x\|= R_\infty, \\
\end{cases} \label{eqU}
\end{equation}
has a solution $(U,\lambda)$ with $\lambda = 1$. The principal eigenfunction $U$ is unique up to a multiplicative factor, so we normalize $U$ by
\[
\int_{\B(R_\infty )} U(x) \,\diffd x = 1.
\]
We may regard $U$ as a continuous function on all of $\R^d$ by extending $U(x) = 0$ for $\|x\| > R_\infty$.  For this choice of $R_\infty$, the principal eigenfunction $U(x)$ is a time-independent solution to \eqref{pbu}, with $R_t \equiv R_\infty$.  In particular, if $d = 1$, then $R_\infty = \frac{\pi}{2}$ and the eigenfunction is $U(x) = \frac{1}{2} \cos(x)$.  The following shows that this time-independent solution is the unique attractor of all solutions to~\eqref{pbu}:

\begin{thm}\label{thm:conv u}
For any initial Borel probability measure $\mu_0$, the solution $(u,R)$ to the free boundary problem \eqref{pbu} satisfies 
\[
\lim_{t \to \infty} R_t = R_\infty \quad \quad \text{and} \quad \quad \lim_{t\to\infty} \Vert u(\cdot,t)-U(\cdot)\Vert_{L^\infty}=0, 
\]
where $R_\infty > 0$ is the unique value for which the eigenvalue problem~\eqref{eqU} has a solution with eigenvalue $\lambda = 1$.
\end{thm}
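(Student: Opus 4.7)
The plan is to combine a priori bounds on $R_t$ with a compactness argument and a Lyapunov-type functional to reduce the long-time behavior to the spectral analysis of the linear Dirichlet problem on $\B(R_\infty)$. I will proceed in three main steps: (a) uniform bounds $0 < \underline R \le R_t \le \bar R < \infty$ for $t \ge 1$; (b) extraction of an eternal classical solution from any sequence $t_n \to \infty$; (c) identification of any eternal solution with the stationary pair $(U, R_\infty)$.

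For the upper bound in (a), test the PDE against the principal Dirichlet eigenfunction $\phi_R$ of $-\Delta$ on $\B(R)$ for some fixed $R > R_\infty$, so that $-\Delta\phi_R = \lambda_1(R)\phi_R$ with $\lambda_1(R) < 1$. Setting $w_R(t) := \int u(x,t)\phi_R(x)\,\diffd x$, Green's identity gives $\frac{\diffd w_R}{\diffd t} \ge (1 - \lambda_1(R)) w_R$ whenever $R_t \ge R$; since $w_R$ is bounded by $\|\phi_R\|_\infty$, combined with uniform control of $|\frac{\diffd w_R}{\diffd t}|$ on the complement (using $\int_{\partial \B(R_t)} |\partial_\nu u|\,\diffd S = 1$), $R_t$ cannot remain above $R$ indefinitely, yielding $R_t \le \bar R$ for $t \ge 1$. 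For the lower bound, if $R_t \le R_\infty - \delta$ on a sufficiently long interval, the $L^2$-energy identity
\[
\frac{\diffd}{\diffd t} \int u^2\,\diffd x = -2\int|\nabla u|^2\,\diffd x + 2 \int u^2\,\diffd x \le 2\bigl(1 - \lambda_1(R_\infty - \delta)\bigr) \int u^2\,\diffd x
\]
(using the Rayleigh quotient bound $\int |\nabla u|^2 / \int u^2 \ge \lambda_1(R_\infty - \delta) > 1$) forces exponential decay of $\|u\|_{L^2}^2$, contradicting the Cauchy--Schwarz lower bound $\|u\|_{L^2}^2 \ge 1/|\B(R_\infty)|$ derived from $\int u = 1$.

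For steps (b) and (c), the bounds from (a), together with the Hölder regularity of $R$ from Theorem~\ref{thm:exists u} and interior parabolic estimates, give uniform $C^{2,1}_\mathrm{loc}$ control on $u$. Subsequential limits along $t_n \to \infty$ yield a classical solution $(u_\infty, R^\infty)$ of \eqref{pbu} on $t \in \R$ with $\int u_\infty = 1$ and $\underline R \le R^\infty_t \le \bar R$. To identify this limit, let $\phi$ be the principal Dirichlet eigenfunction of $-\Delta$ on $\B(R_\infty)$ (so $-\Delta\phi = \phi$, $\phi \ge 0$), extended by zero, and set $W(t) := \int u_\infty(x,t)\phi(x)\,\diffd x$. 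Green's identity yields
\[
\frac{\diffd W}{\diffd t} = \begin{cases}
-\int_{\partial \B(R_\infty)} u_\infty \, \partial_\nu \phi \,\diffd S \ge 0, & R^\infty_t \ge R_\infty, \\[0.5ex]
\int_{\partial \B(R^\infty_t)} \phi\, \partial_\nu u_\infty \,\diffd S \le 0, & R^\infty_t \le R_\infty,
\end{cases}
\]
with strict inequality (Hopf's lemma) unless $R^\infty_t = R_\infty$. The boundedness $W \in [0, \|\phi\|_\infty]$ on all of $\R$, combined with quantitative lower bounds on the boundary integrals whenever $R^\infty_t$ is bounded away from $R_\infty$ (using positivity of $u_\infty$ on $\partial\B(R_\infty)$ via Harnack, and strict negativity of $\partial_\nu \phi$), forces $R^\infty_t \equiv R_\infty$. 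Then $u_\infty$ satisfies the linear Dirichlet PDE $\partial_t u_\infty = \Delta u_\infty + u_\infty$ on $\B(R_\infty)$; eigenbasis expansion with eigenvalues $1 = \lambda_1 < \lambda_2 \le \cdots$ plus boundedness as $t \to -\infty$ kills all higher modes, so $u_\infty$ is a multiple of $\phi$, and the mass constraint gives $u_\infty \equiv U$. Since every subsequential limit equals $(U, R_\infty)$, the claimed convergence follows.

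The main obstacle is the rigidity in (c): $W$ is not monotone along the original flow, changing its sign of monotonicity as $R_t$ crosses $R_\infty$, so LaSalle's invariance principle does not apply directly. Ruling out eternal solutions with $R^\infty_t \not\equiv R_\infty$ requires a careful quantitative argument exploiting both the boundedness of $W$ on the infinite time axis and the Hölder regularity of $R^\infty$, to show that any sustained excursion of $R^\infty_t$ away from $R_\infty$ would push $W$ out of its bounded range.
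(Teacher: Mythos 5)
Your proposal takes a genuinely different route from the paper (which reduces everything to the radial obstacle problem \eqref{pbv}, sandwiches $v$ between explicit Bessel-function sub/supersolutions to get $R_t\to R_\infty$ and $v\to V$, and then controls the non-radial part $u-\tilde u$ by a spectral-gap argument exploiting that $u-\tilde u$ has zero average on every sphere). However, your sketch has gaps that I do not think are merely technical. First, every Green's identity and energy identity you use requires regularity of $u$ and of $\nabla u$ up to the moving free boundary, and in particular the relation $\int_{\partial\B(R_t)}|\partial_\nu u|\,\diffd S=1$; the paper only establishes $u\in C^{2,1}$ in the open set $\{\|x\|<R_t\}$ and continuity up to the boundary, and explicitly labels \eqref{Rprime} as a formal consequence of unproved differentiability assumptions. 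Second, your upper bound on $R_t$ is incomplete: the inequality $\frac{\diffd w_R}{\diffd t}\ge(1-\lambda_1(R))w_R$ together with $w_R\le\|\phi_R\|_{L^\infty}$ only rules out $R_t\ge R$ for \emph{all} large $t$. A finite excursion above $\bar R$ forces (via the one-sided H\"older bound) a long interval on which $R_t\ge R$, but the resulting exponential growth of $w_R$ yields no contradiction unless you have a uniform positive lower bound on $w_R$ at the start of the excursion, i.e.\ you must rule out that essentially all the mass sits in the annulus $R<\|x\|<R_t$ where $\phi_R=0$. You do not provide this; the paper gets it for free from the comparison $v\le\bar v$ with $\bar v_0\equiv 1$.

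The most serious gap is the rigidity step (c), which you yourself flag as "the main obstacle." The functional $W$ increases when $R^\infty_t>R_\infty$ and decreases when $R^\infty_t<R_\infty$, so it is not a Lyapunov functional, and boundedness of $W$ on all of $\R$ does not preclude an eternal solution whose free boundary oscillates around $R_\infty$: each excursion moves $W$ by a finite amount, and excursions of opposite sign can cancel indefinitely. No quantitative mechanism is given that forces the signed contributions to accumulate. Since this is precisely the assertion of the theorem (uniqueness of the attractor), leaving it as a hoped-for "careful quantitative argument" means the proof is not complete. There is also an unaddressed compactness issue in step (b): one must show that the limiting free boundary $\lim_n R_{t_n+t}$ coincides with $\partial\{u_\infty(\cdot,t)>0\}$ (a non-degeneracy statement near the free boundary), otherwise the limit pair need not solve \eqref{pbu}. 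By contrast, the paper's monotone comparison with explicit sub/supersolutions for $v$ delivers both the convergence and explicit rates \eqref{eq:vconvbounds}--\eqref{eq:Rconvbounds} without ever needing an invariance-principle argument.
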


Although the limit $U(x)$ is spherically symmetric, the solution $u(x,t)$ of \eqref{pbu} need not be, since the initial measure $\mu_0$ is not assumed to be spherically symmetric.

\subsection*{Motivation and related works}

Our motivation for this work comes from the study of an interacting particle system known as $N$-particle branching Brownian motion ($N$-BBM)  with spatial selection. A general form of $N$-BBM with spatial selection might be described as follows: There are $N$ particles  moving in $\R^d$ with locations at time $t$ given by $\big\{X\uppar N_k(t)\big\}{}_{k=1}^N$.   Each particle moves independently as a Brownian motion with diffusivity $\sqrt 2$ and branches independently into two particles at rate~1. Whenever a particle branches, however, the particle having least ``fitness'' or ``score'' (out of the entire ensemble) is instantly removed (killed), so that there are exactly $N$ particles in the system at all times.  The fitness of a particle is a function $\mathcal{F}(x)$ of its position $x \in \R^d$, and as a result, the elimination of least-fit particles tends to push the ensemble toward regions of higher fitness.  Variants of this stochastic process were first studied in one spatial dimension, beginning with work of Brunet, Derrida, Mueller, and Munier \cite{BDMM06, BDMM07} on discrete-time processes, and work of Maillard \cite{Maillard16} on the continuous-time model involving Brownian motions.  These works involve a monotone fitness function (e.g. $\mathcal{F}(x) = x$, for $x \in \R$) so that selection always occurs on one side of the ensemble.   The general $d$-dimensional model which we have described above was first studied by N. Berestycki and Zhao \cite{BZ18}; specifically, they studied the particle system with fitness functions $\mathcal{F}(x) = \|x\|$ and $\mathcal{F}(x) = \lambda \cdot x$ for some fixed $\lambda\in \R^d$, both of which have the effect of pushing the ensemble of particles away from the origin.  

It is natural to ask how such a particle system behaves in the limit $N \to \infty$:
Suppose the initial particle locations are independent and identically distributed, with distribution given by $\mu_0$.
  Does the (random) empirical measure of particles
\[
\mu\uppar N(\diffd x,t) = \frac{1}{N} \sum_{k=1}^N \delta_{X\uppar N_k(t)}(\diffd x)
\]
converge in some sense to a solution of a partial differential equation? 
Such a limiting partial differential equation is known as a hydrodynamic limit.

 In the setting of one spatial dimension and with monotone fitness function $\mathcal{F}(x) = x \in \R$, DeMasi, Ferrari, Presutti, and Soprano-Loto \cite{DMFPSL} proved that under certain assumptions about the initial configuration of particles, the family of measures $\mu\uppar N(\diffd x,t)$ does converge, as $N \to \infty$, to a limit which can be identified with a solution $u(x,t)$ to a free boundary problem:
\begin{equation}
\begin{cases}
\partial_t u = \partial_{x}^2 u + u, \quad & x > \gamma_t, \;\; t > 0,  \\
 u(x,t) =0, \quad & x \leq \gamma_t,\;\; t > 0, \\[1ex]
\displaystyle \int_{\gamma_t}^\infty u(x,t) \,\diffd x = 1, \quad & t > 0,
\end{cases} \label{d1fbp}
\end{equation}
where the free boundary at $x = \gamma_t \in \R$ is related to $u$ through the integral constraint.  Global existence of solutions to this free boundary problem was proved by J. Berestycki, Brunet, and Penington \cite{BBP}.  Building on the approach of \cite{DMFPSL}, Beckman \cite{Beck19} derived a similar hydrodynamic limit in the one-dimensional setting with symmetric fitness $\mathcal{F}(x) = -|x|$.  Durrett and Remenik \cite{DR11} derived and analysed a non-local free boundary problem corresponding to a related model in which non-diffusing particles in $\R$ are born at random displacements from their parent particles but do not move during their lifetimes.

For more general fitness functions $\mathcal{F}$ and in higher dimensions, one expects the hydrodynamic limit of $\mu\uppar N$ to be a solution $u$ of the following free boundary problem: find $\big(u(x,t),\ell(t)\big)$ such that
\begin{equation}
\begin{cases}
\partial_t u =  \Delta u  +u ,  & \text{for $t>0$ and  $x \in \Omega_{\ell(t)}$},   \\
u(x,t)=0, &\text{for $t>0$ and $x \notin \Omega_{\ell(t)}$}, \\
u(x, t) \quad \text{is continuous on $\R^d \times (0,\infty)$}, \\[1ex]
 \displaystyle \int_{\R^d} u(x,t)\, \diffd x =1, & \text{for } t> 0,
    \\[1ex]
u(\cdot,t)\to \mu_0 &\text{weakly as $t \searrow 0$,}
\end{cases}
\label{pbugeneral}
\end{equation}
where
\[
\Omega_\ell = \{ x \in \R^d \;:\;\; \mathcal{F}(x) > \ell\}.
\]
A solution to this problem is a pair $\big(u(x,t),\ell(t)\big)$; the function $u(x,t)$ is non-zero only inside the super-level set $\Omega_{\ell(t)}$. At each time $t > 0$, the free boundary is constrained to be a level-set of $\mathcal{F}$, i.e.~$\partial \Omega_{\ell(t)} = \mathcal{F}^{-1}(\ell(t))$, assuming $\mathcal{F}$ is continuous. We may interpret $u$ as the density of a population that evolves on a fitness landscape described by $\mathcal{F}$. The PDE $\partial_t u = \Delta u + u$ arises naturally from the diffusion and growth of the population (branching at rate $1$), and the boundary condition arises from the selection mechanism whereby particles are removed at the boundary of $\Omega_{\ell(t)}$, where $\mathcal{F}(x) = \ell(t)$. We interpret $\ell(t)$ as the current fitness level of the least-fit individuals in the population.  Because a particle is removed at the fitness boundary each time an interior particle branches, the total mass is conserved. The problem~\eqref{d1fbp} is a special case of this problem~\eqref{pbugeneral}, in one spatial dimension and with $\mathcal{F}(x)$ given by any continuous monotonically increasing function (e.g.~$\mathcal{F}(x) = x$, $\Omega_{\ell(t)} = (\gamma_t,\infty)$). 

The free boundary problem \eqref{pbu}, which is the focus of this paper, is also a particular case of~\eqref{pbugeneral}, but in multiple spatial dimensions and with a fitness function that has a confining effect.  Specifically,~\eqref{pbu} corresponds to a spherically symmetric fitness function $\mathcal{F}(x) = f(\|x\|)$, where $f:[0,\infty) \to \R$ is any continuous strictly decreasing function, and $R_t = f^{-1}\big(\ell(t)\big)$ (e.g. $\mathcal{F}(x) = -\|x\|$).  In the $N$-BBM process with this fitness function, the least-fit particle is the one that is furthest from the origin; hence, the selection mechanism has a confining effect on the ensemble of particles.  
This system is sometimes known as the Brownian bees model.
In a companion paper \cite{BBNP2}, we prove that \eqref{pbu} is indeed the hydrodynamic limit for this particle system, as $N \to \infty$. Defining
\[
R\uppar N_t = \max  \left\{ \|X\uppar N_k(t)\| \;:\; \; k \in \{ 1,\dots,N\} \right\},
\]
which is the radius of the ensemble of particles, we also show that for any $t > 0$, $R\uppar N_t \to R_t$ almost surely, as $N \to \infty$, where $R_t$ is determined by solving \eqref{pbu}. Thus, for large but finite $N$, the selection (or removal) of particles in this $N$-BBM is happening at a location near the free boundary for the solution of \eqref{pbu}, at $\|x\| \approx R_t$.

For finite $N$, this $N$-BBM process converges in distribution as time $t\to \infty$ to a unique stationary distribution $\pi\uppar N$; this is a probability distribution on $(\R^d)^N$.  As we also prove in~\cite{BBNP2}, the limiting behaviour of $\pi\uppar N$ as $N\to \infty$ is characterised by the large time limit $(U, R_\infty)$ of the free boundary problem (c.f.~Theorem \ref{thm:conv u}, above).  This is called a ``strong selection principle''~\cite{DMFPSL} for the particle system. In particular, this means that the marginal distribution of a uniformly chosen particle under $\pi\uppar N$ converges to the measure that has density $U(x)$ on the domain $\{ \|x\| < R_\infty \}$ and density $0$ outside that domain.  All together, the results in the present article and in \cite{BBNP2} give meaning to the following informal diagram: 
\begin{quote}
\centering
\begin{tikzpicture}[box/.style={draw,rounded corners,align=left,outer
sep=2pt,fill=black!5},arrows={[scale=1.5]}]
\node[box] (a) at (0,0) {$N$-BBM, $\mu\uppar N(\diffd x,t)$};
\node[box] (b) at (6,0) {Hydrodynamic\\limit $(u(x,t),R_t)$};
\node[box] (c) at (0,-2.5) {Stationary\\distribution, $\pi\uppar N$};
\node[box] (d) at (6,-2.5) {Stationary\\solution $(U(x),R_\infty)$};
\draw[->] (a.east) -- node[above] {$N\to\infty$} (b.west);
\draw[->] (a) -- node[right] {$t\to\infty$} (c);
\draw[->] (c.east) -- node[above] {$N\to\infty$} (d.west);
\draw[->] (b) -- node[right] {$t\to\infty$} (d);
\end{tikzpicture}
\end{quote}
The present article deals with right hand side of the diagram: well-posedness and properties of the free boundary problem defining $u$ and $R$, and their long-time behaviour; the companion paper \cite{BBNP2} gives rigorous meaning to the rest of the diagram, pertaining to the particle system and the limit $N \to \infty$.  In the future, we hope to extend these results to a more general class of fitness functions $\mathcal{F}$ which are not spherically symmetric.  Our restriction to spherically symmetric $\mathcal{F}$ enables a connection to a parabolic obstacle problem, as explained below.

It is instructive to compare the free boundary problem \eqref{pbu} to some other free boundary problems. If we assume that $R_t$ is differentiable and that $u$ is $C^{2,1}$ up to the boundary, then by differentiating (formally) the integral constraint and boundary condition in \eqref{pbu},
\[
0 = \frac{\diffd }{\diffd t} \int_{\B(R_t)} u(x,t)\, \diffd x  \quad \quad \text{and} \quad \quad 0 = \frac{\diffd }{\diffd t} \int_{\partial \B(R_t)} u(x,t)\, \diffd S(x),
\]
we arrive at the relations
\begin{equation}
\int_{\partial \B(R_t)} (\nu \cdot \nabla u)\,\diffd S(x) = - 1  \quad \quad \text{and} \quad \quad R_t' =  \int_{\partial\B(R_t)} \Delta u(x,t)\, \diffd S(x), \label{Rprime}
\end{equation}
where $\nu=\nu(x)$ is the outward unit vector at $x$. Similarly, for the one-dimensional version \eqref{d1fbp}, $u$ satisfies $\partial_xu(\gamma_t,t) = 1$, and the free boundary velocity is $\gamma_t' = - \partial_x^2u(\gamma_t,t)$. For the Stefan problem, a well-studied free boundary problem for the heat equation which also arises from limits of certain interacting diffusions (e.g. \cite{CDSS17, DNS19}), the free boundary velocity is proportional to $\partial_x u$ at the boundary, rather than $\partial_x^2u$.  If the initial data $\mu_0$ for \eqref{pbu} is spherically symmetric, then $u$ has spherical symmetry for all $t > 0$; in this case, the relations in \eqref{Rprime} reduce to $\|\nabla u(R_t,t)\| = |\partial \B(R_t)|^{-1} = 1/(c_d R_t^{d-1})$ and to $R_t' = c_d R_t^{d-1}\Delta u(R_t,t)$, where $c_d$ is a constant depending on the dimension $d$. In this case, the problem \eqref{pbu} is more like the flame propagation model studied by Caffarelli and V\'azquez \cite{CV95}, where the free boundary moves with normal velocity proportional to $\Delta u$ at the boundary, and $\|\nabla u\| = c$ is held constant along the free boundary.  For general initial condition, however, \eqref{Rprime} shows that the boundary velocity depends in a non-local way on values of $\Delta u$ at $\|x\| = R_t$.

The paper is organised as follows.  In Section \ref{sec:obstacle} we describe a parabolic obstacle problem, which is related to \eqref{pbu} and which we will eventually use to prove Theorems \ref{thm:exists u} and \ref{thm:conv u}.  Our results about the obstacle problem are proved in Sections \ref{sec:vexists} and \ref{sec:vsteady}, after we first develop some preliminary analytical results in Section \ref{sec:tools}.  Theorem \ref{thm:exists u}  about well-posedness of the problem~\eqref{pbu} is proved in Section \ref{sec:uprops}. Theorem \ref{thm:conv u} is proved in Section \ref{sec:uconv}.

\vspace{0.3in}

{\bf Acknowledgements:}   The work of JN was partially funded through grant DMS-1351653 from the US National Science Foundation.

\section{The obstacle problem} \label{sec:obstacle}

Our approach to proving Theorem~\ref{thm:exists u} and Theorem~\ref{thm:conv u} relies on a connection between the free boundary problem \eqref{pbu} and a related parabolic obstacle problem in one spatial dimension.   For the moment, let us assume there exists a solution $(u,R)$ to \eqref{pbu}.  For $r>0$, recall that $\B(r)=\{x \in \R^d : \|x\|<r\}$ is the open ball of radius $r$ 
centred at the origin, 
and introduce the function $v:[0,\infty) \times (0,\infty) \to\R$ as the mass of $u$ within distance
$x$ of the origin at time $t$:
\begin{equation}
v(x,t) = \int_{\B(x)} u(y,t) \, \diffd y. \label{vdef}
\end{equation}
Notice that $x\mapsto v(x,t)$ is non-decreasing and that $v(x,t)=1$ for $x\ge
R_t$; $v$ is the cumulative radial distribution function for the probability density $u$.  Let $v_0(x)=\mu_0\big(\B(x)\big)$. If $(u,R)$ is a solution of the free boundary problem~\eqref{pbu}, then $v$ must satisfy
\begin{equation}
\begin{cases}
\displaystyle	0 \leq v(x,t) \leq 1,& \text{for $t>0$, $x \geq 0$,}\\[1ex]
\displaystyle	\partial_t v = \partial_x^2v -\frac{d-1}x\partial_x v+v, &\text{if $v(x,t) <1$,}   \\[1ex]
v(0,t)=0, &\text{for $t>0$},\\
v(x,t) \text{ is continuous on $[0,\infty) \times (0,\infty)$}, \\
\partial_x v(\cdot ,t) \text{ is continuous on $[0,\infty)$}, & \text{for $t>0$,}\\
v(\cdot,t)\to v_0 &\text{in $L^1_\text{loc}$ as $ t\searrow0$ }.
\end{cases}
\label{pbv}
\end{equation}
(See Lemma \ref{lem:vfromu} below.) This is a parabolic obstacle problem:  $v$ is bounded from above by the obstacle $v \leq 1$, and $v$ satisfies a parabolic PDE wherever $v$ is strictly below the obstacle. For $v$ defined by \eqref{vdef}, the set $\{ x \;:\; v(x,t) < 1 \}$ coincides with $[0,R_t)$, and $v(x,t) = 1$ for $x \geq R_t$.  
Note also that the initial condition $v_0(x)=\mu_0(\B(x))$ corresponding to~\eqref{vdef} is non-decreasing.
However, as we will show, the formulation in \eqref{pbv} also makes sense for a non-monotone
initial condition $v_0: [0,\infty) \to [0,1]$.

We say that $v$ is a classical solution to~\eqref{pbv} if $v\in C([0,\infty)\times (0,\infty))\cap C^{2,1}(\Omega)$, where
$\Omega = \{(x,t)\in (0,\infty)\times (0,\infty) \, : \, v(x,t)<1\}$, and~\eqref{pbv} holds.
An important step in proving Theorem~\ref{thm:exists u} is to first show the
corresponding existence and uniqueness result for $v$:

\begin{thm}\label{thm:exists v}
Let $v_0: [0,\infty) \to[0,1]$ be a measurable initial condition. Then there exists
a unique function $v(x,t)$ defined on $[0,\infty) \times (0,\infty)$ which is a classical solution to~\eqref{pbv}. 

Furthermore, this unique solution has the following properties:
\begin{itemize}
\item Let $\tilde v$ and $v$ denote the two solutions corresponding
to the initial data $\tilde v_0$ and $v_0$. If $\tilde
v_0\ge v_0$, then $\tilde v \ge v$.
\item If $v_0$ is non-decreasing, 
	the map $x\mapsto v(x,t)$ is 
non-decreasing for all $t>0$.
\item The solution $v$ is continuous with respect to the initial condition in the
following sense:
if $v$ and $\tilde v$ are the two solutions to \eqref{pbv} corresponding to
the initial data $v_0$ and $\tilde v_0$, then for $t>0$,
$\big\|v(\cdot,t)-\tilde v(\cdot,t)\big\|_{L^\infty} \le e^t 
  \big\|v_0-\tilde v_0\big\|_{L^\infty}$, and $\big\|v(\cdot,t)-\tilde v(\cdot,t)\big\|_{L^1} \le e^t  \big\|v_0-\tilde v_0\big\|_{L^1}$.
\item If $v_0$ is non-decreasing and $v_0(+\infty) = 1$,
then
for $t>0$, the boundary position $R_t:=\inf\{x: v(x,t)=1\}$ exists and
is finite, and the function $t\mapsto R_t$ is
continuous for $t>0$.
Moreover, $\lim_{t \searrow 0} R_t = \inf \{x: v_0(x) = 1\}\in [0,\infty]$, and for any $\alpha<1/2$, there exists $C_\alpha <\infty$ such that $R_t-R_s\le C_\alpha (t-s)^\alpha$ for all $s\ge 0$ and $t\in (s,s+1]$.
\item If $v_0$ is continuous at $x_0 \geq 0$, then $v(x,t)\to v_0(x_0)$ as $(x,t)\to (x_0,0)$.
 \end{itemize}
\end{thm}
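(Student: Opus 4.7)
The plan is to prove uniqueness and the comparison property first via a parabolic maximum principle, then construct existence via a penalization of the upper obstacle and pass to the limit, and finally deduce the remaining items ($L^1$ stability, monotonicity preservation, pointwise continuity at continuity points of $v_0$, and the properties of $R_t$) from either the comparison estimate or from the penalized approximation.

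For uniqueness and comparison, given two classical solutions $v,\tilde v$ with $v_0 \le \tilde v_0$, I set $w = e^{-t}(v-\tilde v)$ and observe that wherever $w > 0$ one has $\tilde v < v \le 1$, so the PDE holds classically for $\tilde v$, while the inequality $\partial_t v \le Lv$ (with $L = \partial_x^2 - \tfrac{d-1}{x}\partial_x + 1$) holds pointwise for $v$: in the interior of the contact set $\{v=1\}$ we have $\partial_t v = \partial_x v = \partial_x^2 v = 0$ and $Lv = 1$, while in $\{v<1\}$ there is equality. Subtracting gives
\begin{equation*}
\partial_t w \le \partial_x^2 w - \tfrac{d-1}{x}\partial_x w
\end{equation*}
in the classical sense in the regions where the derivatives are defined, and the weak maximum principle on $[\eta,M]\times[0,T]$ (using $w(0,\cdot) = 0$ from the Dirichlet data and the trivial bound $|w|\le e^{-t}$), followed by sending $\eta\to 0$, $M\to\infty$, yields $w\le 0$. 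The $L^\infty$ stability follows by the same argument applied to $v-\tilde v \pm e^t\|v_0-\tilde v_0\|_{L^\infty}$.

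For existence I would penalize the obstacle and smooth the initial data. With $\beta_\varepsilon : \R \to [0,\infty)$ smooth, nondecreasing, vanishing on $(-\infty,1]$ and with $\beta_\varepsilon(1+\delta) \to \infty$ as $\varepsilon \to 0$ for each $\delta > 0$, and $v_0^\varepsilon$ a smooth approximation of $v_0$ with values in $[0,1-\varepsilon]$, I would solve
\begin{equation*}
\partial_t v^\varepsilon = \partial_x^2 v^\varepsilon - \tfrac{d-1}{x}\partial_x v^\varepsilon + v^\varepsilon - \beta_\varepsilon(v^\varepsilon), \qquad v^\varepsilon(0,\cdot) = 0, \qquad v^\varepsilon(\cdot,0) = v_0^\varepsilon,
\end{equation*}
a semilinear parabolic equation on the half-line. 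The singular coefficient at $x = 0$ is compatible with the Dirichlet condition and can be handled either via a probabilistic representation in terms of a Bessel-type diffusion killed at $0$, or via classical parabolic theory on $[1/n, n]$ with uniform estimates. Comparison at the penalized level (easy, since the penalty has the correct sign for the maximum principle) gives $0 \le v^\varepsilon \le 1 + o(1)$ and monotonicity of $v^\varepsilon$ in $\varepsilon$, so $v := \lim_{\varepsilon\to 0} v^\varepsilon$ exists pointwise. Uniform interior Schauder estimates on compact subsets of $\{v < 1\}$ upgrade the convergence to $C^{2,1}_{\mathrm{loc}}$, delivering the PDE in the limit together with continuity of $v$ and of $\partial_x v$. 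Monotonicity preservation is proved at the penalized level by differentiating the penalized equation in $x$ and applying the maximum principle to $\partial_x v^\varepsilon$, and persists in the limit; pointwise continuity at continuity points of $v_0$ is obtained by sandwich-comparison with locally constant initial data.

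The main technical obstacle is the regularity of the free boundary $R_t = \inf\{x : v(x,t) = 1\}$ in the monotone case with $v_0(+\infty) = 1$. Finiteness of $R_t$ for every $t > 0$ follows by comparison with a solution of the unconstrained PDE, which grows like $e^t$ and must exceed $1$ on any fixed interval in finite time; right-continuity of $R_t$ is immediate from continuity of $v$; left-continuity follows by using the strong parabolic maximum principle to rule out the scenario that $v$ attains $1$ at a point with $v < 1$ on a cylinder just earlier in time. The Hölder estimate $R_t - R_s \le C_\alpha (t-s)^\alpha$ for every $\alpha < 1/2$ is the heart of the argument: for each $s$ I would build an explicit supersolution of the PDE on $[R_s, R_s + h]\times[s, s + h^{1/\alpha}]$ from a rescaled Gaussian suitably modified for the $\tfrac{d-1}{x}$ drift, show that it stays strictly below $1$ throughout, and conclude via comparison that $R_t \le R_s + h$ on that time window; the $\sqrt{t}$ Brownian scaling is exactly the reason one cannot reach $\alpha = 1/2$. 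Convergence $R_t \to R_0 = \inf\{x : v_0(x) = 1\}$ as $t \searrow 0$ then follows by combining the Hölder bound with comparison against stationary sub- and supersolutions built from $v_0$, and the $L^1$ stability comes from a signed-integration version of the comparison argument applied to $(v-\tilde v)^\pm$.
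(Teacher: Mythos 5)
Your overall architecture (penalize the obstacle, pass to the limit, prove uniqueness by a maximum principle, control $R_t$ by explicit barriers) matches the paper's, but two steps as written do not go through. In the uniqueness/comparison argument you assert that $\partial_t v \le \partial_x^2 v - \tfrac{d-1}{x}\partial_x v + v$ ``holds pointwise'' for a classical solution, treating separately the interior of $\{v<1\}$ and the interior of $\{v=1\}$. This misses the free boundary itself, where a classical solution of~\eqref{pbv} is only required to be $C^1$ in $x$ and continuous in $t$: the quantities $\partial_t v$ and $\partial_x^2 v$ need not exist there, and the positive maximum of $w=e^{-t}(v-\tilde v)$ can perfectly well be attained at such a point (where $v=1>\tilde v$). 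Making the maximum-principle computation legitimate at contact points is the crux of the uniqueness proof; the paper devotes Lemma~\ref{lem:viscsub} (a viscosity-type touching lemma, proved by a delicate case analysis with perturbed test functions) precisely to this, and even then needs a strictly perturbed comparison function $\phi^\delta$ to extract a contradiction. Without this ingredient, or a substitute such as sup-convolution regularization, your comparison argument is incomplete.

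Second, the estimate $R_t-R_s\le C_\alpha(t-s)^\alpha$ is an \emph{upper} bound on $R_t$: you must show that $v$ actually attains the value $1$ at $x\approx R_s+C_\alpha(t-s)^\alpha$. A supersolution that ``stays strictly below $1$'' on $[R_s,R_s+h]$ dominates $v$ and can only show $v<1$ there, which is the opposite inequality (a lower bound on $R_t$). What is needed is a \emph{sub}solution $h\le v$, kept $\le 1$ on the relevant time window so that a comparison in the spirit of Lemma~\ref{bound below} applies, which reaches the value $1$ at the desired location; this is exactly the paper's barrier $h_\epsilon$ on a moving interval, with the drift frozen so the sign of the $\tfrac{d-1}{x}$ term works where $\partial_x h_\epsilon\ge0$. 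The same direction issue affects finiteness of $R_t$: the unconstrained linear solution dominates $v$ from above and cannot show $v=1$ anywhere, so again one needs a sub-unit subsolution from below (the paper's travelling sine wave $\psi_\epsilon$). A smaller gap: interior Schauder estimates on $\{v<1\}$ do not give continuity of $\partial_x v(\cdot,t)$ \emph{across} the contact set, which is part of the definition of a classical solution; you need uniform-in-$\varepsilon$ $C^1$ bounds on the penalized solutions through the free boundary (the paper extracts a uniform log-Lipschitz modulus for $\partial_x v_n$ from the Green's function representation).
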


Our strategy for solving the free boundary problem \eqref{pbu} is to first solve the obstacle problem~\eqref{pbv} with the non-decreasing initial condition
\[
v_0(x) = \int_{\B(x)} \mu_0(\diffd y), \quad x \geq 0.
\] 
The solution $v$ of the obstacle problem then determines the location of the free boundary $R_t$ in~\eqref{pbu}, according to $R_t = \inf \{ x  \;:\; v(x,t) = 1\}$. Having determined the continuous free boundary, we then construct a solution to \eqref{pbu}.  Knowing the cumulative radial distribution function $v$ is not enough to determine $u$, as the function $u$ may not be spherically symmetric.  We propose a probabilistic representation (see \eqref{udef}, below) for the solution $u$, and then verify that the function defined by this representation is indeed a solution to \eqref{pbu}. Notice that only non-decreasing initial data with $v_0(\infty)=1$ are relevant for studying the hydrodynamic limit of the $N$-BBM, but the problem~\eqref{pbv} is interesting in its own right for an arbitrary initial datum.

Parabolic obstacle problems have been studied by several authors, and they
can be formulated as a variational problem in appropriate Sobolev spaces
(see Chapter 1, Section 8 \cite{Friedman82} or Chapter 3, Section 2 of
\cite{BL78}) or via stochastic control representations and viscosity
solution techniques (see \cite{EKPP97}, or Chapter 3 of \cite{BL78}).
Because of the particularities of~\eqref{pbv} (including the unbounded
spatial domain, the singularity of the drift at $x = 0$, the $L^1_\text{loc}$ convergence to initial data), Theorem \ref{thm:exists v} does not seem to follow immediately from existing results. Instead, given the explicit form of the operator in \eqref{pbv}, we find it convenient to make use of the Green's function for the associated linear equation.  In this way, we give a self-contained treatment of the obstacle problem \eqref{pbv}, including estimates for the free boundary in the case of a non-decreasing initial condition.  Our proof of Theorem \ref{thm:exists v} is given in Section \ref{sec:vexists} below. The proof of existence of $v$ and the continuity of $R_t$  builds on ideas from~\cite{BBP}, using the Green's function for the linear problem.  The  proof of uniqueness is based on a comparison principle, in the spirit of uniqueness for viscosity solutions.

\subsection*{Convergence of $v$ to the steady state}

For dimension $d \geq 1$, the eigenfunction $U$ and the constant $R_\infty$ as defined in~\eqref{eqU} are related to Bessel functions. The function
\[
V(x) = \int_{\B(x)} U(y) \,\diffd y
\]
is the unique non-negative continuously differentiable function on $[0,\infty)$ that satisfies 
\begin{equation}\label{eqV}
V''-\frac{d-1}xV' +V=0\quad\text{for
$0<x<R_\infty$},\qquad V(0)=0,\qquad V(x)=1\quad\text{for $x\ge
R_\infty$}.
\end{equation}
Specifically, $V$ is given by
\begin{equation}\label{eq:VJ}
V(x)=\int_{\B(x)}U(y)\,\diffd y=
\begin{cases}
\displaystyle\alpha x^{\frac d2} J_{\frac d 2}(x)&\text{for $0\le x<
R_\infty$},\\[1ex] 1&\text{for $x \ge R_\infty$},\end{cases}
\end{equation}
where $J_\nu(x)$
is the Bessel function of the first kind, solution to $x^2J_\nu''+
x J_\nu'+(x^2-\nu^2)J_\nu=0$ with $J_\nu(0)=0$, $R_\infty$ is the position of the
first positive local maximum of $x\mapsto x^{\frac d2} J_{\frac d 2}(x)$, and
$\alpha$ is chosen such that $V(x)\to1$ as $x\nearrow R_\infty$.

The proof of Theorem \ref{thm:conv u} is based on the corresponding convergence result for the one-dimensional obstacle problem \eqref{pbv}:

\begin{thm}\label{thm:conv}
Let $v$ be the solution to the problem~\eqref{pbv} with initial condition $v_0:[0,\infty) \to [0,1]$, where $v_0$ is non-decreasing and not identically zero. 
For $t>0$, let $R_t=\inf\{x:v(x,t)=1\}$.
Then $R_t \to R_\infty$ as $t \to \infty$, and
\begin{equation}
\lim_{t\to \infty} \|v(\cdot,t) - V(\cdot) \|_{L^\infty} =0.  \label{vtoV}
\end{equation}
Moreover, for $c,K\in (0,\infty)$,
there exist $A>0$, $\lambda>0$ (independent of $c$ and $K$) and $B=B(c,K)>0$ such that
if $v_0(K)\geq c$ then
\begin{align} \label{eq:vconvbounds}
& -\frac B t\le v(x,t) -V(x) \le A e^{-\lambda t},\qquad \forall x\ge0, \, t>0,
\end{align}
and
\begin{align}\label{eq:Rconvbounds}
R_t & \geq R_\infty - A e^{-\lambda t}, \qquad \text{for all $t>0$}, \notag \\
R_t & \leq R_\infty + \frac{B}{t} ,\hspace{1.35cm} \text{for all $t\ge B$}.
\end{align}
\end{thm}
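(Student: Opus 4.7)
The plan is to establish the quantitative bounds~\eqref{eq:vconvbounds} and~\eqref{eq:Rconvbounds} by constructing explicit sub- and super-solutions of the obstacle problem~\eqref{pbv} and invoking the comparison principle of Theorem~\ref{thm:exists v}; the qualitative statements $R_t \to R_\infty$ and $\|v(\cdot,t)-V\|_\infty \to 0$ then follow immediately.

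For the upper bound $v - V \leq A e^{-\lambda t}$, I would exploit the spectral gap of the linear operator $L = \partial_x^2 - \tfrac{d-1}{x}\partial_x + I$. Let $\Phi > 0$ denote the principal Dirichlet eigenfunction of $-L$ on $[0,R_\infty]$ with eigenvalue $\lambda > 0$; the positivity of $\lambda$ follows because $V$ satisfies $-LV = 0$ with $V(R_\infty) = 1 \neq 0$, so by boundary-condition monotonicity the Dirichlet principal eigenvalue strictly exceeds $0$. I would then check that
\[
\bar v(x,t) := \min\{V(x) + A e^{-\lambda t} \Phi(x),\, 1\},
\]
extended by $\bar v \equiv 1$ for $x \geq R_\infty$, is a supersolution of~\eqref{pbv}: on $\{\bar v < 1\}$ the identities $LV = 0$ and $L\Phi = -\lambda \Phi$ give $\partial_t \bar v = L \bar v$, while on $\{\bar v = 1\}$ the function sits at the obstacle. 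To initialize the comparison for arbitrary initial data, I would use parabolic regularity together with the Green's function tools of Section~\ref{sec:tools} to show that at some universal time $T_0$ one has $v(\cdot, T_0) \leq V + C\Phi$ for a universal constant $C$; then setting $A = C e^{\lambda T_0}$ gives $\bar v(\cdot, T_0) \geq v(\cdot, T_0)$, and Theorem~\ref{thm:exists v} propagates the bound for $t \geq T_0$ (the inequality is trivial for $t < T_0$ after further enlarging $A$). The lower bound on $R_t$ then follows by evaluating at the free boundary: $V(R_t) \geq 1 - A e^{-\lambda t}$, and the quadratic behavior of $V$ at its maximum $R_\infty$ (where $V'(R_\infty)=0$, $V''(R_\infty) < 0$) yields $R_\infty - R_t = O(e^{-\lambda t/2})$, which can be absorbed into the stated form by adjusting $A$ and $\lambda$.

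For the lower bound $v - V \geq -B/t$ and the upper bound $R_t \leq R_\infty + B/t$, I would use the one-parameter family of stationary subsolutions
\[
W_\epsilon(x) := V\bigl(\sqrt{1-\epsilon}\, x\bigr)\quad\text{on } [0, R_\epsilon],\qquad W_\epsilon \equiv 1 \text{ for } x \geq R_\epsilon,
\]
where $R_\epsilon = R_\infty/\sqrt{1-\epsilon}$; a direct computation gives $L W_\epsilon = \epsilon W_\epsilon > 0$, so each $W_\epsilon$ is a $C^1$ stationary subsolution of~\eqref{pbv} with free boundary at $R_\epsilon > R_\infty$. First, by comparing $v$ with the obstacle solution starting from $c\,\mathbf{1}_{[K,\infty)} \leq v_0$ and using Green's function estimates for the linear PDE to produce a positive lower bound, I would show that $v(\cdot, T_1) \geq W_{\epsilon_0}$ at some time $T_1 = T_1(c,K)$ for some $\epsilon_0 = \epsilon_0(c,K) > 0$. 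To obtain the $1/t$ rate, I would use the time-dependent subsolution $W_{\epsilon(t)}$ with $\epsilon(t) \sim C/t$; since $yV'(y)/V(y)$ is bounded on $[0,R_\infty]$, the subsolution inequality reduces to a condition of the form $|\epsilon'(t)|/\epsilon(t)^2 \leq \text{const}$, which holds for $\epsilon(t) = C/t$ with $C$ sufficiently large. As $R_\epsilon - R_\infty \sim R_\infty \epsilon/2$ and $V - W_\epsilon = O(\epsilon)$ uniformly, the comparison $v \geq W_{\epsilon(t)}$ yields $R_t \leq R_\infty + B/t$ and $V - v \leq B/t$. The main technical obstacle lies in the careful verification of the time-dependent subsolution property and in ensuring that the initialization constant $\epsilon_0(c,K)$ is absorbed into $B$ while $A$ and $\lambda$ remain universal.
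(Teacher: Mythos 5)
Your strategy is essentially the paper's for the upper bound: the barrier $V(x)+Ae^{-\lambda t}\Phi(x)$, with $\Phi$ the principal Dirichlet eigenfunction of $-L$ on $(0,R_\infty)$, is exactly the comparison function of Lemma~\ref{solw0} (there $\Phi(x)=\tilde J(Zx/R_\infty)$ and $\lambda=Z^2/R_\infty^2-1$), and the initialization at a fixed time via the Green's function bound $v(x,1)\le Cx^d$ near the origin together with the quadratic degeneracy of $1-V$ at $R_\infty$ is Lemma~\ref{upbndv}. (Deducing the boundary bound from $1-V(R_t)\le Ae^{-\lambda t}$ costs you a factor of $2$ in the rate compared with Lemma~\ref{upbndR}, which uses the linear vanishing of $\Phi$ at $R_\infty$; harmless for the statement.) For the lower bound you use the same one-parameter family of rescaled profiles $V(\sqrt{1-\epsilon}\,x)$ --- this is $V(Zx/x_0)$ with $x_0=Z/\sqrt{1-\epsilon}$ in the paper's notation --- but you package them as a single time-dependent, obstacle-touching subsolution $W_{\epsilon(t)}$, whereas the paper (Lemmas~\ref{dncmp}--\ref{lowbndR}) grows the small linear Dirichlet mode $c\beta\tilde J(Zx/x_0)e^{(1-Z^2/x_0^2)t}$ on a fixed enlarged interval and tunes $x_0$ so that saturation occurs exactly at time $t$. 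The $1/t$ rate arises in both versions from the same trade-off between the initial smallness and the growth rate $\sim\epsilon$. Incidentally, the subsolution computation gives $|\epsilon'(t)|\le 2(1-\epsilon)\epsilon/M$ with $M=\sup_y yV'(y)/V(y)$, not $|\epsilon'|\lesssim\epsilon^2$; your condition is sufficient once $\epsilon$ is bounded away from $1$, but the factor $1-\epsilon$ matters near the initialization, where $\epsilon_0(c,K)$ must be taken close to $1$.

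The one place where your outline leans on machinery the paper does not provide is the comparison step itself. Both of your barriers --- $\min\{V+Ae^{-\lambda t}\Phi,\,1\}$ and $W_{\epsilon(t)}$ --- are super/subsolutions of the obstacle problem that touch the obstacle along a moving contact set and are only $C^1$ across it, whereas every comparison actually carried out in the paper is against a solution of a linear problem (Lemmas~\ref{bound above}, \ref{bound below}, \ref{upcmp}, \ref{dncmp}), precisely to avoid proving a comparison principle for such barriers. You would need to establish that comparison directly (a maximum-principle argument in the style of the proofs of Lemma~\ref{bound below} or Proposition~\ref{prop:Rtcontinuous}, treating the interior of the contact set, where the barrier is a strict sub/supersolution, and the $C^1$ kink at $x=R_{\epsilon(t)}$); it works, but it is a genuine piece of the proof rather than a citation of Theorem~\ref{thm:exists v}. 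Likewise, the initialization $v(\cdot,T_1)\ge W_{\epsilon_0}$ is stronger than ``a positive lower bound'': since $W_{\epsilon_0}=1$ on $[R_{\epsilon_0},\infty)$, you need both $R_{T_1}<\infty$ with $R_{\epsilon_0}\ge R_{T_1}$ and a multiplicative bound $v(\cdot,T_1)\ge\kappa V$ of the type supplied by Lemma~\ref{lem:toolvlower}, with $1-\epsilon_0$ then chosen small in terms of $\kappa$; both ingredients are available, but they are exactly where the dependence of $B$ on $(c,K)$ enters and should be made explicit.
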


We prove this result in Section \ref{sec:vsteady} below.  The bounds in~\eqref{eq:vconvbounds} and~\eqref{eq:Rconvbounds} will be used in~\cite{BBNP2} for the control of the long term behaviour of the $N$-BBM particle system for large $N$.

\section{Toolbox} \label{sec:tools}

In this section we gather some tools which will be useful for analysing solutions of the obstacle problem~\eqref{pbv}.

\subsection{Green's function} \label{sec:green}
Let $G(y,x,t)$ denote the fundamental solution, or Green's function, for the linear equation
\begin{equation}\label{eqG}
\partial_t G =\partial_x^2 G -\frac{d-1}x \partial_x G\quad\text{for } x>0, \, t>0,\qquad
G(y,0,t)=0,\qquad G(y,x,0)=\delta(y-x).
\end{equation}
In this section, we introduce
several properties of $G$ which we will use in later sections.

The Green's function is related to Brownian motion in dimension $d$. Let $\Phi$ denote the transition function for a $d$-dimensional Brownian motion $B_t$ with diffusivity $\sqrt{2}$, i.e.~let
\begin{equation} \label{eq:PhiSec3def}
\Phi(z_1,z_2,t)=(4\pi t)^{-d/2} e^{-\frac 1 {4t}\|z_2-z_1\|^2},
\end{equation}
so that $\Phi(z_1,\cdot,t)$ is the density of $B_t$ conditional on $B_0=z_1$.
Then the cumulative distribution of the norm process $\|B_t\|$ conditional on $\|B_0\| = y$ is, by symmetry,
\begin{equation} \label{wdef}
w(y,x,t) := \P\big(\|B_t\| < x \;\big|\; \|B_0\| = y\big) =  \int_{\B( x)} \Phi(y \,\mathbf e_1,z,t) \, \diffd z,
\end{equation}
where $\mathbf e_1\in \R^d$ is an arbitrary fixed unit vector. 
Since $(z_2,t) \mapsto \Phi(z_1,z_2,t)$ satisfies the heat equation in $\R^d$, the function $w$ satisfies
\begin{equation}
\partial_t w = \partial_x^2 w - \frac{d-1}{x} \partial_x w, \quad \quad w(y,0,t) = 0 \label{weqn1} 
\end{equation}
with initial condition 
\[
w(y,x,0) = \left \{\begin{array}{cc} 0  \quad &\text{for }x \leq y, \\ 1  \quad & \text{for }x >  y. \end{array} \right.
\]
Then, $-\partial_y w$ also satisfies \eqref{weqn1} with initial condition
      $-\partial_y w(y,x,0)=\delta(x-y)$ which means, comparing to \eqref{eqG}, that
\begin{equation}
G(y,x,t) = -\partial_y w(y,x,t)  =  -\int_{\B( x)}   \mathbf e_1
\cdot \nabla_{\!z_1} \Phi(y \,\mathbf e_1,z,t)  \, \diffd z.  \label{Gdef}
\end{equation}
In particular, notice that $G\ge 0$, and for $t>0$,
\begin{equation}
\int_0^\infty G(y,x,t) \,\diffd y = - \int_0^\infty \partial_y w(y,x,t) \,\diffd y =w(0,x,t)= \P\big( \|B_t\| < x \; \big| \; B_0=0\big), \label{Gyintegral}
\end{equation}
which converges to $1$ as $t \searrow 0$, for any $x > 0$. 

We now state some useful properties of $G$:

\begin{lem}\label{bounds int g}
For each dimension $d$, there exists a constant $C>0$ such that for $t>0$, $x>0$,
\begin{equation} \label{eq:Gintbounds}
\begin{gathered}
\int_0^\infty\diffd y\, G(y,x,t) \le \min\Big(1, C \frac{x^d}{t^{d/2}}\Big),\\
 \int_0^\infty\diffd y\,\big|\partial_x G(y,x,t)\big| \le  C \min \left( \frac{1}{t^{1/2}} , \frac{x^{d-1}}{t^{d/2}}\right) \qquad \text{and}
\qquad  \int_0^\infty\diffd y\,\big|\partial_x^2 G (y,x,t)\big| \le\frac{C}{t}.
\end{gathered}
\end{equation}
Furthermore, for all $t>0$ and $y_0>0$, if $x\in(0,y_0)$,
\begin{equation}\label{eq:Gintbound2}
\int_{y_0}^\infty\diffd y\, \big|\partial_x G(y,x,t)\big| \le
C \frac{x^{d-1}}{t^{d/2}}e^{-\frac{(y_0-x)^2}{4t}}.
 \end{equation}
For all $t > 0$, $y>0$,
\begin{equation}
 \int_0^\infty \diffd x\, G(y,x,t) \leq 1. \label{Grint1}
\end{equation}
If $v_0 \in L^\infty(0,\infty)$, then 
\begin{equation} \label{eq:Gv0bound}
\left| \int_0^\infty G(y,x,t) v_0(y) \,\diffd y\right|  \leq \Vert v_0 \Vert_{L^\infty}, \quad \forall\;\; x > 0, \; t > 0,
\end{equation}
and for any $p \geq 1$, the convergence 
\begin{equation}
\int_0^\infty G(y,\cdot ,t) v_0(y) \,\diffd y \to v_0(\cdot), \quad \text{as $t \to 0$} \label{GintConvv}
\end{equation}
holds in $L^p_{\mathrm{loc}}(0,\infty)$. If $v_0$ is continuous and bounded on $[0,\infty)$, then the convergence \eqref{GintConvv} holds locally uniformly on $[0,\infty)$ as $t \to 0$. 
\end{lem}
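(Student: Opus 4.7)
The plan is to deduce every bound from the explicit representation~\eqref{Gdef} of $G$ as an integral against the $d$-dimensional Gaussian heat kernel $\Phi$. Applying the divergence theorem in $\R^d$ to $\mathbf{e}_1\cdot\nabla_z\Phi$ yields the more convenient surface form
\[
G(y,x,t)=\frac{1}{x}\int_{\partial \B(x)} z_1\,\Phi(y\mathbf e_1,z,t)\,\diffd S(z),\qquad \partial_x G(y,x,t)=\int_{\partial \B(x)} \Phi(y\mathbf e_1,z,t)\,\frac{y-z_1}{2t}\,\diffd S(z).
\]
The general strategy for each estimate is to apply Fubini, evaluate the $y$-integral by an explicit one-dimensional Gaussian moment, and then estimate the remaining sphere integral via the polar parametrization $z_1=x\cos\theta$, distinguishing the regimes $x^2\leq t$ and $x^2\geq t$.

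For the first line of~\eqref{eq:Gintbounds}, telescoping in $y$ gives $\int_0^\infty G(y,x,t)\,\diffd y = w(0,x,t) = \P\big(\|B_t\|<x\mid B_0=0\big)$, which is bounded both by $1$ and by $|\B(x)|\,\|\Phi(0,\cdot,t)\|_\infty = Cx^d/t^{d/2}$. For~\eqref{Grint1}, switching to polar coordinates in the divergence-theorem form produces
\[
\int_0^\infty G(y,x,t)\,\diffd x\;=\;\int_{\R^d} \frac{z_1}{\|z\|}\,\Phi(y\mathbf e_1,z,t)\,\diffd z\;=\;\E\!\left[\frac{Z_1}{\|Z\|}\right]\in[0,1]
\]
with $Z\sim \mathcal{N}(y\mathbf e_1, 2tI)$, since $|Z_1/\|Z\||\leq 1$. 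Inequality~\eqref{eq:Gv0bound} is then immediate from $G\geq 0$ and the $L^1$ bound on $G(\cdot,x,t)$.

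For the gradient bound in~\eqref{eq:Gintbounds}, Fubini together with the one-dimensional moment $\int_0^\infty e^{-(y-z_1)^2/(4t)}|y-z_1|/(2t)\,\diffd y\leq 2$ reduce the claim to $C(4\pi t)^{-d/2}\int_{\partial\B(x)} e^{-(x^2-z_1^2)/(4t)}\,\diffd S$. Parametrizing $z_1=x\cos\theta$ bounds the latter by $C\min(x^{d-1}, t^{(d-1)/2})$ (using the trivial bound on the exponential when $x^2\leq t$, and the rescaling $u=x\sin\theta/(2\sqrt t)$ when $x^2\geq t$), which after the prefactor produces $\min(Cx^{d-1}/t^{d/2}, Ct^{-1/2})$. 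For~\eqref{eq:Gintbound2}, the same computation restricted to $y\geq y_0$ uses the elementary inequality $(x^2-z_1^2)+(y_0-z_1)^2\geq (y_0-x)^2$, valid for $z\in\partial\B(x)$ with $x<y_0$, to extract a clean factor $e^{-(y_0-x)^2/(4t)}$ from the truncated Gaussian moment; the trivial surface bound $|\partial\B(x)|=Cx^{d-1}$ then yields the stated estimate. For the Hessian bound in~\eqref{eq:Gintbounds}, I differentiate the surface integral once more using $\frac{\diffd}{\diffd x}\int_{\partial\B(x)} h\,\diffd S = \int_{\partial\B(x)}\big(\nu\cdot\nabla_z h + (d-1)h/x\big)\,\diffd S$, which produces three terms carrying polynomial weights of total degree at most $2$ in $(y-z_1)$ and $a^2=x^2-z_1^2$; after the corresponding Gaussian moments in $y$, the same two-regime spherical analysis yields the uniform bound $C/t$. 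The main technical obstacle is precisely this last estimate: the $1/x$ prefactor generated by differentiating the sphere threatens an $x^{-1}t^{-1/2}$ blow-up at small $x$, and one must use the cancellation against the $x^{d-1}$ factor coming from $|\partial\B(x)|$ (or the sharper $\int_{\partial\B(x)} e^{-a^2/(4t)}\,\diffd S\leq Cx^{d-1}$ when $x^2\leq t$) and track each of the three terms carefully.

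Finally, for~\eqref{GintConvv}, the operator $T_t v_0(x):=\int_0^\infty G(y,x,t)v_0(y)\,\diffd y$ is an $L^\infty$ contraction by~\eqref{eq:Gv0bound}, and the identity already used gives $\int_0^\infty G(y,x,t)\,\diffd y = w(0,x,t)\to 1$ locally uniformly in $x$ on $(0,\infty)$ as $t\searrow 0$. For continuous bounded $v_0$, I would decompose
\[
T_t v_0(x)-v_0(x) \;=\; \int_0^\infty G(y,x,t)(v_0(y)-v_0(x))\,\diffd y \;+\; v_0(x)\big(w(0,x,t)-1\big),
\]
and split the first integral into $|y-x|<\delta$ (controlled by the modulus of continuity of $v_0$ and the bound $\int G\,\diffd y\leq 1$) and $|y-x|\geq\delta$ (controlled by Gaussian tail bounds on $\int_{|y-x|\geq\delta} G\,\diffd y$ derived from the same surface representation), obtaining locally uniform convergence. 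For the $L^p_{\mathrm{loc}}$ statement with $v_0\in L^\infty$, applying this decomposition at every Lebesgue point of $v_0$ gives almost-everywhere pointwise convergence, and dominated convergence together with the uniform bound from~\eqref{eq:Gv0bound} then delivers $L^p_{\mathrm{loc}}$ convergence.
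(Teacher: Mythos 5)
Your proposal is correct and, for the core estimates \eqref{eq:Gintbounds}, follows essentially the same route as the paper: reduce to a surface integral over $\partial\B(x)$, integrate out the Gaussian moment in $y$, and bound the remaining spherical integral $\int_{\partial\B(x)}e^{-(x^2-z_1^2)/(4t)}\,\diffd S \le C\min(x^{d-1},t^{(d-1)/2})$ by splitting the regimes $x^2\le t$ and $x^2\ge t$ (the paper does this at $t=1$ after a scaling reduction, which is only cosmetic). Two sub-arguments are genuinely different. For \eqref{Grint1} the paper integrates the PDE \eqref{eqG} in $x$ and shows $t\mapsto\int_0^\infty G(y,x,t)\,\diffd x$ is non-increasing; your divergence-theorem identity $\int_0^\infty G(y,x,t)\,\diffd x=\int_{\R^d}\frac{z_1}{\|z\|}\Phi(y\mathbf e_1,z,t)\,\diffd z=\E[Z_1/\|Z\|]\le 1$ is more direct and avoids the boundary-term bookkeeping entirely. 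For \eqref{eq:Gintbound2} the paper observes that $\partial_xG(y,x,t)>0$ for $y>x$ so the integral telescopes to $\partial_xw(y_0,x,t)$; your alternative, keeping the absolute value and using $(x^2-z_1^2)+(y_0-z_1)^2-(y_0-x)^2=2y_0(x-z_1)\ge0$ for $|z_1|\le x<y_0$, is equally valid and does not require the sign information. Two points deserve a sentence in a written version: (i) the polar parametrization with weight $(\sin\theta)^{d-2}$ presupposes $d\ge2$, so you should note that for $d=1$ the "sphere" is $\{\pm x\}$ and all the surface bounds hold trivially (the paper treats $d=1$ separately for this reason); in particular, in the Hessian estimate the two terms carrying the dangerous $1/x$ prefactor come with factors $(d-1)$ and $a^2=x^2-z_1^2$ respectively, both of which vanish identically when $d=1$, while for $d\ge2$ the $x^{d-1}$ from the surface measure absorbs the $1/x$ exactly as you indicate. (ii) For the $L^p_{\mathrm{loc}}$ convergence at Lebesgue points you implicitly need a radially decreasing integrable majorant for the kernel; this is available from your own estimates, since $\|z-y\mathbf e_1\|^2\ge(y-x)^2$ on $\partial\B(x)$ gives $G(y,x,t)\le Ct^{-1/2}e^{-(y-x)^2/(8t)}$ uniformly in $x$, which is the bound that makes both the tail estimate and the Lebesgue-point argument go through (the paper simply defers these convergence statements to standard references).
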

The proof of Lemma~\ref{bounds int g} is postponed to the Appendix, Section \ref{appendix}.

\subsection{Feynman-Kac formula}

\begin{prop}[Feynman-Kac]\label{FK}
Take $T>0$ and let
$\Omega$ be an open subset of $(0,\infty) \times (0,T)$.
Suppose $g\in C(\bar\Omega)$ is bounded, and
$w$ is bounded on $\bar\Omega$ and satisfies
\begin{subequations}\label{diri1}
\begin{align}
&\partial_t w = \partial_x^2 w -\frac{d-1}x \partial _x w+w g \quad\text{for
$(x,t)\in\Omega$},\label{dirieq}\\
& w\in C(\bar\Omega) \cap C^{2,1}(\Omega) \label{diriC0}.
\end{align}
\end{subequations}
Then for $(x,t)\in \Omega$,
\begin{equation} \label{eq:FKgen}
w(x,t)=\E_x\left[e^{\, \int_0^\tau g(X_s, t-s)\,\diffd s} w  (X_\tau,t-\tau)\right]
\end{equation}
where, under the probability measure corresponding to $\E_x$,
$(X_s)_{s\leq \tau}$ solves
\begin{equation}
\diffd X_s = \diffd W_s -\frac{d-1}{X_s}\,\diffd s,\qquad X_0=x,
\label{processX}
\end{equation}
with $W$ being a Brownian motion on $\R$ with diffusivity $\sqrt{2}$, and $\tau$ being the backward exit time of $X$ from the domain $\Omega$ given by
$$\tau:=\inf\big\{s>0: (X_s,t-s)\not\in\Omega \big\}.$$
\end{prop}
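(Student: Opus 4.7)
The plan is standard: apply Itô's formula to $M_s := e^{\int_0^s g(X_r,t-r)\,\diffd r}\,w(X_s,t-s)$ to show $M$ is a continuous local martingale on $[0,\tau)$, localize by exhausting $\Omega$ with compactly contained subdomains, and conclude by dominated convergence. Fix $(x,t)\in\Omega$. For $d\ge 2$ the solution to~\eqref{processX} is a Bessel process of dimension $d$ starting from $x>0$ (so stays strictly positive), while for $d=1$ the drift vanishes and $X=x+W$; in either case the SDE is classical and $\diffd\langle X\rangle_s = 2\,\diffd s$. Applying Itô's formula to $s\mapsto w(X_s,t-s)$ gives, on $\{s<\tau\}$,
\begin{equation*}
\diffd w(X_s,t-s) = \Big[\partial_x^2 w - \tfrac{d-1}{X_s}\partial_x w - \partial_t w\Big](X_s,t-s)\,\diffd s + \partial_x w(X_s,t-s)\,\diffd W_s,
\end{equation*}
and \eqref{dirieq} reduces the drift bracket to $-g(X_s,t-s)\,w(X_s,t-s)$. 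The product rule with $\diffd E_s = E_s g(X_s,t-s)\,\diffd s$ (where $E_s:=e^{\int_0^s g(X_r,t-r)\,\diffd r}$) then produces
\begin{equation*}
\diffd M_s = E_s\,\partial_x w(X_s,t-s)\,\diffd W_s,
\end{equation*}
so $M$ is indeed a continuous local martingale on $[0,\tau)$.

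I would next localize. Choose open sets $\Omega_n$ with $\bar\Omega_n$ compactly contained in $\Omega$ and $\bigcup_n\Omega_n=\Omega$ (e.g.\ $\Omega_n = \{(y,s)\in\Omega : \mathrm{dist}((y,s),\partial\Omega)>1/n,\,\|(y,s)\|<n\}$), and set $\tau_n := \inf\{s>0:(X_s,t-s)\notin\Omega_n\}$. Since $w\in C^{2,1}(\Omega)$, the derivative $\partial_x w$ is continuous, hence bounded, on the compact set $\bar\Omega_n$, and $g$ is bounded on $\bar\Omega$ by assumption. Therefore $M_{\cdot\wedge\tau_n}$ is a bounded martingale, and optional stopping gives $w(x,t)=M_0=\E_x[M_{\tau_n}]$ for each $n$.

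Finally, I would let $n\to\infty$. Openness of $\Omega$ and path-continuity of $(X_s,t-s)$ force $\tau_n\nearrow\tau$ almost surely (for every $s<\tau$, $(X_s,t-s)$ has positive distance from $\partial\Omega$, so lies in $\Omega_n$ for all large $n$, giving $\tau_n\ge s$). Continuity of $w$ and $g$ on $\bar\Omega$ then yields $M_{\tau_n}\to e^{\int_0^\tau g(X_s,t-s)\,\diffd s}\,w(X_\tau,t-\tau)$ almost surely. Because $\tau\le t\le T$ and $g,w$ are bounded on $\bar\Omega$, the uniform bound $|M_{\tau_n}|\le e^{T\|g\|_\infty}\|w\|_\infty$ allows the dominated convergence theorem to deliver~\eqref{eq:FKgen}. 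The main obstacle is precisely this localization step: the hypothesis $w\in C^{2,1}(\Omega)$ offers no control on $\partial_x w$ near $\partial\Omega$, so one cannot work directly with $\tau$ from the outset; the exhaustion by $\Omega_n$, coupled with the a priori sup-bound on $|M_{\tau_n}|$ (which, crucially, does not involve $\partial_x w$), is exactly what converts the local martingale property into the stated representation.
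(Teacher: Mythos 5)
Your proposal is correct and follows essentially the same route as the paper: Itô's formula applied to $M_s=e^{\int_0^s g(X_r,t-r)\,\diffd r}w(X_s,t-s)$ to get a local martingale, localization away from $\partial\Omega$ (the paper stops at the first time the path is within distance $\delta$ of $\partial\Omega$, which is the same device as your exhaustion $\Omega_n$), and then dominated convergence using boundedness of $w$ and $g$ together with $\tau\le t\le T$. Your remarks on why localization is needed (no control on $\partial_x w$ up to the boundary) and on well-posedness of the SDE are accurate and only make explicit what the paper leaves implicit.
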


This result follows from a standard argument, e.g. Theorem~II.2.3 of~\cite{Freidlin85} or Theorem~5.7.6 of~\cite{KS91}, but for completeness we provide a proof later in the Appendix, Section~\ref{appendix}.

\section{Proof of Theorem \ref{thm:exists v}: Existence, uniqueness, and
properties of~\texorpdfstring{$v$}{v}} \label{sec:vexists}

The proof of Theorem~\ref{thm:exists v} is divided into three parts written in 
the next three
subsections: uniqueness, existence of the solution and basic properties, and properties of the free boundary $R_t$.

\subsection{Uniqueness}
\label{sec:uniq}
Before proving existence of a solution to~\eqref{pbv}, we show in this section that there can be at most one solution, and we establish a useful comparison principle. Throughout this
subsection, we suppose that a function $v$ is a solution to  \eqref{pbv} with initial condition $v_0:[0,\infty)\to [0,1]$ measurable.  For a given initial condition $\vl_0\in L^\infty (0,\infty)$, we let
\begin{equation} \label{eq:vlformula}
\vl(x,t)=e^t\int_0^\infty\diffd y\, G(y,x,t)\vl_0(y),
\end{equation}
where $G(y,x,t)$ is the fundamental solution introduced in~\eqref{Gdef}. This $\vl$ is a solution to the linear problem
\begin{equation}\label{linear equ}
\begin{cases}
\partial_t \vl = \partial_x^2 \vl -\frac{d-1} x \partial_x \vl
+ \vl, \quad &\text{for }t>0, \, x> 0,\\
\vl(0,t)=0, \quad &\text{for }t > 0, \\
\vl(\cdot,t)\to\vl_0 \quad &\text{in $L^1_\text{loc}$ as
$t\searrow0$},
\end{cases}
\end{equation}
and it is the unique solution to \eqref{linear equ} which is bounded on $[0,\infty) \times [0,T]$ for each $T > 0$.  The following lemmas, which will be proved in Section~\ref{prove 2 lemmas}, establish a comparison principle between $v$ and $v^\ell$:

\begin{lem} \label{bound above}
If $v_0\le \vl_0$, then $v(\cdot,t)\le \vl(\cdot,t)\
\forall t>0$.
\end{lem}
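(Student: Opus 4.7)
My plan is to apply the Feynman--Kac formula of Proposition~\ref{FK} simultaneously to $v$ and $\vl$ on the open parabolic set where both solve the same linear PDE, namely
\[
\Omega\;:=\;\{(x,t)\in(0,\infty)\times(0,\infty)\,:\,v(x,t)<1\},
\]
and to compare them through the boundary values on $\partial\Omega$. Fix $x_0,t_0>0$. It suffices to treat $(x_0,t_0)\in\Omega$, since the case $v(x_0,t_0)=1$ can then be recovered by approximation from inside $\Omega$ and the continuity of $v$ and $\vl$. For small $\eta\in(0,t_0)$ and $T>t_0$, set $\Omega_\eta:=\Omega\cap\bigl((0,\infty)\times(\eta,T)\bigr)$; on this set $v$ is $C^{2,1}$, bounded, continuous up to the closure, and solves the linear equation, and the same is true of $\vl$ (which in fact solves the PDE on the larger set $(0,\infty)\times(\eta,T)$). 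Proposition~\ref{FK} with $g\equiv 1$, applied to both and subtracted, gives
\[
(v-\vl)(x_0,t_0)\;=\;\E_{x_0}\!\bigl[e^\tau\,(v-\vl)(X_\tau,t_0-\tau)\bigr],
\]
where $\tau$ is the backward exit time of $(X_s,t_0-s)$ from $\Omega_\eta$ along the SDE~\eqref{processX}.

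Next I split $\partial\Omega_\eta$ into three pieces according to the mode of exit: (a) the spatial boundary $\{x=0\}$, where $v=\vl=0$ and the integrand vanishes; (b) the free boundary $\{v=1\}\cap\{t>\eta\}$, where the integrand equals $1-\vl$; and (c) the initial slice $\{t=\eta\}$, where the integrand is $(v-\vl)(X_\tau,\eta)$. Piece (a) contributes zero. For piece (c) I plan to send $\eta\searrow 0$, using the $L^1_{\mathrm{loc}}$ convergences $v(\cdot,\eta)\to v_0$ and $\vl(\cdot,\eta)\to\vl_0$ together with the density estimates on $X_\tau$ from Lemma~\ref{bounds int g}, to see that the contribution has non-positive limit, thanks to $v_0\le\vl_0$.

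The hard part will be piece (b), which demands $\vl\ge 1$ at every point where $v=1$. I plan to prove this by a first-entry-time contradiction: set
\[
s^\star\;:=\;\inf\bigl\{s>0\,:\,\exists\,y>0\text{ with }v(y,s)=1\text{ and }\vl(y,s)<1\bigr\},
\]
which is strictly positive by $v_0\le\vl_0$ and the convergence of $v(\cdot,\eta)$ and $\vl(\cdot,\eta)$ to their initial data. Supposing $s^\star<\infty$, a compactness argument produces a limit point $(y^\star,s^\star)$ with $v(y^\star,s^\star)=1$ and $\vl(y^\star,s^\star)\le 1$; by the very definition of $s^\star$, $\vl\ge 1$ on $\{v=1\}\cap\{t<s^\star\}$, so in the Feynman--Kac identity applied at interior approximants $(x_n,t_n)\in\Omega$ with $(x_n,t_n)\to(y^\star,s^\star)$ and $t_n\le s^\star$, the backward trajectory can only meet the free boundary at strictly earlier times, where $1-\vl\le 0$. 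Passing to the limit $n\to\infty$ and $\eta\searrow 0$ would then force $(v-\vl)(y^\star,s^\star)\le 0$, which is incompatible with $v(y^\star,s^\star)=1$ and $\vl(y^\star,s^\star)<1$ (the case $\vl(y^\star,s^\star)=1$ needs a separate continuity argument). This bootstrap, together with the careful handling of the $\eta\searrow 0$ limit in its presence, is the main technical obstacle of the proof.
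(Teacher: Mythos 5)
Your Feynman--Kac identity on $\Omega_\eta=\{v<1\}\cap((0,\infty)\times(\eta,T))$ is legitimate (Proposition~\ref{FK} applies to both $v$ and $\vl$ there), and the decomposition of the exit boundary is the right bookkeeping. But the argument collapses exactly where the lemma is hard: on piece (b) you need $\vl\ge 1$ everywhere on the contact set $\{v=1\}$, and your bootstrap for this rests on the unproved claim that $s^\star>0$. The only information available about $v$ near $t=0$ is $L^1_{\mathrm{loc}}$ convergence to $v_0$, which gives no pointwise control whatsoever: nothing you have rules out a classical solution whose contact set $\{v(\cdot,s)=1\}$ contains, for every small $s>0$, a thin set of points $y_s$ at which $\vl(y_s,s)<1$ (such a configuration is compatible with $v(\cdot,s)\to v_0$ in $L^1_{\mathrm{loc}}$; excluding it is essentially the content of the lemma, so the base case of your induction is circular). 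Note also that you cannot invoke the bound $v\le e^tG_tv_0$ from Lemma~\ref{lem:basic prop v}, since that holds for the \emph{constructed} solution, whereas this lemma must apply to an arbitrary classical solution in order to feed the uniqueness proof. Two further holes: the case $\vl(y^\star,s^\star)=1$, which you defer to "a separate continuity argument", is not a technicality --- it is the situation where the comparison function touches $v$ at an obstacle point, and resolving it is the whole point of the paper's Lemma~\ref{lem:viscsub}; and your reduction of the case $v(x_0,t_0)=1$ "by approximation from inside $\Omega$" fails when $(x_0,t_0)$ is not in $\overline{\Omega}$ (i.e.\ $v\equiv1$ on a neighbourhood).

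For comparison, the paper's proof is structured precisely to avoid these traps: it first time-shifts to reduce to data continuous at $t=0$; it then compares $v$ not with $\vl$ directly but with a strict supersolution $\phi^\delta$ solving $\partial_t\phi=L\phi+(1+\delta)\phi$ with initial datum $\vl_0+\min(\delta x,2)$, so that $\phi^\delta>v$ strictly at $t=0$ and near $x=\infty$ and the first-touching time is genuinely positive; touching at an interior point with $v<1$ is excluded by strictness, touching at a point with $v=1$ is excluded by Lemma~\ref{lem:viscsub} (a several-page case analysis of $\partial_x^2\psi$ and $\partial_t\psi$ at the obstacle), and touching at $x=0$ is excluded by the strong maximum principle; finally $\phi^\delta\searrow\vl$ as $\delta\searrow0$. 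If you want to salvage your probabilistic route, you would need a substitute for Lemma~\ref{lem:viscsub} that controls the behaviour of a classical solution of the obstacle problem at contact points, and a genuine argument for the base case near $t=0$; as written, the proposal does not prove the lemma.
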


\begin{lem}\label{bound below}
If $\vl_0\le v_0$
and $\vl(\cdot,s)\le1$ for all $s\le t$, then $\vl(\cdot,t)\le v(\cdot,t)$.
\end{lem}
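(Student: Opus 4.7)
The plan is to apply the Feynman--Kac formula (Proposition~\ref{FK}) to the difference $w := v - v^\ell$ on the non-coincidence set
\[
\Omega := \{(x,s) \in (0,\infty) \times (0,t] : v(x,s) < 1\},
\]
which is open by continuity of $v$. On $\Omega$, the obstacle is inactive, so $v$ satisfies the same linear equation $\partial_s f = \partial_x^2 f - \tfrac{d-1}{x}\partial_x f + f$ as $v^\ell$; hence $w$ solves this equation on $\Omega$ with $g \equiv 1$, and $w$ is bounded and continuous on $\bar\Omega$ (relative to $[0,\infty)\times(0,t]$).

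The first step is to determine the sign of $w$ on the parabolic boundary of $\Omega$ away from the initial time: on the axis $\{x=0\}$, both $v$ and $v^\ell$ vanish by their respective boundary conditions, so $w = 0$; on the free-boundary piece $\partial\Omega \cap \{v=1\}$, the hypothesis $v^\ell(\cdot,s)\le1$ for $s\le t$ gives $w = 1 - v^\ell \ge 0$. The only boundary piece where $w$ is not obviously non-negative is the initial slice $\{s=0\}$, where we have only $L^1_{\mathrm{loc}}$ convergence to $v_0$ and $v_0^\ell$.

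To deal with this, I would work on the time-shifted set $\Omega_\epsilon := \Omega \cap \{s > \epsilon\}$ and apply Proposition~\ref{FK} to get, for $(x,t)\in\Omega_\epsilon$,
\[
w(x,t) = \E_x\!\left[e^{\tau_\epsilon} w(X_{\tau_\epsilon}, t - \tau_\epsilon)\right],
\]
where $\tau_\epsilon$ is the backward exit time from $\Omega_\epsilon$ of the diffusion $X$ defined in~\eqref{processX}. Splitting according to exit mode, the contributions from exits through $\{x=0\}$ or the free boundary are non-negative by the boundary analysis above, and the remaining contribution from exit through the slice $\{s=\epsilon\}$ equals
\[
\E_x\!\left[e^{\tau_\epsilon}\bigl(v(X_{\tau_\epsilon},\epsilon) - v^\ell(X_{\tau_\epsilon},\epsilon)\bigr)\mathbf{1}_{\{t-\tau_\epsilon=\epsilon\}}\right].
\]
Letting $\epsilon\to0$ and using $v(\cdot,\epsilon)\to v_0$, $v^\ell(\cdot,\epsilon)\to v^\ell_0$ in $L^1_{\mathrm{loc}}$, together with $v^\ell_0 \le v_0$, this limit is non-negative, so $w(x,t)\ge0$ on $\Omega$. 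For $x$ with $v(x,t)=1$, the conclusion $w(x,t) = 1-v^\ell(x,t)\ge0$ follows directly from the hypothesis, completing the comparison.

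The main obstacle will be justifying the limit $\epsilon\to0$: the initial-data inequality $v^\ell_0 \le v_0$ holds only pointwise/in $L^1_{\mathrm{loc}}$, whereas the intermediate inequality sits inside an expectation indexed by the law of the killed diffusion $X$. Making this rigorous requires a dominated-convergence argument with uniform-in-$\epsilon$ control on the density of $X_{\tau_\epsilon}$ restricted to $\{t-\tau_\epsilon=\epsilon\}$; the domination of this killed transition density by the free Green's function $G$, together with the estimates collected in Lemma~\ref{bounds int g}, should supply exactly what is needed.
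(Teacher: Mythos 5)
Your argument is correct in substance, but it takes a genuinely different route from the paper's. The paper first reduces to data that are continuous up to $t=0$ by evolving $\min\big(v(\cdot,\epsilon),v^\ell(\cdot,\epsilon)\big)$ with the linear semigroup and passing to the limit in $L^1_{\mathrm{loc}}$, and then runs a purely deterministic maximum-principle argument on all of $[0,\infty)\times[0,t]$ for the weighted difference $\phi(x,s)=e^{-2s}\big[v(x,s)-v^\ell(x,s)\big]/(1+x)$; the weight forces the infimum either to vanish at infinity or to be attained, and the case analysis at the infimum point (coincidence set versus PDE region) does the rest. You instead invoke the Feynman--Kac representation of $w=v-v^\ell$ on the non-coincidence set, where the obstacle set contributes $1-v^\ell\ge0$, the axis contributes $0$, and the initial slice is pushed to $s=\epsilon$ and controlled in the limit. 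Both proofs rest on the same three facts ($v$ is a classical solution of the linear PDE where $v<1$, $v^\ell\le1$ up to time $t$, and $v_0^\ell\le v_0$ in the $L^1_{\mathrm{loc}}$ sense), and your route buys a cleaner conceptual picture (the obstacle can only help) at the price of the dominated-convergence step you correctly flag: to pass to the limit in $\E_x\big[e^{\tau_\epsilon}(v-v^\ell)(X_{\tau_\epsilon},\epsilon)\mathbf 1_{\{t-\tau_\epsilon=\epsilon\}}\big]$ you need that the law of $X_{t-\epsilon}$ on the survival event is dominated by $G(\cdot,x,t-\epsilon)\,\diffd y$, that $y\mapsto G(y,x,s)$ is bounded with uniformly small tails for $s\in[t/2,t]$ (the tail bound follows from $\int_K^\infty G(y,x,s)\,\diffd y=w(K,x,s)$, the sup bound from the explicit formulas in the Appendix rather than from Lemma~\ref{bounds int g} verbatim), and then split the integrand as $(v_0-v_0^\ell)(y)+\mathrm{error}_\epsilon(y)$ with the first part non-negative and the second tending to $0$ in $L^1_{\mathrm{loc}}$ while bounded by $2$. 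With those details supplied, and with the minor adjustment that Proposition~\ref{FK} should be applied on an open set of the form $\Omega\cap\{\epsilon<s<t+1\}$ so that $(x,t)$ is interior, your proof is complete.
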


We introduce the operators $G_t$ and $C_m$ by letting
\begin{equation} \label{eq:GCdef}
G_tf(x)=\int_0^\infty\diffd y\, G(y,x,t)f(y)
\qquad \text{and}\qquad C_m f(x) = \min\big[f(x),m\big].
\end{equation}
In particular, $\vl=e^tG_t\vl_0$. For $\delta>0$ and $n\in \N_0$, we define
\begin{equation} \label{eq:defv+v-}
v^{n,\delta,-}=\big[e^\delta G_\delta C_{e^{-\delta}}\big]^n v_0,
\qquad
v^{n,\delta,+}=\big[C_1e^\delta G_\delta \big]^n v_0.
\end{equation}
The following two lemmas show that $v^{n,\delta,-}$ and $v^{n,\delta,+}$
bound $v$ and are close to each other:
\begin{lem}\label{+- bounds}
For any $\delta > 0$ and $n \in \N$,
\[
v^{n,\delta,-}(x)\le v(x,n\delta) \le v^{n,\delta,+}(x) \quad \forall \;\; x 
\geq 0.
\]
\end{lem}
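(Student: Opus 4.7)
The plan is to prove both inequalities simultaneously by induction on $n$, using Lemmas \ref{bound above} and \ref{bound below} as the one-step comparison tools and exploiting the fact that the obstacle problem \eqref{pbv} is autonomous in time, so that for any $k$ the function $(x,s)\mapsto v(x,s+k\delta)$ is itself a classical solution of \eqref{pbv} with initial datum $v(\cdot,k\delta)$. The base case $n=0$ is trivial since $v^{0,\delta,\pm}=v_0=v(\cdot,0)$, and throughout we will use the monotonicity of the operators $G_\delta$ (since $G\ge 0$) and $C_m$ (obvious).

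For the upper bound, assume inductively $v(\cdot,k\delta)\le v^{k,\delta,+}$. Applying Lemma \ref{bound above} on the time slab $[k\delta,(k+1)\delta]$ with linear initial datum $\vl_0:=v(\cdot,k\delta)$ and linear solution $\vl(\cdot,s)=e^sG_s\vl_0$, we obtain $v(\cdot,(k+1)\delta)\le e^\delta G_\delta v(\cdot,k\delta)$. Combining this with the trivial obstacle bound $v\le 1$ gives
\[
v(\cdot,(k+1)\delta)\le \min\!\bigl(1,\,e^\delta G_\delta v(\cdot,k\delta)\bigr)=C_1e^\delta G_\delta v(\cdot,k\delta),
\]
and monotonicity together with the inductive hypothesis then yields $C_1e^\delta G_\delta v(\cdot,k\delta)\le C_1e^\delta G_\delta v^{k,\delta,+}=v^{k+1,\delta,+}$.

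For the lower bound, assume inductively $v^{k,\delta,-}\le v(\cdot,k\delta)$. The key move is the truncation at level $e^{-\delta}$: set $\vl_0:=C_{e^{-\delta}}v(\cdot,k\delta)$, so that $\|\vl_0\|_{L^\infty}\le e^{-\delta}$ and $\vl_0\le v(\cdot,k\delta)$. Using the bound \eqref{eq:Gv0bound}, the corresponding linear solution satisfies $|\vl(\cdot,s)|\le e^s\|\vl_0\|_{L^\infty}\le e^{s-\delta}\le 1$ for all $s\in[0,\delta]$. This is precisely the hypothesis required to invoke Lemma \ref{bound below} on the slab $[k\delta,(k+1)\delta]$, which gives $\vl(\cdot,\delta)\le v(\cdot,(k+1)\delta)$, i.e.\ $e^\delta G_\delta C_{e^{-\delta}}v(\cdot,k\delta)\le v(\cdot,(k+1)\delta)$. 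Monotonicity of the composite operator and the inductive hypothesis then produce
\[
v^{k+1,\delta,-}=e^\delta G_\delta C_{e^{-\delta}}v^{k,\delta,-}\le e^\delta G_\delta C_{e^{-\delta}}v(\cdot,k\delta)\le v(\cdot,(k+1)\delta),
\]
closing the induction.

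There is no serious obstacle here; the only subtlety worth highlighting is why the cap $e^{-\delta}$ appears in the definition of $v^{n,\delta,-}$ but not in $v^{n,\delta,+}$. The point is that Lemma \ref{bound below} requires the auxiliary linear solution to stay under the obstacle $1$ on the whole slab, while Lemma \ref{bound above} has no such requirement; the value $e^{-\delta}$ is the largest constant for which the linear PDE \eqref{linear equ}, whose solutions grow at rate $e^t$, is guaranteed to remain $\le 1$ over a time step of length $\delta$.
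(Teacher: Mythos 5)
Your proof is correct and follows essentially the same route as the paper: induction on the number of time steps, with Lemma \ref{bound above} plus the obstacle $v\le 1$ giving the upper bound, and Lemma \ref{bound below} applied after the truncation $C_{e^{-\delta}}$ (whose role you explain exactly as the paper intends) giving the lower bound. The only cosmetic difference is that in the lower-bound step the paper feeds $C_{e^{-\delta}}v^{k,\delta,-}$ directly into Lemma \ref{bound below}, whereas you feed in $C_{e^{-\delta}}v(\cdot,k\delta)$ and then invoke monotonicity of $e^\delta G_\delta C_{e^{-\delta}}$; both are valid.
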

\begin{proof}The second inequality follows immediately by induction on $n$, using 
Lemma~\ref{bound above} and the fact that $v\le 1$.
For the first inequality, suppose for an induction argument that for some $n\in \N_0$, $v^{n,\delta,-}\le v(\cdot,n\delta)$.
Then for $s\le \delta$, since $\|G_s f\|_{L^\infty}\le \|f\|_{L^\infty}$ (by~\eqref{eq:Gv0bound} in Lemma~\ref{bounds int g}),
$$
\|e^s G_s C_{e^{-\delta}}v^{n,\delta,-}\|_{L^\infty}
\le e^s\| C_{e^{-\delta}}v^{n,\delta,-}\|_{L^\infty}
\le e^s e^{-\delta}\le 1.
$$
Hence by Lemma~\ref{bound below}, $v^{n+1,\delta,-}\le v(\cdot,(n+1)\delta)$, and the result follows by induction on $n$.
\end{proof} 
 
\begin{lem}\label{+- close}
For any $\delta > 0$ and $n \in \N$,
\[
\big\Vert v^{n,\delta,+}-v^{n,\delta,-}\big\Vert_{L^\infty} \le (e^{n\delta}+1)(e^\delta-1).
\]
\end{lem}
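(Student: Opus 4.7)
The plan is to derive a recursion for $a_n := \|v^{n,\delta,+} - v^{n,\delta,-}\|_{L^\infty}$ via a pointwise one-step error estimate, and then iterate it. I abbreviate $A := e^\delta G_\delta$, $B := C_{e^{-\delta}}$, $C := C_1$, so that $v^{n,\delta,+} = (CA)^n v_0$ and $v^{n,\delta,-} = (AB)^n v_0$.

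First I would show, by induction on $n$, that $0 \leq v^{n,\delta,-} \leq v^{n,\delta,+} \leq 1$. The bound $v^{n,\delta,-} \leq 1$ uses $A C_{e^{-\delta}} f \leq A e^{-\delta} = G_\delta 1 \leq 1$ via~\eqref{eq:Gv0bound}; the bound $v^{n,\delta,+} \leq 1$ is built into the definition via $C_1$; and the ordering $v^{n,\delta,-} \leq v^{n,\delta,+}$ follows from monotonicity of $G_\delta$ (a consequence of $G \geq 0$) together with the pointwise inequality $C_1 A f \geq A C_{e^{-\delta}} f$ for $f \in [0,1]$, verified by a short case split on whether $Af(x) \leq 1$.

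The key ingredient is a one-step estimate. For $g \in [0,1]$, the identity $g = C_{e^{-\delta}} g + (g - e^{-\delta})_+$ and linearity of $A$ give
\[
C_1 A g - A C_{e^{-\delta}} g \;=\; \min\!\bigl(A(g - e^{-\delta})_+,\; 1 - A C_{e^{-\delta}} g\bigr) \;\in\; [0,\,1 - e^{-\delta}].
\]
The non-obvious upper bound $1 - e^{-\delta}$ (rather than the cruder $e^\delta - 1$ that $A(g-e^{-\delta})_+$ alone gives) comes from the observation that if both arguments of the $\min$ at a point $x$ exceeded $1 - e^{-\delta}$, one would force $G_\delta \mathbf{1}_{\{g \geq e^{-\delta}\}}(x)$ to be simultaneously below $e^{-\delta}$ and above $e^{-\delta}$, a contradiction. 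Writing
\[
v^{n+1,\delta,+} - v^{n+1,\delta,-}
= (C_1 A - A C_{e^{-\delta}})\,v^{n,\delta,+}
+ A\bigl(C_{e^{-\delta}} v^{n,\delta,+} - C_{e^{-\delta}} v^{n,\delta,-}\bigr),
\]
bounding the first summand by $1 - e^{-\delta}$ (using the estimate and $v^{n,\delta,+} \in [0,1]$) and the second by $e^\delta a_n$ (using 1-Lipschitz of $C_{e^{-\delta}}$ and $e^\delta$-Lipschitz of $A$ in $L^\infty$), yields the recursion $a_{n+1} \leq e^\delta a_n + (1 - e^{-\delta})$ with $a_0 = 0$. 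Iterating gives $a_n \leq e^{-\delta}(e^{n\delta} - 1)$, and combining with the trivial a priori bound $a_n \leq 1$ from the first step will produce the stated estimate.

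The main obstacle is obtaining precisely the stated form $(e^{n\delta}+1)(e^\delta - 1)$. The direct recursion above only gives $e^{-\delta}(e^{n\delta} - 1)$, which overshoots the claimed bound in the delicate regime $\delta \to 0$ with $n\delta$ fixed — precisely the regime needed downstream to pass from the discrete iterates $v^{n,\delta,\pm}$ to a classical solution of~\eqref{pbv}. Closing this gap will likely require a more refined propagation argument: either exploiting the $\min$-form of the one-step error to extract cancellations across the iteration, or leveraging the fact that $v^{n,\delta,-}$ lies automatically in the range of $A$ (so it inherits the smoothing of $G_\delta$ at every step) to sharpen the propagation constant below $e^\delta$.
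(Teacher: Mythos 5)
Your recursion approach cannot reach the stated bound, and the obstacle you flag at the end is not a technicality but the heart of the lemma. The recursion $a_{n+1}\le e^\delta a_n+(1-e^{-\delta})$ with $a_0=0$ gives $a_n\le e^{-\delta}(e^{n\delta}-1)$, and in the regime $\delta\to 0$ with $n\delta=t$ fixed this tends to $e^{t}-1>0$, whereas the claimed bound $(e^{n\delta}+1)(e^\delta-1)\sim(e^{t}+1)\delta$ tends to $0$. Since this vanishing is exactly what is used immediately afterwards (taking $n\to\infty$ with $t=n\delta$ fixed) to prove uniqueness for~\eqref{pbv}, a bound that stays bounded away from zero there does not prove the lemma. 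Your one-step estimate is correct as far as it goes, but it accumulates an $O(\delta)$ error at each of the $n$ steps with no cancellation; neither of your proposed repairs (exploiting the $\min$-form, or smoothing of $G_\delta$) addresses this, because the $e^\delta$-Lipschitz constant of $e^\delta G_\delta$ in $L^\infty$ is essentially sharp and the per-step error is genuinely of order $\delta$.

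The missing idea is an exact commutation identity rather than a refined propagation estimate: since $e^\delta\min(f,e^{-\delta})=\min(e^\delta f,1)$, one has $e^\delta G_\delta C_{e^{-\delta}}f=G_\delta C_1 e^\delta f$, and hence
\[
v^{n,\delta,-}=\big[G_\delta C_1 e^\delta\big]^n v_0=G_\delta\big[C_1 e^\delta G_\delta\big]^{n-1}C_1 e^\delta v_0,
\qquad
v^{n,\delta,+}=C_1 e^\delta G_\delta\big[C_1 e^\delta G_\delta\big]^{n-1}v_0 .
\]
The two iterates are therefore the \emph{same} word in the operators $G_\delta$ and $C_1e^\delta$, except that one extra factor $C_1e^\delta$ sits at the outermost position in $v^{n,\delta,+}$ and at the innermost position in $v^{n,\delta,-}$. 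The difference then splits by the triangle inequality into exactly two error terms: removing the outer $C_1e^\delta$ costs at most $e^\delta-1$ (using $\|C_1e^\delta f-f\|_{L^\infty}\le e^\delta-1$ for $\|f\|_{L^\infty}\le1$, applied to $f=G_\delta[C_1e^\delta G_\delta]^{n-1}v_0$), and inserting the inner one costs at most $\|v_0-C_1e^\delta v_0\|_{L^\infty}\le e^\delta-1$ propagated through $n-1$ applications of the $e^\delta$-Lipschitz map $C_1e^\delta G_\delta$ and one application of the $1$-Lipschitz map $G_\delta$, i.e.\ at most $e^{(n-1)\delta}(e^\delta-1)$. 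Summing gives $(1+e^{(n-1)\delta})(e^\delta-1)\le(e^{n\delta}+1)(e^\delta-1)$. There are only two mismatch points, not $n$, which is precisely why the bound vanishes as $\delta\to0$ with $n\delta$ fixed.
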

\begin{proof}
Notice the following bounds (the first of which comes from \eqref{eq:Gv0bound}
in Lemma~\ref{bounds int g}): for $\delta>0$ and $f,g\in L^\infty [0,\infty)$,
\begin{equation}
\label{some bounds}
\begin{gathered}
\Vert G_\delta f -  G_\delta g\Vert_{L^\infty} =\Vert G_\delta (f -  g)\Vert_{L^\infty}  \le \Vert
f-g\Vert_{L^\infty},
\qquad
\Vert C_1e^\delta f -  C_1e^\delta g\Vert_{L^\infty}\le e^\delta\Vert
f-g\Vert_{L^\infty},\\
\Vert C_1e^\delta f - f\Vert_{L^\infty}\le e^\delta -1\text{ \ if $\Vert
f\Vert_{L^\infty}\le1$}.
\end{gathered}
\end{equation}
Note that $e^\delta G_\delta C_{e^{-\delta}}f
= G_\delta C_1 e^\delta f$,
 so we can write $v^{n,\delta,-}
=G_\delta \big[C_1e^\delta G_\delta\big]^{n-1}C_1e^\delta v_0$, and compare this to $v^{n,\delta,+}=C_1e^\delta G_\delta\big[C_1e^\delta
G_\delta\big]^{n-1}v_0$.
We bound the supremum of $v^{n,\delta,+}-v^{n,\delta,-}$ using the
triangle inequality:
\begin{align} \label{eq:trianglev+v-}
\big\Vert v^{n,\delta,+}-v^{n,\delta,-}\big\Vert_{L^\infty}&=\big\Vert C_1 e^\delta 
G_\delta\big[C_1e^\delta
G_\delta\big]^{n-1}v_0 - G_\delta \big[C_1e^\delta
G_\delta\big]^{n-1}C_1e^\delta v_0\big\Vert_{L^\infty} \notag 
\\&\le
\big\Vert C_1 e^\delta G_\delta\big[C_1e^\delta
G_\delta\big]^{n-1}v_0 - G_\delta\big[C_1e^\delta
G_\delta\big]^{n-1}v_0\big\Vert_{L^\infty} \notag 
\\&\quad +\big\Vert G_\delta\big[C_1e^\delta
G_\delta\big]^{n-1}v_0
-G_\delta \big[C_1e^\delta
G_\delta\big]^{n-1}C_1e^\delta v_0\big\Vert_{L^\infty}.
\end{align}
By the first bound in~\eqref{some bounds},
$\|G_\delta\big[C_1e^\delta
G_\delta\big]^{n-1}v_0\|_{L^\infty}\le 1$, and so by the third bound in~\eqref{some bounds},
the first term on the right hand side of~\eqref{eq:trianglev+v-} is smaller than
$e^\delta -1$.
By successive applications of the first two bounds in~\eqref{some bounds}, and then by the third bound in~\eqref{some bounds}, the second term on the right hand side of~\eqref{eq:trianglev+v-} is smaller than $e^{\delta(n-1)}\Vert
v_0-C_1e^\delta v_0\big\Vert_{L^\infty}\le e^{\delta(n-1)}(e^\delta-1)$.
\end{proof}
The uniqueness of solutions of~\eqref{pbv} is now straightforward. Suppose that $v$ and $v'$ are two
solutions to~\eqref{pbv} with the same initial condition
$v_0=v'_0$. Take $n\in \N$; then by
Lemmas~\ref{+- bounds} and~\ref{+- close}, for $t>0$,
$$\Vert v(\cdot,t)-v'(\cdot,t)\Vert_{L^\infty}
\le \Vert v^{n,\frac t n,+}-v^{n,\frac tn,-}\Vert_{L^\infty}\le
(e^t+1)(e^{\frac t n}-1).$$
By letting $n\to \infty$, we conclude that $v(\cdot,t)=v'(\cdot,t)$.

\subsubsection{Proofs of Lemmas~\ref{bound above} and \ref{bound below}}
\label{prove 2 lemmas}

For convenience, we define $L$ to be the differential operator appearing in \eqref{pbv}:
\begin{equation}
Lv = \partial_x^2 v - \frac{d-1} x \partial_x v. \label{Ldef}
\end{equation}
For $r > 0$ and a point $(x_0,t_0) \in (0,\infty) \times (0,\infty)$, let us define the backward parabolic cylinder:
\begin{equation}
Q^-_r(x_0,t_0) = \{ (x,t) \;:\;\; |x - x_0| < r, \;\; t_0 - r^2 < t \leq
t_0 \}. \label{Qcylinder}
\end{equation}
Recall that the parabolic boundary (see \cite{Evans2010}) of
$Q^-_r(x_0,t_0)$ is the set $\overline{Q_r^-(x_0,t_0)} \setminus
Q_r^-(x_0,t_0)$, which excludes the top of the cylinder $\{ (x,t_0) \;:\;\;
|x - x_0| < r\}$.

\begin{lem} \label{lem:viscsub}
Let $v$ be a solution to \eqref{pbv}.  Suppose there exist $t_0 > 0$, $x_0 > 0$, $r > 0$ and a function $\phi \in C^{2,1}$ such that
\begin{equation}
0 = \phi(x_0,t_0) - v(x_0,t_0) = \min_{(x,t) \in \overline{Q^-_r(x_0,t_0)}} \left( \phi(x,t) - v(x,t) \right).  \label{visctouch}
\end{equation}
Then $\partial_t \phi \leq L \phi + \phi$ must hold at $(x_0,t_0)$.
\end{lem}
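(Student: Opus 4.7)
The plan is to apply the standard viscosity-test computation at the touching point, with a case split on the value $v(x_0, t_0) \in [0,1]$. The common setup is that, since $\phi - v \geq 0$ on $\overline{Q^-_r(x_0, t_0)}$ with equality at $(x_0, t_0)$, and $(x_0, t_0)$ is interior in the spatial direction but sits on the top time face of the backward cylinder, wherever $v$ is regular enough one obtains at $(x_0, t_0)$ the first-/second-order conditions $\partial_x(\phi - v) = 0$, $\partial_x^2(\phi - v) \geq 0$ and the one-sided time condition $\partial_t(\phi - v) \leq 0$.

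In the regular case $v(x_0, t_0) < 1$, continuity of $v$ furnishes a smaller backward cylinder $Q^-_\rho(x_0, t_0) \subseteq \Omega$ on which $v \in C^{2,1}$ and the PDE $\partial_t v = Lv + v$ holds classically. The conditions above then give $\partial_x \phi = \partial_x v$, $\partial_x^2 \phi \geq \partial_x^2 v$ and $\partial_t \phi \leq \partial_t v$ at $(x_0, t_0)$, together with $\phi(x_0, t_0) = v(x_0, t_0)$, and substituting the PDE produces the desired inequality $\partial_t \phi \leq Lv + v \leq L\phi + \phi$. In the saturated case $v(x_0, t_0) = 1$, the touching condition forces $\phi(x_0, t_0) = 1$; if moreover $v \equiv 1$ on some full neighborhood of $(x_0, t_0)$, then $\phi \geq 1$ on that neighborhood with equality at $(x_0, t_0)$, so $(x_0, t_0)$ is a minimum of $\phi$ itself, and the same derivative conditions now applied to $\phi$ give $\partial_x \phi = 0$, $\partial_x^2 \phi \geq 0$, $\partial_t \phi \leq 0$, whence $L\phi + \phi = \partial_x^2\phi + 1 \geq 1 \geq \partial_t \phi$.

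The main obstacle is the remaining sub-case, in which $v(x_0, t_0) = 1$ but every neighborhood of $(x_0, t_0)$ meets $\Omega$, so that $v$ is not $C^{2,1}$ at the touching point. My plan here is a contradiction argument combined with the classical maximum principle on $\Omega$. Assuming $c := \partial_t \phi(x_0, t_0) - L\phi(x_0, t_0) - \phi(x_0, t_0) > 0$, continuity makes $\phi$ a strict classical super-solution of $\partial_t u = Lu + u$ on $\overline{Q^-_\rho(x_0, t_0)}$ for some small $\rho$. On $\Omega \cap Q^-_\rho$ the function $w := \phi - v$ is then a classical strict super-solution of the linear equation $\partial_t w = Lw + w$, and on the parabolic boundary of this set $w \geq 0$: on the portion inside $\partial Q^-_\rho$ this is immediate from $\phi \geq v$, while on the free-boundary portion $v = 1$ and the touching inequality $\phi \geq v$ forces $\phi \geq 1$, so $w = \phi - 1 \geq 0$ there. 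A comparison/strong-maximum-principle argument then forces $w > 0$ in the interior of $\Omega \cap Q^-_\rho$, pinning the boundary minimum $w = 0$ at the free-boundary point $(x_0, t_0)$. The delicate final step I expect to be hardest is a Hopf-type boundary-point lemma at $(x_0, t_0)$, exploiting the $C^1$-in-$x$ regularity of $v$ postulated in~\eqref{pbv} (which forces $\partial_x v(x_0, t_0) = 0$) and an interior approach to $(x_0, t_0)$ through $\Omega$, to extract from the boundary vanishing of $w$ a contradiction with the strict super-solution constant $c > 0$.
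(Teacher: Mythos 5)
Your treatment of the regular case $v(x_0,t_0)<1$ and of the trivial saturated sub-case (where $v\equiv 1$ on a full neighbourhood) is correct and matches the easy part of the paper's argument. The problem is the remaining sub-case, which is the entire content of the lemma, and there your plan has a genuine gap. The proposed endgame is a Hopf-type boundary-point lemma for $w=\phi-v$ at $(x_0,t_0)\in\partial\Omega$. A parabolic Hopf lemma requires an interior ball (or paraboloid) condition on $\Omega$ at the touching point, and no regularity whatsoever of the contact set $\{v=1\}$ is available at this stage -- near $(x_0,t_0)$ the set $\Omega$ could be a cusp, could approach only from one side in $x$, or could approach only through times $t<t_0$. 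Moreover, even granting such a lemma, it is unclear what it would contradict: the spatial derivative $\partial_x w(x_0,t_0)=0$ is forced by the $C^1$-in-$x$ regularity of $v$ and is perfectly consistent with $w>0$ in $\Omega\cap Q^-_\rho$ when the "inward" direction at $(x_0,t_0)$ is temporal rather than spatial; and $\partial_t v$ (hence $\partial_t w$) is not known to exist at $(x_0,t_0)$, so there is no one-sided time derivative to play against the constant $c>0$. The statement "extract a contradiction from the boundary vanishing of $w$" is therefore not a proof step but a restatement of the difficulty.

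For comparison, the paper's proof of this case does not attempt any boundary regularity or Hopf argument. It perturbs $\phi$ to $\psi(x,t)=\phi(x,t)+\beta(x-x_0)^2+\beta(t_0-t)$ so that $\psi$ is still a strict supersolution touching $v$ only at $(x_0,t_0)$, but with $\partial_x^2\psi$ and $\partial_t\psi$ of constant (nonzero) sign on $\overline{Q_r^-(x_0,t_0)}$. It then tracks, for each $s<t_0$, a minimiser $(x_s,t_s)$ of $\psi-v$ over the truncated cylinder, shows $v(x_s,t_s)=1$ and $\partial_x\psi(x_s,t_s)=0$, and uses the implicit function theorem to confine these points to a $C^1$ curve $\gamma$. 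A sign analysis on $(\partial_x^2\psi,\partial_t\psi)$ then rules out every configuration, the worst case ($\partial_x^2\psi<0$, $\partial_t\psi<0$) requiring a second perturbation $\widetilde\psi=\psi+hx(t_0-t)$ to show that $\{v=1\}$ would have to lie simultaneously on two distinct curves. Some argument of this localising type (or another mechanism that exploits the obstacle structure rather than boundary regularity) is needed; as written, your proof does not close.
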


\begin{proof}[Proof of Lemma \ref{lem:viscsub}]

If $v(x_0,t_0)<1$, then $v\in C^{2,1}$ locally.  So, if \eqref{visctouch} holds at such a point, then we must
have $ \partial_t (\phi-v)\le0$, $\partial_x(\phi-v)=0$ and
$\partial_x^2 (\phi-v)\ge 0$ (since $\phi-v$ is at a local minimum)
and $\partial_t v=L v +v$ (since we assumed $v<1$). This implies
that $\partial_t \phi\le Lv+v\le  L \phi +\phi$ at $(x_0,t_0)$, as required.

Suppose instead that \eqref{visctouch} holds at a point where $v(x_0,t_0) = 1$. Arguing by contradiction, let us suppose that  there is $\delta > 0$ such that $\partial_t \phi \geq L \phi + \phi
+ \delta$ at $(x_0,t_0)$. In the rest of the proof, we argue that this is impossible. We may make $r$ in \eqref{visctouch} smaller so that $\overline{Q^-_r(x_0,t_0)} \subset (0,\infty) \times (0,\infty)$ and
$\partial_t \phi - L \phi - \phi > \delta/2$ holds for all $(x,t) \in
Q^-_r(x_0,t_0)$.  Since $v(x_0,t_0)
= 1$, then $\phi(x_0,t_0) = 1$, by \eqref{visctouch}.  Moreover,
$\partial_x v(x_0,t_0) = \partial_x \phi(x_0,t_0) = 0$, since $x \mapsto
v(x,t_0)$ is $C^1$ and $v \leq 1$ attains a local maximum at $(x_0,t_0)$ and
$x \mapsto (\phi - v)(x,t_0)$ attains a local minimum. For $\beta > 0$, consider
the function 
$$\psi(x,t) = \phi(x,t) + \beta (x - x_0)^2 + \beta (t_0 - t).$$
If $\beta$ and $r$ are small enough, we now have
a function $\psi$ with the following properties: 
\begin{align*}
\text{(i)\quad}
	& \psi(x_0,t_0)=v(x_0,t_0)=1,
&
\text{(ii)\quad}
	& \psi(x,t) > v(x,t)\quad\text{in $\overline{Q_r^-(x_0,t_0)} \setminus
\{(x_0,t_0)\}$},
\\
\text{(iii)\quad}
	& \partial_t \psi - L \psi - \psi > 0\quad\text{in $\overline{Q_r^-(x_0,t_0)}$},
&
\text{(iv)\quad}
	& \partial_x \psi(x_0,t_0)=0,
\\
\text{(v)\quad}
	& \partial_x^2\psi \neq 0\quad\text{in $\overline{Q_r^-(x_0,t_0)}$},
&
\text{(vi)\quad}
	& \partial_t\psi \neq 0\quad\text{in $\overline{Q_r^-(x_0,t_0)}$}.
\end{align*}
Regarding points (v) and (vi), we first choose $\beta$ small
enough and such that $\partial_x^2\psi(x_0,t_0) \neq 0$ and $\partial_t
\psi(x_0,t_0) \neq 0$. Then, by continuity of $\partial_x^2\psi$ and
$\partial_t\psi$, we may decrease $r$, if necessary, so that $\partial_t \psi$ and $\partial_x^2 \psi$ do not change sign over $\overline{Q_r^-(x_0,t_0)}$.

Now, let $w = \psi - v$.  For $s \in (t_0 - r^2, t_0]$, define the subset
\[
A_s = Q_r^-(x_0,t_0) \cap \{ (x,t):t \leq s\}
\]
(thus, $A_{t_0} = Q_r^-(x_0,t_0)$). Let $m_s$ be the minimum of $w$ on
$\overline{ A_s}$ and $(x_s,t_s)$ a point where this minimum is reached:
\begin{equation}
m_s = \min_{(x,t) \in \overline{A_s}} w(x,t) = w(x_s,t_s)\qquad
\text{with }(x_s,t_s)\in \overline{ A_s}. \label{msmin}
\end{equation}
Observe that $m_{t_0} = w(x_0,t_0) = \psi(x_0,t_0) - v(x_0,t_0) = 0$ and $m_s > 0$ for $s < t_0$.  Because 
of the continuity of $w$, $m_s$ is continuous, non-increasing in $s$, and $m_s 
\to 0$ as $s \nearrow t_0$. Furthermore, again by continuity,
$$(x_s,t_s)\to (x_0,t_0)\quad\text{as $s\nearrow t_0$}.$$
In particular, for $s$ close enough to $t_0$, we have $(x_s,t_s)\in A_s$
(i.e.~$(x_s,t_s)$ is not in the parabolic boundary of $A_s$).
From now on, we only consider values of $s$ close
enough to $t_0$ for this property to hold.

By the same argument as the one we used to show that $v(x_0,t_0)=1$,
we claim that necessarily
\begin{equation} \label{eq:vxsts1}
v(x_s,t_s)=1.\end{equation}
Indeed, if we had $v(x_s,t_s)<1$ for some $s$, we would have at that
point $w=\psi-v\in C^{2,1}$ with
$\partial_t w\le0$, $\partial_x w = 0$ and $\partial_x^2w\ge0$ and then
$\partial_tv=Lv+v$ would impliy $\partial_t\psi\le L \psi+\psi$, a contradiction by property~(iii).
Then, $v(x_s,t_s)=1$ implies that $\partial_x v(x_s,t_s)=0$ and therefore
that $\partial_x\psi(x_s,t_s)=0$.

Because of properties (iv) and (v) of $\psi$, the Implicit Function
Theorem implies that for some $\tau< t_0$ close enough to $t_0$ there is
a $C^1$ curve $\gamma:[\tau,t_0] \to [x_0 - r,x_0 + r]$ such
that $\gamma(t_0) = x_0$, and for all $(x,t) \in
\overline{Q_{r}^-(x_0,t_0)}$ with $t\ge\tau$, one has $\partial_x\psi(x,t)
= 0$ if and only if $x
= \gamma(t)$. In
particular, if $t_s\ge \tau$ then since $\partial_x \psi(x_s,t_s)=0$, $$x_s=\gamma(t_s).$$  Depending on the signs of $\partial_x^2\psi$ and $\partial_t\psi$, we now consider three cases and arrive each time at a contradiction.

First, suppose $\partial_{x}^2 \psi > 0$ in $\overline{Q_{r}^-(x_0,t_0)}$.
Then 
for $t\in (\tau,t_0)$ such that $\psi(\gamma(t),t)\ge 0$, 
since
$\partial_x\psi(\gamma(t),t)=0$,
$$\frac{\diffd}{\diffd t}
\psi(\gamma(t),t) = \partial_t \psi(\gamma(t),t) >
L\psi(\gamma(t),t)+\psi(\gamma(t),t)>0,$$
where the last inequality follows since  $L\psi(\gamma(t),t)>0$. Since
$\psi(x_0,t_0) = 1$, this implies that $\psi(\gamma(t),t)<1$ for $t\in [\tau,t_0)$.
As $v\le\psi$ this leads to $v(x_s,t_s)<1$ for $s<t_0$ sufficiently close to $t_0$ that $t_s\ge \tau$, a contradiction by~\eqref{eq:vxsts1}.

Second, suppose that $\partial_{x}^2 \psi < 0$ and $\partial_t \psi > 0$ in
$\overline{Q_{r}^-(x_0,t_0)}$. These conditions, along with
$\partial_x\psi(x_0,t_0) = 0$,
and $\psi(x_0,t_0) = 1$, would imply that $v<\psi < 1$ in
$\overline{Q_{r'}^- (x_0,t_0)} \setminus \{(x_0,t_0)\}$
for some $r'>0$ small enough, contradicting $v(x_s,t_s)=1$ with
$(x_s,t_s)\to(x_0,t_0)$ as $s\nearrow t_0$.

Third, we consider the possibility that $\partial_{x}^2 \psi < 0$ and $\partial_t \psi < 0$ in $\overline{Q_{r}^-(x_0,t_0)}$. 
Under these conditions, for $t\in (\tau,t_0)$,
$\frac{\diffd}{\diffd t} \psi(\gamma(t),t) = \partial_t
\psi(\gamma(t),t)<0$ and hence the function $t\mapsto \psi(\gamma(t),t)$ is
strictly decreasing. With $m_s=w(x_s,t_s)=\psi(\gamma(t_s),t_s)-1$ for $s$ sufficiently close to $t_0$ that $t_s\ge \tau$, this
implies that in fact $t_s=s$ and
$$m_s = \psi(\gamma(s),s)-1.$$
It follows that for all $s$ close enough to $t_0$ and all
$x\in[x_0-r,x_0+r]$ we have
$$\psi(x,s)-v(x,s)\ge \psi(\gamma(s),s)-1.$$
As we assumed $\partial_x^2\psi<0$ and as $\partial_x\psi(\gamma(s),s)=0$,
we have $\psi(x,s)<\psi(\gamma(s),s)$ if $x\ne\gamma(s)$ and so
$$v(x,s)<1\quad\text{if $x\ne\gamma(s).$}$$
This means that, in the case $\partial_{x}^2 \psi < 0$ and $\partial_t \psi
< 0$, for $t$ sufficiently close to $t_0$, the function $v$ attains the value $1$ in $\overline{Q_{r}^-(x_0,t_0)}$ along and only
along the curve $(\gamma(t),t)$. To finish the argument, we now show that
this situation is impossible. Consider the function
$\widetilde\psi(x,t)=\psi(x,t)+h x(t_0-t)$ for some fixed $h>0$.
By choosing $h$ small enough, this function can be made to satisfy all
the properties (i), \ldots, (vi) satisfied by $\psi$, with
$\partial_x^2\widetilde\psi<0$ and
$\partial_t\widetilde\psi<0$ on $\overline{Q_r^-(x_0,t_0)}$, as for
$\psi$.
By repeating the argument
above, 
for $t$ sufficiently close to $t_0$,
$v$  attains the value $1$ in $\overline{Q_{r}^-(x_0,t_0)}$ along and only
along the curve $(\widetilde\gamma(t),t)$,
where $\widetilde\gamma(t)$ is such that $\partial_x\widetilde\psi(\widetilde \gamma (t),t)=0$. But, for
$t<t_0$, we have
$\gamma(t)\ne\widetilde\gamma(t)$  because  $\partial_x\widetilde\psi\ne
\partial_x\psi$, and we obtain a contradiction.
This shows that the case 
$\partial_{x}^2 \psi < 0$ and $\partial_t \psi < 0$ is untenable, as well.
\end{proof}

\begin{proof}[Proof of Lemma~\ref{bound above}]
By the comparison principle for $\vl$, it is sufficient to prove the result for
$\vl_0=v_0$.
By the assumptions in~\eqref{pbv}, $v$ is continuous for $t > 0$ and $v(\cdot,t) \to
v_0$ in $L^1_\text{loc}$ as $t \searrow 0$.  To prove the lemma, we
observe that it suffices to assume that $v$ is continuous on
$[0,\infty) \times [0,\infty)$ (\textit{i.e.}\@ at $t = 0$, as well). To
see why this is the case, let $\epsilon > 0$ and define $h^\epsilon(x,t)
= v(x,t +\epsilon)$. Then $h^\epsilon$ is a solution to \eqref{pbv} with
initial condition $h^\epsilon_0(x) = v(x,\epsilon)$. In particular,
$h^\epsilon$ is continuous on $[0,\infty) \times [0,\infty)$.  If the
conclusion of Lemma~\ref{bound above} holds for such solutions, then we
have
$$h^\epsilon(x,t) \leq e^t\int_0^\infty\diffd y\, G(y,x,t) v(y,\epsilon)
\quad\text{for $t>0$ and $x\ge0$}.$$
With $x\ge 0$ and $t>0$ fixed, take the $\epsilon\searrow0$ limit. The left hand
side converges to $v(x,t)$ by continuity of $v$ for $t>0$ and the right
hand side converges to $\vl(x,t)$ because $v(\cdot,\epsilon)\to v_0$ in
$L^1_\text{loc}$, and hence we have
$$v(x,t) \le \vl(x,t)=e^t\int_0^\infty\diffd y \, G(y,x,t) v_0(y),$$
as required.

Now we proceed assuming $v$ is continuous on $[0,\infty) \times [0,\infty)$, and hence $v_0^\ell=v_0$ is also continuous. For $\delta > 0$, let $\phi = \phi^\delta$ satisfy 
\begin{empheq}[left = \empheqlbrace]{align}
\partial_t \phi & = L \phi + (1 + \delta) \phi, \label{phipde}  \\
\phi(0,t) & = 0, \notag \\
\phi(x,0) & = v^\ell_0(x) +  \min(\delta x , 2). \label{phiic}
\end{empheq}
The comparison principle implies that $\phi^\delta(x,t) \geq v^\ell(x,t)$ for all $t \geq 0$ 
and $x \geq 0$. Moreover, $\phi^\delta (x,t)=e^{(1+\delta)t}\int_{0}^\infty G(y,x,t)\phi^\delta(y,0)\,\diffd y$ and $\phi^\delta(y,0) \to v_0^\ell(y)$ locally uniformly as $\delta \to 0$,
so by the dominated convergence theorem,
$\phi^\delta(x,t) \searrow v^\ell(x,t)$ as $\delta \searrow
0$.
Hence it suffices to show that $\phi^\delta \geq v$. 
Define
\begin{equation}
t_0 = \sup \big\{ t \geq 0 : \phi(x,s) > v(x,s) \quad \forall x > 0,\,
s \in [0,t] \big\}. \label{t0def}
\end{equation}
We will show that $t_0 = +\infty$. 

Suppose that $t_0 < \infty$. Then there is a sequence of points
$\{(x_n,t_n)\}$ with $t_n \searrow t_0$ as $n \to \infty$, and $x_n > 0$,
such that $\phi(x_n,t_n) \leq v(x_n,t_n)$.  
Since $v_0^\ell = v_0 \geq 0$, by~\eqref{Gdef} and~\eqref{wdef} we have that
$$
\phi(x,t)\ge 2\int_{0}^\infty G(y,x,t)\indic{y>2/\delta}\,\diffd y 
=2\P\big(\|B_t\|< x \; \big|\; \|B_0\|=2/\delta\big).
$$
Hence
 there exists an $r$
such that $\phi(x,t)\ge 1.5$ for all $(x,t)\in[r,\infty)\times [0,t_0+1].$
As $v(x,t)\le1$, this implies that $x_n<r$ for $n$ large enough.

So, the sequence $\{x_n\}$ must be confined to a compact interval, and by
taking a subsequence, we infer the existence of a point $(x_0,t_0)$ such
that $(x_{n_j},t_{n_j}) \to (x_0,t_0)$ as $j \to \infty$.  By continuity of
$\phi$ and $v$, we must have $\phi(x_0,t_0) = v(x_0,t_0)$,
and $\phi(x,t)>v(x,t)$ for all $x>0$ and $t<t_0$, 
and $\phi(x,t_0)\ge v(x,t_0)$ for all $x\ge0$.

Notice that, in fact, we must have $\phi(x,t_0)>v(x,t_0)$ for $x>0$.
Indeed, this is obvious from the
initial condition \eqref{phiic} if $t_0=0$, and by
Lemma~\ref{lem:viscsub} we cannot have $\phi(x,t_0)=v(x,t_0)$ for $x>0$,
 $t_0>0$ since $\partial_t\phi-L\phi-\phi=\delta\phi>0$ at such a point. Therefore we must have
$x_0=0$. 

We have shown that if $t_0 < \infty$, we must have $x_0 = 0$, and $\phi(x,t_0) > v(x,t_0)$ for all $x > 0$. Then, because $\phi$ and $v$ are continuous on $[0,\infty) \times [0,\infty)$, we may take $y_1 > 0$ small 
enough so that $v(x,t_0)<1$ for $x\in[0,y_1]$ and then $t_1 > t_0$
small enough so that $\phi(y_1,t) > v(y_1,t)$ holds for all $t \in
[t_0,t_1]$, and $v(x,t) < 1$ for all $(x,t) \in [0,y_1] \times
[t_0,t_1]$.

Then $w := \phi - v$ satisfes $\partial_t w \ge L w + w$ in $Q
:= (0,y_1) \times (t_0,t_1]$, with $w\ge 0$ on the parabolic
boundary of $Q$, and $w>0$ on the boundary $\{y_1 \} \times [t_0,t_1]$.  The
strong maximum principle implies that $w > 0$ everywhere in
$Q$. But the condition $\phi(x,t) > v(x,t)$ for all $(x,t)\in Q$
contradicts the existence of the subsequence $(x_{n_j},t_{n_j})\to (0,t_0)$ as $j\to \infty$. We conclude that $t_0 = +\infty$, and the proof is complete.
\end{proof}

\begin{proof}[Proof of Lemma~\ref{bound below}]
As in the proof of Lemma \ref{bound above}, it suffices to assume $v$ and $v^\ell$ are continuous on $[0,\infty)\times [0,\infty)$.
 Specifically, for $\epsilon > 0$, consider the functions
\[
\tilde v^\ell_\epsilon(x) = \min \left( v(x,\epsilon)\;,\; v^\ell(x,\epsilon) \right)
\]
and
\begin{equation} \label{eq:hepsilonformula}
h_\epsilon(x,s) = e^{s} \int_0^\infty G(y,x,s) \tilde v_\epsilon^\ell(y) \,\diffd y.
\end{equation}
Observe that $h_\epsilon(x,0) = \tilde v^\ell_\epsilon(x)$ is continuous and $h_\epsilon(x,0) \leq v(x,\epsilon)$.  Moreover, for $s \geq 0$, $h_\epsilon(x,s)$ satisfies the same linear PDE as $v^\ell$, and 
\[
h_\epsilon(x,s) \leq e^{s} \int_0^\infty G(y,x,s) v^\ell(y,\epsilon) \,\diffd y = v^\ell(x,s+\epsilon), \quad s \geq 0, \;\; x \geq 0.
\]
Therefore, if $v^\ell(\cdot,s) \leq 1$ holds for $s \leq t$, then $h_\epsilon(x,s) \leq 1$ for $s \in [0,t - \epsilon]$. So, if the lemma holds for solutions of~\eqref{pbv} and~\eqref{linear equ} which are continuous on $[0,\infty)\times [0,\infty)$, then we must have $h_\epsilon(x,s) \leq v(x,s + \epsilon)$ for all $s \in [0,t - \epsilon]$. Now we claim that as $\epsilon \to 0$, $\tilde v^\ell_\epsilon \to v_0^\ell$ in $L^1_{\text{loc}}$: this follows from the definition of $\tilde v^\ell_\epsilon$ and the elementary inequality
\[
\big|\min\big(v(x,\epsilon), v^{\ell}(x,\epsilon)\big)
- \min\big(v_0(x),v^\ell_0(x)\big)\big| \leq \big|v(x,\epsilon)
- v_0(x)\big| + \big|v^{\ell}(x,\epsilon) - v^\ell_0(x)\big|,
\]
because $v(\cdot,\epsilon) \to v_0$ and $v^\ell(\cdot,\epsilon) \to v_0^\ell$ in $L^1_{\text{loc}}$ as $\epsilon \to 0$, and $v_0 \geq v^\ell_0$.
  Therefore, since $\tilde v^\ell_\epsilon \to v_0^\ell$ in $L^1_{\text{loc}}$, we see that by~\eqref{eq:hepsilonformula}, for all $s >0$ and $x\ge 0$,  $h_\epsilon(x,s) \to v^\ell(x,s)$ as $\epsilon \to 0$. Also, for $s>0$ and $x\ge 0$, $v(x,s + \epsilon) \to v(x,s)$ as $\epsilon\to 0$. Hence $v^\ell(x,s) \leq v(x,s)$ must also hold for all $s \in (0,t]$ and $x \geq 0$.

So, we now proceed, assuming that $v$ and $v^\ell$ are continuous on $[0,\infty) \times [0,\infty)$, that $v_0^\ell \leq v_0$, and that $v^\ell(\cdot,s)\le 1$ for all $s\le t$. 
We use a standard maximum principle argument. Introduce
\[
\phi(x,s)=\frac{e^{-2s}\big[v(x,s)-v^\ell(x,s)\big]}{1+x}\qquad\text{and}\qquad
M=\inf\big\{\phi(x,s) \;:\; x\ge0,\,\, s\in[0,t]\big\}.
\]
We will show that $M\ge0$, which implies the lemma. Let $(x_n,t_n)$ be a sequence with $t_n\le t$ for each $n$, and such that $\phi(x_n,t_n)\to M$ as $n\to\infty$. If the sequence $x_n$ is unbounded, then $M=0$ because $|\phi(x,s)| \le \frac{1}{1 + x}$ for all $x \geq 0$ and $s \in [0,t]$. If instead the sequence $x_n$ is bounded, then up to extracting a subsequence, we can assume that $(x_n,t_n)$ converges to some $(x^*,t^*)$ with $t^*\le t$, and
$M=\phi(x^*,t^*)$ by continuity. If $x^*=0$, then $M=\phi(0,t^*)=0$.
If $t^*=0$, then $M\ge0$ since $v_0^\ell \le v_0$. It remains to consider the case $x^*>0$ and
$t^*>0$. At such a point, if $v=1$, then $v^\ell\le1$ and so $M\ge 0$. If
$v<1$, then $v$, $v^\ell$ are both $C^{2,1}$ and satisfy  $\partial_s h=Lh+h$ in a neighbourhood of the point $(x^*,t^*)$. This implies by direct substitution that $\phi$ is also $C^{2,1}$ in a neighbourhood of $(x^*,t^*)$ and satisfies \[
\partial_s \phi =L \phi +\frac2{1+  x}\partial_x \phi -\left(\frac{d-1}{x(1+x)} +1\right) \phi
\]
 at the point $(x^*,t^*)$. But as $(x^*,t^*)$ is the point in $[0,\infty)\times [0,t]$ where $\phi$ is minimal, we must also have
$\partial_s \phi\le0$,
$\partial_x \phi=0$ and
$\partial_x^2 \phi\ge0$ at that point. This implies $\phi(x^*,t^*)=M\ge0$. As we have exhausted all possibilities, the proof is complete.
\end{proof}

\subsection{Existence of the solution and basic properties}\label{sec:exists v}
In this subsection, we construct a solution to \eqref{pbv}. For a measurable initial condition $v_0:[0,\infty) \to [0,1]$ and for $n\ge2$, we
let $(x,t)\mapsto v_n(x,t)$ denote the unique solution to
\begin{equation}\label{eqvn}
\partial_t v_n= \partial_x^2 v_n-\frac{d-1}x\partial_x v_n
+v_n-v_n^n, \qquad\qquad v_n(0,t)=0,\qquad\qquad v_n(x,0)=v_0(x).
\end{equation}
(As $v_n$ is defined only for $n\ge2$, there is no clash of notation with
the initial condition $v_0$.)

By the maximum principle, for any fixed $(x,t)$, the map $n\mapsto
v_n(x,t)$ is non-decreasing. Furthermore, again by the maximum principle,
$v_n(x,t)\in[0,1]$ for all $x,t,n$. This implies that the following
pointwise limit exists:
$$v(x,t):= \lim_{n\to\infty} v_n(x,t).$$
We will show that this limit $v(x,t)$ satisfies all the
conditions in~\eqref{pbv}, thus proving the existence part of
Theorem~\ref{thm:exists v}.

\begin{lem}[Basic properties of $v_n$] \label{lem:uchi}
For any $x\ge 0$, $t_0\ge0$ and $t>t_0$,
\begin{align}
&v_n(x,t)=e^{t-t_0}\int_0^\infty \diffd y\, G(y,x,t-t_0)v_n(y,t_0) \notag \\
&\qquad\qquad\quad -\int_0^{t-t_0}\diffd 
s\,\int_0^\infty\diffd y
\,e^{t-t_0-s}G(y,x,t-t_0-s)v_n(y,s+t_0)^n\label{Uchi}
\\
\text{and }\qquad&\int_0^\infty\diffd y\, G(y,x,t-t_0) v_n(y,t_0) \le
v_n(x,t) \le e^{t-t_0} \int_0^\infty\diffd y\, G(y,x,t-t_0) v_n(y,t_0)
.
\label{Bounds vn}
\end{align}
\end{lem}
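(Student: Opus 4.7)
The plan is to first establish the Duhamel-type integral representation \eqref{Uchi}, and then deduce both sides of \eqref{Bounds vn} as essentially immediate consequences (using a comparison argument for the lower bound rather than \eqref{Uchi} directly).

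For \eqref{Uchi}, the key move is to absorb the linear term $+v_n$ in \eqref{eqvn} via an exponential change of variables. Setting $\tilde v_n(x,s) := e^{-(s-t_0)} v_n(x,s)$, a direct calculation gives
\begin{equation*}
\partial_s \tilde v_n = L \tilde v_n - e^{-(s-t_0)} v_n(x,s)^n, \qquad \tilde v_n(0,s) = 0, \qquad \tilde v_n(x,t_0) = v_n(x,t_0),
\end{equation*}
where $L = \partial_x^2 - \frac{d-1}{x}\partial_x$. Since $G$ is the fundamental solution of $\partial_s G = LG$ with zero Dirichlet datum at $x=0$ (see \eqref{eqG}), the standard Duhamel principle treating $-e^{-(s-t_0)} v_n^n$ as a source yields an integral representation for $\tilde v_n$; multiplying through by $e^{t-t_0}$ and substituting $s\mapsto s+t_0$ in the time integral produces \eqref{Uchi}. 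To make this rigorous I would differentiate
\begin{equation*}
F(s) := e^{t-t_0-s}\int_0^\infty G(y,x,t-t_0-s) v_n(y,s+t_0)\, \diffd y, \qquad s \in (0,t-t_0),
\end{equation*}
using the PDE for $v_n$ together with the PDE for $G$ in the $x$-variable, and integrate by parts in $y$ to transfer $L$ onto $v_n$. The boundary contributions at $y=0$ vanish because $G(0,\cdot,\cdot)=v_n(0,\cdot)=0$; those at $y=\infty$ vanish thanks to the integrability estimates for $G$, $\partial_x G$ and $\partial_x^2 G$ in Lemma \ref{bounds int g} and the uniform bound $0\le v_n \le 1$. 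The algebra collapses to $F'(s) = -e^{t-t_0-s}\int_0^\infty G(y,x,t-t_0-s) v_n(y,s+t_0)^n\,\diffd y$, and integrating $F$ from $s=0$ to $s\nearrow t-t_0$, combined with the approximation-of-identity property \eqref{GintConvv}, produces \eqref{Uchi}.

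The upper bound in \eqref{Bounds vn} then follows immediately from \eqref{Uchi}, since $G\ge 0$ and $v_n^n\ge 0$ make the time-integral non-negative. For the lower bound, the cleanest route is a comparison principle rather than direct manipulation of \eqref{Uchi}. Because $v_n\in[0,1]$, we have $v_n-v_n^n = v_n(1-v_n^{n-1}) \ge 0$, so $\partial_s v_n - L v_n \ge 0$ on $(0,\infty)\times(t_0,t]$. The function $W(x,s):=\int_0^\infty G(y,x,s-t_0) v_n(y,t_0)\,\diffd y$ solves $\partial_s W = LW$ with $W(0,\cdot)=0$ and $W(\cdot,t_0) = v_n(\cdot,t_0)$, so $v_n$ is a supersolution of $W$'s problem with matching parabolic-boundary values; the parabolic maximum principle gives $v_n\ge W$, which is exactly the desired lower bound. (Alternatively, Proposition \ref{FK} applied with the non-negative potential $g:=1-v_n^{n-1}\ge 0$ yields the same inequality, since $e^{\int_0^\tau g\,\diffd s}\ge 1$.) The main technical obstacle is the Duhamel derivation itself: the singular drift at $x=0$ and the unbounded spatial domain mean that the interchange of differentiation and integration, and the vanishing of boundary terms at infinity, must be justified using the estimates in Lemma \ref{bounds int g}; everything else is essentially a sign check.
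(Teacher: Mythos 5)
Your proposal is correct in substance, and both bounds in \eqref{Bounds vn} are handled exactly as the paper does for the lower bound (maximum principle, using $v_n-v_n^n\ge 0$; the paper also gets the upper bound this way, from $v_n-v_n^n\le v_n$, whereas you read it off \eqref{Uchi} directly — both are fine). The one place where your route genuinely diverges from the paper's, and where you should be careful, is the derivation of \eqref{Uchi}. The paper does not differentiate the Duhamel integrand in $s$; it takes the right-hand side of \eqref{Uchi} as a candidate function $\Psi(x,t)$, differentiates it in $t$ and $x$ using only the equation $\partial_t G = L_x G$ from \eqref{eqG} (i.e.\ the PDE for $G$ in the \emph{forward} variable $x$, which is exactly what is given), checks that $\Psi$ satisfies \eqref{eqvn} with the same data as $v_n$ at $t=t_0$ (via \eqref{GintConvv}), and concludes $\Psi=v_n$ by uniqueness for the linear problem satisfied by $\Psi-v_n$. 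Your computation of $F'(s)$ instead requires the cancellation $\int_0^\infty \partial_\sigma G(y,x,\sigma)\,v_n(y,\cdot)\,\diffd y=\int_0^\infty G(y,x,\sigma)\,(Lv_n)(y,\cdot)\,\diffd y$, i.e.\ you need $G$ to satisfy the adjoint equation in the backward variable $y$ (equivalently, the commutation $\tfrac{\diffd}{\diffd\sigma}G_\sigma f=G_\sigma Lf$ of the semigroup with its generator). That is true here — it can be extracted from the representation $G=-\partial_y w$ with $w$ given explicitly by \eqref{wdef} — but it is not literally the statement \eqref{eqG}, and "integrate by parts in $y$ using the PDE for $G$ in the $x$-variable" as written does not produce it. If you keep your route, state and justify the $y$-variable equation for $G$ (the operator $L$ is self-adjoint with respect to $x^{-(d-1)}\diffd x$, not Lebesgue measure, so the adjoint carries extra terms); otherwise, switching to the paper's verify-and-invoke-uniqueness direction avoids the issue entirely at no cost.
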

\begin{proof}
The first statement is proved by directly checking, using~\eqref{eqG} and~\eqref{GintConvv}, that
\eqref{Uchi} 
satisfies~\eqref{eqvn} and has the correct limit as $t\searrow t_0$. 
The second statement follows from applying the maximum principle
in~\eqref{eqvn}, using that $0\le v_n-v_n^n\le v_n$
(because $v_n\in [0,1]$) and using~\eqref{eqG}. 
\end{proof}

\begin{lem}[Basic properties of $v$]
\label{lem:basic prop v}
For $0\le t_0<t$,
\begin{equation}
\int_0^\infty\diffd y\, G(y,x,t-t_0) v(y,t_0) \le
v(x,t) \le e^{t-t_0} \int_0^\infty\diffd y\, G(y,x,t-t_0) v(y,t_0).
\label{Bound v}
\end{equation}
Moreover,
\begin{itemize}
\item $v(x,t)\in[0,1]$.
\item
$v(0,t)=0$ for $t>0$.
\item If $v_0$ is non-decreasing, then $v(\cdot,t)$ is
non-decreasing for all $t>0$.
\item Suppose $\tilde v_0:[0,\infty)\to [0,1]$ is measurable, and let $\tilde v(x,t)=\lim_{n\to \infty}\tilde v_n(x,t)$,
where $\tilde v_n$ solves~\eqref{eqvn} with initial condition $\tilde v_0$.
If $\tilde v_0\ge v_0$, then $\tilde v(\cdot,t)\ge
v(\cdot, t)$ for all $t>0$.
\item
$v(\cdot,t) \to v_0$ in $L^1_\text{loc}$ as $t\searrow 0$.
\item If $v_0$ is continuous at $x_0 \geq 0$, then $v(x,t)\to v_0(x_0)$ as $(x,t)\to (x_0,0)$. 
\end{itemize}
\end{lem}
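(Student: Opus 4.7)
The plan is to derive each listed property of $v$ from the corresponding property of the approximating sequence $v_n$ by passing to the pointwise monotone limit $v_n\nearrow v$, using the representation formulae and two-sided bounds for $v_n$ from Lemma~\ref{lem:uchi} together with the regularizing properties of $G$ collected in Lemma~\ref{bounds int g}.

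First I would settle the \emph{direct} items. The properties $v(x,t)\in [0,1]$ and $v(0,t)=0$ for $t>0$ descend immediately from the corresponding statements for $v_n$ and the pointwise convergence $v_n\to v$. For the sandwich \eqref{Bound v}, apply monotone convergence to both outer terms of \eqref{Bounds vn}, using $v_n(y,t_0)\nearrow v(y,t_0)$. For the comparison $\tilde v\ge v$ whenever $\tilde v_0\ge v_0$, the standard parabolic comparison principle applied to~\eqref{eqvn}, valid since $r\mapsto r-r^n$ is locally Lipschitz on $[0,1]$, yields $\tilde v_n\ge v_n$ for each $n$, and the inequality is preserved by the pointwise limit.

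Next I would handle the monotonicity of $x\mapsto v(x,t)$ when $v_0$ is non-decreasing. The strategy is to show each $v_n(\cdot,t)$ is non-decreasing and pass to the limit. At the level of $v_n$, I would first smooth and truncate $v_0$ by a non-decreasing $v_0^\epsilon\in C^\infty((0,\infty))$ with $v_0^\epsilon(0)=0$ and $v_0^\epsilon\to v_0$ pointwise, so that the corresponding $v_n^\epsilon$ is classical on $(0,\infty)\times (0,\infty)$. Differentiating~\eqref{eqvn} in $x$, the function $w:=\partial_x v_n^\epsilon$ satisfies the linear parabolic equation
\begin{equation*}
\partial_t w=\partial_x^2 w-\frac{d-1}{x}\partial_x w+\frac{d-1}{x^2}w+\bigl(1-n(v_n^\epsilon)^{n-1}\bigr)w,
\end{equation*}
with non-negative initial datum $(v_0^\epsilon)'\ge 0$ and $w(0,t)\ge 0$ (since $v_n^\epsilon\ge 0$ vanishes at $x=0$). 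A maximum-principle argument on a truncated cylinder $[\eta,M]\times [0,T]$, followed by $\eta\to 0$ and $M\to\infty$, yields $w\ge 0$. Letting $\epsilon\to 0$ and then $n\to\infty$ preserves the monotonicity.

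Finally, for the convergence to the initial data, I would feed the sandwich \eqref{Bound v} with $t_0=0$ into Lemma~\ref{bounds int g}. The $L^p_{\mathrm{loc}}$ convergence~\eqref{GintConvv} (with $p=1$), combined with $e^t\to 1$, gives $v(\cdot,t)\to v_0$ in $L^1_{\mathrm{loc}}$ as $t\searrow 0$. For the pointwise statement at a point $x_0$ of continuity of $v_0$, I would run a standard approximate-identity argument on both outer terms: split each integral into $|y-x_0|<\delta$ and $|y-x_0|\ge\delta$, use~\eqref{eq:Gintbound2} for the tail, and use $\int_0^\infty G(y,x,t)\,\diffd y=w(0,x,t)\to 1$ as $(x,t)\to (x_0,0)$ for $x_0>0$; the edge case $x_0=0$ follows from $v(0,t)=0$ together with $w(0,x,t)\to 0$ as $x\to 0$. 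The main obstacle will be the monotonicity-in-$x$ step, specifically justifying the maximum principle in the presence of the singular coefficient $(d-1)/x^2$ and handling the boundary behaviour of $\partial_x v_n^\epsilon$ at $x=0$ for $d\ge 2$; every other item reduces to a routine limiting argument using Lemmas~\ref{lem:uchi} and~\ref{bounds int g}.
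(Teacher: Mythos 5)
Your overall strategy is the same as the paper's: every item except the last two is a property of the approximants $v_n$ that survives the monotone pointwise limit, \eqref{Bound v} comes from \eqref{Bounds vn} by monotone convergence, and the initial-data statements come from feeding \eqref{Bound v} with $t_0=0$ into Lemma~\ref{bounds int g}. Two of your sub-arguments, however, need repair. First, for the pointwise convergence at a continuity point $x_0$, the estimate \eqref{eq:Gintbound2} is the wrong tool: it bounds $\int_{y_0}^\infty|\partial_x G(y,x,t)|\,\diffd y$, not the mass of $G(\cdot,x,t)$ outside a $\delta$-neighbourhood of $x_0$, and it only addresses the region $y>y_0>x$. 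The concentration you need does hold (write $G=-\partial_y w$ and integrate, so that $\int_{y_1}^\infty G(y,x,t)\,\diffd y=w(y_1,x,t)$ and $\int_0^{y_1}G(y,x,t)\,\diffd y=w(0,x,t)-w(y_1,x,t)$, then use the probabilistic meaning of $w$ from \eqref{wdef}), but you must derive it rather than cite \eqref{eq:Gintbound2}. The paper avoids this bookkeeping entirely by sandwiching $v_0$ between continuous bounded $f^\pm$ with $f^\pm(x_0)=v_0(x_0)$ and invoking the locally uniform convergence already recorded in the last sentence of Lemma~\ref{bounds int g}; that route is shorter and you may as well adopt it.

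Second, and more substantively, your maximum-principle argument for the monotonicity of $x\mapsto v_n(x,t)$ does not close as written: on the truncated cylinder $[\eta,M]\times[0,T]$ you need $w=\partial_x v_n^\epsilon\ge0$ on the \emph{lateral} boundaries $x=\eta$ and $x=M$ for all $t\in[0,T]$, and that is precisely the statement you are trying to prove; knowing only $w\ge0$ at $t=0$ and $w(0,t)\ge0$ is not enough to start the argument on the truncated domain. You either have to work on the whole half-line with quantitative control of $\partial_x v_n^\epsilon$ near $x=0$ and at infinity (the bounds of Lemma~\ref{bounds int g} applied to the Duhamel formula \eqref{Uchi} give decay at infinity, and $\partial_x G(y,x,t)=\mathcal O(x^{d-1})$ handles the origin), or replace the argument altogether: since $G_t$ preserves monotonicity in $x$ (because $\partial_x G_t f(x)=\int \partial_x w(y,x,t)\,(-\partial_y f)(y)\,\diffd y$ after integration by parts, with $\partial_x w\ge0$), one can add $Kv_n$ to both sides of \eqref{eqvn} with $K$ large enough that $r\mapsto r-r^n+Kr$ is non-decreasing on $[0,1]$, and then check that each Picard iterate of the resulting integral equation is non-decreasing in $x$. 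Either fix is routine but one of them is needed; as it stands this item is the one genuine gap in the proposal.
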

\begin{proof}
Equation \eqref{Bound v} and the first four  itemized points are
properties valid for any $v_n$ and which remain true in the $n\to\infty$
limit. For the fifth point, by~\eqref{Bound v} with
$t_0=0$  it is sufficient to notice
that $\int_0^\infty \diffd y\, G(y,\cdot,t) v_0(y)$
converges to $v_0$ in $L^1_\text{loc}$ as $t\searrow 0$, which holds by~\eqref{GintConvv} in Lemma~\ref{bounds int g}.
For the last point, by~\eqref{Bound v} it suffices to show that $\int_0^\infty \diffd y\, G(y,x,t) v_0(y)$
converges to $v_0(x_0)$ as $(x,t) \to (x_0,0)$. If $v_0$ is continuous at $x_0$ (but possibly not continuous on all of $[0,\infty)$) then we can find functions $f^+$ and $f^-$ which are continuous and bounded on $[0,\infty)$, and satisfy
\[
f^-(x) \leq v_0(x) \leq f^+(x), \quad \forall \; x \geq 0, \quad \quad \text{and} \quad \quad f^-(x_0) = v_0(x_0) = f^+(x_0).
\]
Then, by Lemma~\ref{bounds int g}, the desired statement follows, since the convergence
\[
\lim_{t \to 0} \int_0^\infty G(y,x,t)f^\pm(y)\,\diffd y =  f^\pm(x), \quad x \geq 0,
\]
holds uniformly on a neighbourhood of $x_0$, while $\int_0^\infty \diffd y\, G(y,x,t) v_0(y)$ is bounded from above by $\int_0^\infty G(y,x,t)f^+(y)\,\diffd y$ and from below by $\int_0^\infty G(y,x,t)f^-(y)\,\diffd y$.
\end{proof}

We now prove a regularity result on $v(\cdot,t)$ for fixed $t>0$.
\begin{prop}\label{prop:v is C1}
For each $t>0$, the function $x\mapsto v(x,t)$ is $C^1$ on $[0,\infty)$. There exists a constant $C$ (which depends on the dimension $d$, but not on $v_0$) such that for $t>0$,
\begin{equation}\label{dxv<ct}\sup_{x \ge 0}\big|\partial_x v(x,t)\big|\le 
C(1 + t^{-1/2})\end{equation}
and for $x,x'\ge0$ and $t>0$,
\begin{equation}
\big|\partial_x v(x',t)-\partial_x v(x,t)\big|
\le C|x'-x|\left(t^{-1}+1+\log_+(|x'-x|^{-1})\right). \label{dvmodulus}
\end{equation}
For any $t_1 > 0$, there is a constant $C'$ (depending on $d$
and $t_1$, but not on $v_0$) such that
\begin{equation}\label{eq:vxNew}
|\partial_x v(x,t)| \leq C' x^{d-1}, \quad x \geq 0,\;\; t \geq t_1.
\end{equation}
\end{prop}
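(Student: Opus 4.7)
The plan is to differentiate the Duhamel-type representation~\eqref{Uchi} of $v_n$ under the integral, derive bounds uniform in $n$ via the Green's function estimates of Lemma~\ref{bounds int g}, and then pass to the limit $n\to\infty$. Formally,
\[
\partial_x v_n(x,t) = e^t\!\int_0^\infty \partial_x G(y,x,t)\,v_0(y)\,\diffd y - \int_0^t\!\!\int_0^\infty e^{t-s}\,\partial_x G(y,x,t-s)\,v_n(y,s)^n\,\diffd y\,\diffd s,
\]
and the exchange of $\partial_x$ with the integrals is legitimate because $G\in C^\infty$ for $t>0$ and Lemma~\ref{bounds int g} gives absolute integrability on any compact subset of $(0,\infty)\times(0,\infty)$.

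For~\eqref{dxv<ct} I would insert $|v_0|,|v_n^n|\le 1$ and use $\int_0^\infty|\partial_x G(y,\cdot,\tau)|\,\diffd y\le C\tau^{-1/2}$ in both terms, yielding a uniform-in-$n$ estimate of the form $C(1+t^{-1/2})$ (the $e^t$ factor absorbed into the constant on bounded time intervals). For~\eqref{dvmodulus}, I would write $\partial_x G(y,x',\tau)-\partial_x G(y,x,\tau)=\int_x^{x'}\partial_\xi^2 G(y,\xi,\tau)\,\diffd\xi$ and apply the $\partial_x^2 G$ bound of Lemma~\ref{bounds int g} to obtain $\int_0^\infty|\partial_x G(y,x',\tau)-\partial_x G(y,x,\tau)|\,\diffd y\le C|x'-x|/\tau$, interpolate with the crude triangle bound $\le 2C\tau^{-1/2}$, and split the time integral at $\tau=|x'-x|^2$; the logarithmic factor $\log_+(|x'-x|^{-1})$ emerges from $\int_{|x'-x|^2}^{t}\diffd\tau/\tau$, while the $t^{-1}$ term comes from the $\tau=t$ endpoint of the first integral.

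Because these bounds are uniform in $n$ and $v_n(\cdot,t)\to v(\cdot,t)$ pointwise, Arzelà--Ascoli gives a subsequence along which $\partial_x v_n(\cdot,t)$ converges uniformly on compact subsets of $[0,\infty)$ for each $t>0$. The limit must be $\partial_x v(\cdot,t)$, so $v(\cdot,t)\in C^1([0,\infty))$, and~\eqref{dxv<ct},~\eqref{dvmodulus} pass to the limit.

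The main obstacle will be~\eqref{eq:vxNew}. The naive pointwise bound $\int_0^\infty|\partial_x G(y,\cdot,\tau)|\,\diffd y\le Cx^{d-1}/\tau^{d/2}$ from Lemma~\ref{bounds int g} fails to be integrable in $\tau$ near $0$ once $d\ge 2$, so the crude estimate $v_n^n\le 1$ cannot be used directly in the time integral. My plan is to apply~\eqref{Uchi} from the starting time $t-t_1/2$ (so that by the preceding step $v(\cdot,t-t_1/2)$ is Lipschitz with constant depending on $t_1$), and then exploit the fact that $\partial_x G(y,x,\tau)$ carries an explicit factor of $x^{d-1}$ coming from the surface measure on $\partial\B(x)$ in the derivation~\eqref{wdef}--\eqref{Gdef}: $\partial_x w(y,x,\tau) = c_d x^{d-1}\cdot(\text{spherical average of }\Phi(y\mathbf e_1,\cdot,\tau)\text{ over }\partial\B(x))$. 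Since $G(0,x,\tau)=0$ (the integrand in $\partial_y w(0,x,\tau) = \int_{\B(x)}\Phi(0,z,\tau)\,z_1/(2\tau)\,\diffd z$ is odd in $z_1$), and since $v(0,\cdot)=0$ and $v_n(0,s)^n=0$, an integration by parts in $y$ transfers a $\partial_y$ onto $v$ (respectively $v_n^n$) and produces an integrand bounded by $C(t_1)\,x^{d-1}$. One subtlety is that $\partial_y(v_n^n)=nv_n^{n-1}\partial_y v_n$ is not bounded uniformly in $n$, which may require representing the nonlinear correction as a limiting non-negative measure on $\{v=1\}$ before extracting the $x^{d-1}$ factor.
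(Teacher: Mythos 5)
For \eqref{dxv<ct}, \eqref{dvmodulus} and the $C^1$ conclusion, your route is essentially the paper's: differentiate the Duhamel representation of $v_n$, use the kernel bounds of Lemma~\ref{bounds int g}, get the logarithm by splitting the time integral at $\tau=|x'-x|^2$ after interpolating the $C\tau^{-1}|x'-x|$ and $C\tau^{-1/2}$ bounds, and pass to the limit by Arzel\`a--Ascoli. One fixable slip: you apply \eqref{Uchi} from time $0$ and propose to absorb the factor $e^t$ "on bounded time intervals", but the statement requires a constant independent of $t$. The paper restarts the Duhamel formula at $t_0=t-\delta$ with $\delta=\min(1,t)$, so the exponential prefactor is at most $e$ and the second term contributes only $\int_0^\delta s^{-1/2}\,\diffd s\le 2$; you already use this restarting device for \eqref{eq:vxNew}, so you should use it here as well.

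The genuine gap is \eqref{eq:vxNew}. Your integration by parts in $y$ replaces $\partial_xG=-\partial_y(\partial_xw)$ by $\partial_xw$ acting on $\partial_y f$, but even for a Lipschitz integrand this does not produce $C'x^{d-1}$ after the time integration: integrating $\partial_xw(y,x,\tau)$ in $y$ gives, by scaling, a bound of order $\min\bigl(1,(x/\sqrt{\tau})^{d-1}\bigr)$, and $\int_0^\delta\min\bigl(1,(x/\sqrt{\tau})^{d-1}\bigr)\,\diffd\tau$ is of order $x^2\log(1/x)$ for $d=3$ and of order $x^2$ for $d\ge4$ --- both much larger than $x^{d-1}$ as $x\to0$. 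Moreover the nonlinear term's derivative $n\,v_n^{n-1}\partial_yv_n$ is not uniformly bounded, and "representing the correction as a limiting measure on $\{v=1\}$" is not an argument. The mechanism the paper uses is different and avoids both problems: from \eqref{Bounds vn} and \eqref{eq:Gintbounds} one gets $v_n(y,t-s)\le Ce^{\delta}y^d/\delta^{d/2}$, hence $v_n(y,t-s)^n\le2^{-n}$ for $y\le2b$ with $b$ depending only on $t_1$; this kills the contribution of small $y$ in the limit $n\to\infty$, while for $y\ge2b$ and $x<b$ the tail estimate \eqref{eq:Gintbound2} supplies the explicit factor $x^{d-1}$ together with a Gaussian $e^{-b^2/(4s)}$ that makes $\int_0^\delta s^{-d/2}e^{-b^2/(4s)}\,\diffd s$ finite. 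You would need to replace your integration-by-parts step by an argument of this kind (smallness of $v_n^n$ near the origin plus a localized kernel bound) for the proof of \eqref{eq:vxNew} to go through.
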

\begin{proof}
We are going to show that, for any fixed $t>0$, the sequence
of functions $\big(x\mapsto \partial_x
v_n(x,t)\big)_n$ is uniformly bounded and equicontinuous, i.e.~for some $c(t)<\infty$,
\begin{equation}
\label{twobounds}
\sup_{n} \sup_{x \geq 0}\big|\partial_x v_n(x,t)\big|\le c(t),\qquad \text{and} \qquad
\lim_{\epsilon \to0}\sup_{n}\sup_{\substack{x \geq 0\\ y \in (x, x+\epsilon ]}} \big|\partial_x v_n(y,t)-\partial_x
v_n(x,t)\big|=0.
\end{equation}
Then, by the
Arzel\`a–Ascoli Theorem, for each compact set $K\subset [0,\infty)$, there exists a subsequence
$n_k$ such that  $\partial_x v_{n_k}(\cdot,t)$ converges uniformly on $K$
to some continuous limit $\ell(\cdot,t)$ as $k\to\infty$, with
$\|\ell(\cdot,t)\|_{L^\infty}\le c(t)$. Since we know that
$v_{n_k}(\cdot,t)$ converges pointwise to $v(\cdot,t)$, we conclude that the limit
$\ell(\cdot,t)=\partial_xv(\cdot,t)$ on $K$. Therefore, $v(\cdot,t)$
is $C^1$ on $[0,\infty)$ and $|\partial_x v(x,t)|\le c(t)$.

We first prove \eqref{twobounds}. Let $\delta=\min(1,t)$;
write~\eqref{Uchi} in Lemma~\ref{lem:uchi} with $t_0=t-\delta$, change the variable $s$ into
$\delta-s$ and differentiate with respect to $x$:
\begin{align}
\partial_x v_n(x,t) = e^{\delta} \int_0^\infty \diffd y\, \partial_x G(y,x,\delta) v_n(y,t - \delta) -  \int_{0}^{\delta} \diffd s\,e^{s} \int_0^\infty\diffd 
y
\, \partial_x G(y,x,s) v_n(y,t-s)^n. \label{dxvn}
\end{align}
Since $v_n\in[0,1]$ and $\delta \le 1$, and then using~\eqref{eq:Gintbounds} in Lemma~\ref{bounds int g}, we can bound:
\begin{align}
\big|\partial_x v_n(x,t)\big|
&\le e\int_0^\infty\diffd y\,\big|\partial_x G(y,x,\delta)\big|
+e \int_{0}^\delta \diffd s\, \int_0^\infty\diffd y
\,\big|\partial_x G(y,x,s)\big| \notag \\
& \le
e C \bigg( \frac 1{\sqrt{\delta}} + 2\sqrt{\delta}\bigg) \notag \\
&\le e C (3+t^{-1/2}),
\label{bound dxvn}
\end{align}
since $\delta=\min(t,1)$.  This implies the first bound in
\eqref{twobounds}, with $c(t) = eC (3 + t^{-1/2})$.

We now prove equicontinuity of $\partial_x v_n$. 
Take $x'>x\ge0$. 
From
\eqref{dxvn}, we obtain
\begin{align}\label{deltadxvn}
\big|\partial_xv_n(x',t)-\partial_xv_n(x,t)\big|
 & \le e\int_0^\infty\diffd y\, \big|\partial_x G(y,x',\delta)-\partial_x
G(y,x,\delta)\big| \notag 
 \\
& \quad +e\int_0^{\delta}\diffd s\, \int_0^\infty\diffd y\,
\big|\partial_x G(y,x',s)-\partial_x G(y,x,s)\big|.
\end{align}
Using the third bound in~\eqref{eq:Gintbounds} in Lemma~\ref{bounds int g}, for $s>0$,
\begin{align} \label{eq:boundAdxG}
\int_0^\infty\diffd y\, \big|\partial_x G(y,x',s)-\partial_x G(y,x,s)\big|
&=
\int_0^\infty\diffd y\, \left| \int_x^{x'}\diffd z\, \partial_x^2
G(y,z,s) \right| \notag \\
&\le \int_x^{x'}\diffd z \int_0^\infty\diffd y\, \big|\partial_x^2
G(y,z,s)\big|\le \frac{C}{s} (x'-x).
\end{align}
By the second bound in~\eqref{eq:Gintbounds} in Lemma~\ref{bounds int g} we also have
\begin{equation} \label{eq:boundBdxG}
\int_0^\infty\diffd y\, \big|\partial_x G(y,x',s)-\partial_x G(y,x,s)\big| \le
\int_0^\infty\diffd y\, \big|\partial_x G(y,x',s)\big|+
\int_0^\infty\diffd y\, \big|\partial_x G(y,x ,s)\big|\le \frac {2C}{\sqrt s}.
\end{equation}
Let $t^*=(\frac{x'-x}2)^2$; we shall bound the second term on the right hand side of
\eqref{deltadxvn} by using~\eqref{eq:boundAdxG} for $s>t^*$ and~\eqref{eq:boundBdxG} for
$s<t^*$. This gives us that
\begin{align*}
\int_0^{\delta} \diffd s\, \int_0^\infty\diffd y\,
\big|\partial_x G(y,x',s)-\partial_x G(y,x,s)\big| & \le\int_0^{t^*\wedge
\delta}\diffd s \,\frac{2C}{\sqrt s}
+\int_{t^*\wedge \delta}^\delta\diffd s\,\frac {C} s (x'-x) \\
& \leq {C}(x'-x)\bigg[2+\log_+\frac{4\delta}{(x'-x)^2}\bigg].
\end{align*}
Using~\eqref{eq:boundAdxG} to bound the first term on the right hand side of~\eqref{deltadxvn},
 and recalling that $\delta = \min(t,1)$ so that $\frac1{\delta}\le 1+\frac1t$, we obtain
\[
\big|\partial_xv_n(x',t)-\partial_xv_n(x,t)\big|
\le e C(x'-x)\bigg[\frac1t+3+\log_+\frac{4}{(x'-x)^2}\bigg],
\]
for $t > 0$ and $0 \le x < x'$. This implies the second property in~\eqref{twobounds}. 

Since for $t>0$ and $0\le x<x'$, there exists a sequence $n_k \to \infty$ such that
$\partial_x v_{n_k}(\cdot,t)$ converges uniformly on $[x,x']$ to $\partial_x v(\cdot,t)$, it follows that
\begin{equation*}
\big|\partial_x v(x',t)-\partial_x v(x,t)\big|
\le e C(x'-x)\bigg[\frac1t+3+\log_+\frac{4}{(x'-x)^2}\bigg],
\end{equation*}
which completes the proof of \eqref{dxv<ct} and \eqref{dvmodulus}.

We now turn to proving \eqref{eq:vxNew}.  We already have by \eqref{dxv<ct}  that $|\partial_x v(x,t)|$ is bounded on $[0,\infty) \times [t_1,\infty)$; therefore, it suffices to prove that $|\partial_x v(x,t)| \leq C' x^{d-1}$ holds on $(0,b) \times [t_1,\infty)$ for some $b > 0$ and $C'>0$.  Let $\delta = t_1/2$.

We assume $t \geq t_1$, and use~\eqref{dxvn} again.
Due to~\eqref{eq:Gintbounds}, and since $v_n \in [0,1]$, the first integral term on the right hand side of~\eqref{dxvn} is bounded by $C e^{\delta} \frac{x^{d-1}}{\delta^{d/2}}$ for all $t \geq t_1$.  

We now bound the second integral term on the right hand side of~\eqref{dxvn}, using \eqref{eq:Gintbound2} in Lemma~\ref{bounds int g}. First notice from~\eqref{Bounds vn} in Lemma~\ref{lem:uchi} applied with $t_0=t-\delta$, and then using that $v_n\in [0,1]$ and using~\eqref{eq:Gintbounds}, that  for $t\ge\delta$,
\[
v_n(x,t)\le e^{\delta} \int_0^\infty \diffd y\, G(y,x,\delta)\le e^{\delta} C \frac{x^d}{\delta^{d/2}} .
\]
This implies that there exists $b>0$ (independent of $n$) such that $v_n(y,t-s)\le1/2$ for $(y,s) \in [0,2b] \times [0,\delta]$ and $t\ge t_1=2\delta$. So for $t \ge t_1$,
\begin{equation*}
\begin{aligned}
& \left| \int_{0}^{\delta} \diffd s\,e^{s} \int_0^\infty\diffd y
\, \partial_x G(y,x,s) v_n(y,t-s)^n \right| \\
& \qquad \qquad\leq
 e^\delta \int_{0}^{\delta} \diffd s\,\int_0^{2b}\diffd y
\,\big|\partial_x G(y,x,s)\big|\, 2^{-n}
+
e^\delta \int_{0}^{\delta} \diffd s\,\int_{2b}^\infty\diffd y
\,\big|\partial_x G(y,x,s)\big|  
.
\end{aligned}
\end{equation*}
Then, for $x\in(0,b)$, using \eqref{eq:Gintbounds} for the first term and \eqref{eq:Gintbound2}
for the second,
\begin{align}
&\left| \int_{0}^{\delta} \diffd s \,e^s \int_0^\infty\diffd y
\, \partial_x G(y,x,s) v_n(y,t-s)^n\right|
\le
2^{-n} C e^\delta \int_0^\delta\frac{\diffd s}{s^{1/2}}
+C e^\delta x^{d-1}\int_0^\delta \frac{\diffd s}{s^{d/2}}e^{-\frac{b^2}{4s}}
.
\notag
\end{align}

By combining these estimates, we conclude that $|\partial_x v_n(x,t)|\le C' 2^{-n}+ C' x^{d-1}$ if $t \ge t_1$ and $x<b$, for some $b$ and $C'$ that depend on $t_1$ but not on $n$. The desired bound now follows by taking the $n\to\infty$ limit: since $\partial_x v_n(x,t) \to \partial_x v(x,t)$ along a subsequence (in fact, along the entire sequence, since the limit is unique), this implies that $|\partial_x v(x,t)|\le C' x^{d-1}$ holds for $(x,t) \in (0,b) \times[t_1,\infty)$.  This completes the proof.
\end{proof}

We use~\eqref{dxv<ct} in Proposition~\ref{prop:v is C1} to prove the following result.
\begin{prop} \label{prop:vcont}
$(x,t)\mapsto v(x,t)$ is jointly continuous for $x \geq 0$ and $t > 0$.
\end{prop}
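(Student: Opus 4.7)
The plan is to reduce joint continuity at a point $(x_0,t_0)$ with $t_0>0$ to continuity in $t$ at fixed $x_0$. Proposition~\ref{prop:v is C1} gives $\sup_{x\ge 0}|\partial_x v(x,t)|\le C(1+t^{-1/2})$, which is bounded uniformly for $t$ in any compact subinterval of $(0,\infty)$, so the splitting
\[
|v(x,t)-v(x_0,t_0)|\le|v(x,t)-v(x_0,t)|+|v(x_0,t)-v(x_0,t_0)|
\]
controls the first term by a constant multiple of $|x-x_0|$ for $t$ in a fixed compact neighborhood of $t_0$. It therefore suffices to show $v(x_0,t)\to v(x_0,t_0)$ as $t\to t_0$; the case $x_0=0$ is trivial since $v(0,t)\equiv 0$, so I would assume $x_0>0$.

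For right-continuity I would apply the sandwich~\eqref{Bound v} with initial time $t_0$, obtaining
\[
\int_0^\infty G(y,x_0,t-t_0)v(y,t_0)\,\diffd y\le v(x_0,t)\le e^{t-t_0}\int_0^\infty G(y,x_0,t-t_0)v(y,t_0)\,\diffd y.
\]
Since $v(\cdot,t_0)$ is continuous and bounded on $[0,\infty)$ by Proposition~\ref{prop:v is C1}, the locally uniform convergence stated in Lemma~\ref{bounds int g} forces both sides to tend to $v(x_0,t_0)$ as $t\searrow t_0$.

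For left-continuity I would apply~\eqref{Bound v} in the other direction (initial time $t$, final time $t_0$) to get
\[
\int_0^\infty G(y,x_0,t_0-t)v(y,t)\,\diffd y\in\big[e^{-(t_0-t)}v(x_0,t_0),\,v(x_0,t_0)\big],
\]
so this integral converges to $v(x_0,t_0)$ as $t\nearrow t_0$. The main obstacle is that it averages $v(\cdot,t)$ over all $y$, while I need the pointwise value $v(x_0,t)$. Writing $s=t_0-t$ and decomposing
\[
\int_0^\infty G(y,x_0,s)v(y,t)\,\diffd y=v(x_0,t)\int_0^\infty G(y,x_0,s)\,\diffd y+\int_0^\infty G(y,x_0,s)\big[v(y,t)-v(x_0,t)\big]\,\diffd y,
\]
I would use $\int_0^\infty G(y,x_0,s)\,\diffd y\to 1$ (from~\eqref{Gyintegral} with $x_0>0$) and split the remainder at a small radius $\epsilon$: on $\{|y-x_0|\le\epsilon\}$ the uniform Lipschitz bound gives at most $L\epsilon$, while on $\{|y-x_0|>\epsilon\}$ the bound $|v(y,t)-v(x_0,t)|\le 2$ reduces matters to the tail mass $\int_{|y-x_0|>\epsilon}G(y,x_0,s)\,\diffd y$. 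To kill this tail I would test Lemma~\ref{bounds int g} against the continuous bump $f(y)=(1-|y-x_0|/\epsilon)_+$, getting $\int G(y,x_0,s)f(y)\,\diffd y\to 1$; subtracting from $\int G(y,x_0,s)\,\diffd y\to 1$ forces the tail to $0$. Sending $s\searrow 0$ and then $\epsilon\searrow 0$ finishes the argument. The delicate step is this last one: the backward sandwich does not isolate $v(x_0,t)$ automatically, and extracting it relies on the concentration of $G(\cdot,x_0,s)\,\diffd y$ near $y=x_0$ as a measure, which is only a weak-type fact coming from Lemma~\ref{bounds int g}.
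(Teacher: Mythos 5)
Your proof is correct and follows essentially the same route as the paper: reduce to continuity in $t$ via the uniform spatial Lipschitz bound from Proposition~\ref{prop:v is C1}, then use the sandwich~\eqref{Bound v} together with the concentration of $G(\cdot,x_0,s)$ from Lemma~\ref{bounds int g}. The only cosmetic difference is that the paper handles both time directions in one computation by applying~\eqref{GintConvv} directly to the continuous bounded function $y\mapsto\min\big(1,C(\epsilon)|y-x_0|\big)$, whereas you extract the tail-mass concentration by hand via a bump function; both are instances of the same weak-convergence fact.
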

\begin{proof}
Fix $\epsilon > 0$. By \eqref{dxv<ct} and the triangle inequality, there is a constant $C(\epsilon) = C (1 + \epsilon ^{-1/2})$ such that whenever $x_0,x_1\ge 0$ and $\epsilon \le t_0 \le t_1$,
\[
\big|v(x_1,t_1)-v(x_0,t_0)| \le C(\epsilon) |x_1-x_0| + \big|v(x_0,t_1)-v(x_0,t_0)|.
\]
To estimate the second term on the right hand side, first notice
using~\eqref{Bound v} in Lemma~\ref{lem:basic prop v} that:
\begin{align*}
0\le v(x_0,t_1) - \int_0^\infty\diffd y\, G(y,x_0,t_1-t_0)v(y,t_0)&\le
(e^{t_1-t_0}-1)\int_0^\infty\diffd y\, G(y,x_0,t_1-t_0)v(y,t_0)\\&\le e^{t_1-t_0}-1
\end{align*}
by \eqref{Grint1} and since $v\in[0,1]$.  Then
\begin{align*}
\big|v(x_0,t_1)-v(x_0,t_0)| & \le  \bigg|\int_0^\infty\diffd y\, G(y,x_0,t_1-t_0) v(y,t_0)
- v(x_0,t_0)\bigg| + \Big|e^{t_1-t_0}-1\Big|.
\end{align*}
We bound \big|$v(y,t_0)
- v(x_0,t_0)\big|$ using \eqref{dxv<ct} again, and also note that it is smaller
than 1. Then, using the fact that $0 \leq v(x_0,t_0) \leq 1$, we have
\begin{align*}
\big|v(x_0,t_1)-v(x_0,t_0)| & \le  \int_0^\infty\diffd y\,
G(y,x_0,t_1-t_0) \min\big( 1, C(\epsilon) | y - x_0| \big) \notag  \\
&  \qquad + \bigg|1 -  \int_0^\infty\diffd y\, G(y,x_0,t_1-t_0)\bigg|+ \Big|e^{t_1-t_0}-1\Big|.
\end{align*}
It follows that for any $x_0,x_1\ge 0$ and $t_0,t_1\ge \epsilon$,
\begin{align*}
\big|v(x_1,t_1)-v(x_0,t_0)| & \le C(\epsilon) |x_1-x_0|
+ \int_0^\infty\diffd y\, G(y,x_0,|t_1-t_0|) \min\big( 1, C(\epsilon)
| y - x_0| \big) \notag \\
&  \quad + \bigg|1 -  \int_0^\infty\diffd y\, G(y,x_0,|t_1-t_0|)\bigg|+
\Big|e^{|t_1-t_0|}- 1\Big|. 
\end{align*}
Because the function $y \mapsto \min\big( 1, C(\epsilon) | y - x_0| \big)$
is continuous and bounded on $[0,\infty)$, the right hand side converges to zero
as $(x_1,t_1) \to (x_0,t_0)$, by \eqref{GintConvv} in Lemma~\ref{bounds int g}. 
\end{proof}

We can now complete the proof that $v$ is a classical solution to the obstacle problem~\eqref{pbv} by proving the following result.

\begin{prop}  \label{prop:vsmoothpde}
Suppose that $x_0 > 0$, $t_0 > 0$, and $v(x_0,t_0)<1$. Then in a neighbourhood of $(x_0,t_0)$, $v(x,t)$ is smooth and satisfies
\begin{equation}
\label{evolagain}
\partial_t v = \partial_x^2 v -\frac{d-1}x \partial_x v+v.
\end{equation}
\end{prop}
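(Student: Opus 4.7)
The plan will be to exploit the approximating sequence $v_n$ used to construct $v$, taking advantage of the fact that $v_n \leq v < 1$ near $(x_0, t_0)$ to render the nonlinear perturbation $v_n^n$ negligible. First, since $v$ is continuous by Proposition~\ref{prop:vcont} and $v(x_0, t_0) < 1$, I will fix $r > 0$ and $\epsilon > 0$ such that the open parabolic cylinder $Q := (x_0 - r, x_0 + r) \times (t_0 - r^2, t_0 + r^2)$ satisfies $\bar Q \subset (0,\infty) \times (0,\infty)$ and $v \leq 1 - 2\epsilon$ on $\bar Q$. Because $v_n \nearrow v$ pointwise, with $v_n$ (smooth, since $v_n$ solves a semilinear parabolic PDE with smooth nonlinearity) and $v$ both continuous on the compact $\bar Q$, Dini's theorem promotes the convergence to be uniform; in particular $v_n \leq 1 - 2\epsilon$ on $\bar Q$ for every $n$, so $\|v_n^n\|_{L^\infty(\bar Q)} \leq (1 - 2\epsilon)^n \to 0$.

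Next, each $v_n$ solves the linear parabolic equation
\[
\partial_t v_n - \partial_x^2 v_n + \tfrac{d-1}{x}\partial_x v_n - v_n = -v_n^n \quad \text{in } Q,
\]
whose coefficients are smooth on $\bar Q$ (which is bounded away from $x = 0$). I will invoke standard interior parabolic regularity to pass to the limit. Since $\{v_n\}$ and $\{v_n^n\}$ are uniformly bounded in $L^\infty(Q)$, De~Giorgi--Nash--Moser yields uniform $C^{\alpha,\alpha/2}$ bounds for the $v_n$ on any subcylinder $Q' \Subset Q$. The mean value inequality then gives a uniform $C^\alpha$ bound on $v_n^n$ via $[v_n^n]_{C^\alpha(Q')} \leq n(1-2\epsilon)^{n-1}[v_n]_{C^\alpha(Q')}$, whose exponentially decaying prefactor absorbs the factor $n$. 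Interior Schauder estimates then upgrade this to uniform $C^{2+\alpha, 1+\alpha/2}$ bounds on further nested subcylinders $Q'' \Subset Q'$. Arzelà--Ascoli together with uniqueness of the pointwise limit $v_n \to v$ then gives $v_n \to v$ in $C^{2,1}_{\mathrm{loc}}(Q)$.

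Passing to the limit $n \to \infty$ in the PDE above (using $v_n^n \to 0$ uniformly) will yield
\[
\partial_t v = \partial_x^2 v - \tfrac{d-1}{x}\partial_x v + v
\]
throughout $Q$. Since $v$ then solves a linear parabolic equation with $C^\infty$ coefficients on $\{x > 0\}$, classical parabolic bootstrap will give $v \in C^\infty(Q)$. The main technical step is the uniform parabolic regularity argument of the second paragraph; what makes it work is the exponential decay of $v_n^n$ on $\bar Q$, which is exactly what is needed to offset the factor of $n$ arising when controlling Hölder seminorms of $v_n^n$, and hence to obtain Schauder estimates that are uniform in $n$.
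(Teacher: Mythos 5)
Your proof is correct, but it follows a genuinely different route from the paper's. The paper also works on a cylinder $\mathcal{R}$ where $v\le 1-\epsilon$ and exploits the exponential smallness of the nonlinearity, but it does so through the Feynman--Kac formula (Proposition~\ref{FK}): each $v_n$ is written as $v_n(x,t)=\E_{x}\big[v_n(X_\tau,t-\tau)e^{\tau-\int_0^\tau v_n(X_s,t-s)^{n-1}\diffd s}\big]$, the limit $n\to\infty$ is taken inside the expectation using $v_n^{n-1}\le(1-\epsilon)^{n-1}\to 0$, and the resulting representation $v(x,t)=\E_x[v(X_\tau,t-\tau)e^\tau]$ is matched against the identical representation of the unique classical solution $\tilde v$ of the Dirichlet problem on $\mathcal{R}$ with boundary data $v$; hence $v=\tilde v$ is $C^{2,1}$ and solves \eqref{evolagain}, with smoothness then following from standard regularity. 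Your argument replaces this identification step by uniform a priori estimates and compactness: Krylov--Safonov/De~Giorgi--Nash--Moser interior H\"older bounds, the observation that $[v_n^n]_{C^{\alpha}}\le n(1-2\epsilon)^{n-1}[v_n]_{C^{\alpha}}\to 0$ (the exponential decay absorbing the factor $n$ from differentiating $s\mapsto s^n$, which is indeed the crux), uniform interior Schauder bounds, and Arzel\`a--Ascoli combined with uniqueness of the pointwise limit to get $v_n\to v$ in $C^{2,1}_{\mathrm{loc}}$. (Two minor remarks: the bound $v_n\le 1-2\epsilon$ on $\bar Q$ already follows from the monotonicity $v_n\le v$, so Dini's theorem is not needed for that step; and one should note, as the paper implicitly does, that the $v_n$ are classical solutions of \eqref{eqvn} so that the interior estimates apply.) The paper's route avoids compactness machinery entirely and directly identifies $v$ with a known smooth function; yours is self-contained within parabolic PDE theory and avoids the probabilistic representation. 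Both are valid.
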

\begin{proof}
Let $(x_0,t_0)$ be as in the statement.
Choose $\epsilon \in \big(0,1-v(x_0,t_0)\big)$, and then $a$, $b$, $t_1$, $t_2$ such that $0<a<x_0<b$ and $0< t_1 < t_0 < t_2$, and such that
$v(x,t)<1 - \epsilon$ if $(x,t)$ lies in the rectangle $\mathcal{R} = (a,b)\times(t_1,t_2]$. This is possible by continuity of $v$.  Since $v$ is continuous on the parabolic boundary of $\mathcal{R}$, there is a unique function $\tilde v \in C(\overline{\mathcal{R}}) \cap C^{2,1}(\mathcal{R})$ which satisfies \eqref{evolagain} in $\mathcal{R}$ and is equal to $v$ on the parabolic boundary of $\mathcal{R}$.

Recall the definition of $v_n$ in~\eqref{eqvn}.
Applying the Feynman-Kac formula in Proposition~\ref{FK} to each function $v_n$ in the rectangle $\mathcal R$, we obtain
\[
v_n(x,t)=\E_{x}\Big[v_n(X_\tau,t-\tau)e^{\tau-\int_0^\tau\diffd s\,
v_n(X_s,t-s)^{n-1}}\Big], \quad \forall \;\;(x,t) \in \mathcal{R},
\]
where 
$X$ solves the SDE $\diffd X_s=\diffd W_s-\frac{d-1}{X_s}\,\diffd s$ with $X_0=x$, and
$\tau = \inf \{ s > 0\;:\; (X_s ,t-s) \notin \mathcal{R} \}$.  Now take the limit $n\to\infty$. Because $v_n^{n-1}\leq v^{n-1}\le (1-\epsilon)^{n-1}\to0$ uniformly on $\overline{\mathcal{R}}$, and $\tau \le t$, we get by dominated convergence
\begin{equation*}
v(x,t)=\E_{x}\big[v(X_\tau,t-\tau)e^{\tau}\big], \quad \forall \;\;(x,t) \in \mathcal{R}. 
\end{equation*}
By Proposition~\ref{FK}, the function $\tilde v$ defined above is also given by
\begin{equation*}
\tilde v(x,t)=\E_{x}\big[v(X_\tau,t-\tau)e^{\tau}\big], \quad \forall \;\;(x,t) \in \mathcal{R}. 
\end{equation*}
Hence $v = \tilde v$ in $\mathcal{R}$, which proves that $v \in C^{2,1}(\mathcal{R})$ and $v$ solves \eqref{evolagain}.  Since the coefficients in the parabolic equation \eqref{evolagain} are smooth for $x > 0$, standard regularity estimates (e.g. \cite{Evans2010} Theorem 7.1.7) imply that $v$ is infinitely differentiable in a neighbourhood of $(x_0,t_0)$.
\end{proof}

Having established existence and uniqueness of solutions to \eqref{pbv}, we now show that $v$ is continuous with respect to the initial condition.

\begin{lem}\label{lem:cont init cond}
Let $v$ and $\tilde v$ be the solutions to \eqref{pbv} corresponding to
the initial conditions $v_0$ and $\tilde v_0$.
Then
for $t>0$,
\[
\|v(\cdot,t) - \tilde v(\cdot,t)\|_{L^\infty}  \leq e^t \|v_0 - \tilde v_0 \|_{L^\infty} 
\]
and
\[
\|v(\cdot,t) - \tilde v(\cdot,t)\|_{L^1}  \leq e^t \|v_0 - \tilde v_0 \|_{L^1}. 
\]
\end{lem}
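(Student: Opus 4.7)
My plan is to work with the smooth approximants $v_n, \tilde v_n$ defined in~\eqref{eqvn} (corresponding to $v_0$ and $\tilde v_0$), establish both bounds for $w_n := v_n - \tilde v_n$ uniformly in $n$, and pass to the limit $n \to \infty$ (directly for the $L^\infty$ bound, via Fatou's lemma for the $L^1$ bound). The algebraic key is the factorization $v_n^n - \tilde v_n^n = S_n w_n$ with $S_n := \sum_{k=0}^{n-1} v_n^k \tilde v_n^{n-1-k} \geq 0$; subtracting the equations satisfied by $v_n$ and $\tilde v_n$ shows that $w_n$ solves the \emph{linear} parabolic equation
\[
\partial_t w_n = \partial_x^2 w_n - \tfrac{d-1}{x}\partial_x w_n + (1 - S_n)\, w_n, \qquad w_n(0,t) = 0,
\]
with initial datum $v_0 - \tilde v_0$ and the crucial one-sided bound $1 - S_n \leq 1$.

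For the $L^\infty$ bound I exhibit a supersolution to~\eqref{eqvn}. Let $M := \|v_0 - \tilde v_0\|_{L^\infty}$ and $\phi_n(x,t) := \tilde v_n(x,t) + M e^t$. Writing $L := \partial_x^2 - \tfrac{d-1}{x}\partial_x$ and using that $L$ annihilates the spatially constant term $Me^t$, one computes
\[
\partial_t \phi_n - \bigl(L \phi_n + \phi_n - \phi_n^n\bigr) = (\tilde v_n + M e^t)^n - \tilde v_n^n \geq 0,
\]
while $\phi_n \geq v_n$ holds on the parabolic boundary of $(0,\infty) \times (0,\infty)$ by construction. The parabolic comparison principle for~\eqref{eqvn} then gives $v_n \leq \tilde v_n + Me^t$, and swapping the roles of $v_n$ and $\tilde v_n$ yields $|v_n - \tilde v_n| \leq Me^t$ pointwise. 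Passing to the pointwise limit $v_n \to v$, $\tilde v_n \to \tilde v$ delivers the first claimed bound.

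For the $L^1$ bound I apply Proposition~\ref{FK} to $w_n$ on the cylinder $\Omega = (0,\infty) \times (\epsilon, T)$ with $0 < \epsilon < t < T$, where $w_n$ is bounded and classical. The backward exit time is $\tau = \min(\hat\tau, t-\epsilon)$ with $\hat\tau$ the hitting time of $0$ by $X$; the boundary values $w_n(0,\cdot) = 0$ kill the contribution from paths absorbed at the origin, and $1 - S_n \leq 1$ bounds the exponential factor by $e^{t-\epsilon}$. Identifying the surviving expectation through~\eqref{eq:vlformula} applied to the bounded datum $|w_n(\cdot,\epsilon)|$ yields
\[
|w_n(x, t)| \leq e^{t-\epsilon} \int_0^\infty G(y, x, t-\epsilon)\,|w_n(y, \epsilon)|\, \diffd y.
\]
Integrating in $x$ and invoking $\int_0^\infty G(y,x,s)\,\diffd x \leq 1$ from~\eqref{Grint1} (via Tonelli) produces the key estimate $\|w_n(\cdot, t)\|_{L^1} \leq e^{t-\epsilon}\, \|w_n(\cdot, \epsilon)\|_{L^1}$.

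The main obstacle is passing $\epsilon \to 0^+$: the Feynman-Kac representation requires continuity up to the bottom of the cylinder, so I cannot set $\epsilon = 0$ directly. To control $\|w_n(\cdot, \epsilon)\|_{L^1}$ I use the Duhamel representation~\eqref{Uchi} from Lemma~\ref{lem:uchi} together with the elementary bound $|v_n^n - \tilde v_n^n| \leq n|w_n|$ (valid since $v_n, \tilde v_n \in [0,1]$ gives $S_n \leq n$) to derive
\[
\|w_n(\cdot, \epsilon)\|_{L^1} \leq e^\epsilon \|v_0 - \tilde v_0\|_{L^1} + n e^\epsilon\! \int_0^\epsilon\! \|w_n(\cdot, s)\|_{L^1}\, \diffd s.
\]
For each fixed $n$, a bootstrap on $\epsilon$ small enough that $n e^\epsilon \epsilon < 1$ yields $\limsup_{\epsilon \to 0^+} \|w_n(\cdot, \epsilon)\|_{L^1} \leq \|v_0 - \tilde v_0\|_{L^1}$; combined with the Feynman-Kac estimate this gives $\|w_n(\cdot, t)\|_{L^1} \leq e^t \|v_0 - \tilde v_0\|_{L^1}$. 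The $n$-dependent constant from the Gronwall-type step vanishes harmlessly in the $\epsilon \to 0^+$ limit for each $n$, and Fatou's lemma in $n$ concludes the proof.
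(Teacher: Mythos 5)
Your argument is correct in outline but takes a genuinely different, and considerably longer, route than the paper's. The paper observes that $h_n:=v_n-\tilde v_n$ solves the \emph{linear} equation $\partial_t h_n = L h_n + (1-\alpha_n)h_n$ with $\alpha_n\ge0$, splits the \emph{initial datum} (not the solution) into its positive and negative parts $h_0^\pm$, and compares each of the two resulting non-negative solutions with $e^t G_t h_0^\pm$; this yields the single pointwise bound $|h_n(x,t)|\le e^t G_t|v_0-\tilde v_0|(x)$, from which both the $L^\infty$ and the $L^1$ estimates follow at once via \eqref{eq:Gv0bound} and \eqref{Grint1}, before letting $n\to\infty$. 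Your supersolution $\tilde v_n+Me^t$ for the $L^\infty$ bound is a clean alternative (the inequality $(\tilde v_n+Me^t)^n-\tilde v_n^n\ge0$ is what makes it work), and your Feynman--Kac plus Duhamel route to the $L^1$ bound is workable; what the paper's decomposition buys is that one comparison handles both norms simultaneously and the delicate $\epsilon\to0$ limit near the (merely measurable) initial data never arises.

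The one genuine soft spot is the ``bootstrap''. The inequality $\|w_n(\cdot,\epsilon)\|_{L^1}\le e^\epsilon\|v_0-\tilde v_0\|_{L^1}+ne^\epsilon\int_0^\epsilon\|w_n(\cdot,s)\|_{L^1}\,\diffd s$ only yields $\limsup_{\epsilon\to0^+}\|w_n(\cdot,\epsilon)\|_{L^1}\le\|v_0-\tilde v_0\|_{L^1}$ if you already know $\sup_{s\le\epsilon}\|w_n(\cdot,s)\|_{L^1}<\infty$, and the trivial bound $|w_n|\le1$ gives no integrability on the half-line, so the Gronwall-type step is circular as written. This is fixable for each fixed $n$: iterating the Duhamel inequality $|w_n(\cdot,s)|\le e^sG_s|v_0-\tilde v_0|+n\int_0^s e^{s-r}G_{s-r}|w_n(\cdot,r)|\,\diffd r$, using the semigroup property of $G_t$ on the main terms and \eqref{eq:Gv0bound} to show the $k$-th remainder is $O(n^{k+1}s^{k+1}/k!)$, gives $|w_n(x,s)|\le e^{(1+n)s}G_s|v_0-\tilde v_0|(x)$ and hence the required a priori finiteness (indeed it makes the bootstrap unnecessary, since the $n$-dependent constant still tends to $1$ as $\epsilon\to0$ for fixed $n$). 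With that repair the rest of your argument — the Feynman--Kac identity on $(0,\infty)\times(\epsilon,T)$, Tonelli with \eqref{Grint1}, and Fatou in $n$ — goes through.
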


\begin{proof}
Let $v_n$ and $\tilde v_n$ be the solutions to \eqref{eqvn}
with initial conditions $v_0$ and $\tilde v_0$ respectively.  Having established uniqueness of solutions to \eqref{pbv} in Section~\ref{sec:uniq}, we know that $v_n$ and $\tilde v_n$ converge pointwise to $v$ and $\tilde v$ respectively as $n \to \infty$. Let
$h_n:=v_n-\tilde v_n$; then $h_n$ solves
\begin{equation} \label{eq:hndef}
\partial_t h_n = \partial_x^2 h_n -\frac{d-1}x\partial_x h_n
+ h_n - \alpha_n h_n,
\quad h_n(0,t)=0,
\quad\text{where }\alpha_n=\sum_{p=1}^{n}\tilde v_n^{n-p}\ v_n^{p-1}\ge0,
\end{equation}
with initial condition $h_0=v_0-\tilde v_0$.
Since this equation is linear,
we can write $h_n(x,t)=h^+_n(x,t)-h^-_n(x,t)$ where $h^+_n$ is the
solution of~\eqref{eq:hndef} with initial condition $h_0^+:=\max(h_0,0)$ and 
$h^-_n$ is the solution of~\eqref{eq:hndef} with initial condition $h^-_0:=\max(-h_0,0)$.
Recall the definition of $G_t$ in~\eqref{eq:GCdef}.
By the comparison principle, for $x,t>0$,
$$0\le h^+_n(x,t)\le e^t G_t h^+_0(x)\qquad \text{and}\qquad
  0\le h^-_n(x,t)\le e^t G_t h^-_0(x).$$
Therefore
$$\big| h_n(x,t)\big|
\le e^t \max\big(G_th_0^+(x),G_th^-_0(x)\big)
\le   e^t  G_t |h_0|(x).$$
Recall that $h_n=v_n-\tilde v_n$ and $h_0=
v_0-\tilde v_0$.
Taking the limit $n\to\infty$ implies that
$$
|v(x,t)-\tilde v(x,t)|\le e^t G_t |v_0-\tilde v_0|(x).
$$
The bound on $\|v(\cdot,t) - \tilde v(\cdot,t)\|_{L^\infty}$ now follows by~\eqref{eq:Gv0bound} in Lemma~\ref{bounds int g}; the bound on $\|v(\cdot,t) - \tilde v(\cdot,t)\|_{L^1}$ follows from \eqref{Grint1} in Lemma~\ref{bounds int g}.
\end{proof}

We now prove a property of $v$ which we will use in the following subsection.

\begin{lem}\label{lem:vxpos}
If $v_0:[0,\infty)\to [0,1]$ is not identically zero, then $v(x,t) > 0$ for all $x > 0$, $t > 0$. If $v_0$ is also non-decreasing, then $\partial_x v(x,t) > 0$ for all $x > 0$, $t > 0$ such that $v(x,t) < 1$. 
\end{lem}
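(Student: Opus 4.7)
The plan splits naturally into two parts.

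For the first claim, the plan is to use a pointwise lower bound coming from the approximating sequence $v_n$. Setting $t_0=0$ in~\eqref{Bounds vn} of Lemma~\ref{lem:uchi} gives $v_n(x,t) \ge \int_0^\infty G(y,x,t)\, v_0(y)\,\diffd y$, and passing to the limit $n\to\infty$ yields
\begin{equation*}
v(x,t) \;\ge\; \int_0^\infty G(y,x,t)\, v_0(y)\,\diffd y, \qquad x,t>0.
\end{equation*}
A direct calculation from~\eqref{Gdef} shows $G(y,x,t)>0$ for $y,x,t>0$: in dimension $1$ the Green's function reduces to $(4\pi t)^{-1/2}\bigl(e^{-(x-y)^2/(4t)}-e^{-(x+y)^2/(4t)}\bigr)$, and the same reflection/Gaussian computation applies for general $d$. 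Reading ``$v_0$ not identically zero'' as $\{v_0>0\}$ having positive Lebesgue measure, the right-hand side is then strictly positive.

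For the second claim, the plan is a strong minimum principle applied to $w := \partial_x v$. Since $v_0$ is non-decreasing, Lemma~\ref{lem:basic prop v} gives that $v(\cdot,t)$ is non-decreasing and Proposition~\ref{prop:v is C1} that $w(\cdot,t)$ is continuous, so $w\ge 0$. On the open set $\Omega^+:=\{(x,t):x>0,\,t>0,\,v(x,t)<1\}$, Proposition~\ref{prop:vsmoothpde} gives that $v$ is smooth and solves the linear PDE in~\eqref{pbv}; differentiating in $x$,
\begin{equation*}
\partial_t w \;=\; \partial_x^2 w - \frac{d-1}{x}\partial_x w + \Big(1+\frac{d-1}{x^2}\Big) w \qquad \text{on $\Omega^+$}.
\end{equation*}
Since $w\ge 0$ and the zeroth-order coefficient is nonnegative, $w$ is a nonnegative supersolution of the homogeneous operator $\partial_t - \partial_x^2 + \frac{d-1}{x}\partial_x$, whose coefficients are smooth and bounded on any rectangle $R\subset\Omega^+$ whose closure is disjoint from $\{x=0\}$. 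The parabolic strong minimum principle then gives: if $w(x_0,t_0)=0$ at an interior point of such a rectangle $R=(a,b)\times(t_0-\delta,t_0+\delta)$, then $w\equiv 0$ on $(a,b)\times(t_0-\delta,t_0]$.

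To close the argument, I fix $t_0>0$, set $R_{t_0}:=\inf\{x>0:v(x,t_0)=1\}\in(0,\infty]$, and argue by connectedness along the time slice $t=t_0$. By continuity and monotonicity of $v(\cdot,t_0)$, the set $\{x>0:v(x,t_0)<1\}$ is exactly $(0,R_{t_0})$. Suppose for contradiction that $w(x_0,t_0)=0$ for some $x_0\in(0,R_{t_0})$, and let $Z:=\{x\in(0,R_{t_0}):w(x,t_0)=0\}$. Around any $x\in Z$ I can choose a rectangle $R\subset\Omega^+$ of the form above (using $x>0$ and openness of $\Omega^+$), so the strong minimum principle makes $Z$ open; continuity of $w(\cdot,t_0)$ makes it closed in $(0,R_{t_0})$. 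Since $(0,R_{t_0})$ is connected and $Z$ is nonempty, $Z=(0,R_{t_0})$, so $v(\cdot,t_0)$ is constant on $(0,R_{t_0})$; continuity at $x=0$ with $v(0,t_0)=0$ forces this constant to be $0$, contradicting the first part of the lemma. I expect the step requiring the most care to be the openness of $Z$: one has to pick each rectangle $R$ so that its closure stays in $\{x>0\}$, where the coefficient $(d-1)/x^2$ is bounded — which is fine point by point precisely because every $x\in Z$ lies strictly inside $(0,\infty)$.
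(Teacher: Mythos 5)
Your proof is correct and, for the main (second) claim, follows essentially the same route as the paper: differentiate the PDE in the region $\{v<1\}$, apply the parabolic strong maximum/minimum principle to $w=\partial_x v\ge 0$ on small cylinders away from $x=0$, and conclude by an open-and-closed argument on the slice $(0,R_{t_0})$, reaching a contradiction with $v(0,t_0)=0$ and the first claim. For the first claim you take a slightly more direct path — invoking the lower bound $v(x,t)\ge \int_0^\infty G(y,x,t)v_0(y)\,\diffd y$ from \eqref{Bound v} together with strict positivity of $G$ — whereas the paper routes through the comparison Lemma~\ref{bound below} with the rescaled linear solution $e^{-t_0}v_0$; both reduce to the same fact that $\int_0^\infty G(y,x,t)v_0(y)\,\diffd y>0$ when $v_0$ is nonzero on a set of positive measure, so this is an equivalent (and arguably cleaner) variant rather than a different argument.
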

\begin{proof}
The first statement is a consequence of the comparison in Lemma \ref{bound below}.  Specifically, for any $t_0 > 0$, we may define $v^\ell_0(x) = e^{-t_0} v_0(x)$, and let $v^\ell(x,t)$ be given by \eqref{eq:vlformula}, which solves the linear equation~\eqref{linear equ}. Then by~\eqref{eq:Gv0bound} in Lemma~\ref{bounds int g}, $v^\ell(x,t) \leq 1$ for $x \geq 0$ and $t \in [0,t_0]$.  So, by Lemma \ref{bound below} we have $v(x,t_0) \geq v^\ell(x,t_0)$ for all $x \geq 0$.   But $v^\ell(x,t_0) > 0$ for all $x > 0$, so $v$ must also be positive.

Now suppose $v_0$ is non-decreasing.
By Lemma \ref{lem:basic prop v}, we know that $\partial_x v \geq 0$.  Applying Proposition \ref{prop:vsmoothpde} in the region where $v < 1$, we see that the function $w = \partial_x v$ is a non-negative solution to $\partial_t w = L w + (1 + \frac{d-1}{x^2}) w$, where $L$ is the differential operator defined in~\eqref{Ldef}.  The second statement is a consequence of the strong maximum principle applied to $w$, as follows.  Let $\Omega = \{ (x,t) \;:\; x > 0,\; t > 0, \; v(x,t) < 1 \}$.  Suppose $(x_0,t_0) \in \Omega$ and $w(x_0,t_0) = 0$.  Because $v$ is continuous, we may choose $r > 0$ sufficiently small, so that $\overline{Q_r^-(x_0,t_0)} \subset \Omega$, where $Q_r^-(x_0,t_0)$ is the backward parabolic cylinder defined in~\eqref{Qcylinder}. 
By the strong maximum principle (e.g.\@ Theorem 7.1.11 of \cite{Evans2010}), $w$ must be constant on $Q_r^-(x_0,t_0)$, since it attains its minimum at $(x_0,t_0)$. In particular, $w(x,t_0) = 0$ for all $x \in (x_0 - r, x_0 +r)$. 
Let $R_{t_0}=\inf\{x:v(x,t_0)=1\}>0$ by continuity of $v$ (note that we may have $R_{t_0}=\infty$).
We now have that
the subset of $(0,R_{t_0})$ on which $w(x,t_0) = 0$ must be open. Since $x \mapsto w(x,t_0)$ is continuous, this subset must also be relatively closed in $(0,R_{t_0})$. We conclude that $w(x,t_0) = 0$ for all $x \in (0,R_{t_0})$. This is impossible however, since $v(0,t_0) = 0$ and $v(x,t_0)$ is positive for $x > 0$.
\end{proof}

To complete the proof of Theorem~\ref{thm:exists v}, it only remains to prove the fourth itemized point in the statement of the theorem, which we will prove in the next subsection.

\subsection{Properties of \texorpdfstring{$R_t$}{Rt}} \label{sec:Rprops}

In this subsection, we only consider initial conditions $v_0$ which are non-decreasing and such that $v_0(+\infty)=1$, and we study the properties of the boundary $R_t:=\inf\{x: v(x,t)=1\}$.

\begin{prop} \label{prop:Rtcontinuous}
Let $v_0$ be non-decreasing and such that $v_0(+\infty) = 1$. Then $R_t:=\inf\{x: v(x,t)=1\}$  is finite and strictly positive for all $t>0$, and $t\mapsto R_t$ is
continuous for $t > 0$. Moreover, $\lim_{t \searrow 0} R_t = R_0:=\inf \{x: v_0(x) = 1\}\in[0,\infty]$,
and for any $\alpha<1/2$, there exists $C_\alpha <\infty$ such that $R_t-R_s\le C_\alpha (t-s)^\alpha$ for all $s\ge 0$ and $t\in (s,s+1]$.

Let $v^-(x,t)$ and $v^+(x,t)$ be solutions of \eqref{pbv} with respective initial conditions $v_0^-(x)$ and $v_0^+(x)$ as described above, where $v_0^-\le v_0^+$. Then for $x\ge 0$ and $t>0$, $v^-(x,t)\le v^+(x,t)$, and the associated free boundaries $R^-$ and $R^+$ satisfy $R^+_t \leq R^-_t$ for all $t > 0$. 
\end{prop}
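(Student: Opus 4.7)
The plan is to establish (i) positivity and finiteness of $R_t$, (ii) the comparison statement for $R^\pm$, (iii) the Hölder estimate, (iv) continuity for $t>0$, and (v) the limit $t\to 0^+$, in that order. Positivity $R_t>0$ is immediate from $v(0,t)=0$ and continuity of $x\mapsto v(x,t)$ (Proposition~\ref{prop:v is C1}). For finiteness, the argument is by contradiction: if $v<1$ throughout a strip $(0,\infty)\times(0,T)$, then by Proposition~\ref{prop:vsmoothpde} and uniqueness of bounded solutions of~\eqref{linear equ}, $v\equiv v^\ell$ on that strip. Integrating by parts against the non-decreasing probability measure $\diffd v_0$ with $v_0(+\infty)=1$ gives $\lim_{x\to\infty}\int_0^\infty G(y,x,t)v_0(y)\,\diffd y=1$, whence $\lim_{x\to\infty} v^\ell(x,t)=e^t>1$ for $t>0$, contradicting $v\le 1$. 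This shows $\{s>0:R_s<\infty\}$ accumulates at $0$, and a propagation step using the comparison principle (fourth itemized point of Lemma~\ref{lem:basic prop v}, applied to the solution started from $\indic{y\ge R_{s_0}}$ at a time $s_0$ where $R_{s_0}<\infty$) then extends finiteness to all $t>0$. The comparison $v^-\le v^+$ is exactly the fourth itemized point of Lemma~\ref{lem:basic prop v}, and $R^+_t\le R^-_t$ follows immediately from the definition $R_t=\inf\{x:v(x,t)=1\}$.

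The Hölder estimate is the main technical step, following the strategy of~\cite{BBP}. For $s\ge 0$ with $R_s<\infty$ and $u=t-s\in(0,1]$, I compare $v$ from below with a linear sub-solution constructed via Lemma~\ref{bound below}. A convenient choice of initial data (at time $s$) is $v^\ell_0(y)=c\,\indic{R_s\le y\le R_s+M}$ with $c=e^{-u}$ and $M$ a suitable mesoscopic length, so that $v^\ell_0\le v(\cdot,s)$ and $\|v^\ell(\cdot,r)\|_{L^\infty}\le ce^r\le 1$ for $r\le u$; Lemma~\ref{bound below} then yields $v(\cdot,t)\ge v^\ell(\cdot,u)$. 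Using the representation~\eqref{wdef} and standard Gaussian tail bounds, $v^\ell(R_s+\delta,u)\ge 1-\eta$ as soon as $\delta/\sqrt u$ is sufficiently large in terms of $\eta$. To upgrade this from ``$v$ close to $1$'' to the equality ``$v=1$'' at points of the form $R_s+C_\alpha u^\alpha$, I iterate the construction over a chain of short sub-intervals, each iteration using the improved lower bound as the new initial datum and enlarging the region where $v$ is close to $1$. The diffusive scaling of $G$ forces the restriction $\alpha<1/2$, and I expect closing this iteration --- going from ``arbitrarily close to $1$'' to ``equal to $1$'' --- to be the main technical obstacle.

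Continuity of $R_t$ for $t>0$ then follows. Lower semi-continuity is a direct consequence of joint continuity of $v$ (Proposition~\ref{prop:vcont}): if $t_n\to t$ with $R_{t_n}\to\rho<R_t$, choosing $y\in(\rho,R_t)$ gives $v(y,t_n)=1$ eventually, hence $v(y,t)=1$, contradicting $y<R_t$. Right-continuity is then immediate from the Hölder estimate, which yields $\limsup_{s\downarrow t}R_s\le R_t$. For left-continuity at $t_0>0$, suppose for contradiction that $s_n\uparrow t_0$ with $R_{s_n}\to\rho>R_{t_0}$ and pick $y\in(R_{t_0},\rho)$. Then $v(y,s_n)<1$ by monotonicity of $v(\cdot,s_n)$, while $v(y,t_0)=1$. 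Applying the upper bound in~\eqref{Bound v} between $s_n$ and $t_0$ and using that $v(z,s_n)<1$ uniformly on a neighbourhood of $y$ together with the concentration of the measure $G(z,y,t_0-s_n)\,\diffd z$ near $z=y$ as $s_n\to t_0$, one obtains $v(y,t_0)<1$ in the limit, the desired contradiction.

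Finally, for the limit $t\to 0^+$: if $R_0<\infty$, then $v_0\equiv 1$ on $(R_0,\infty)$, so $v_0$ is continuous there, and Lemma~\ref{lem:basic prop v} (sixth itemized point) gives $v(x,t)\to 1$ as $t\to 0^+$ for every $x>R_0$, whence $\limsup_{t\to 0^+}R_t\le R_0$. Conversely, for any $x\in(0,R_0)$, I pick $x'\in(x,R_0)$ at which the monotone function $v_0$ is continuous (discontinuities of a monotone function form an at most countable set); then $v_0(x')<1$ and Lemma~\ref{lem:basic prop v} gives $v(x',t)\to v_0(x')<1$, so $R_t\ge x$ for $t$ small. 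Letting $x\nearrow R_0$ yields $\liminf_{t\to 0^+}R_t\ge R_0$. The case $R_0=+\infty$ is handled analogously, using arbitrarily large points of continuity of $v_0$.
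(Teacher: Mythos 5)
Your outline is broadly sensible, but it contains genuine gaps, the most serious being exactly the point you flag yourself: the H\"older estimate is never closed. Comparison from below with a solution of the linear problem \eqref{linear equ} started from $c\,\indic{R_s\le y\le R_s+M}$ with $c=e^{-u}$ can only ever give $v\ge 1-\eta$, never $v=1$: the constraint $\|v^\ell(\cdot,r)\|_{L^\infty}\le 1$ required by Lemma~\ref{bound below} forces the subsolution to stay strictly below $1$ for $r<u$, and iterating with improved data does not help, since each iterate is again capped strictly below $1$ on its own time interval. Because $R_t$ is defined through the \emph{equality} $v=1$, ``arbitrarily close to $1$'' gives no information about $R_t$ at all. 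The paper's resolution is a different subsolution: a Dirichlet problem on a moving slab $(b+c_bt-\epsilon^\alpha,\,b+c_bt+\epsilon^\alpha)$ with $b=R_0+2\epsilon^\alpha$, constant drift $c_b=(d-1)/(R_0+\epsilon^\alpha)$, and constant initial datum $\eta_\epsilon$ tuned so that the running maximum of $h_\epsilon$ over $[0,\epsilon]$ equals \emph{exactly} $1$ and is attained only at the single point $(b+c_b\epsilon,\epsilon)$ (one checks that $s\mapsto e^{\epsilon s}g_\epsilon(0,s)$ is increasing for small $\epsilon$). Then $h_\epsilon\le v\le 1$ pins $v=1$ at that point, yielding $R_\epsilon\le R_0+(d+1)\epsilon^\alpha$; the comparison $h_\epsilon\le v$ is proved by a bespoke maximum-principle argument (not Lemma~\ref{bound below}, since the drift is constant rather than $(d-1)/x$), using monotonicity of $v(\cdot,t)$ to dispose of the region where $\partial_x h_\epsilon<0$. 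The same ``supremum equals exactly $1$ at the final time'' device, with a travelling sine profile, is what the paper uses to prove $R_t<\infty$. Your finiteness argument is likewise incomplete: the propagation step ``compare with the solution started from $\indic{y\ge R_{s_0}}$'' is circular, since finiteness of the free boundary for that initial condition is an instance of what you are proving; and the claim that $\limsup_{t\searrow0}R_t\le R_0$ follows from $v(x,t)\to1$ for $x>R_0$ is the same confusion again --- pointwise convergence to $1$ does not give $v=1$; the paper derives this bound from \eqref{Rholder} applied at $s=0$.

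The left-continuity argument also fails as written. If $s_n\nearrow t_0$ with $R_{s_n}\to\rho>R_{t_0}$ and $y\in(R_{t_0},\rho)$, you cannot have $v(z,s_n)\le 1-\epsilon$ uniformly in $n$ on a neighbourhood of $y$: by joint continuity $v(z,s_n)\to v(z,t_0)=1$ for $z\ge R_{t_0}$, so the bound degenerates, and the upper estimate in \eqref{Bound v} only returns the tautology $v(y,t_0)\le v(y,t_0)$ in the limit. The paper's argument is genuinely different: it first shows that $\lim_{t\nearrow t_0}R_t=\bar R$ exists, then uses the strict positivity of $w=\partial_x v$ on $\{v<1\}$ (Lemma~\ref{lem:vxpos}) together with a Feynman--Kac lower bound to obtain $\inf w\ge\delta>0$ on $[R_{t_0},c]\times[t_0-\epsilon,t_0)$, whence $v(c,t_0)-v(R_{t_0},t_0)\ge\delta(c-R_{t_0})>0$, contradicting $v(\cdot,t_0)\equiv1$ on $[R_{t_0},\infty)$. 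Your lower-semicontinuity step, the comparison statement, and the $\liminf$ half of the $t\searrow0$ limit are fine.
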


\begin{proof}
These statements about $R_t$ follow from arguments similar to those in the proof of Proposition 1.3 of \cite{BBP} (which concerns the free boundary in a related obstacle problem, but one with symmetries which do not hold here in the case $d\neq 1$).  

Note that by continuity of $v$, for $t>0$, $R_t$ is well-defined (although possibly infinite) and $R_t>0$.
By Lemma~\ref{lem:basic prop v}, since $v_0$ is non-decreasing, $v(\cdot,t)$ is also non-decreasing for $t>0$.
We now argue that for any $t_0 > 0$,
\begin{equation}
\liminf_{t \to t_0} R_t \geq R_{t_0}. \label{Rlsc}
\end{equation}
If this were not the case, then there would exist a finite $\bar R \in [0, R_{t_0})$ and a sequence of times $t_n$ such that $t_n \to t_0$ as $n \to \infty$, $R_{t_n}<\infty$ for each $n$, and
\[
\lim_{n \to \infty} R_{t_n} = \bar R.
\]
By the continuity of $v$, we have $v(R_{t_n},t_n) \to v(\bar R, t_0)$ as $n \to \infty$. Since $v(R_{t_n}, t_n) = 1$, this implies $v(\bar R,t_0) = 1$. Because this contradicts the fact that $v(x,t_0) < 1$ for all $x < R_{t_0}$, by definition, we conclude that \eqref{Rlsc} must hold.

Next, we prove that for any $\alpha \in (0,1/2)$, there is $C > 0$ such that for $s\ge 0$, whenever $R_s\in [0,\infty)$,
\begin{equation}
R_{s + \epsilon} \leq R_s + C \epsilon^\alpha, \quad \forall \;\;\epsilon \in (0,1]. \label{Rholder}
\end{equation}
Without loss of generality, we may suppose that $s = 0$ and $R_0 \in [0,\infty)$.  

The proof of this bound will rely on comparison of $v$ with a subsolution to the linear equation.
We will show that for all $\alpha\in (0,1/2)$, and all $\epsilon>0$ smaller than some $\epsilon^*=\epsilon^*(\alpha)\le 1$, there exists a function $h_\epsilon(x,t)$ satisfying $h_\epsilon(x,t) \leq v(x,t)$ for $t \in [0,   \epsilon]$ and $h_\epsilon(R_0 + z,\epsilon) = 1$ for some $z\in[0,(d+1)\epsilon^\alpha]$. This implies that $R_\epsilon \le R_0+(d+1)\epsilon^\alpha$ if $\epsilon\le\epsilon^*$. 
Hence for $\epsilon\in (0,1]$ and $k\in \N$ with $\epsilon/k \le \epsilon^*$, we have
$R_\epsilon \le R_0 +k(d+1)(\epsilon/k)^\alpha$, and we
obtain~\eqref{Rholder} with $C=(d+1)(\lceil1/\epsilon^*\rceil)^{1-\alpha}$.

Take $\epsilon\in (0,1]$ and introduce  $b=R_0+2\epsilon^\alpha$ and  $c_b = (d-1)/(R_0+ \epsilon^{\alpha})$.
 Let $h_\epsilon(x,t)$ solve the linear problem
\begin{equation*} 
\begin{cases}
\partial_t h_\epsilon  = \partial_x^2 h_\epsilon - c_b\partial_x h_\epsilon + h_\epsilon, \quad & t > 0,\;\;\;  b + c_b t - \epsilon^{\alpha} < x < b + c_b t + \epsilon^{\alpha}, \\
h_\epsilon(b + c_b t - \epsilon^{\alpha}, t) =  0 = h_\epsilon(b+ c_b t + \epsilon^{\alpha},t), \quad & t > 0, \\
h_\epsilon(x,0) = \eta_\epsilon,  \quad  & b - \epsilon^{\alpha} < x < b + \epsilon^{\alpha},
\end{cases}
\end{equation*}
with the constant $\eta_\epsilon \in (0,1)$ to be determined. 
We let $h_\epsilon(x,t)=0$ if $x\not\in(b+c_bt-\epsilon^\alpha,b+c_bt+\epsilon^\alpha)$.
Observe that the function 
$g_\epsilon(y,s)= e^{-\epsilon s} h_\epsilon( \epsilon^{1/2} y + b + c_b \epsilon s, \epsilon s)/\eta_\epsilon$ satisfies
\begin{equation*} 
\begin{cases}
\partial_s g_\epsilon  = \partial_y^2 g_\epsilon   , \quad  & |y| < \epsilon^{-(\frac{1}{2} - \alpha)} , \;\;s > 0, \\
g_\epsilon\Big(\pm \epsilon^{-(\frac{1}{2} - \alpha)},s\Big)= 0, \quad & s > 0,  \\
g_\epsilon(y,0) = 1,  \quad & |y| < \epsilon^{-(\frac{1}{2} - \alpha)} . 
\end{cases}
\end{equation*}
 By symmetry, $y \mapsto g_\epsilon(y,s)$ attains a unique maximum at $y = 0$. We now show that, if $\epsilon$ is small enough, then $s\mapsto e^{\epsilon s}g_\epsilon(0,s)$ is increasing for $s\in[0,1]$.

Recall that $\frac{1}{2} - \alpha > 0$.
Observe that $g_\epsilon(y,s) = \P_{\!y}\big( |W_r| < \epsilon^{-(\frac{1}{2} - \alpha)}, \; \forall\; r \in[0,s] \big)$, where $\P_y$ is the probability measure under which $(W_r)_{r\ge 0}$ is a Brownian motion in $\R$ (with diffusivity $\sqrt 2$) and $W_0=y$. 
Then for $|y|<\epsilon^{-(\frac 12 -\alpha)}$,
$$\begin{aligned}
1-g_\epsilon(y,s)
&=\P_{\!y}\Big(\exists r\in[0,s]\;:\; |W_r| = \epsilon^{-(\frac{1}{2} - \alpha)}\Big)\\
&
\le 2 \P_{|y|}\Big(\exists r\in[0,s]\;:\; W_r = \epsilon^{-(\frac{1}{2} -\alpha)}\Big)
= 4\P_{|y|}\Big(W_s \ge \epsilon^{-(\frac{1}{2} -\alpha)}\Big),
\end{aligned}
$$
where the last equality comes from the reflection principle. Then, for $\epsilon$ small enough,
\begin{equation}
\max_{|y| < 1} \big(1-g_\epsilon(y,s) \big) \leq 
2\erfc\Big[\tfrac1{2\sqrt s} \Big(\epsilon^{-(\frac12-\alpha)}-1\Big)\Big]
\le 2e^{- \frac1{4s} \big(\epsilon^{-(\frac12 - \alpha)}-1\big)^2}
\le 2e^{- \frac1{5s} \epsilon^{-(1 - 2\alpha)}}
. \label{gepsbound}
\end{equation}
We thus have that $\max_{s\in[0,1]}\big[1-g_\epsilon(0,s)\big]=o(\epsilon)$, and in particular $e^\epsilon g_\epsilon(0,1)>1$ if $\epsilon$ is small enough.
Because of \eqref{gepsbound} and regularity of solutions to the heat equation, $\max_{s \in [0,1]} \big|\partial_s g_\epsilon(0,s)\big| =o(\epsilon)$ holds, as well. Therefore, $\partial_s [e^{\epsilon s} g_\epsilon(0,s)] = e^{\epsilon s} [\epsilon g_\epsilon(0,s) + \partial_s g_\epsilon(0,s)] > 0$ is strictly positive for $s \in [0,1]$ if $\epsilon$ is small enough, and therefore  $e^{\epsilon s} g_\epsilon(y,s)< e^\epsilon g_\epsilon(0,1)$ for all $y$ and all $s<1$, with $e^\epsilon g_\epsilon(0,1)>1$. Then 
if we choose $\eta_\epsilon$ according to
\[
\eta_\epsilon = \frac{1}{ g_\epsilon(0,1)e^{\epsilon}} < 1, 
\]
the function $h_\epsilon$ satisfies
\[
\sup_{t \in [0,\epsilon],\ x\in\R} h_\epsilon(x,t) = h_\epsilon(b + c_b \epsilon,\epsilon) = \eta_\epsilon e^{\epsilon }g_\epsilon (0,1)=1.
\]
Observe that $b+c_b\epsilon=R_0+2\epsilon^\alpha +(d-1)\epsilon/(R_0+\epsilon^\alpha)
\le R_0+(d+1)\epsilon^\alpha$ since $\alpha<\frac12$. To conclude the proof of \eqref{Rholder}, it only remains to show that
$h_\epsilon(x,t)\le v(x,t)$ for all $t\le\epsilon$. This statement is proved as in the proof of Lemma~\ref{bound below}, by a maximum principle argument.
Recall that the function $h_\epsilon$ satisfies $h_\epsilon(x,t)\le1$ for $t\le\epsilon$, and note that the choice of $c_b$ was made to guarantee that
\begin{equation}
\partial_t h_\epsilon  \leq \partial_x^2 h_\epsilon - \frac{d-1}{x} \partial_x h_\epsilon + h_\epsilon \label{hsubineq}
\end{equation}
holds for $x \in [b + c_b t - \epsilon^{\alpha}, b + c_b t]$ and $t > 0$
(i.e.~the region where $\partial_x h_\epsilon \geq 0$). 

Introduce $\phi(x,t)=e^{-2t}[v(x,t)-h_\epsilon(x,t)]$, and $M=\inf_{t\le\epsilon,\, x>0} \phi(x,t)$. We will show that $M\ge0$, which implies \eqref{Rholder}. Let $(x_n,t_n)$ be a sequence with $x_n>0$ and $t_n \leq \epsilon$ for each $n$, and such that $\phi(x_n,t_n)\to M$ as $n\to\infty$. By taking a subsequence, we may assume that either $x_n\to\infty$ or $(x_n,t_n)\to(x^*,t^*)$ as $n\to\infty$ for some $x^*\ge 0$ and $t^*\le \epsilon$.
If $x_n\to \infty$, then $M\ge0$ because $h_\epsilon(x,t)=0$ for $x\ge b+c_b\epsilon +\epsilon^\alpha$, $t\le \epsilon$. We now assume $(x_n,t_n)\to(x^*,t^*)$.
If $t^*=0$, then $M\ge0$. Indeed, the fact that $x \mapsto v(x,t)$ is non-decreasing and $v_0(x) = 1$ on $(R_0,\infty)$ implies that $v(x,t) \to 1$ uniformly on $[R_0+\frac12\epsilon^\alpha,\infty)$ as $t\to0$.
Then if $x^*> R_0+\frac12\epsilon^\alpha$, we conclude by noticing that $h_\epsilon(x,t)\le\eta_\epsilon e^t\le\frac{1+\eta_\epsilon}2$ for $t$ small enough, and if $x^*\le  R_0+\frac12\epsilon^\alpha$ we conclude by noticing that $h_\epsilon(x_n,t_n)=0$ for $n$ large enough.
We now consider the case $t^*>0$. By continuity, we have that $M=\phi(x^*,t^*)$. If $x^*\not\in (b+c_bt^*-\epsilon^\alpha, b+ c_b t^*+\epsilon^\alpha)$, then $h_\epsilon(x^*,t^*)=0$ and $M\ge0$. If $v(x^*,t^*)=1$, then $M\ge0$ (since $h_\epsilon\le1$). In the remaining cases, $h_\epsilon$, $v$ (and hence $\phi$) are $C^{2,1}$ in a neighbourhood of $(x^*,t^*)$, and therefore we must have $\partial_t \phi(x^*,t^*)\le0$, $\partial_x\phi(x^*,t^*)=0$ and $\partial_x^2\phi(x^*,t^*)\ge0$.
Recalling that $\partial_x h_\epsilon(x,t) <0$ if $x\in(b+c_bt,b+c_bt+\epsilon^\alpha)$, and that $\partial_x v\ge0$ on $(0,\infty)$, we see that $\partial_x\phi(x^*,t^*)=0$ cannot hold unless $x^*\le b+c_bt^*$. In the remaining region, \eqref{hsubineq} holds and $\partial_t v =\partial_x^2 v-\frac{d-1}x\partial_x v + v$, so that $\partial_t\phi \ge \partial_x^2 \phi-\frac{d-1}x\partial_x \phi - \phi$, which implies $M=\phi(x^*,t^*)\ge0$.  In all the cases, we found $M\ge0$, and so we have now established~\eqref{Rholder}.

The fact that $R_t < \infty$ for all $t > 0$ follows by a similar comparison with subsolutions to the linear problem. By our assumptions about $v_0$, we know that for any $\epsilon \in (0,1)$ there is $n_\epsilon > 0$ such that $v_0(x) > 1- \frac 12 \epsilon$ if $x \geq n_\epsilon$.  Now define $c = (d-1)/n_\epsilon \ge 0$. For $L > \pi$ fixed, consider the function 
\[
\psi_\epsilon (x,t) = (1 - \epsilon) e^{ t \big(1 - (\pi/L)^2\big)} \sin \left( \frac{\pi (x - n_\epsilon - ct)}{L} \right)
\]
for $x\in (n_\epsilon+ct,n_\epsilon+ct+L)$,
which satisfies the linear problem
\begin{equation*}
\begin{cases} 
\partial_t \psi_\epsilon  = \partial_x^2 \psi_\epsilon - c \partial_x \psi_\epsilon + \psi_\epsilon, \quad & t > 0,\;\;\;  n_\epsilon + c t  < x < n_\epsilon + c t + L ,\\
\psi_\epsilon(n_\epsilon + c t, t) =  0 = \psi_\epsilon(n_\epsilon + c t + L, t), \quad & t > 0,\\
\psi_\epsilon(x,0) = (1 - \epsilon) \sin\left( \frac{\pi (x - n_\epsilon)}{L} \right) < v_0(x),  \quad  & n_\epsilon  < x < n_\epsilon + L.
\end{cases}
\end{equation*}
We let $\psi_\epsilon(x,t)=0$ if $x\notin (n_\epsilon+ct,n_\epsilon+ct+L)$.
Notice that for each $t$, $x \mapsto \psi_\epsilon(x,t)$ attains its maximum at $x = n_\epsilon + ct + L/2$.  Also, $c \ge 0$ is chosen so that
\[
\partial_t \psi_\epsilon  \le \partial_x^2 \psi_\epsilon - \frac{d-1}{x} \partial_x \psi_\epsilon + \psi_\epsilon
\]
holds if $x \in [n_\epsilon + ct , n_\epsilon + ct + L/2]$ and $t>0$ (i.e.~where $\partial_x \psi_\epsilon(x,t) \ge 0$). 
Let $t_1=|\log(1 - \epsilon)|/(1 - (\pi/L)^2)$ (remember that $L>\pi$); notice that $\psi_\epsilon(n_\epsilon+ct_1+L/2,t_1)=1$ and that $\psi_\epsilon (\cdot,t)\le 1$ for all $t\le t_1$.
Then by the same argument as above with $h_\epsilon$, the comparison $\psi_\epsilon \leq v$ holds over $\{ (x,t) \;:\; n_\epsilon + ct < x < n_\epsilon + ct + L, \,\,\,t \le t_1 \}$.  This implies that $v(n_\epsilon+ct_1+L/2,t_1)=1$ and thus $R_{t_1}<\infty$. But any value of $t_1 > 0$ can be obtained by taking $\epsilon\in(0,1)$, and so we conclude that $R_t <\infty$ for all $t > 0$.

Since we now have that $R_t<\infty$ for all $t>0$, the estimate \eqref{Rholder} implies that for $t_0>0$,
\[
\limsup_{t \searrow t_0 } R_{t} \leq R_{t_0}.
\]
Therefore, to prove that $R_t$ is continuous at $t_0 > 0$, it remains to prove that
\begin{equation}
\bar R := \limsup_{t \nearrow t_0} R_t \leq R_{t_0}. \label{contin2}
\end{equation}
Note that by~\eqref{Rholder} we have for $t\in[(t_0-1)\vee (t_0/ 2),t_0]$
that $R_t\le R_{(t_0-1)\vee (t_0/2)}+C<\infty$, and so $\bar R<\infty$.
By the definition of $\bar R$, for any $\epsilon>0$ and any $t<t_0$ one can find $t_1\in(t,t_0)$
such that $R_{t_1}\ge\bar R-\epsilon$. But 
by~\eqref{Rholder}, assuming $t\ge t_0-1$, we know that $R_t\ge R_{t_1} - C(t_1-t)^\alpha\ge\bar
R-\epsilon - C(t_0-t)^\alpha$, and we conclude that
\[
\liminf_{t \nearrow t_0} R_t \ge \bar R.
\]
That is, the limit exists: $\lim_{t \nearrow t_0} R_t = \bar R$.

Arguing by contradiction, suppose that $\bar R > R_{t_0}$, and
 let $b = (\bar R + R_{t_0})/2 \in (R_{t_0}, \bar R)$. Then there is $\epsilon> 0$ small enough so that $b < R_t$ for all $t \in [t_0 - \epsilon, t_0)$. Hence, $v(x,t) < 1$ for all $(x,t) \in [0,b] \times [t_0-\epsilon, t_0)$, although $v(x,t_0) = 1$ for all $x \in [R_{t_0}, b]$.  By Lemma \ref{lem:vxpos}, the function $w(x,t) = \partial_x v(x,t)$ is positive in the region where $v(x,t) < 1$ and $x > 0$ and $t > 0$.   In particular, $\inf_{x\in [R_{t_0}/2,b]}w(x,t_0 - \epsilon)> 0$.  From this it follows that, for any fixed $c \in (R_{t_0}, b)$ there is $\delta > 0$ such that
\begin{equation} \label{eq:wlower}
\inf_{\substack{x \in [R_{t_0}, c] \\ t \in [t_0 - \epsilon, t_0)}} w(x,t) > \delta > 0.
\end{equation}
(For example, 
since $\partial_t w=\partial^2_x w-\frac{d-1}x \partial_x w+(1-\frac{d-1}{x^2})w$ in the region
$\{(x,t):v(x,t)<1,x>0,\, t>0\}$, we can apply the Feynman-Kac formula in Proposition~\ref{FK}.
Then~\eqref{eq:wlower}
follows easily from the Feynman-Kac representation for $w$ in the rectangle $[R_{t_0}/2, b] \times [t_0 - \epsilon, t_0)$ by showing that a backward path $(X_s,t-s)$ started from any point $(x,t)\in [R_{t_0},c]\times[t_0-\epsilon,t_0)$ has a probability larger than some $\eta>0$ of reaching time $t_0-\epsilon$ without first touching the boundaries $R_{t_0}/2$ or $b$.) 
Since $w = \partial_x v$, the lower bound in~\eqref{eq:wlower} shows that for any $t \in (t_0 - \epsilon,t_0)$ we have
\[
v(c,t) - v(R_{t_0},t) = \int_{R_{t_0}}^c w(x,t) \,\diffd x \geq \delta (c - R_{t_0}) .
\]
By continuity of $v$, we let $t \nearrow t_0$ and conclude that $v(c,t_0) - v(R_{t_0}, t_0) \geq \delta  (c - R_{t_0}) > 0$, which is a contradiction, since $v(x,t_0) = 1$ for all $x \geq R_{t_0}$. This proves \eqref{contin2}, and completes the proof of continuity of $R_t$ for $t > 0$. 

Now we prove that
\[
\lim_{t \searrow 0} R_t = R_0 := \inf \{x: v_0(x) = 1\}.
\]
Recall that \eqref{Rholder} is valid for $s = 0$ if $R_0 < \infty$.  Thus, the only thing we need to show is
\begin{equation}
\liminf_{t \searrow 0} R_t \geq R_0. \label{R0lower}
\end{equation}
Suppose that \eqref{R0lower} does not hold; then there exist $M > 0$ and a decreasing sequence of times $\{t_n\}_{n=1}^\infty$ such that $t_n \to 0$ as $n \to \infty$, while $R_{t_n} \leq M < R_0$ for all $n$ (where $M$ is some finite number in the case that $R_0 = +\infty$).  Because $v$ is non-decreasing in $x$, this implies that $v(x,t_n) = 1$ for all $x \geq M$. Since $v(\cdot,t) \to v_0$ in $L^1_{\text{loc}}$ as $t\searrow 0$ and $v_0$ is non-decreasing, we conclude that $v_0(x) = 1$ for all $x > M$, which is a contradiction since $M < R_0$.  This proves that $R_t \to R_0$ as $t \to 0$.

For the last statement of the proposition, by Lemma~\ref{lem:basic prop v}
we have that $v^-(x,t) \leq v^+(x,t)$ for all $x \geq 0$ and $t > 0$, and the fact that 
$R^+_t \leq R^-_t$ follows immediately.
\end{proof}

Having completed the proof of Theorem~\ref{thm:exists v}, we now show that
$\partial_x v$ is jointly continuous in $(x,t)$ up to the moving boundary
in the case where $v_0$ is non-decreasing.
This result 
is not implied by Proposition~\ref{prop:vsmoothpde}, in which $v$ was shown to be smooth in the neighbourhood of a point where $v<1$, and
will be needed in the proof of Theorem~\ref{thm:exists u} in Section~\ref{sec:uprops}.

\begin{lem}\label{lem:jointcontin}
Let $v_0$ be non-decreasing and such that $v_0(+\infty) = 1$. Then $(x,t) \mapsto \partial_x v(x,t)$ is jointly continuous on $(0,\infty) \times (0,\infty)$. 
\end{lem}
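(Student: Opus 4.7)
The plan is to verify joint continuity of $\partial_x v$ at a fixed point $(x_0,t_0)\in(0,\infty)\times(0,\infty)$ by splitting into three regimes according to the value of $v(x_0,t_0)$ and the location relative to the free boundary $R_{t_0}=\inf\{x:v(x,t_0)=1\}$, which by Proposition~\ref{prop:Rtcontinuous} is finite, strictly positive, and continuous in $t$. These three regimes cover a neighbourhood of every point in $(0,\infty)\times(0,\infty)$.

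In the first regime, $v(x_0,t_0)<1$, Proposition~\ref{prop:vsmoothpde} already gives smoothness of $v$ in a neighbourhood of $(x_0,t_0)$, so joint continuity of $\partial_x v$ there is immediate. In the second regime, $x_0>R_{t_0}$, continuity of $t\mapsto R_t$ together with the fact that $v\equiv 1$ on $\{x\ge R_t\}$ yields a neighbourhood of $(x_0,t_0)$ on which $x>R_t$ and hence $v\equiv 1$, so $\partial_x v\equiv 0$ there and joint continuity is trivial.

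The hard part will be the third regime, $x_0=R_{t_0}$, where the free boundary is approached from both sides. The two key ingredients will be a ``smooth-fit'' identity $\partial_x v(R_t,t)=0$ and the modulus-of-continuity estimate~\eqref{dvmodulus}. The smooth-fit identity follows directly from the $C^1$ regularity of $x\mapsto v(x,t)$ (Proposition~\ref{prop:v is C1}) combined with $v(x,t)\equiv 1$ on $[R_t,\infty)$: the right derivative at $R_t$ vanishes, so $\partial_x v(R_t,t)=0$. Applying~\eqref{dvmodulus} with $x'=R_t$ then gives, for $x\in[0,R_t]$,
\begin{equation*}
|\partial_x v(x,t)| \le C|R_t-x|\bigl(t^{-1}+1+\log_+(|R_t-x|^{-1})\bigr),
\end{equation*}
while $\partial_x v(x,t)=0$ for $x\ge R_t$. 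Now as $(x,t)\to(R_{t_0},t_0)$, the time $t$ stays bounded away from $0$ so $t^{-1}$ is bounded, and $R_t\to R_{t_0}$ by continuity of the free boundary, so $|R_t-x|\le|R_{t_0}-x|+|R_t-R_{t_0}|\to 0$. The linear prefactor $|R_t-x|$ dominates the logarithmic singularity, hence $\partial_x v(x,t)\to 0=\partial_x v(R_{t_0},t_0)$, which finishes the argument. The principal difficulty is thus marrying the smooth-fit property, the $x$-modulus~\eqref{dvmodulus}, and continuity of $R_t$ into a single estimate that is uniform in $t$ near $t_0$.
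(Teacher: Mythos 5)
Your proposal is correct and follows essentially the same route as the paper: both reduce the problem (via Proposition~\ref{prop:vsmoothpde} and the trivial region beyond the boundary) to showing $\partial_x v(x,t)\to 0$ as $(x,t)\to(R_{t_0},t_0)$, and both conclude by combining the smooth-fit identity $\partial_x v(R_t,t)=0$ with the modulus of continuity~\eqref{dvmodulus} and the continuity of $t\mapsto R_t$ from Proposition~\ref{prop:Rtcontinuous}. The only cosmetic difference is that the paper first weakens~\eqref{dvmodulus} to a H\"older-$1/2$ bound on a compact neighbourhood, whereas you keep the logarithmic form; both yield the same conclusion.
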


\begin{proof}
In view of Proposition \ref{prop:vsmoothpde} and Proposition~\ref{prop:Rtcontinuous}, we only need to show that for any $t_0 > 0$,
\begin{align}
\lim_{(x,t) \to (R_{t_0},t_0)} \partial_x v(x,t) = 0. \label{vrcontin}
\end{align}
Note that $R_{t_0}\in (0,\infty)$ by Proposition~\ref{prop:Rtcontinuous}.
Because of \eqref{dvmodulus} in Proposition~\ref{prop:v is C1}, we know there is a constant $C <\infty$ such that
\begin{equation*}
\big|\partial_x v(x,t) - \partial_x v(x',t)\big| \le C|x-x'|^{1/2}
\end{equation*}
for all $x,x' \in [(1/2)R_{t_0}, (3/2) R_{t_0}]$, and $t \in [(1/2)t_0, (3/2) t_0]$. 
Therefore, since $\partial_x v(R_{t},t)= 0$ for $t>0$, we infer that if
$(x,t) \in [(1/2)R_{t_0}, (3/2) R_{t_0}] \times[(1/2)t_0, (3/2) t_0]$,
\begin{align*}
\big|\partial_x v(x,t)\big| \le C|x-R_t|^{1/2} \leq C|x-R_{t_0}|^{1/2} + C|R_{t_0}-     R_t|^{1/2}. 
\end{align*}
This last expression vanishes in the limit $(x,t) \to (R_{t_0},t_0)$ by
continuity of $R_t$ (from Proposition~\ref{prop:Rtcontinuous}).
\end{proof}

\section{Proof of Theorem \ref{thm:conv}: Convergence to the steady state for 
\texorpdfstring{$v$}{v}} \label{sec:vsteady}

Recall the definitions of $V$ and $R_\infty$ in~\eqref{eq:VJ} and~\eqref{eqU}.
We consider the solutions of the obstacle problem~\eqref{pbv} with two special initial conditions
\begin{equation} \label{eq:v0upperlower}
\bar v_0(x) = 1\quad \quad \text{and} \quad \quad \underline v_0(x)=c \indic{x >K} \quad \quad \forall \;x \geq 0,
\end{equation}
where $K>0$ and $c \in(0,1)$ are fixed with $c\le V(K)$.
Then since $V$ is non-decreasing and non-negative, we have that 
$\underline v_0(x)\le V(x)\le \bar v_0(x)$ for all $x\ge 0$.
We let $\bar
v$ and $\underline v$ denote the
solutions to \eqref{pbv} with initial conditions $\bar
v_0$ and $\underline v_0$ respectively. 
By Theorem~\ref{thm:exists v}, $\bar v(\cdot,t)$ and $\underline
v(\cdot,t)$ are both non-decreasing for all $t > 0$, and the
free boundary $\bar R_t := \inf \{ x\;:\; \bar v(x,t) = 1 \}$ is finite and continuous for $t > 0$.
Let $t^*=\inf\{t> 0:\lim_{x\to \infty}\underline v(x,t)=1\}$;
 the free boundary $\underline R_t := \inf \{ x\;:\; \underline v(x,t) = 1 \}$ is finite and continuous for $t>t^*$.  Again by Theorem~\ref{thm:exists v},
and since $V$ is a stationary solution of~\eqref{pbv}, we have
\begin{equation} \label{eq:vVvsandwich}
\underline v(x,t) \le V(x)\le \bar v(x,t)\qquad \text{and }\qquad \bar R_t \le R_\infty \le \underline R_t
\end{equation}
for all $x\ge 0$ and $t> 0$.
Let $I_{K,c}$ denote the set of initial conditions
that lie between $\underline v_0$ and $\bar v_0$:
$$ v_0 \in I_{K,c} \quad\Leftrightarrow\quad \underline
v_0(x)\le v_0(x)\le\bar v_0(x) \,\,\,\, \forall x\ge 0.$$ Then, 
letting $v$ denote the solution of~\eqref{pbv} with initial condition $v_0$, and letting
$R_t=\inf\{x:v(x,t)=1\}$,
by the comparison principle in Theorem~\ref{thm:exists v} it is clear that
\begin{equation} \label{eq:vsandwich}
 v_0\in I_{K,c} \quad\implies \quad
	\underline v(\cdot,t)\le v(\cdot,t)\le\bar v(\cdot,t)
\quad\text{and}\quad \bar R_t \le R_t \le \underline R_t\quad \text{for all }t>0.
\end{equation}
Since any non-zero, non-decreasing initial condition $v_0:[0,\infty)\to [0,1]$ 
satisfies $v_0 \in I_{K,c}$ for some $K>0$ and $c\in\big(0,V(K)\big]$,
 it
is therefore sufficient to consider the long term behaviour of $\bar v$
and $\underline v$. Specifically, we are going to prove the following result.
\begin{prop} \label{prop:conv v}
For $c\in(0,1)$ and $K\in (0,\infty)$ with $c\le V(K)$, there
exist $A>0$ (independent of $c$ and $K$), and $B=B(c,K)>0$ such that the following holds.
Let
$$\lambda=\frac{Z^2}{R_\infty^2}-1>0, \quad \text{ where $Z:=\inf\big\{x>0 : J_{\frac d2}(x)=0\big\}$,}
$$
i.e.~$Z$ is the first positive zero of the
Bessel function $J_{\frac d2}$.
Let $\underline v$ and $\bar v$ denote the solutions of~\eqref{pbv} with initial conditions $\underline{v}_0(x)=c\indic{x>K}$ and $\bar{v}_0(x)=1$ respectively, and let
$\underline{R}_t=\inf\{x:\underline{v}(x,t)=1\}$ and $\bar{R}_t=\inf\{x:\bar{v}(x,t)=1\}$ for $t>0$.
Then for $x\ge 0$ and $t>0$,
\[
V(x)-\frac{B} t\le \underline v(x,t)\le V(x)\le \bar v(x,t)\le V(x)+
A e^{-\lambda t}.
\]
For all $t> 0$, $R_\infty-Ae^{-\lambda t}\le \bar R_t \leq R_\infty \leq \underline R_t$, and for $t\ge B$, $\underline R_t \le R_\infty+\frac{B}t$.
\end{prop}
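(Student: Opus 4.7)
The sandwich $\underline v \leq V \leq \bar v$ on $[0,\infty)\times(0,\infty)$ and the boundary inequalities $\bar R_t \leq R_\infty \leq \underline R_t$ are immediate from Theorem~\ref{thm:exists v}: $V$ (extended by $1$ beyond $R_\infty$) is a stationary solution of~\eqref{pbv}, and $\underline v_0 \leq V \leq \bar v_0$ by construction. The substance of the proposition is therefore the quantitative rates.

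\textbf{Upper bound on $\bar v$.} The plan is to build a super-solution out of the principal Dirichlet eigenfunction of $-L$ on $[0,R_\infty]$. Set $\psi(x) = x^{d/2}J_{d/2}(Zx/R_\infty)$, so that $\psi(0)=\psi(R_\infty)=0$, $\psi>0$ on $(0,R_\infty)$, and $-L\psi = (Z/R_\infty)^2\psi$. Combined with $LV+V=0$ on $(0,R_\infty)$, direct substitution yields $\partial_t\Phi = L\Phi+\Phi$ exactly for $\Phi(x,t) := V(x)+Ae^{-\lambda t}\psi(x)$. I would establish $\bar v \leq \Phi$ on $[0,R_\infty]\times[t_0,\infty)$ for some fixed $t_0 > 0$ by applying the maximum principle to the smooth approximations $\bar v_n$ of~\eqref{eqvn} (which are genuine sub-solutions of $\partial_t f=Lf+f$) and taking $n\to\infty$. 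The boundary data are handled trivially at $x=0$ (both sides vanish) and at $x=R_\infty$ (where $\bar v \leq 1 = \Phi$). The initialisation at $t=t_0$ is the substantive point: Lemma~\ref{bounds int g} yields $\bar v(x,t_0)\leq Cx^d$ near $x=0$, matching $\psi(x)\asymp x^d$; and near $R_\infty$, the fact $V'(R_\infty)=0$ gives $1-V(x)\asymp (R_\infty-x)^2$, while $\psi(x)\asymp R_\infty-x$, so a single $A$ dominates both asymptotics. The bound extends trivially to $x \geq R_\infty$ (both sides equal $1$) and to $t < t_0$ after enlarging $A$. Finally, $\bar R_t \geq R_\infty - A e^{-\lambda t}$ follows by inserting $x=\bar R_t$ into the upper bound and inverting $V$ near $R_\infty$ (any loss in the exponent being absorbed into the constants).

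\textbf{Lower bound on $\underline v$.} Symmetrically, I plan to construct a sub-solution $\Psi(x,t) := V(x) - (B/t)\chi(x)$ with $\chi\ge0$, $\chi(0)=0$, and $\chi$ chosen (e.g.~from a rescaled $\psi$) so that $L\chi+\chi \leq -\eta\chi$ for some $\eta>0$. A direct computation then gives $\partial_t\Psi - L\Psi - \Psi = B\chi/t^2 + (B/t)(L\chi+\chi)\leq 0$ for $t\geq 1/\eta$, making $\Psi$ a sub-solution of the linear PDE. The initialisation step is to pick $t_1=t_1(c,K)$ so that $\underline v(\cdot,t_1) \ge e^{t_1}G_{t_1}\underline v_0(\cdot)$ remains bounded by $1$ (Lemma~\ref{bound below}), and to combine this with explicit Green's-function lower bounds on compact sets to conclude $\underline v(x,t_1) \ge \Psi(x,t_1)$ on $[0,\infty)$ for $B=B(c,K)$ sufficiently large. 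A maximum-principle comparison with the smooth approximations $\underline v_n$ of~\eqref{eqvn} (and passage to the limit $n\to\infty$) propagates the inequality to all $t\ge t_1$. Enlarging $B$ absorbs $t<t_1$. The boundary bound $\underline R_t \leq R_\infty + B/t$ then follows by evaluating the lower bound on $\underline v$ at $x=R_\infty + B/t$ and using the monotonicity of $\underline v(\cdot,t)$ together with $\underline v(\underline R_t,t)=1$.

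\textbf{Expected difficulty.} The critical subtlety is in the lower-bound comparison: the natural sub-solution $\Psi$ is genuinely below $V$ only inside (and slightly beyond) $[0,R_\infty]$, while the inequality must be propagated on all of $[0,\infty)$ to respect the obstacle. This forces a careful choice of how $\chi$ is continued past $R_\infty$, and a careful accounting of the boundary contribution produced by the transition of $V$ to $1$ at $R_\infty$. Tracking the explicit dependence of $B$ on $(c,K)$ through the initialisation step is the second technical point; the upper-bound argument, by contrast, reduces to a standard spectral-gap estimate once the initial comparison at $t=t_0$ is in place.
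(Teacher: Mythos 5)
Your upper-bound argument is essentially the paper's: the super-solution $\Phi=V+Ae^{-\lambda t}\psi$ with $\psi$ proportional to $\tilde J(Zx/R_\infty)$ is exactly the function of Lemma~\ref{solw0}, the comparison through the approximations of \eqref{eqvn} is Lemma~\ref{upcmp}, and the initialisation via $\bar v(x,t_0)\le Cx^d$ near $0$ together with the quadratic-versus-linear vanishing at $R_\infty$ is Lemma~\ref{upbndv}. One correction there: deducing $\bar R_t\ge R_\infty-Ae^{-\lambda t}$ by ``inserting $x=\bar R_t$ and inverting $V$'' only yields $1-V(\bar R_t)\le Ae^{-\lambda t}$, and since $1-V(R_\infty-\eta)\asymp\eta^2$ this gives the exponent $\lambda/2$, not $\lambda$; a loss in the exponent is not absorbable into constants. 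The fix (Lemma~\ref{upbndR}) is to use that the correction term vanishes \emph{linearly} at $R_\infty$: near the boundary the upper bound reads $\bar v(R_\infty-\eta,t+1)\le 1+C\eta e^{-\lambda t}-\tfrac{0.99}{2}\eta^2$, which is $<1$ as soon as $\eta\gtrsim e^{-\lambda t}$.

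The lower bound is where the proposal genuinely breaks. Any $\chi\ge0$ with $L\chi+\chi\le-\eta\chi$ is strictly concave (after the change of variables removing the drift) wherever it is positive, so it must vanish at some finite $x_1$ and can only be continued by $0$ beyond; moreover gluing $\chi$ to $0$ at its first zero produces a \emph{concave} kink (a negative Dirac mass in $\Psi''$), which destroys the sub-solution property of $\Psi=V-(B/t)\chi$ at $x_1$. Worse, on $[x_1,\infty)$ one then has $\Psi\equiv V\equiv 1$, whereas $\underline v(\cdot,t)<1$ on $[x_1,\underline R_t)$ --- a nonempty set for every $t<t^*$ (when $\underline R_t=\infty$) and more generally whenever $\underline R_t>x_1$ --- so the global inequality $\Psi\le\underline v$ is simply false at such times for \emph{every} $B$; enlarging $B$ cannot help where $\chi=0$. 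Retreating to a comparison on a bounded interval $(0,x_0)$ does not save the argument either: one must then verify the lateral boundary inequality $\Psi(x_0,t)\le\underline v(x_0,t)$, i.e.\ $1-\underline v(x_0,t)\le(B/t)\chi(x_0)$, which is precisely (an instance of) the conclusion being proved, and a first-touching argument gives no information at a lateral boundary point. This is not a technicality you can ``carefully'' continue $\chi$ through; the ansatz $V-(B/t)\chi$ is structurally incompatible with the obstacle.

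The paper's route avoids all of this by not perturbing $V$ at all. One compares $\underline v$ from below with the \emph{growing} mode $\phi(x,t)=\tilde J(Zx/x_0)\,e^{(1-Z^2/x_0^2)t}$ on $(0,x_0)$ with $x_0>Z$ and \emph{zero} Dirichlet data at $x_0$, so the lateral boundary condition is trivially satisfied; the comparison (Lemmas~\ref{dncmp} and~\ref{bound below}) is valid exactly until the mode reaches the obstacle. After an initialisation step (Lemma~\ref{lem:toolvlower}) giving $\underline v(x,1)\ge c\beta(K)\tilde J(Zx/x_0)$, one chooses, for each target time $t$, the domain size $x_0(t)=Z/\sqrt{1+\log(c\beta)/t}>Z$ so that the mode attains height exactly $1$ at time $t$, at which instant it equals $V(Zx/x_0)$, a slightly compressed copy of $V$; Taylor expanding $V\bigl(x\sqrt{1+\log(c\beta)/t}\bigr)$ then yields both $\underline v(x,t+1)\ge V(x)-B/t$ and $\underline R_{t+1}\le R_\infty+B/t$ (Lemma~\ref{lowbndR}). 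The $1/t$ rate thus comes from optimising over a one-parameter family of exact solutions, not from a single $t$-dependent sub-solution.
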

This result is proved in Lemmas~\ref{upbndv} and~\ref{upbndR} (for the bounds on
$\bar v$ and $\bar R$), 
and in Lemma~\ref{lowbndR} (for the bounds on $\underline v$
and $\underline R$).
Note that if $v$ is the solution to~\eqref{pbv} with initial condition $v_0:[0,\infty)\to [0,1]$, where $v_0$ is non-decreasing and $v_0(K)\ge c$, then $v_0\in I_{K,c}$, and so Theorem~\ref{thm:conv} follows directly from Proposition~\ref{prop:conv v} and~\eqref{eq:vsandwich}.

Recall the definition of $V$, $R_\infty$ and $\alpha$ from \eqref{eq:VJ}, and for $x\ge 0$, let
\begin{equation}
\label{defJt}
\tilde J(x) = \alpha x^\frac d2 J_{\frac d2}(x).
\end{equation}
Recall that $R_\infty$ is the position of the first positive local maximum of $\tilde J$, and that $\alpha$ is chosen in such a way that $\tilde J(R_\infty)=1$.
The function $\tilde J$ is a solution to
\begin{equation}\label{eqJt}
\tilde J'' -\frac{d-1}x\tilde J' +\tilde J=0, \quad \quad \text{for }x > 0,
\end{equation}
and $\tilde J(0) = 0$, and so, in particular, $\tilde J''(R_\infty)=-1$. 
Recall also that
$$V(x)=\begin{cases}\tilde J(x)& \text{for }0 \leq x < R_\infty,\\ 1 &\text{for }x\ge R_\infty. \end{cases}$$
Recall from the statement of Proposition~\ref{prop:conv v} that we
let $Z=\inf\{x>0:\tilde J(x)=0\}$ denote the position of the first positive zero of $\tilde J$.
Figure~\ref{figJV} shows the graph of $\tilde J$ and $V$ for $d=3$.
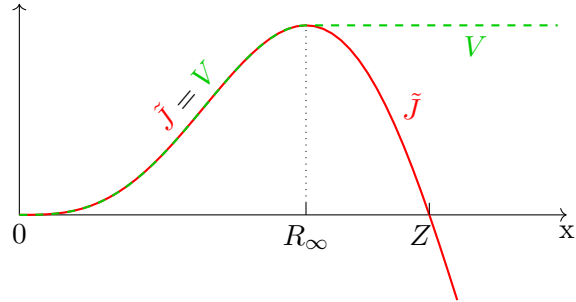
\begin{figure}[ht]\centering
\begin{tikzpicture}[xscale=1.2,yscale=.8]
\draw[<->] (0,3.5)--(0,0) node[below]{0} -- (6,0) node[below]{x};
\draw[red,thick] plot[domain=0:4.8,samples=50] 
(\x,{sin(\x*180/3.1416)-\x*cos(\x*180/3.1416)});
\draw[green!80!black,thick,dashed] plot[domain=0:3.1416,samples=50]
(\x,{sin(\x*180/3.1416)-\x*cos(\x*180/3.1416)})
	-- (5.9,3.1416);
\draw[red] (4.3,1.8) node{$\tilde J$};
\draw[green!80!black] (5,2.8) node{$V$};
\draw (1.8,2) node[rotate=50]{$\textcolor{red}{\tilde
J}=\textcolor{green!80!black}{V}$};

\draw (3.1416,0) node[below]{$R_\infty$} -- ++(0,.2);
\draw[dotted] (3.1416,.2) --(3.1416,3.1416);
\draw (4.4934,0) node[below,xshift=-.3em]{$Z$} -- ++(0,.2);
\end{tikzpicture}
\caption{The functions $\tilde J$ and $V$ for $d=3$}
\label{figJV}
\end{figure}

\subsection{Proof of Proposition~\ref{prop:conv v}: upper bound}
Our strategy to prove bounds on $\bar v$ and $\bar R$ is to compare $\bar v$ with the solution $\phi$ to the usual linear 
PDE
with the boundary condition $\phi(R_\infty,t)=1$.

\begin{lem}\label{upcmp}
Take 
$v_0:[0,\infty)\to [0,1]$ measurable.
Let $\phi_0:[0,\infty)\to [0,\infty)$ be a continuous function with $\phi_0(0)=0$, $\phi_0(R_\infty)=1$, and $\phi_0\ge v_0$ on $[0,R_\infty]$.
Suppose
$v$ is the solution to \eqref{pbv} with initial condition $v_0$, and $\phi$ is the solution to
\begin{equation}\label{eqw}
\begin{cases}
\partial_t \phi=\partial_x^2 \phi -\frac{d-1}x \partial_x \phi + \phi, \quad &\text{for
$t>0$, $x\in(0,R_\infty)$},\\
\phi(0,t)=0, \quad &\text{for }t>0,\\
\phi(x,t)=1,\quad &\text{for }t>0,\,\, x\ge R_\infty ,\\
\phi(x,0)=\phi_0(x), \quad &\text{for $x\in(0,R_\infty)$.}
\end{cases}
\end{equation}
Then for $t>0$ and $x\ge 0$,
$$\phi(x,t)\ge v(x,t).$$
\end{lem}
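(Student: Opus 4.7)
The plan is to combine a perturbation argument with the viscosity-subsolution property of $v$ recorded in Lemma~\ref{lem:viscsub}. First, extend $\phi$ to all of $[0,\infty)\times(0,\infty)$ by setting $\phi(x,t)=1$ for $x\ge R_\infty$; this extension is continuous thanks to the boundary condition $\phi(R_\infty,t)=1$ in~\eqref{eqw}. For $x\ge R_\infty$ the desired inequality $\phi(x,t)=1\ge v(x,t)$ is immediate, so the task reduces to showing $\phi\ge v$ on $[0,R_\infty]\times(0,\infty)$.

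I would first prove the lemma under the simplifying assumption that $v_0$ is continuous on $[0,R_\infty]$, so that $v(\cdot,t)\to v_0$ uniformly as $t\searrow0$ by the last bullet of Theorem~\ref{thm:exists v}. Fix $T>0$, and for $\epsilon>0$ consider the perturbation $\phi^\epsilon(x,t):=\phi(x,t)+\epsilon e^{2t}$. Using $\partial_t\phi=L\phi+\phi$ in $(0,R_\infty)\times(0,\infty)$ (where $L=\partial_x^2-\tfrac{d-1}{x}\partial_x$), a direct computation gives $\partial_t\phi^\epsilon-L\phi^\epsilon-\phi^\epsilon=\epsilon e^{2t}>0$, so $\phi^\epsilon$ is a strict supersolution. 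Moreover $\phi^\epsilon(0,t)=\epsilon e^{2t}>0=v(0,t)$, $\phi^\epsilon(R_\infty,t)\ge 1+\epsilon>v(R_\infty,t)$, and $\phi^\epsilon(x,0)-v_0(x)\ge\epsilon$ uniformly on $[0,R_\infty]$.

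Let $\tau:=\sup\{t\in[0,T]:\phi^\epsilon>v\text{ on }[0,R_\infty]\times[0,t]\}$. Combining the uniform initial gap with the uniform convergences $\phi(\cdot,t)\to\phi_0$ and $v(\cdot,t)\to v_0$ as $t\searrow0$ yields $\tau>0$. Supposing for contradiction $\tau<T$, continuity of $\phi^\epsilon-v$ produces a touching point $(x^*,\tau)\in[0,R_\infty]\times(0,T]$ with $\phi^\epsilon(x^*,\tau)=v(x^*,\tau)$ and $\phi^\epsilon\ge v$ on $[0,R_\infty]\times[0,\tau]$. The strict boundary inequalities rule out $x^*\in\{0,R_\infty\}$, so $x^*\in(0,R_\infty)$. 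Since $\phi^\epsilon$ is smooth at $(x^*,\tau)$ and $\phi^\epsilon-v$ attains value $0$ as its minimum on a backward parabolic cylinder around $(x^*,\tau)$ inside $(0,R_\infty)\times(0,T]$, Lemma~\ref{lem:viscsub} yields $\partial_t\phi^\epsilon\le L\phi^\epsilon+\phi^\epsilon$ at $(x^*,\tau)$, contradicting the strict supersolution property. Hence $\tau=T$; letting $\epsilon\searrow0$ and then $T\to\infty$ concludes the continuous-data case.

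The main obstacle is removing the continuity hypothesis on $v_0$, since the original initial condition is only attained in $L^1_{\mathrm{loc}}$. I would handle this with the shift trick used in the proof of Lemma~\ref{bound above}. For each $\eta>0$ put $h^\eta(x,t):=v(x,t+\eta)$; this solves~\eqref{pbv} with the continuous initial datum $v(\cdot,\eta)$ (continuous by Proposition~\ref{prop:vcont}). Let $\tilde\phi^\eta$ be the solution of~\eqref{eqw} with initial datum $\max(\phi_0,v(\cdot,\eta))$, which is continuous on $[0,R_\infty]$, vanishes at $x=0$, equals $1$ at $x=R_\infty$, and dominates $h^\eta(\cdot,0)$. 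Applying the continuous-data case just proved to the pair $(\tilde\phi^\eta,h^\eta)$ gives $\tilde\phi^\eta(x,t)\ge v(x,t+\eta)$ for all $x,t\ge0$. As $\eta\searrow0$, one has $\max(\phi_0,v(\cdot,\eta))\to\max(\phi_0,v_0)=\phi_0$ in $L^1_{\mathrm{loc}}$ (using $\phi_0\ge v_0$ and $v(\cdot,\eta)\to v_0$ in $L^1_{\mathrm{loc}}$), so standard parabolic continuity with respect to initial data (applied to the linear problem on the bounded cylinder $(0,R_\infty)\times(0,\infty)$) gives $\tilde\phi^\eta(x,t)\to\phi(x,t)$, while $v(x,t+\eta)\to v(x,t)$ by continuity of $v$ for $t>0$. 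Passing to the limit delivers $\phi\ge v$ in full generality.
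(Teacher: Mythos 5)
Your argument is correct, but it is genuinely different from the paper's. The paper's proof is a two-line consequence of the construction of $v$ in Section~\ref{sec:exists v}: each approximant $v_n$ solving \eqref{eqvn} satisfies $\partial_t v_n\le Lv_n+v_n$ (since $v_n^n\ge0$) together with $v_n(0,t)=0$, $v_n(R_\infty,t)\le1$ and $v_n(\cdot,0)=v_0\le\phi_0$, so the standard comparison principle for the linear Dirichlet problem \eqref{eqw} gives $v_n\le\phi$, and one lets $n\to\infty$. You instead work directly with the limit object $v$, using the viscosity-subsolution property of Lemma~\ref{lem:viscsub} at a first touching point of the strict supersolution $\phi+\epsilon e^{2t}$, plus the time-shift $h^\eta(x,t)=v(x,t+\eta)$ to reduce to continuous data (the same device the paper uses in Lemma~\ref{bound above}). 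Both routes are sound; the paper's buys brevity by exploiting the construction of $v$, while yours is independent of that construction and would apply to any classical solution of \eqref{pbv}, at the cost of two extra ingredients that you should make explicit: (i) the uniform convergence $v(\cdot,t)\to v_0$ on $[0,R_\infty]$ used to get $\tau>0$ follows from the last bullet of Theorem~\ref{thm:exists v} only via a compactness argument, and strictly speaking requires continuity of the initial datum on a neighbourhood of $[0,R_\infty]$ in $[0,\infty)$ — harmless here since the reduction only ever invokes the continuous case with datum $v(\cdot,\eta)$, which is continuous on all of $[0,\infty)$ by Proposition~\ref{prop:vcont}; and (ii) the passage $\tilde\phi^\eta\to\phi$ from $L^1$ convergence of initial data needs a pointwise bound on the Dirichlet kernel for $L$ on $(0,R_\infty)$ at positive times (available, e.g., from the bound $G(y,x,t)\le Cx^{d-1}t^{-d/2}e^{-(x-y)^2/4t}$ established in the Appendix, which dominates the killed kernel).
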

\begin{proof}
Recall the definition of $v_n$ in~\eqref{eqvn}.
In Section~\ref{sec:vexists}, we proved that the solution of~\eqref{pbv} is given by the pointwise limit
$v(x,t)=\lim_{n\to \infty}v_n(x,t)$.
Since $v_n\in [0,1]$ is a subsolution to \eqref{eqw}, it follows by the standard comparison principle for \eqref{eqw} that $v_n(x,t)\le \phi(x,t)$ for $t>0$ and $x\ge 0$.
Taking the limit $n\to \infty$ completes the proof.
\end{proof}

\begin{lem}\label{solw0}
Take $C>0$. Then
$$\phi(x,t)= 
\begin{cases} V(x)+C \tilde J\Big(\frac {Z x}{R_\infty}\Big)
e^{-\left(\frac{Z^2}{R_\infty^2}-1\right)t}\quad &\text{for $x\in[0,R_\infty)$,} \\
1 \quad &\text{for }x\ge R_\infty
\end{cases}
$$
solves \eqref{eqw}
with $\phi_0(x)= V(x)+C \tilde J\Big(\frac {Z x}{R_\infty}\Big)$ for $x\in [0,R_\infty]$.
\end{lem}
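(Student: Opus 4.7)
The proof will be a direct verification that the proposed formula satisfies every requirement of \eqref{eqw}. The plan is to first reduce the spatial PDE on $(0,R_\infty)$ to two ODEs (one for $V$, one for a rescaled Bessel-type factor), then check the boundary data.

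The key observation is that if I set $\mu:=Z/R_\infty>1$ and $f(x):=\tilde J(\mu x)$, then by the chain rule and \eqref{eqJt} applied at $\mu x$,
\[
f''(x)-\frac{d-1}{x}f'(x)=\mu^2\!\left[\tilde J''(\mu x)-\frac{d-1}{\mu x}\tilde J'(\mu x)\right]=-\mu^2\,\tilde J(\mu x)=-\mu^2 f(x).
\]
In other words, $f$ solves $f''-\tfrac{d-1}{x}f'+\mu^2 f=0$ on $(0,\infty)$. This is the one computation I would do carefully; everything else is bookkeeping.

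Next I would plug $\phi(x,t)=V(x)+Cf(x)e^{-(\mu^2-1)t}$ into the PDE on $(0,R_\infty)$. Using~\eqref{eqV} for $V$ and the identity above for $f$, the spatial operator gives
\[
\partial_x^2\phi-\frac{d-1}{x}\partial_x\phi+\phi
=\bigl[V''-\tfrac{d-1}{x}V'+V\bigr]+Ce^{-(\mu^2-1)t}\bigl[f''-\tfrac{d-1}{x}f'+f\bigr]
=Ce^{-(\mu^2-1)t}(-\mu^2+1)f(x),
\]
which exactly matches $\partial_t\phi=-C(\mu^2-1)f(x)e^{-(\mu^2-1)t}$. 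So the PDE holds on $(0,R_\infty)$.

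It remains to check boundary, matching, and initial conditions. At $x=0$, both $V(0)=0$ and $\tilde J(0)=0$ give $\phi(0,t)=0$. For $x\ge R_\infty$ the formula is literally $\phi\equiv 1$, so that condition is immediate; continuity of $\phi$ across $x=R_\infty$ holds because $V(R_\infty)=1$ and $\tilde J(Z)=0$ by the definition of $Z$, so the two pieces agree (and agree with $\phi_0(R_\infty)=1$). The initial datum is just the definition evaluated at $t=0$. The only mildly delicate point — and the one I would present most carefully — is that the matching at $x=R_\infty$ works precisely because the rescaling factor $\mu=Z/R_\infty$ was chosen so that $\mu R_\infty=Z$ is a zero of $\tilde J$; this is what forces the perturbation $Cf(x)e^{-(\mu^2-1)t}$ to vanish at the boundary and makes the explicit eigenfunction ansatz compatible with the Dirichlet-type condition $\phi(R_\infty,t)=1$.
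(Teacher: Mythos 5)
Your verification is correct and follows essentially the same route as the paper: check the boundary data from $V(0)=\tilde J(0)=0$, $\tilde J(Z)=0$, and $V(R_\infty^-)=\tilde J(R_\infty)=1$, then use the ODE \eqref{eqJt} for $V$ and for the rescaled function $x\mapsto\tilde J(Zx/R_\infty)$ to match $\partial_t\phi$ with the spatial operator. The paper compresses the chain-rule computation into ``an elementary calculation,'' which you have simply written out explicitly.
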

\begin{proof}
Note that $V(0)=0=\tilde J(0)$ and $\tilde J(Z)=0$, so the boundary conditions are satisfied.
For $x\in (0,R_\infty)$, by~\eqref{eqJt} we have 
$V''(x)-\frac{d-1}x V'(x) +V(x)=0$ and
$\tilde J''\left(\frac{Zx}{R_\infty}\right)-\frac{d-1}x\frac{R_\infty}Z  \tilde J'\left(\frac{Zx}{R_\infty}\right)+\tilde J\left(\frac{Zx}{R_\infty}\right)=0$.
The result follows by an elementary calculation.
\end{proof}
We now find an upper bound on $\bar v(\cdot,1)$ and use this to apply Lemmas~\ref{upcmp} and~\ref{solw0}.
\begin{lem}
\label{upbndv}
There exists a constant $C_1<\infty$ such that
for $t\ge 0$ and $x\in [0,R_\infty]$,
$$\bar v(x,t+1)\le {V(x)}+C_1 \tilde J\Big(\frac {Z 
x}{R_\infty}\Big)e^{-\left(\frac{Z^2}{R_\infty^2}-1\right)t}.$$
\end{lem}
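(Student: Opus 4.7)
The strategy is to combine Lemmas \ref{upcmp} and \ref{solw0}: by uniqueness, the function $(x,t) \mapsto \bar v(x, t+1)$ is the solution of \eqref{pbv} with initial condition $\bar v(\cdot, 1)$. If I can exhibit a finite constant $C_1$ such that
\[
\bar v(x, 1) \;\leq\; V(x) + C_1 \, \tilde J\!\big(Zx/R_\infty\big) \qquad \text{for all } x \in [0, R_\infty],
\]
then the function $\phi_0(x) := V(x) + C_1 \tilde J(Zx/R_\infty)$ is continuous on $[0,R_\infty]$, satisfies $\phi_0(0) = 0$ and $\phi_0(R_\infty) = V(R_\infty) + C_1 \tilde J(Z) = 1$, and dominates $\bar v(\cdot, 1)$. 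Lemma~\ref{upcmp} then gives $\bar v(\cdot, t+1) \leq \phi(\cdot, t)$, and Lemma~\ref{solw0} (applied with $C = C_1$) identifies $\phi$ explicitly, yielding the claim.

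The main work is therefore to produce $C_1$, which I do by studying the ratio
\[
h(x) := \frac{\bar v(x,1) - V(x)}{\tilde J(Zx/R_\infty)}, \qquad x \in (0, R_\infty).
\]
Because $Z$ is the first positive zero of $\tilde J$ and $R_\infty < Z$, the denominator is strictly positive on $(0, R_\infty)$ and the numerator is continuous on $[0, R_\infty]$ and bounded by $1$ in absolute value (by Proposition~\ref{prop:vcont}); hence $h$ is bounded on compact subsets of $(0, R_\infty)$, and the only issue is to control $h$ as $x \searrow 0$ and as $x \nearrow R_\infty$.

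Near $x = R_\infty$: since $\bar R_1 \leq R_\infty$ by \eqref{eq:vVvsandwich}, we have $\bar v(R_\infty,1) = 1 = V(R_\infty)$, so the numerator vanishes at $R_\infty$. By Proposition~\ref{prop:v is C1}, $\partial_x \bar v(\cdot, 1)$ is bounded on $[0, R_\infty]$, and $V' = \tilde J'$ is smooth there; hence the numerator is Lipschitz and is $O(R_\infty - x)$. For the denominator, $\tilde J$ reaches its first positive maximum at $R_\infty$ and decreases to a simple zero at $Z$, so $\tilde J'(Z) < 0$, giving $\tilde J(Zx/R_\infty) \geq c_1 (R_\infty - x)$ for some $c_1 > 0$ and $x$ near $R_\infty$. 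Thus $h$ is bounded near $R_\infty$. Near $x = 0$: applying Proposition~\ref{prop:v is C1} with $t_1 = 1$ yields $|\partial_x \bar v(x,1)| \leq C' x^{d-1}$, and since $\bar v(0,1) = 0$, integration gives $\bar v(x,1) \leq (C'/d)\, x^d$; using $V \geq 0$, the numerator is at most $(C'/d)\, x^d$, while the Bessel asymptotics $J_{d/2}(y) \sim (y/2)^{d/2}/\Gamma(d/2+1)$ at $y=0$ show that $\tilde J(Zx/R_\infty) \sim c_0\, x^d$ as $x \searrow 0$ for some $c_0 > 0$. Hence $h$ is bounded near $0$.

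Taking $C_1 := \sup_{x \in (0,R_\infty)} h(x)^+$, which is finite by the argument above, completes the construction of $\phi_0$ and hence the proof. The only delicate point is matching the vanishing rates at the endpoints: the $x^d$ behavior at the origin is precisely what Proposition~\ref{prop:v is C1} delivers, and the simple zero of $\tilde J$ at $Z$ together with the Lipschitz bound at $R_\infty$ handles the right endpoint.
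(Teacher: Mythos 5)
Your proof is correct and follows essentially the same route as the paper: establish $\bar v(x,1)\le V(x)+C_1\tilde J(Zx/R_\infty)$ by bounding the ratio $(\bar v(x,1)-V(x))/\tilde J(Zx/R_\infty)$ at both endpoints (the $x^d$ vanishing at the origin and the simple zero of $\tilde J$ at $Z$), then propagate via Lemmas~\ref{upcmp} and~\ref{solw0}. The only differences are cosmetic: the paper gets the $O(x^d)$ bound near $0$ from the Green's function estimate rather than from \eqref{eq:vxNew}, and near $R_\infty$ it uses the finer expansion $\bar v(R_\infty-\epsilon,1)=1+o(\epsilon)$, whereas your cruder Lipschitz bound on the numerator suffices equally well.
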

\begin{proof}
By \eqref{defJt} and the series expansion around $x=0$ for the Bessel
function $J_{d/2}(x)$, there exists $c_1\in (0,\infty)$ such that $\tilde J(x)=V(x)\sim c_1 x^d$  as $x\searrow 0$.
Now let $\bar{v}^\ell $ denote the solution to the linear problem~\eqref{linear equ} with initial condition $v_0^\ell=\bar{v}_0=1$.
Then by Lemma~\ref{bound above}, $\bar v(x,1)\le \bar v^\ell (x,1)$ for all $x\ge 0$.
Using~\eqref{eq:vlformula}, it follows that for $x\ge 0$,
\begin{align*}
\bar v(x,1)&\le e^1 \int_0^\infty\diffd y \, G(y,x,1)\le C e^1 x^d,
\end{align*}
where the second inequality follows by~\eqref{eq:Gintbounds} in Lemma~\ref{bounds int g} and $C$ is a constant.
Note that $V(R_\infty)=1$, $V'(R_\infty)=0$ and, using~\eqref{eqJt}, $V''(R_\infty -)=-1$,
and so $1-V(R_\infty-\epsilon)\sim \frac 12 \epsilon^2$ as $\epsilon\searrow 0$.
Also, since $\bar R_t \le R_\infty$ by~\eqref{eq:vVvsandwich}, and since $\partial_x \bar v(x,t)=0$ for $x\ge \bar R_t$, we have $\bar v(R_\infty-\epsilon,1)=1+o(\epsilon)$ as $\epsilon \searrow 0$.
Since the zeros of the Bessel function $J_{\frac d2}$ are simple, 
there exists a constant $c_2\in (0,\infty)$ such that
$\tilde J(Z(R_\infty -\epsilon)/R_\infty)\sim c_2 \epsilon$  as $\epsilon\searrow 0$.
Therefore
$$
\sup_{x\in (0,R_\infty)}
\frac{\bar v(x,1)-V(x)}{\tilde J\left(\frac{Zx}{R_\infty}\right)}<\infty.
$$
It follows that there exists $C_1<\infty$ such that
$$\bar v(x,1)\le {V(x)}+C_1 \tilde J\Big(\frac {Z 
x}{R_\infty}\Big) \quad \text{ for all }x\in [0,R_\infty].$$
By Lemmas~\ref{upcmp} and~\ref{solw0}, the result follows.
\end{proof}
Note that $\sup_{x\in [0,R_\infty]}\tilde J(Zx/R_\infty)<\infty$ and, for $x\ge R_\infty$, $\bar v (x,t)\le 1=V(x)$. Therefore it follows from Lemma~\ref{upbndv} that there exists a constant $A<\infty$ such that for $t> 0$ and $x\ge 0$,
$$
\bar v(x,t)\le V(x)+A e^{-\left(\frac{Z^2}{R_\infty^2}-1\right)t}.
$$
We now need to prove a lower bound on $\bar R_t$.
\begin{lem} 
\label{upbndR}
There exists a constant $C_2<\infty$ such that for any $t> 0$,
$$\bar R_t  \ge R_\infty -C_2 e^{-\big(\frac{Z^2}{R_\infty^2}-1\big)t}.$$ 
\end{lem}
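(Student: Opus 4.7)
The plan is to evaluate the upper bound from Lemma~\ref{upbndv} at the moving boundary $x=\bar R_{t+1}$, where $\bar v$ equals $1$, and then exploit the different orders at which $1-V(x)$ and $\tilde J(Zx/R_\infty)$ vanish as $x\nearrow R_\infty$ in order to convert the temporal decay $e^{-\lambda t}$ of the perturbation into a spatial bound on $R_\infty-\bar R_{t+1}$.

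Concretely, by \eqref{eq:vVvsandwich} and Proposition~\ref{prop:Rtcontinuous}, for each $t\ge 0$ the value $\bar R_{t+1}$ lies in $(0,R_\infty]$, and by continuity of $\bar v$ one has $\bar v(\bar R_{t+1},t+1)=1$. Setting $\epsilon_t:=R_\infty-\bar R_{t+1}\ge 0$ and substituting $x=\bar R_{t+1}$ into the pointwise bound of Lemma~\ref{upbndv} yields
\[
1-V(R_\infty-\epsilon_t)\ \le\ C_1\,\tilde J\!\left(\tfrac{Z(R_\infty-\epsilon_t)}{R_\infty}\right)e^{-\lambda t}.
\]
The two sides are then analyzed as $\epsilon_t\searrow 0$. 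Since $V$ satisfies \eqref{eqJt} with $V(R_\infty)=1$ and $V'(R_\infty-)=0$, one has $V''(R_\infty-)=-1$, so $1-V(R_\infty-\epsilon)=\tfrac{1}{2}\epsilon^2+O(\epsilon^3)$. On the other hand, because the positive zeros of $J_{d/2}$ are simple, $\tilde J'(Z)\ne 0$, and $\tilde J$ crosses zero downwards at $Z$, so $\tilde J(Z(R_\infty-\epsilon)/R_\infty)=c_2\epsilon+O(\epsilon^2)$ with $c_2=-(Z/R_\infty)\tilde J'(Z)>0$. Hence there exist constants $\delta,c',c''>0$ such that for $\epsilon_t\le\delta$ the displayed inequality gives $c'\epsilon_t^2\le c''\epsilon_t\,e^{-\lambda t}$, i.e. $\epsilon_t\le (c''/c')e^{-\lambda t}$.

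To obtain a bound valid for all $t>0$, I handle the residual regime separately. If $\epsilon_t>\delta$, then $1-V(R_\infty-\epsilon_t)$ is bounded below by $1-V(R_\infty-\delta)>0$ while $\tilde J(Z\cdot/R_\infty)$ is uniformly bounded above on $[0,R_\infty]$, so the inequality forces $e^{-\lambda t}$ to exceed a positive constant, i.e.\ $t\le t_0$ for some $t_0=t_0(\delta)$; then the trivial estimate $\epsilon_t\le R_\infty\le R_\infty e^{\lambda t_0}e^{-\lambda t}$ suffices. Combining the two regimes and absorbing the unit shift $t\mapsto t+1$ (for $t<1$ one uses $R_\infty-\bar R_t\le R_\infty\le R_\infty e^{\lambda}e^{-\lambda t}$) yields the claim with a sufficiently large constant $C_2$. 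The main technical point is purely the asymptotic matching at $R_\infty$: because $1-V$ meets the obstacle quadratically while the leading perturbation $\tilde J(Zx/R_\infty)$ vanishes only linearly there, the full rate $e^{-\lambda t}$ is recovered rather than its square root, which would be the naive outcome of a direct $L^\infty$ comparison.
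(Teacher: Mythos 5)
Your proof is correct and follows essentially the same route as the paper: both arguments combine the upper bound of Lemma~\ref{upbndv} with the facts that $1-V$ vanishes quadratically at $R_\infty$ (since $V''(R_\infty-)=-1$) while $\tilde J(Z\cdot/R_\infty)$ vanishes only linearly there (simplicity of the zero of $J_{d/2}$ at $Z$). The only cosmetic difference is that you evaluate the inequality at $x=\bar R_{t+1}$ and solve for $\epsilon_t$, whereas the paper shows $\bar v(R_\infty-\eta,t+1)<1$ for all $\eta$ in an interval $[c\,e^{-\lambda t},\delta]$; the underlying computation is identical.
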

\begin{proof}
By the definitions of $V$ and $R_\infty$, and by~\eqref{eqJt},
$V(R_\infty)=1$, $V'(R_\infty)=0$ and $V''(R_\infty -)=-1$, so there exists
$\delta>0$ such that
$$V(R_\infty-\eta) < 1- \frac{0.99}2\eta^2\quad\forall\eta\in[0,\delta].$$ 
Since the zeros of the Bessel function $J_{\frac d2}$ are simple,
by making $\delta$ smaller if necessary we also have that for $\eta \in [0,\delta]$,
$$
\tilde J\left( \frac{Z(R_\infty-\eta)}{R_\infty}\right) \le
\frac{2Z}{R_\infty}\Big|\tilde J'(Z)\Big|\eta.
$$
Therefore by
Lemma~\ref{upbndv}, for $t\ge 0$,
$$
 \bar v(R_\infty-\eta,t+1)< 1+\frac{2C_1 Z}{R_\infty}\big|\tilde J'(Z)\big|e^{-
\left(\frac{Z^2}{R_\infty^2}-1\right)t}\eta-\frac{0.99}2\eta^2\quad\forall\eta\in[0,\delta].
$$
This implies that
$$\bar v (R_\infty-\eta,t+1)< 1 \qquad\text{if}\quad \frac{4C_1 Z}{0.99R_\infty}\big|\tilde 
J'(Z)\big|e^{-
\left(\frac{Z^2}{R_\infty^2}-1\right)t}\le \eta\le\delta.$$
In particular, for $t$ sufficiently large,
$$\bar R_{t+1}\ge R_\infty -\frac{4C_1 Z}{0.99R_\infty}\big|\tilde J'(Z)\big|e^{-
\left(\frac{Z^2}{R_\infty^2}-1\right)t},$$
which completes the proof.
\end{proof}

\subsection{Proof of Proposition~\ref{prop:conv v}: lower bound}
To prove bounds on $\underline v$ and $\underline R$, we
now compare $\underline v$ with the solution $\phi$ to the usual linear PDE
with the boundary condition $\phi(x_0,t)=0$, where $x_0$ is chosen to be close to $Z$.  Recall from~\eqref{eq:v0upperlower} that $\underline v$ depends on parameters $c \in (0,1)$ and $K > 0$. 

\begin{lem}\label{dncmp}
Take $x_0>0$ and $v_0:[0,\infty)\to [0,1]$ measurable. Let $\phi_0:[0,\infty)\to [0,1]$ be a continuous function with $\phi_0(0)=0$ and $\phi_0(x_0)=0$, and satisfying $\phi_0\le v_0$ on $[0,x_0]$.
Suppose
$v$ is the solution to \eqref{pbv} with initial condition $v_0$, and $\phi$ is the solution to
\begin{equation}\label{eqw2}
\begin{cases}
\partial_t \phi=\partial_x^2 \phi -\frac{d-1}x \partial_x \phi + \phi, \quad &\text{for
$t>0$, $x\in(0,x_0)$},\\
\phi(0,t)=0, \quad &\text{for }t>0,\\
\phi(x,t)=0,\quad &\text{for }t>0,\,\, x\ge x_0,\\
\phi(x,0)=\phi_0(x),  \quad &\text{for $x\in(0,x_0)$.}
\end{cases}
\end{equation}
Let $t_*=\inf\{s\geq 0:\sup_{x\in (0,x_0)}\phi(x,s)\geq 1\}$.
Then, for $t\le t_*$,
\begin{equation}
\phi(x,t)\le v(x,t), \quad \forall \;\; x \geq 0. \label{wvlower}
\end{equation}
\end{lem}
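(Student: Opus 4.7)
The plan is to mirror the proof of Lemma~\ref{bound below}, which established the analogous comparison between $v$ and the solution $v^\ell$ of the full-line linear problem. The only structural difference here is that $\phi$ solves its linear PDE on the bounded interval $(0,x_0)$ with Dirichlet condition at both endpoints (extended by zero for $x\ge x_0$); the hypothesis $s\le t_*$ will play the role that ``$v^\ell(\cdot,s)\le 1$'' played in that earlier lemma, guaranteeing that the barrier we compare against never exceeds the obstacle level. By the same time-shift reduction used at the start of the proof of Lemma~\ref{bound below} (replacing initial data at $s=0$ by values at $s=\varepsilon$ and letting $\varepsilon\searrow 0$, using $L^1_{\mathrm{loc}}$-continuity), we may assume both $v$ and $\phi$ are continuous on $[0,\infty)\times[0,\infty)$.

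Next, fix any $t\in(0,t_*)$; it suffices to prove $\phi(x,s)\le v(x,s)$ for all $(x,s)\in[0,\infty)\times[0,t]$, since the bound at $s=t_*$ then follows by joint continuity. Define
\[
\Psi(x,s):=\frac{e^{-2s}\bigl[v(x,s)-\phi(x,s)\bigr]}{1+x},\qquad M:=\inf\{\Psi(x,s):x\ge 0,\,s\in[0,t]\}.
\]
The goal is to show $M\ge 0$. Take a sequence $(x_n,s_n)$ with $\Psi(x_n,s_n)\to M$. If $x_n\to\infty$, then $\phi\equiv 0$ there and $v\in[0,1]$, so $M=0$. Otherwise, passing to a subsequence, $(x_n,s_n)\to (x^*,s^*)$ with $s^*\in[0,t]$. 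The boundary cases dispatch immediately: $x^*=0$ yields $\Psi(x^*,s^*)=0$ since $v(0,\cdot)=\phi(0,\cdot)=0$; $x^*\ge x_0$ yields $\Psi(x^*,s^*)\ge 0$ because $\phi$ vanishes there; and $s^*=0$ yields $\Psi(x^*,s^*)\ge 0$ from $\phi_0\le v_0$ on $[0,x_0]$ together with $\phi_0\equiv 0$ on $[x_0,\infty)$.

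The essential case is $x^*\in(0,x_0)$, $s^*\in(0,t]$, which is where the main obstacle lies and which splits in two. If $v(x^*,s^*)<1$, then both $v$ and $\phi$ are $C^{2,1}$ classical solutions of $\partial_s w = L w + w$ in a neighbourhood of $(x^*,s^*)$, and the computation from Lemma~\ref{bound below} shows that $\Psi$ itself is $C^{2,1}$ near $(x^*,s^*)$ and satisfies
\[
\partial_s\Psi = L\Psi + \frac{2}{1+x}\partial_x\Psi - \Bigl(\frac{d-1}{x(1+x)}+1\Bigr)\Psi
\]
at $(x^*,s^*)$; the local minimum conditions $\partial_s\Psi\le 0$, $\partial_x\Psi=0$, $\partial_x^2\Psi\ge 0$ then force $\Psi(x^*,s^*)\ge 0$. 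If instead $v(x^*,s^*)=1$, then because $s^*\le t<t_*$ the definition of $t_*$ yields $\phi(x^*,s^*)<1$ strictly, so $\Psi(x^*,s^*)>0$. It is precisely this $v=1$ sub-case that forces the restriction $t\le t_*$: the strict inequality $\phi<1$ available for $s<t_*$ is what rules out a problematic interior minimum of $v-\phi$. Combining all cases yields $M\ge 0$, hence $\phi\le v$ on $[0,\infty)\times[0,t]$, and letting $t\nearrow t_*$ completes the proof.
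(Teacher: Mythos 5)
Your argument is correct, but it takes a different route from the paper's. The paper proves this lemma as a quick corollary of Lemma~\ref{bound below}: it sets $t_1=\inf\{t:\phi(x,t)>v(x,t)\text{ for some }x\}$, and if $t_1<t_*$ it restarts at time $t_1$ with the half-line linear solution $v^\ell$ having initial data $\phi(\cdot,t_1)$; the Dirichlet comparison gives $\phi(\cdot,t_1+\epsilon)\le v^\ell(\cdot,\epsilon)$, while Lemma~\ref{bound below} gives $v^\ell(\cdot,\epsilon)\le v(\cdot,t_1+\epsilon)$ for small $\epsilon$ because $\sup_x\phi(x,t_1)<1$ keeps $v^\ell$ below the obstacle for a short time — contradicting the definition of $t_1$. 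You instead re-run the weighted maximum principle (the barrier $e^{-2s}[v-\phi]/(1+x)$) directly on the pair $(v,\phi)$, which is self-contained and makes transparent exactly where the hypothesis $t<t_*$ enters (at a minimiser with $v=1$ you get $\phi<1$ strictly); the price is redoing the full case analysis, and your extra boundary case $x^*\ge x_0$ is correctly dispatched since $\phi$ vanishes there (note $\phi$ is not $C^1$ across $x=x_0$, but you never need derivatives there). The one step you should not wave through is the initial time-shift reduction: you cannot simply shift both functions to time $\epsilon$, since $\phi(\cdot,\epsilon)\le v(\cdot,\epsilon)$ is what you are trying to prove; as in the proof of Lemma~\ref{bound below} you must compare $v$ against the solution $h_\epsilon$ of the Dirichlet problem~\eqref{eqw2} started from $\min\big(\phi(\cdot,\epsilon),v(\cdot,\epsilon)\big)$, check that $h_\epsilon\le\phi(\cdot,\cdot+\epsilon)$ so its own stopping time is at least $t_*-\epsilon$, and pass to the limit using $L^1_{\mathrm{loc}}$ convergence of $v(\cdot,\epsilon)$ and the (Dirichlet) Green's function representation of $h_\epsilon$. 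With that adaptation spelled out, the proof is complete.
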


\begin{proof}
This is a consequence of Lemma \ref{bound below}. Let $t_1$ denote the time at which \eqref{wvlower} first fails:
\[
t_1 = \inf \{ t \geq 0\;:\;  \phi(x,t) > v(x,t) \;\;\text{for some $x \geq 0$} \}.
\]
The lemma is proved by showing that $t_1 \geq t_*$.
Suppose that $t_1<\infty$.
Let $v^\ell(x,t)$ be defined by~\eqref{eq:vlformula} with $v_0^\ell(x) = \phi(x,t_1)$ for $x \ge 0$. Thus, $v^\ell$ solves the linear problem~\eqref{linear equ}. By the comparison principle for this linear PDE, we have $\phi(x,t_1 + \epsilon) \leq v^\ell(x,\epsilon)$ for all $x \in (0,x_0)$ and $\epsilon \geq 0$.  By Lemma~\ref{bound below}, 
since $\phi(\cdot,t_1)\le v(\cdot,t_1)$,
we also have $v^\ell(x,\epsilon) \leq v(x,t_1 + \epsilon)$ for all $x \geq 0$, as long as $\sup_{x\ge 0} v^\ell(x,t) \leq 1$ for all $t\le \epsilon$. 
Suppose that $t_1<t_*$. Then $\sup_{x\ge 0} v^\ell_0(x) = \sup_{x\ge 0} \phi(x,t_1) < 1$, and
so by~\eqref{eq:Gintbounds} in Lemma~\ref{bounds int g},
for $\epsilon > 0$ sufficiently small, $\sup_{x\ge 0} v^\ell(x,t) < 1$ holds for all $t \in [0,\epsilon]$. Hence for $\epsilon>0$ sufficiently small, $\phi(x,t_1 + \epsilon) \leq v^\ell(x,\epsilon) \leq v(x,t_1 + \epsilon)$ holds for all $x \geq 0$; this contradicts the definition of $t_1$, so we must have $t_1 \geq t_*$.
\end{proof}

\begin{lem}\label{solw1}
Take $x_0>0$.  Then
$$\phi(x,t)=
\begin{cases}
\tilde
J\Big(\frac{Zx}{x_0}\Big)e^{\Big(1-\frac{Z^2}{x_0^2}\Big)t}\qquad &\text{for
$x\in[0,x_0)$},\\
0 \quad &\text{for $x\ge x_0$},
\end{cases}
$$
solves \eqref{eqw2} with $\phi_0(x)=\tilde
J\Big(\frac{Zx}{x_0}\Big)$ for $x\in [0,x_0]$.
\end{lem}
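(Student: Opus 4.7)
The plan is to verify the claim by direct substitution, exploiting the ODE \eqref{eqJt} satisfied by $\tilde J$ together with the facts that $\tilde J(0)=0$ and $\tilde J(Z)=0$ (the latter by the very definition of $Z$ in Proposition~\ref{prop:conv v}). There is no real obstacle here; the content is just a change of variables in a Bessel-type eigenfunction.

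First I would check the boundary and initial conditions. Since $\tilde J(0)=0$, we get $\phi(0,t)=0$ for all $t>0$. Since $\tilde J(Z)=0$ and $\phi$ is extended by $0$ for $x\ge x_0$, the boundary condition $\phi(x_0,t)=0$ holds and $\phi$ is continuous across $x=x_0$. At $t=0$ we plainly recover $\phi_0(x)=\tilde J(Zx/x_0)$ for $x\in[0,x_0]$, as required.

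Next I would verify the PDE for $x\in(0,x_0)$, $t>0$. Writing $y=Zx/x_0$ and $\mu=1-Z^2/x_0^2$, I compute
\begin{align*}
\partial_t\phi &= \mu\, \tilde J(y)\, e^{\mu t},\\
\partial_x\phi &= \tfrac{Z}{x_0}\tilde J'(y)\, e^{\mu t},\\
\partial_x^2\phi &= \tfrac{Z^2}{x_0^2}\tilde J''(y)\, e^{\mu t}.
\end{align*}
Substituting into the right-hand side of the PDE and using $\frac{d-1}{x}=\frac{(d-1)Z}{yx_0}$, the factor $e^{\mu t}$ cancels and we need
\[
\mu\, \tilde J(y) = \tfrac{Z^2}{x_0^2}\Bigl[\tilde J''(y) - \tfrac{d-1}{y}\tilde J'(y)\Bigr] + \tilde J(y).
\]
Since $\mu-1=-Z^2/x_0^2$, this reduces precisely to $\tilde J''(y)-\frac{d-1}{y}\tilde J'(y)+\tilde J(y)=0$, which is~\eqref{eqJt}. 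This confirms that $\phi$ solves the linear PDE in $(0,x_0)\times(0,\infty)$, completing the verification.
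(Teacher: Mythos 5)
Your proof is correct and follows exactly the paper's approach: verify the boundary conditions from $\tilde J(0)=\tilde J(Z)=0$ and reduce the PDE by direct substitution and the change of variable $y=Zx/x_0$ to the ODE~\eqref{eqJt}. The paper simply leaves the "easy calculation" implicit, whereas you have written it out.
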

\begin{proof}
Since $\tilde J(0)=0=\tilde J(Z)$, the boundary conditions are satisfied.
The result follows by~\eqref{eqJt} and an easy calculation.
\end{proof}

\begin{lem} \label{lem:toolvlower}
There exists $\beta=\beta(K)>0$ such that for any $x_0\ge Z$ and $c \in (0,e^{-1})$, for $t\ge 0$ and $x\ge 0$,
\begin{equation}\label{ulvtJ}
\underline v(x,t+1) \ge c \beta
V\Big(\frac{Zx}{x_0}\Big)e^{\left(1-\frac{Z^2}{x_0^2}\right)t}\qquad\text{if $c\beta e^{\left(1-\frac{Z^2}{x_0^2}\right)t}\le 1$}.
\end{equation}
\end{lem}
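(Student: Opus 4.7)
The plan is to compare $\underline v(\cdot,t+1)$ with the explicit subsolution $\phi(x,t)=c\beta\tilde J(Zx/x_0)e^{(1-Z^2/x_0^2)t}$ supplied by Lemma~\ref{solw1}, applying Lemma~\ref{dncmp} to the obstacle problem~\eqref{pbv} with initial data $\underline v(\cdot,1)$. Since $\sup_{y\in[0,Z]}\tilde J(y)=1$ (attained at $y=R_\infty$), the time $t_*$ appearing in Lemma~\ref{dncmp} is exactly the first $t$ such that $c\beta e^{(1-Z^2/x_0^2)t}=1$, which matches the quantitative condition in the statement. Thus the crux is to choose $\beta=\beta(K)\in(0,1]$ so that $\phi(\cdot,0)\le\underline v(\cdot,1)$ on $[0,x_0]$ uniformly in $x_0\ge Z$.

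To obtain a lower bound on $\underline v(\cdot,1)$, I would apply Lemma~\ref{bound below} with $v_0^\ell=\underline v_0$. The assumption $c\in(0,e^{-1})$ is exactly what ensures that the linear solution $v^\ell(x,s)=ce^s\int_K^\infty G(y,x,s)\,\diffd y$ stays bounded above by $ce^s\le 1$ on $[0,1]$ (by~\eqref{Grint1} in Lemma~\ref{bounds int g}), so the lemma applies and gives
\[
\underline v(x,1)\ge c\, e\int_K^\infty G(y,x,1)\,\diffd y = c\, e\, f(x),\qquad f(x):=\p{\|B_1\|<x \mid \|B_0\|=K},
\]
the last identity coming from~\eqref{wdef}--\eqref{Gdef}. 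The main technical step, and the one I expect to be the principal obstacle, is then to prove $f(x)\ge(\beta/e)\tilde J(Zx/x_0)$ uniformly for $x\in[0,x_0]$, $x_0\ge Z$. Both functions vanish like $x^d$ at $0$ and are bounded above by $1$, and the comparison must be uniform in $x_0$. Concretely, using the explicit Gaussian form of $\Phi$ one shows $f(x)\ge c_1(K)\min(x^d,1)$ (by bounding $\Phi(K\mathbf e_1,\cdot,1)$ from below on $\B(1)$ for $x\le 1$, and using monotonicity of $f$ for $x\ge 1$); and since $\tilde J(y)/y^d$ is continuous on $[0,Z]$ with $\tilde J\le 1$, one has $\tilde J(Zx/x_0)\le C_0\min((Zx/x_0)^d,1)\le C_0\min(x^d,1)$ uniformly in $x_0\ge Z$. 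Combining the two bounds, and shrinking $\beta$ if necessary so that $\beta\le 1$ (which ensures $\phi_0\le 1$), gives the required inequality $\phi_0(x)=c\beta\tilde J(Zx/x_0)\le\underline v(x,1)$ on $[0,x_0]$.

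With this in hand, Lemma~\ref{dncmp} applied to the solution $(s,x)\mapsto\underline v(x,s+1)$ of~\eqref{pbv} (with initial datum $\underline v(\cdot,1)$) yields
\[
\underline v(x,t+1)\ge c\beta\tilde J(Zx/x_0)\,e^{(1-Z^2/x_0^2)t}\quad\text{for }x\in[0,x_0],
\]
whenever $c\beta e^{(1-Z^2/x_0^2)t}\le 1$. To replace $\tilde J$ by $V$ and extend the bound to all $x\ge 0$, I would invoke monotonicity of $x\mapsto\underline v(x,t+1)$, which holds by Theorem~\ref{thm:exists v} since $\underline v_0$ is non-decreasing. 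For $x\in[0,R_\infty x_0/Z]$ one has $V(Zx/x_0)=\tilde J(Zx/x_0)$ and the bound is immediate; for $x\ge R_\infty x_0/Z$ one has $V(Zx/x_0)=1$, and monotonicity together with the previous display applied at $x=R_\infty x_0/Z$ gives
\[
\underline v(x,t+1)\ge\underline v(R_\infty x_0/Z,t+1)\ge c\beta\tilde J(R_\infty)e^{(1-Z^2/x_0^2)t}=c\beta e^{(1-Z^2/x_0^2)t}=c\beta V(Zx/x_0)e^{(1-Z^2/x_0^2)t},
\]
which completes the proof.
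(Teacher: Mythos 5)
Your proposal is correct and follows essentially the same route as the paper: lower-bound $\underline v(\cdot,1)$ by the linear solution $ce^t w(K,\cdot,t)$ via Lemma~\ref{bound below}, compare $x\mapsto\underline v(x,1)$ with $c\beta\tilde J(Zx/x_0)$ uniformly in $x_0\ge Z$ (the paper defines $\beta$ directly as $\inf_x e\,w(K,x,1)/V(x)$ rather than passing through $\min(x^d,1)$, but this is the same estimate), then apply Lemmas~\ref{dncmp} and~\ref{solw1} and finish by monotonicity of $\underline v(\cdot,t+1)$ and of $V$. The only blemish is a mis-citation: the bound $\int_K^\infty G(y,x,s)\,\diffd y\le 1$ comes from the first inequality in~\eqref{eq:Gintbounds} (equivalently~\eqref{eq:Gv0bound}), not from~\eqref{Grint1}, which integrates over $x$ rather than $y$.
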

\begin{proof}
Recalling the definition of $w$ in~\eqref{wdef}, let 
\[
\underline{v}^\ell(x,t) = c e^t w(K,x,t) = ce^t \p{\|B_t\|< x \,\, |\,\, \|B_0\| =K},
\]
where $(B_t)_{t\ge 0}$ is a $d$-dimensional Brownian motion with diffusivity $\sqrt 2$.
Due to~\eqref{weqn1}, this function is a solution of the linear problem~\eqref{linear equ} with initial condition $v_0^\ell(x)=\underline{v}_0(x)=c\indic{x>K}$.
Since $c<e^{-1}$ we have $\underline{v}^\ell (\cdot,s)\le e^1 c\le 1$ for all $s\le 1$, and so by Lemmas~\ref{bound above} and~\ref{bound below}
\[
\underline v (x,1)=\underline v^\ell(x,1) = e^1 c w(K,x,1) =  e^1 c \p{\|B_1\|< x \,\, |\,\, \|B_0\| =K}
\ge e^1 c (4\pi)^{-d/2} e^{-\frac 14 (K+x)^2} \omega_d x^d,
\]
where $\omega_d$ is the volume of the $d$-dimensional unit ball.
As in the proof of Lemma~\ref{upbndv}, recall that there exists $c_1\in (0,\infty)$ such that $\tilde J(x)=V(x)\sim c_1 x^d$ as $x\searrow 0$, and note that both $V(x)$ and $w(K,x,1)$ converge to 1 as $x\to\infty$.
It follows that the constant 
\[
\beta := \inf_{x>0} \frac{e^1  w(K,x,1) }{V(x)}>0
\]
is positive. 
Note that by~\eqref{eqJt}, $\tilde J'(x)<0$ for $x\in (R_\infty,Z)$, and so $V(x)\ge \tilde J(x)$ for all $x\le Z$.
Now take $x_0\ge Z$. Since $V$ is non-decreasing, we have
\begin{equation}
\underline v(x,1)=e^1cw(K,x,1)\ge c\beta V(x)\ge c\beta V\left( \frac{Zx}{x_0}\right) \ge c \beta \tilde J\left( \frac{Zx}{x_0}\right) 
\quad \text{for all }x\in [0,x_0].
\label{eq:v1lowerbound}
\end{equation}
By~\eqref{eq:v1lowerbound} and Lemmas~\ref{dncmp} and~\ref{solw1},
we have that if $c \beta e^{\left(1-{Z^2}/{x_0^2}\right)t}\le 1$ then
$$
\underline v(x,t+1)\ge c \beta \tilde J\left( \frac{Zx}{x_0}\right) e^{\left(1-\frac{Z^2}{x_0^2}\right)t}
\quad \text{for all }x\in [0,x_0].
$$
Since $\underline v(\cdot,t+1)$ is non-decreasing, the result follows by the definition of $V$.
\end{proof}
We can now complete the proof of Proposition~\ref{prop:conv v}.
\begin{lem}\label{lowbndR}
Let $\beta = \beta(K)>0$ be as in Lemma \ref{lem:toolvlower} and suppose $c \in (0,e^{-1}\wedge \beta^{-1})$. For $t>-\log (c \beta) $ and $x\ge 0$,
$$\underline v(x,t+1) \ge 
V\left(x\sqrt{1+\frac{\log (c\beta)}t}\right)\qquad\text{and}\qquad
\underline R_{t+1}\le \frac{R_\infty}{\sqrt{1+\frac{\log (c \beta)}t}}.$$
\end{lem}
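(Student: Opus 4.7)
The plan is to invoke Lemma~\ref{lem:toolvlower} with an optimally chosen radius $x_0$ that makes the exponential prefactor in~\eqref{ulvtJ} saturate the constraint $c\beta e^{(1-Z^2/x_0^2)t}\le 1$. Solving $c\beta\, e^{(1-Z^2/x_0^2)t}=1$ for $x_0$ gives
\[
\frac{Z^2}{x_0^2}=1+\frac{\log(c\beta)}{t}, \qquad\text{i.e.,}\qquad \frac{Z}{x_0}=\sqrt{1+\frac{\log(c\beta)}{t}}.
\]
Since $c\beta<1$ we have $\log(c\beta)<0$, so this quantity is in $(0,1)$ precisely when $t>-\log(c\beta)$, which in turn yields $x_0\ge Z$, as required by Lemma~\ref{lem:toolvlower}.

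With this choice of $x_0$, the hypothesis of~\eqref{ulvtJ} holds with equality, so Lemma~\ref{lem:toolvlower} (together with $c<e^{-1}$) immediately gives, for $t>-\log(c\beta)$ and $x\ge 0$,
\[
\underline v(x,t+1)\ge c\beta\, V\!\left(\tfrac{Zx}{x_0}\right) e^{(1-Z^2/x_0^2)t}=V\!\left(x\sqrt{1+\tfrac{\log(c\beta)}{t}}\right),
\]
which is the first asserted inequality.

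For the bound on $\underline R_{t+1}$, recall that $V$ is non-decreasing with $V(y)=1$ iff $y\ge R_\infty$. Setting $x^*=R_\infty\big/\sqrt{1+\log(c\beta)/t}$, the first inequality gives $\underline v(x^*,t+1)\ge V(R_\infty)=1$, and combined with $\underline v\le 1$ this forces $\underline v(x^*,t+1)=1$. By the definition of $\underline R_{t+1}$ as the infimum of points where $\underline v(\cdot,t+1)=1$, we conclude $\underline R_{t+1}\le x^*$, which is exactly the claimed bound.

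I do not expect any genuine obstacle here: this lemma is a direct corollary of Lemma~\ref{lem:toolvlower} obtained by optimising the free parameter $x_0$. The only minor care needed is to verify the domain condition $t>-\log(c\beta)$ (which makes the argument of the square root positive and ensures $x_0\ge Z$) and to use the hypothesis $c\in(0,e^{-1}\wedge\beta^{-1})$ so that both $c<e^{-1}$ and $c\beta<1$ hold simultaneously.
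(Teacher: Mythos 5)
Your proposal is correct and follows exactly the paper's argument: choose $x_0=Z/\sqrt{1+\log(c\beta)/t}$ so that the constraint $c\beta e^{(1-Z^2/x_0^2)t}\le 1$ in Lemma~\ref{lem:toolvlower} is saturated, then read off both bounds. The verification of the domain condition $t>-\log(c\beta)$ (ensuring $x_0>Z$) and the deduction $\underline R_{t+1}\le R_\infty/\sqrt{1+\log(c\beta)/t}$ from $V(R_\infty)=1$ are exactly as in the paper.
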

\begin{proof}
Take $t>-\log(c\beta) > 0$ and choose $x_0=Z/\sqrt{1+\frac{\log (c\beta)
}{t}}>Z$ so  that
\[
c\beta e^{\left(1-{Z^2}/  {x_0^2}\right)t}=1.
\]
Then by Lemma~\ref{lem:toolvlower},
\[
\underline v(x,t+1) \ge 
V\Big(\frac{Zx}{x_0}\Big)
 \quad \text{for all }x\ge 0,
\]
and in particular $\underline R _{t+1}\le \frac{x_0}{Z} R_\infty$.
Since $\frac{Z}{x_0}=\sqrt{1+\frac{\log (c\beta) }{t}}$, the result follows.
\end{proof}
Note that for $t\ge -\frac 4 3 \log (c\beta)$, for $x\in [0,2R_\infty]$,
$$V\left(x\sqrt{1+\frac{\log (c \beta)}t}\right)\ge V(x)-\|V'\|_{L^\infty} \cdot 2R_\infty \left(1-\sqrt{1+\frac {\log (c\beta) }t}\right),$$
and for any $x\ge 2R_\infty$,
$V\left(x\sqrt{1+\frac{\log (c\beta)}t}\right)\ge 1=V(x)$.
This completes the proof of Proposition~\ref{prop:conv v} for $c, K$ with $c<e^{-1}\wedge \beta(K)^{-1}$;
the result for $c \ge e^{-1}\wedge \beta(K)^{-1}$ follows by the comparison principle in Theorem~\ref{thm:exists v}.

The convergence result in
Proposition~\ref{prop:conv v} does not clearly answer the question: what is the time $T(K,c)$ needed  for $v(1,t)$ to be $\mathcal O(1)$ when starting from the initial condition $v_0(x)=c\indic{x>K}$? From Lemma~\ref{lowbndR}, it is clear that $T(K,c)\lesssim -\log c + T'(K)$. Proposition~\ref{prop:movemassPDE} below implies that
\[
T(K,c) \le t_0(K-\log_2 c)
\]
for some constant $t_0$ which does not depend on $K$ and $c$.
This result will be used in the companion paper~\cite{BBNP2} to prove results about the long term behaviour of the $N$-BBM particle system for large $N$, by controlling the proportion of particles which are fairly close to the origin.
\begin{prop} \label{prop:movemassPDE}
There exist $t_0>1$ and $c_0\in (0,1/2)$ such that for all $c\in 
(0,c_0]$, all $K\ge 2$, and all $t_1\in [t_0,2t_0]$, for $v_0:[0,\infty)\to [0,1]$ measurable, the condition
\begin{equation*}
v_0(x) \geq c \indic{x > K} \;\;\forall x \geq 0\qquad\text{(\textit{i.e.} $v_0\in I_{K,c}$)}
\end{equation*}
implies that 
\begin{equation}
v(x,t_1) \geq 2c \indic{x> K-1} \;\;\forall x \geq 0\qquad\text{(\textit{i.e.} $v(\cdot,t_1)\in I_{K-1,2c}$)}, \label{vshift1}
\end{equation}
and
\begin{equation}
v(x,n t_1) \geq \min(2 c_0 \,, \,2^n c) \indic{x>\max(K-n,1)}, \quad \forall \;\;x \geq 0, \quad n \in \N, \label{vshiftn}
\end{equation}
where $v(x,t)$ denotes the solution of~\eqref{pbv} with initial condition $v_0$. 
\end{prop}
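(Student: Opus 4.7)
My approach is to compare $v$ with the linear solution via Lemma~\ref{bound below}. By~\eqref{eq:vlformula}, \eqref{wdef}, \eqref{Gdef}, the linear solution $v^\ell$ with initial condition $c\,\indic{x>K}$ has the explicit formula
\[
v^\ell(x,t) \;=\; c\, e^t\, w(K,x,t) \;=\; c\, e^t\,\P\!\left(\|B_t\|<x \,\big|\, \|B_0\|=K\right),
\]
where $B$ is a $d$-dimensional Brownian motion with diffusivity $\sqrt{2}$. Choosing $c_0=\tfrac12 e^{-2t_0}$ ensures that $v^\ell(x,s)\le c\, e^s\le 2c_0 e^{2t_0}=1$ for all $s\in[0,2t_0]$ whenever $c\le 2c_0$, so Lemma~\ref{bound below} applies and yields $v(x,t_1)\ge c\, e^{t_1} w(K,x,t_1)$ for every $t_1\in[t_0,2t_0]$. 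Since $x\mapsto w(K,x,t)$ is non-decreasing, the single-step bound~\eqref{vshift1} reduces to proving the Brownian-motion estimate
\[
e^{t_1}\, w\!\left(K,\,\max(K-1,1),\,t_1\right) \;\ge\; 2 \qquad \text{for all } K\ge 1 \text{ and } t_1\in[t_0,2t_0],
\]
for $t_0$ chosen sufficiently large.

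To establish this estimate, note that for $K$ ranging in $[2,\infty)$ the map $K\mapsto w(K,K-1,t)$ is continuous, strictly positive, and has the positive limit $\Phi(-1/\sqrt{2t})$ as $K\to\infty$ (obtained from the expansion $\|K\mathbf e_1+\sqrt{2t}N\|=K+\sqrt{2t}N_1+O(1/K)$ for a standard Gaussian $N$). The worst case occurs near $K=2$, where $\max(K-1,1)=1$, and where a direct Gaussian density lower bound gives $w(K,1,t)\ge c_d\,t^{-d/2}e^{-9/(4t)}$ uniformly in $K\in[1,2]$. Hence $\inf_{K\ge 1}w(K,\max(K-1,1),t)$ decays at most polynomially in $t$, whereas $e^{t_1}$ grows exponentially, so taking $t_0$ large enough secures the estimate uniformly on $[t_0,2t_0]$.

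Given the single-step bound, the iterated inequality~\eqref{vshiftn} follows by induction on $n$. Write $c_n:=\min(2c_0,2^nc)$ and $K_n:=\max(K-n,1)$, and suppose $v(\cdot,nt_1)\ge c_n\indic{x>K_n}$. Apply the single-step argument with the truncated coefficient $\tilde c_n:=\min(c_n,c_0)\le c_0$ (this truncation keeps the comparison linear solution below $1$): for $x>K_{n+1}=\max(K_n-1,1)$,
\[
v(x,(n+1)t_1) \;\ge\; \tilde c_n\, e^{t_1}\, w\!\left(K_n,x,t_1\right) \;\ge\; 2\tilde c_n \;=\; \min(2^{n+1}c,\,2c_0) \;=\; c_{n+1},
\]
which closes the induction. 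The main technical obstacle is the Brownian-motion estimate itself: one has to show that $\inf_{K\ge 1}w(K,\max(K-1,1),t)$ decays no faster than polynomially in $t$, which requires a careful uniform lower bound on the Gaussian density of $B_t$ on a small ball around the origin, over starting positions in a bounded range near $K=2$.
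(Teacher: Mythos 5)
Your proposal is correct and follows essentially the same route as the paper: comparison with the explicit linear solution $c\,e^t w(K,\cdot,t)$ via Lemma~\ref{bound below}, a lower bound on $w(K,K-1,t)$ of order $t^{-d/2}$, a choice of $t_0$ large so that $e^{t_0}$ dominates this polynomial factor and $c_0$ of order $e^{-2t_0}$ so the linear comparison stays below $1$, and then iteration of the one-step bound. If anything you are more explicit than the paper about the induction (the truncation $\tilde c_n=\min(c_n,c_0)$ and the extension of the key estimate to starting radii $K_n\in[1,2)$), which the paper compresses into ``follows immediately by iterating''.
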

\begin{proof}
Let $c \in (0,1)$ and suppose that $v_0(x) \geq c \indic{x>K}$. 
Recall the definition of $w$ in~\eqref{wdef}.
From~\eqref{weqn1}, the function $v^\ell(x,t) = c e^t w(K,x,t)$ satisfies the linear problem \eqref{linear equ} with initial condition $v^\ell_0(x) = c \indic{x>K}$. Since $w \leq 1$ always, we have $c e^t w(K,x,t) \leq 1$ for all $x\ge 0$, if $t \leq t'_c := -\log c$. Therefore, by Lemma \ref{bound below}, we have
\begin{equation}
v(x,t) \geq c e^t w(K,x,t), \quad \forall \;\; x \geq 0, \quad t \leq t'_c = - \log c. \label{vclower1}
\end{equation}

We claim that there exists a constant $\sigma > 0$ such that $w(K,K-1,t) \geq \sigma t^{-d/2}$ for all $K \geq 2$ and $t \geq 1$. Indeed, this lower bound follows easily from the explicit formula \eqref{wdef} for $w$.  Now take $t_0 > 1$ such that $e^{t_0} \sigma (2t_0)^{-d/2} \geq 2$, and let $c_0 = e^{-2 t_0}$. Then for all $c \in (0,c_0]$, we have $2 t_0 \leq t'_c$. Hence from~\eqref{vclower1}, we obtain that for $t_1\in [t_0,2t_0]$,
\[
v(K-1,t_1) \geq c e^{t_1} w(K,K-1,t_1) \geq c e^{t_0} \sigma (2t_0)^{-d/2} \geq 2 c.
\] 
Since $v(\cdot,t_1)$ is non-decreasing, this implies that
\[
v(x,t_1) \geq 2c \indic{x>K-1} \quad \forall x\ge 0.
\]
This proves \eqref{vshift1}.  The bound \eqref{vshiftn} follows immediately by iterating \eqref{vshift1}.
\end{proof}

\section{Proof of Theorem \ref{thm:exists u}: Existence and uniqueness of
\texorpdfstring{$u$}{u}} \label{sec:uprops}

We now focus on the properties of solutions $u$ to the free boundary problem~\eqref{pbu}.

Let $(B_t)_{t\ge 0}$ be a Brownian motion (with diffusivity $\sqrt 2$) on $\R^d$. For a Borel probability measure $\mu$ on $\R^d$, we use $\P_{\!\mu}$ to denote the Wiener measure on $C([0,\infty);\R^d)$ under which $\omega \mapsto B_t(\omega) = \omega(t)$ is a Brownian motion (with diffusivity $\sqrt 2$), with $B_0 \sim \mu$.  Let $\E_{\mu}$ denote expectation with respect to $\P_{\!\mu}$.  For the particular case that $\mu = \delta_x$ is a point mass at some $x \in \R^d$, we use the notation $\P_{\!x}$ and $\E_x$.  Given a continuous function $R:(0,\infty) \to \,(0,\infty)$, $t\mapsto R_t$ with $\lim_{t \searrow 0} R_t = R_0 \in [0,\infty]$, define the random time
\begin{align}
\tau=\tau_R = \inf \big\{ t > 0 \;:\; \|B_t\| \geq R_t \big\}, \label{tauRdef}
\end{align}
which is a stopping time with respect to the usual right-continuous filtration $\mathcal{F}_{t}^+$. When there is no ambiguity, we write $\tau$ rather than $\tau_R$ to lighten the notation. Notice that it is possible that $\tau > 0$ even if $\|B_0\| = R_0$ when $R_0 < \infty$.  By the Blumenthal 0-1 law, we know that for each $x\in \R^d$, $\P_{\!x}( \tau = 0) \in \{0,1\}$. 

For $t > 0$, define a family of measures on $\R^d$ according to
\begin{equation} \label{eq:rhotdef}
\rho_t(x,A) = \P_{\!x}( B_t \in A, \; \tau \geq t) 
\end{equation}
for all Borel sets $A \subseteq \R^d$.  (This $\rho_t$ depends implicitly on $R$ through $\tau$.) The measure $\rho_t(x,\diffd y)$ is not a probability measure, because mass is lost when $B$ hits the moving boundary defined by $R$. Nevertheless, $\rho_t(x,\diffd y)$ is absolutely continuous with respect to Lebesgue measure, so it has a density. Abusing notation, we denote this density by $\rho_t(x,y)$. Since $\rho_t(x,A) \leq \P_{\!x}( B_t \in A)$, for all $x \in \R^d$ the upper bound
\begin{align}
\rho_t(x,y) \leq \Phi(x,0,y,t) \label{rhoupper}
\end{align}
holds for almost every $y \in \R^d$, where
\begin{equation}\label{eq:Phitransitiondef}
\Phi(x,s,y,t) = \frac{1}{(4 \pi(t-s))^{d/2}} e^{- \frac{\|x - y\|^2}{4 (t-s)}}
\end{equation}
denotes the transition density for Brownian motion. Whether the function $y\mapsto \rho_t(x,y) \in L^1(\R^d)$ has a continuous version depends on $R$. Indeed, $\rho_t(x,y)=0$ if $\|y\|>R_t$, but (for example) with $R_t=1+|1-t|^{1/10}$, the quantity $\rho_1(0,y)$ has a positive limit as $\|y\|\nearrow1$.

Given a probability measure $\mu_0$ on $\R^d$ and a continuous function $t\mapsto R_t$, define the function 
\begin{align}
u(y,t) & = e^t \int_{\R^d} \mu_0(\diffd x)\, \rho_t(x,y), \quad y \in \R^d, \; t > 0. \label{udef}
\end{align}
The integral in \eqref{udef} describes the initial measure $\mu_0$ evolving forward in time by diffusion, but with mass lost upon hitting the boundary defined by the function $R$; the factor $e^t$ compensates for the loss of mass. By definition, $u(\cdot,t) \in L^1(\R^d)$ for each $t > 0$.  For a probability measure $\mu_0$ on $\R^d$, define its radial distribution function $v_0:[0,\infty) \to [0,1]$ by
\begin{align}
v_0(x) = \int_{\B(x)} \mu_0(\diffd y) =  \mu_0\big(\B(x)\big),  \label{v0def}
\end{align} 
which is non-decreasing and left-continuous with $v_0(+\infty)=1$.  Recall that $\mathcal{B}(x) =\{y:\|y\|<x\}\subset \R^d$ is the open ball of radius $x$, centered at the origin.

The following result will be used to prove the existence part of Theorem~\ref{thm:exists u}, and provide a representation for the solution.

\begin{prop} \label{prop:ufb}
Let $\mu_0$ be a Borel probability measure on $\R^d$, 
and let $v_0$ be its radial distribution function, defined by~\eqref{v0def}.
Let $v$ denote the solution of
the obstacle problem~\eqref{pbv} with initial condition $v_0$;
for $t>0$, let $R_t=\inf\{x:v(x,t)=1\}$ and let $R_0=\lim_{t\searrow 0}R_t \in [0,\infty]$.
Using this boundary $R_t$, let $u$ be defined by~\eqref{udef}.
Then $u$ has a version which is continuous on $\R^d \times (0,\infty)$ and smooth on $\{(x,t):t>0,\|x\|<R_t\}$, and the pair $(u,R)$ satisfies 
\begin{subequations}\label{pbugivenR}
\begin{alignat}{3}
& \partial_t u = \Delta u + u,  && \text{for }t>0, \, \|y\| < R_t, \label{upde}\\
& u(y,t) = 0,  && \text{for }t > 0, \, \|y\| \geq R_t,  \label{uboundary} \\
& u(\cdot,t) \to \mu_0  && \text{weakly as } \; t \searrow 0, \label{t0conv} \\
& (y,t) \mapsto u(y,t) \text{ is continuous for } t > 0,   \label{continuity}
\end{alignat}
\end{subequations}
and the mass constraint 
\begin{equation}
\int_{\B( R_t)} u(y,t) \,\diffd y = 1, \quad \text{for }t > 0. \label{umass}
\end{equation}
Hence, $(u,R)$ is a classical solution of the free boundary problem \eqref{pbu}.  
\end{prop}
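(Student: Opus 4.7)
The plan is to reduce the verification of \eqref{pbugivenR} and \eqref{umass} to properties of the obstacle solution $v$ by identifying the radial cumulative integral of $u$ with $v$ itself. First, I collect immediate consequences of the definition \eqref{udef}: continuity of Brownian paths forces $\rho_t(x,y)=0$ whenever $\|y\|\geq R_t$, so $u$ vanishes outside $\overline{\B(R_t)}$, proving \eqref{uboundary} and giving $\int u(y,t)\,\diffd y\leq e^t$; also $u\geq 0$. The weak convergence $u(\cdot,t)\to\mu_0$ in \eqref{t0conv} follows by writing, for any bounded continuous test function $\phi$,
\[
\int\phi(y)u(y,t)\,\diffd y = e^t\,\E_{\mu_0}\bigl[\phi(B_t)\indic{\tau\geq t}\bigr],
\]
and using dominated convergence with $\P_{\!x}(\tau>0)=1$ for $\mu_0$-almost every $x$, which holds by continuity of paths together with continuity of $R_t$ at $0$.

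Next I would establish interior smoothness and the PDE on $\Omega^+$. For any $(y_0,t_0)\in\Omega^+$, continuity of $R_t$ from Theorem~\ref{thm:exists v} provides a parabolic cylinder $Q\subset\Omega^+$ around $(y_0,t_0)$. On $Q$, applying the Markov property at some time $t_0-h$ slightly before $t_0$ represents $\rho_t(x,y)$ as a convolution of the killed density at time $t_0-h$ with a free Gaussian kernel. Standard interior parabolic regularity then gives that $(y,t)\mapsto\rho_t(x,y)$ is smooth on $Q$ and satisfies the heat equation there. Integrating against $\mu_0$ and multiplying by $e^t$ yields \eqref{upde} and smoothness of $u$ on $\Omega^+$.

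The core of the proof is to identify
\[
\tilde v(x,t):=\int_{\B(x)}u(y,t)\,\diffd y
\]
with $v$. Using the interior PDE for $u$ and the divergence theorem on $\B(x)$ for $x<R_t$, one checks that $\tilde v$ satisfies the same PDE as $v$,
\[
\partial_t\tilde v=\partial_x^2\tilde v-\tfrac{d-1}{x}\partial_x\tilde v+\tilde v\qquad\text{for }0<x<R_t,
\]
with $\tilde v(0,t)=0$, $\tilde v(\cdot,t)\to v_0$ in $L^1_{\mathrm{loc}}$, and $\partial_x\tilde v$ continuous in $x$. Moreover $\tilde v(x,t)\equiv M(t):=\int u(y,t)\,\diffd y$ on $[R_t,\infty)$ since $u$ vanishes there. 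To conclude $\tilde v=v$ by uniqueness in Theorem~\ref{thm:exists v}, it remains to show the global obstacle bound $\tilde v\leq 1$, equivalently $M(t)\leq 1$ for all $t>0$.

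This mass bound is the main obstacle. The plan is a bootstrap argument. At $t=0$, $M(0+)=1$ since $\mu_0$ is a probability measure and $\tau>0$ $\mu_0$-almost surely; then applying the maximum principle to $w:=\tilde v-v$ on the moving domain $\{x<R_t\}$, where $w$ solves the linear equation $\partial_t w=Lw+w$ with $w(0,t)=0=w(x,0)$ and boundary value $w(R_t,t)=M(t)-1$, propagates the inequality $M(t)\leq 1$ forward in time; a first crossing $M(t_*)>1$ would force a sign violation incompatible with $v<1$ strictly inside $\{x<R_{t_*}\}$. Uniqueness in Theorem~\ref{thm:exists v} then yields $\tilde v=v$, so \eqref{umass} follows from $M(t)=\tilde v(R_t,t)=v(R_t,t)=1$. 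Finally, for the continuity \eqref{continuity} of $u$ on $\R^d\times(0,\infty)$, the identity $\tilde v=v$ combined with Lemma~\ref{lem:jointcontin} gives $\int_{\partial\B(R_t)}u(y,t)\,\diffd S(y)=\partial_x v(R_t,t)=0$; together with nonnegativity and interior smoothness of $u$, and a barrier argument exploiting the Hölder regularity of $R_t$ from Theorem~\ref{thm:exists v}, this forces $u(y,t)\to 0$ as $(y,t)$ approaches the free boundary, completing the verification.
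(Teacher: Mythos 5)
There are two genuine gaps, both at the places where the proposal tries to avoid the paper's detour through the symmetrized solution $\tilde u(y,t)=\frac{1}{d\omega_d\|y\|^{d-1}}\partial_x v(\|y\|,t)$. First, the claim that $\P_{\!x}(\tau>0)=1$ for $\mu_0$-a.e.\ $x$ ``by continuity of paths together with continuity of $R_t$ at $0$'' is false as stated. It is fine for $x$ with $\|x\|<R_0$, but $\mu_0$ may put positive mass on the sphere $\{\|x\|=R_0\}$ (see Remark~\ref{rmk:holder}); for such $x$, Blumenthal's $0$--$1$ law gives $\P_{\!x}(\tau=0)\in\{0,1\}$, and mere continuity of $t\mapsto R_t$ at $0$ does not prevent $\tau=0$: one needs $R_t-R_0$ to grow faster than the Brownian fluctuations $\sim\sqrt{t\log\log(1/t)}$, and Theorem~\ref{thm:exists v} only supplies an \emph{upper} H\"older bound on $R_t-R_0$. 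The paper proves $\P_{\!\mu_0}(\tau>0)=1$ indirectly (Lemma~\ref{lem:rhomass}): it first shows that $\tilde u$ built from $\partial_x v$ conserves mass (because $v(R_t,t)=1$) and coincides with the probabilistic representation started from $\Gamma^*\mu_0$, whence $\int\tilde\mu_0(\diffd x)\,\P_{\!x}(\tau>0)=1$ forces the statement. Without this, your proof of \eqref{t0conv} and your claim $M(0+)=1$ both fail.

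Second, the ``bootstrap / first crossing'' argument for $M(t)\le 1$ is not an argument. On $[R_t,\infty)$ one has $\tilde v\equiv M(t)$, and the only differential information available from the representation is $M'(t)\le M(t)$ (mass grows at rate $u$ and can only be lost at the boundary), which does not propagate $M\le 1$ past a first touching time; nothing in the maximum principle for $w=\tilde v-v$ on $\{x<R_t\}$ prevents $M$ from crossing $1$, since the boundary datum $w(R_t,t)=M(t)-1$ is exactly the quantity you are trying to control. The identity $M(t)=1$ encodes an exact cancellation between the growth term and the boundary flux, which is precisely the content of $\partial_x v(R_t,t)=0$, i.e.\ of $v$ solving the obstacle problem; the paper obtains it by defining $\tilde u$ \emph{from} $\partial_x v$ (so that $\int\tilde u=v(R_t,t)=1$ is automatic), proving $\tilde u$ solves \eqref{pbugivenR}, and then invoking the uniqueness statement of Proposition~\ref{prop:utilderho} to transfer mass conservation to $u$ via $\rho_t(x,\R^d)=\rho_t(x',\R^d)$ for $\|x\|=\|x'\|$. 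Your direction of argument (from the representation $u$ to $\tilde v$ and then to $v$) cannot close without this input. Relatedly, the final continuity step is also underpowered: knowing that the spherical average $\partial_x v(R_t,t)$ vanishes does not by itself give $u(y,t)\to0$ at each individual boundary point for a non-symmetric $u$; the paper needs Lemma~\ref{lem:taucontin}, whose proof uses the strict interior positivity of $\tilde u$ together with Feynman--Kac, and your ``barrier argument'' is not supplied.
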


From Proposition \ref{prop:ufb}, the existence part of Theorem~\ref{thm:exists u} now follows. Given $\mu_0$, let $v_0$ be defined by \eqref{v0def}, and let $v$ be the solution to the obstacle problem \eqref{pbv} and $R_t=\inf\{x:v(x,t)=1\}$. Existence and uniqueness of $(v,R)$ is guaranteed by Theorem \ref{thm:exists v}.  
Moreover, by Theorem~\ref{thm:exists v}, $R_t$ is finite and continuous for $t>0$, and $\lim_{t\searrow 0}R_t=\inf\{x:v_0(x)=1\}=\inf\{r:\mu_0(\B(r))=1\}.$
Then Proposition \ref{prop:ufb} implies that the function \eqref{udef} is a classical solution to the free boundary problem~\eqref{pbu}.  Notice that Proposition~\ref{prop:ufb} does not imply that a solution to \eqref{pbugivenR} exists for any continuous function $R$; it only guarantees existence for the function $R$ obtained from solving the obstacle problem \eqref{pbv}.

To prove the uniqueness part of Theorem \ref{thm:exists u}, we first argue that the boundary $R_t$ is uniquely determined. 

\begin{lem} \label{lem:vfromu}  Let $(u,R)$ be any solution to the free boundary problem \eqref{pbu} with initial condition $\mu_0$. Then the function 
\[
v(x,t) = \int_{\B(x)} u(y,t) \,\diffd y, \quad \quad  x \geq 0,\,  t > 0
\]
solves the obstacle problem \eqref{pbv} with initial condition $v_0(x)=\mu_0(\B(x))$. 

For $t > 0$, $u(y,t) > 0$ holds if $\|y\| < R_t$. Moreover, $R_t = \inf\{x:v(x,t)=1\}$ for all $t > 0$.
\end{lem}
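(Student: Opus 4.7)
The plan is to verify the five conditions of the obstacle problem \eqref{pbv} for $v$, then establish $u > 0$ on $\B(R_t)$ via a clopen argument using the strong parabolic maximum principle, and combine this with $v|_{[R_t,\infty)} \equiv 1$ to identify $R_t = \inf\{x : v(x,t) = 1\}$. First, the bounds $v \in [0,1]$ and $v(0,t) = 0$ follow immediately from $u \geq 0$ and $\int_{\R^d} u(\cdot,t)\,\diffd y = 1$, and for $x \geq R_t$ we get $v(x,t) = 1$ because $u$ vanishes outside $\B(R_t)$. Joint continuity of $v$ on $[0,\infty)\times(0,\infty)$ follows from continuity of $u$ and dominated convergence. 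Polar coordinates yield $\partial_x v(x,t) = x^{d-1} U(x,t)$, where $U(x,t) := \int_{S^{d-1}} u(x\omega,t)\,\diffd \sigma(\omega)$ is the spherical average; continuity of $u$ and its vanishing on $\partial \B(R_t)$ give continuity of $\partial_x v(\cdot,t)$ on $[0,\infty)$. For the initial condition, weak convergence $u(\cdot,t) \to \mu_0$ and the portmanteau theorem give $v(x,t) \to v_0(x) = \mu_0(\B(x))$ at every $x$ with $\mu_0(\partial \B(x)) = 0$; the set of exceptional $x$ is at most countable, so dominated convergence with $|v| \leq 1$ upgrades this to $L^1_{\mathrm{loc}}$ convergence.

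For the PDE on $\{0 < x < R_t\}$: since $\overline{\B(x)}$ lies in the interior $\Omega^+$ where $u \in C^{2,1}$, I differentiate under the integral sign and apply the divergence theorem to obtain
\[
\partial_t v(x,t) = \int_{\B(x)}(\Delta u + u)(y,t)\,\diffd y = v(x,t) + \int_{\partial \B(x)} \partial_r u\,\diffd S.
\]
The surface integral equals $x^{d-1}\partial_x U$ in polar form, and substituting $U = x^{-(d-1)}\partial_x v$ converts it to $\partial_x^2 v - \frac{d-1}{x}\partial_x v$, recovering the equation in \eqref{pbv}.

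Finally, for positivity, fix $t_0 > 0$ and set $N := \{y \in \B(R_{t_0}) : u(y,t_0) = 0\}$; by continuity of $u$, $N$ is relatively closed in $\B(R_{t_0})$. For relative openness, take $y_0 \in N$: the openness of $\Omega^+$ (a consequence of continuity of $u$ together with $\int u = 1$, which forces $R$ to be lower semicontinuous at each $t$) yields $\epsilon,\delta > 0$ with $\{y : \|y - y_0\| \leq \delta\} \times [t_0 - \epsilon, t_0] \subset \Omega^+$, and then $u \geq 0$ is a classical solution of $\partial_t u = \Delta u + u$ in this cylinder attaining zero at the spatially interior top point $(y_0, t_0)$; the strong parabolic maximum principle forces $u \equiv 0$ on the whole cylinder, so $N$ contains a neighborhood of $y_0$. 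Connectedness of $\B(R_{t_0})$ then gives $N = \emptyset$ or $N = \B(R_{t_0})$, and the latter contradicts $\int u(\cdot,t_0)\,\diffd y = 1$, so $N = \emptyset$. With positivity in hand, $v(x,t) = 1 - \int_{\B(R_t)\setminus\B(x)} u(y,t)\,\diffd y < 1$ for $x < R_t$, which combined with $v(R_t,t) = 1$ yields $R_t = \inf\{x : v(x,t) = 1\}$. The main subtlety is verifying that the cylinder used in the strong maximum step lies inside $\Omega^+$, which is handled by the open character of $\Omega^+$; all other steps are routine given the regularity assumed of $(u,R)$.
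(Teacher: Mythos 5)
Your proof is correct and takes essentially the same route as the paper's: the $L^1_{\mathrm{loc}}$ convergence of the initial data is obtained via the Portmanteau theorem together with the countability of the discontinuities of the non-decreasing function $v_0$, and positivity of $u$ on $\B(R_t)$ is obtained from the strong parabolic maximum principle applied on backward cylinders inside $\Omega^+$ (the paper runs it once on a single cylinder $\B(r)\times[t_0-\epsilon,t_0]$ and lets $r\nearrow R_{t_0}$ rather than using your clopen packaging, but the two are interchangeable). The one delicate point --- that such a cylinder really lies in $\Omega^+$, i.e.\ that $R$ cannot drop below a fixed $r<R_{t_0}$ at times just before $t_0$ --- is asserted with the same brevity in the paper (``by continuity of $u$'') as in your parenthetical about lower semicontinuity of $R$, so your write-up, including the explicit verification of the part the paper calls ``easy to check by direct calculation,'' matches the paper's argument and level of rigor.
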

Suppose that there are at least two solutions, $(u\uppar1,R\uppar1)$ and $(u\uppar2,R\uppar2)$, to the free boundary problem \eqref{pbu} with the same initial measure $\mu_0$.  Then  by Lemma~\ref{lem:vfromu} the corresponding distribution functions
\begin{align*}
v\uppar1(x,t) = \int_{\B(x)} u\uppar1(y,t) \,\diffd y \quad \text{ and } \quad v\uppar2(x,t) = \int_{\B(x)} u\uppar2(y,t) \,\diffd y
\end{align*}
both satisfy the obstacle problem \eqref{pbv} with the same initial condition $v_0(x)=\mu_0(\B(x))$. By the uniqueness statement in Theorem \ref{thm:exists v}, $v\uppar1 = v\uppar2$.  Using Lemma~\ref{lem:vfromu} again, we conclude that $R\uppar1_t = \inf\{x:v\uppar1 (x,t)=1\} = \inf\{x:v\uppar2 (x,t)=1\}  = R\uppar2_t$ for all $t > 0$. From Theorem \ref{thm:exists v} we also infer that $R$ is finite, strictly positive and continuous for $t > 0$,
 and that $R_0 = \lim_{t\searrow 0}R_t =\inf\{r:\mu_0(\B(r))=1\}$. Since $u\uppar1$ and $u\uppar2$ both satisfy \eqref{pbugivenR} with the same boundary $R$, the next proposition implies that $u\uppar1$ and $u\uppar2$ coincide, and are given by the same formula \eqref{udef}.  Therefore, the solution $(u,R)$ to the free boundary problem \eqref{pbu} is unique.

\begin{prop} \label{prop:utilderho}
Let $\mu_0$ be a Borel probability measure on $\R^d$. Let $R:(0,\infty) \to (0,\infty)$ be continuous for $t>0$, with $\lim_{t \searrow 0} R_t = R_0 \in [0,\infty]$. 
Suppose that $u$ is any function satisfying~\eqref{pbugivenR}. Then the function $u$ must be given by the representation \eqref{udef}, meaning that for all $t > 0$, the equality \eqref{udef} holds for almost every $y \in \R^d$. In particular, for a given boundary function $R_t$, there is at most one function $u$ satisfying \eqref{pbugivenR}.
\end{prop}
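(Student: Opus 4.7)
The plan is to apply It\^o's formula to a backward-time functional of a Brownian motion starting at $y_0$, extract a probabilistic representation of $u(y_0,t_0)$, and then invoke the time-reversal symmetry of Brownian motion to identify this representation with~\eqref{udef}. Fix $t_0>0$ and $y_0\in\R^d$ with $\|y_0\|<R_{t_0}$. Under $\P_{\!y_0}$, let $B$ be a Brownian motion with diffusivity $\sqrt{2}$ starting at $y_0$, and let $\tau:=\inf\{s\ge 0:\|B_s\|\ge R_{t_0-s}\}$, which is a stopping time by continuity of $R$. Applying It\^o's formula to $M_s:=e^s u(B_s,t_0-s)$ for $s\in[0,\tau\wedge t_0)$, the finite-variation part equals
\[
\int_0^s e^r\bigl[u-\partial_t u+\Delta u\bigr](B_r,t_0-r)\,\diffd r,
\]
which vanishes pointwise by~\eqref{upde}. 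Hence $M$ is a local martingale on $[0,\tau\wedge t_0)$.

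For $\epsilon\in(0,t_0)$, continuity of $u$ combined with $u=0$ for $\|y\|\ge R_t$ and finiteness of $\sup_{t\in[\epsilon,t_0]}R_t$ imply that $M$ is bounded on $[0,\tau\wedge(t_0-\epsilon)]$, so optional stopping yields $u(y_0,t_0)=\E_{y_0}[M_{\tau\wedge(t_0-\epsilon)}]$. On the event $\{\tau<t_0-\epsilon\}$, continuity of $B$ gives $\|B_\tau\|=R_{t_0-\tau}$, so $u(B_\tau,t_0-\tau)=0$ by~\eqref{uboundary}. Therefore
\[
u(y_0,t_0)=e^{t_0-\epsilon}\,\E_{y_0}\!\left[\indic{\tau\ge t_0-\epsilon}\,u(B_{t_0-\epsilon},\epsilon)\right]=e^{t_0-\epsilon}\int_{\R^d} p^\epsilon(z)\,u(z,\epsilon)\,\diffd z,
\]
where $p^\epsilon(z)\,\diffd z=\P_{\!y_0}\bigl(B_{t_0-\epsilon}\in \diffd z,\,\tau\ge t_0-\epsilon\bigr)$. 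The symmetry of the Gaussian kernel $\Phi(y_0,0,z,t_0-\epsilon)=\Phi(z,\epsilon,y_0,t_0)$ together with the time-reversal symmetry of the Brownian bridge identifies $p^\epsilon(z)$ with $\tilde\rho^\epsilon(z,y_0)$, the density at $y_0$ of a Brownian motion started at $z$ at time $\epsilon$ and killed on first reaching $\|\cdot\|\ge R_s$ over $s\in[\epsilon,t_0]$.

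As $\epsilon\searrow 0$, the strong Markov property and continuity of $R$ show that $\tilde\rho^\epsilon(\cdot,y_0)\to\rho_{t_0}(\cdot,y_0)$ locally uniformly on $\R^d$, while the Gaussian bound $\tilde\rho^\epsilon(z,y_0)\le\Phi(z,\epsilon,y_0,t_0)$ is uniform for $\epsilon\le t_0/2$. Combining these with the weak convergence $u(\cdot,\epsilon)\to\mu_0$ yields
\[
u(y_0,t_0)=e^{t_0}\int_{\R^d}\mu_0(\diffd x)\,\rho_{t_0}(x,y_0),
\]
which is exactly~\eqref{udef}; the case $\|y_0\|\ge R_{t_0}$ is immediate from~\eqref{uboundary} and the definition of $\rho_{t_0}$. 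The main technical obstacle is justifying this last limit: because $\mu_0$ may be singular with respect to Lebesgue measure and may even charge $\partial\B(R_0)$, one genuinely needs the test function $z\mapsto e^{t_0-\epsilon}\tilde\rho^\epsilon(z,y_0)$ to be continuous and to converge locally uniformly in $z$, which demands careful control of the killed heat kernel in a moving domain near $t=\epsilon$, exploiting the H\"older regularity of $R_t$ from Theorem~\ref{thm:exists v}.
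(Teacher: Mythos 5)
Your first two steps (the backward It\^o/optional-stopping argument giving $u(y_0,t_0)=e^{t_0-\epsilon}\int p^\epsilon(z)u(z,\epsilon)\,\diffd z$, and the Brownian-bridge time reversal identifying $p^\epsilon(z)$ with $p(z,\epsilon,y_0,t_0)$) are sound and coincide with the paper's route (its Feynman--Kac step \eqref{tubackward} and Lemma~\ref{lem:timereverseFK}). The genuine gap is in the final limit $\epsilon\searrow 0$, which is where essentially all the difficulty of the proposition lives. Your plan is to treat $z\mapsto e^{t_0-\epsilon}\,p(z,\epsilon,y_0,t_0)$ as a continuous test function converging locally uniformly to $z\mapsto e^{t_0}\rho_{t_0}(z,y_0)$ and pair it with the weak convergence $u(\cdot,\epsilon)\to\mu_0$. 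But this convergence (and even the continuity of the limit in $z$) fails, or at least cannot be established, exactly at the points $\|z\|=R_0$ where $\mu_0$ is allowed to carry positive mass $m>0$. For a general continuous $R$ with $R_t\to R_0$, one can have $\P_{\!z}(\tau=0)=1$ for $\|z\|=R_0$ (e.g.\ if $R_t-R_0$ grows no faster than $\sqrt t$), in which case $\rho_{t_0}(z,\cdot)\equiv 0$ at such $z$ while $p(z,\epsilon,y_0,t_0)$ need not tend to $0$; then the representation \eqref{udef} would lose the mass $m$ and could not integrate to $1$. The proposition survives only because in that situation \emph{no} $u$ satisfying \eqref{pbugivenR} exists; i.e.\ the needed input $\P_{\!\mu_0}(\tau>0)=1$ must be \emph{deduced from the hypothesis that $u$ exists and satisfies \eqref{t0conv}}, not from regularity of $R$. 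Your closing suggestion to extract it from the H\"older bound of Theorem~\ref{thm:exists v} cannot work: that bound is only an upper bound on the growth of $R_t$ (and is not even available here, since $R$ is an arbitrary continuous function, not necessarily the obstacle-problem boundary), whereas what is needed is a statement that $R_t$ escapes from $R_0$ fast enough — compare Remark~\ref{rmk:holder}.

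The paper resolves this differently, and you would need some version of its argument. It never uses the killed kernel as a test function for the weak convergence; instead it inserts Chapman--Kolmogorov at an intermediate time $\epsilon$, replaces $p(x,s,z,\epsilon)$ by $\Phi(x,s,z,\epsilon)-h(x,s,z,\epsilon)$ so that the (smooth, globally continuous) Gaussian kernel carries the weak limit, and controls the error $h$ only in the integrated form $\int h(x,0,z,\epsilon)\,\diffd z=\P_{\!x}(\tau<\epsilon)$ and $\lim_{s\to0}f(s,\epsilon)=1-e^{-\epsilon}\int u(z,\epsilon)\,\diffd z$. The second error vanishes by mass conservation; the first requires the hard step: assuming $\P_{\!\mu_0}(\tau>0)<1$, the Blumenthal 0--1 law gives $\P(\tau=0\mid\|B_0\|=R_0)=1$, a Girsanov perturbation upgrades this to $\tau^+=0$ almost surely, and then a quantitative version shows that a definite fraction of the mass of $u(\cdot,r)$ concentrated near $\|x\|=R_0$ is absorbed before time $\epsilon$, contradicting $\int u(\cdot,\epsilon)\to 1$. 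Without this (or an equivalent) argument, your proof does not close.
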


The remaining statements in Theorem~\ref{thm:exists u} about properties of $R_t$ follow immediately from Proposition~\ref{prop:Rtcontinuous}, proved already.   Except for the proofs of Proposition \ref{prop:ufb}, Proposition~\ref{prop:utilderho}, and Lemma~\ref{lem:vfromu}, this completes the proof of Theorem \ref{thm:exists u}.  After giving a short proof of Lemma~\ref{lem:vfromu}, we devote the  rest of this section  to the proofs of Propositions~\ref{prop:ufb} and~\ref {prop:utilderho}. We prove Proposition~\ref{prop:utilderho} first, because the proof of Proposition~\ref{prop:ufb} will rely on Proposition \ref {prop:utilderho}.

\begin{proof}[Proof of Lemma~\ref{lem:vfromu}]
The statement that $v$ satisfies \eqref{pbv} is easy to check by direct calculation.  The only part that may not be clear is that $v(x,t)$ converges in $L^1_{\text{loc}}$ to the initial condition $v_0(x)=\mu_0(\B(x))$, as $t \to 0$. Let $\{\mu_t(\diffd y)\}_{t>0}$ denote the family of probability measures on $\R^d$ having density $u(y,t)$.  Thus, $v(x,t) = \mu_t(\mathcal{B}(x))$ is the radial distribution function, and $\mu_t \to \mu_0$ weakly as $t \to 0$, by assumption.  It is well known (by the Portmanteau theorem) that the weak convergence $\mu_t \to \mu_0$ as $t \to 0$ is equivalent to the condition that $\mu_t(A) \to \mu_0(A)$ whenever $A$ is a continuity set for $\mu_0$ (see Chapter 1, Section 2 of \cite{Bill99}). Therefore, if $\mu_t \to \mu_0$ weakly, and if $x \geq 0$ is a point of continuity for the function $v_0(x)=\mu_0(\B(x))$, then $v(x,t) \to v_0(x)$ holds at $x$ (since we may take $A = \mathcal{B}(x)$). Thus, $v(x,t) \to v_0(x)$ holds for all $x \geq 0$ except, possibly, at points where $v_0(x)$ is discontinuous, which is a countable set of points since $v_0(x)$ is non-decreasing.  The dominated convergence theorem then implies $v(x,t) \to v_0(x)$ in $L^1_\text{loc}$.

Any solution to \eqref{pbu} must satisfy $u(y,t) > 0$ if $\|y\| < R_t$ and $t > 0$. This is a consequence of the strong maximum principle, as follows.  Suppose $u(y_0,t_0) = 0$ for some $t_0 > 0$, and $\|y_0\| < R_{t_0}$. For $r \in (\|y_0\|,R_{t_0})$, by continuity of $u$ we can take $\epsilon \in (0,t_0)$ small enough so that $R_t > r$ for all $t \in [t_0-\epsilon,t_0]$. Then the strong maximum principle implies that $u(y,t) = 0$ for all $(y,t) \in \mathcal{B}(r)\times  [t_0-\epsilon,t_0]$.  Because $r$ may be chosen arbitrarily close to $R_{t_0}$, we conclude that $u(y,t_0) = 0$ for all $\|y \| < R_{t_0}$, which contradicts the mass constraint \eqref{umass}.  Hence, $u(y,t) > 0$ holds if $\|y\| < R_t$ and $t > 0$.  
The relation $R_t = \inf\{x:v (x,t)=1\}$ for $t > 0$ now follows from the definition of $v$.
\end{proof}

\begin{rmk} \label{rmk:holder}
If $(u,R)$ solves the free boundary problem \eqref{pbu}, then the mass conservation \eqref{umass} and the representation \eqref{udef} imply that $\P_{\!\mu_0}( \tau > 0) = 1$ (see also Lemma \ref{lem:rhomass} below).  In particular, if $R_0 < \infty$ and the initial measure $\mu_0$ puts positive mass at $\|x\| = R_0$, then for any $\alpha \geq 1/2$ and $C > 0$, it cannot be true that $R_t - R_0 \leq C t^{\alpha}$ as $t \to 0$.  This is because, with probabilty one, $\|B_t\|$ is not $\alpha$-H\"older continuous for $\alpha \geq 1/2$. In this case, $R_t$ must quickly increase away from $R_0$ to avoid eliminating positive mass instantaneously.  Thus, the condition $\alpha < 1/2$ in the one-sided H\"older bound of Theorem \ref{thm:exists u} cannot be improved at $s= 0$ without assuming more regularity of $\mu_0$.  
\end{rmk}

\subsection{Proof of Proposition \ref{prop:utilderho}}

For
$R:(0,\infty)\to (0,\infty)$ continuous with $\lim_{t\searrow 0}R_t =R_0\in [0,\infty]$, for
 $0 \le s < t$ and $x \in \R^d$, let us define $y \mapsto p_R(x,s,y,t)=p(x,s,y,t)$ to be the density of the measure
\begin{equation}
A  \mapsto \P( B_t \in A, \  \tau_{R,s} \geq t  \;|\; B_s = x) \label{pdensitydef}
\end{equation}
where
\begin{equation} \label{eq:tauRsdefn}
\tau_{R,s} = \tau_s = \inf \{ t > s \;:\; \|B_t\| \geq R_t \}.
\end{equation}
(Usually we will drop the subscript $R$ and simply write $p(x,s,y,t)$ and $\tau_s$.)
Hence $p(x,0,y,t) = \rho_t(x,y)$. 
For $t>0$, we define
\begin{equation} \label{eq:tau'def}
\tau'_t=\tau'_{R,t}=\inf\{t'\in (0,t]\,:\, \|B_{t'}\|\ge R_{t-t'}\}.
\end{equation}
We shall use the following result in the proofs of Propositions~\ref{prop:utilderho} and~\ref{prop:ufb}.
\begin{lem}\label{lem:timereverseFK}
Suppose $0\le s <t$, and $R:[0,\infty) \to [0,\infty)$ is continuous on $[s,t]$, and define $\tau_{s}$, $\tau'_t$, and $p$ as in~\eqref{eq:tauRsdefn},~\eqref{eq:tau'def} and~\eqref{pdensitydef}. For any bounded, measurable function $g:\R^d \to \R$,
\[
\int_{\R^d}  g(x) p(x,s,y,t) \,\diffd x = \E_y\left[  g(B_{t - s}) \indic{\tau_{t}'\geq t - s} \right]
\]
holds for almost every $y \in \R^d$.
\end{lem}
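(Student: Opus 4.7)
The plan is to deduce the identity from a time-reversal symmetry of the killed Brownian kernel, using the time-reversibility of Brownian bridges. Since $p$ and the killing time depend on $R$ only through its restriction to $[s,t]$, the first step is to shift time by $s$ and reduce to the case $s = 0$. Introduce the time-reversed boundary $R^*_u := R_{t-u}$ for $u \in [0,t]$, and let $p_{R^*}(\cdot,0,\cdot,t)$ be the killed density associated with $R^*$. The hitting time of $R^*$ by a Brownian motion started at time $0$ is
\begin{equation*}
\tau_{R^*} := \inf\{u > 0 : \|B_u\| \geq R^*_u\} = \inf\{u > 0 : \|B_u\| \geq R_{t-u}\},
\end{equation*}
which coincides with $\tau'_t$ as defined in~\eqref{eq:tau'def}. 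Hence the lemma reduces to proving the symmetry
\begin{equation*}
p_R(x, 0, y, t) = p_{R^*}(y, 0, x, t) \qquad \text{for a.e.\ } (x,y) \in \R^d \times \R^d,
\end{equation*}
because integrating against $g$ then yields $\int g(x)\,p_R(x,0,y,t)\,dx = \E_y[g(B_t)\indic{\tau_{R^*} \geq t}] = \E_y[g(B_t)\indic{\tau'_t \geq t}]$, by definition of $p_{R^*}$ as the density on $\R^d$ of $B_t$ under $\P_{\!y}[\,\cdot \cap \{\tau_{R^*} \geq t\}]$.

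To prove the symmetry, use the Brownian bridge representation
\begin{equation*}
p_R(x,0,y,t) = \Phi(x,0,y,t)\cdot \P^{x \to y}_{0,t}\!\bigl(\|B_u\| < R_u \text{ for all } u \in (0,t)\bigr),
\end{equation*}
where $\P^{x\to y}_{0,t}$ denotes the law of the Brownian bridge from $x$ at time $0$ to $y$ at time $t$, and similarly for $p_{R^*}$. Two classical symmetries combine. First, the Gaussian kernel is symmetric in its endpoints: $\Phi(x,0,y,t) = \Phi(y,0,x,t)$. Second, under $\P^{x\to y}_{0,t}$ the time-reversed process $\tilde B_u := B_{t-u}$ is itself a Brownian bridge from $y$ to $x$ over $[0,t]$, and the event $\{\|B_u\| < R_u\ \forall u \in (0,t)\}$ transforms into $\{\|\tilde B_u\| < R_{t-u}\ \forall u \in (0,t)\} = \{\|\tilde B_u\| < R^*_u\ \forall u \in (0,t)\}$. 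Multiplying the reversed Gaussian transition by the reversed bridge probability gives exactly $p_{R^*}(y,0,x,t)$, establishing the desired symmetry.

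The main technical point will be justifying the bridge disintegration applied to the killing event: one needs the event $\{\|B_u\| < R_u\ \forall u \in (0,t)\}$ to be measurable on path space, the disintegration $\P_{\!x}(\,\cdot\,) = \int \P^{x\to y}_{0,t}(\,\cdot\,)\Phi(x,0,y,t)\,dy$ to hold for this event, and the time-reversal invariance of the bridge law to be available; all three are classical facts once a continuous version of the bridge is fixed, so the remaining work is essentially bookkeeping. If one prefers to avoid bridge theory altogether, an equivalent route is a discrete-time approximation on the partition $t_i = it/N$: the iterated Gaussian integral $\int \prod_i \Phi(x_i,t_i,x_{i+1},t_{i+1}) \prod_i \indic{\|x_i\| < R_{t_i}}$ is manifestly symmetric in $(x_0, x_N) = (x, y)$ under the relabeling $x_j \leftrightarrow x_{N-j}$ (since $\Phi$ is symmetric and the indicators swap $R_{t_i} \leftrightarrow R_{t_{N-i}} = R^*_{t_i}$), and one then lets $N \to \infty$ along a refining sequence, using continuity of $R$ and of Brownian paths to identify the limiting killing events and dominated convergence to pass the limit under the integral.
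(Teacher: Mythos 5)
Your argument is correct and is essentially the paper's own proof: reduce to $s=0$ by a time shift, write $p$ via the Brownian bridge disintegration $p(x,0,y,t)=\Phi(x,0,y,t)\,\P^{x\to y}_{0,t}(\tau\ge t)$, and combine the symmetry of $\Phi$ with time-reversal invariance of the bridge to turn the survival event for $R$ into the survival event for $R^*_u=R_{t-u}$, i.e.\ the event $\{\tau'_t\ge t\}$. Packaging this as the kernel symmetry $p_R(x,0,y,t)=p_{R^*}(y,0,x,t)$ before integrating against $g$ (rather than working directly with $w(y)=\int g(x)p(x,0,y,t)\,\diffd x$ and Fubini, as the paper does) is only a cosmetic difference.
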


\begin{proof}
Note first that if we let $\tilde R_{s'}=R_{s'+s}$ for $s'\in [0,t-s]$, then $\tilde R$ is continuous on $[0,t-s]$ and we have $p(x,s,y,t)=p_R(x,s,y,t)=p_{\tilde R}(x,0,y,t-s)$, and $\{\tau'_t=\tau'_{R,t}\ge t-s\}=\{\tau'_{\tilde R,t-s}\ge t-s\}$.
Hence
it suffices to assume $s = 0$ to simplify the notation.  So we want to show that
\[
\int_{\R^d}  g(x) p(x,0,y,t) \,\diffd x = \E_y\left[  g(B_t) \indic{\tau_{t}' \geq t} \right], \quad \text{ for a.e.~}y \in \R^d,
\]
assuming that $R$ is continuous on $[0,t]$. Define the function
\[
w(y) = \int_{\R^d}  g(x) p(x,0,y,t) \,\diffd x, \quad y \in \R^d.
\]
Let $\P_{x,y,t}$ be the measure on $C([0,t];\R^d)$ under which $(B_s)_{s\in [0,t]}$ is a Brownian bridge (with diffusivity $\sqrt 2$) in $\R^d$ from $(x,0)$ to $(y,t)$, and let $\E_{x,y,t}$ denote expectation with respect to this measure.  Then, recalling from~\eqref{tauRdef} that $\tau=\tau_0$, and recalling the definition of the transition density $\Phi$ in~\eqref{eq:Phitransitiondef}, we have
\[
p(x,0,y,t)=\Phi(x,0,y,t) \P_{x,y,t}(\tau \geq t), \quad \text{a.e.}\;\;y\in \R^d
\]
for all $x \in \R^d$.
Since under $\P_{x,y,t}$, $(B_s)_{s\in [0,t]}$ is a Brownian bridge from $(x,0)$ to $(y,t)$, we have that under the same measure, the time-reversed process $B_r' = B_{t-r}$ is a Brownian bridge from $(y,0)$ to $(x,t)$.  Therefore,
since $\{\tau\ge t\}=\{\|B_s\|<R_s\,\, \forall s\in (0,t)\}=\{\|B'_s\|<R_{t-s}\,\forall s\in (0,t)\},$ we have that
\[
 \P_{x,y,t}(\tau \geq t) = \P_{y,x,t}( \tau_{t}' \geq t).
\]
Since $\Phi(x,0,y,t)=\Phi(y,0,x,t)$, it
follows that for almost every $y\in \R^d$, by Fubini's theorem,
\begin{align*}
w(y) & = \int_{\R^d}  \Phi(y,0,x,t) \P_{y,x,t}(\tau_{t}' \geq t)g(x) \,\diffd x  = \E_y \left[ g(B_t) \indic{\tau_{t}' \geq t} \right],
\end{align*}
as desired.
\end{proof}

\begin{proof}[Proof of Proposition \ref{prop:utilderho}]
Assume that $u$ satisfies \eqref{pbugivenR}.
Due to \eqref{t0conv}, we know that 
\begin{equation} \label{eq:uintnear1}
\lim_{s \to 0} \int_{\R^d} u(y,s) \,\diffd y = \int_{\R^d} \mu_0(\diffd y) = 1.
\end{equation}
Let $0 < s < t$. Although $\mu_0$ is a measure, the function $u$ is continuous on $\R^d \times [s,\infty)$, by \eqref{continuity}. Therefore, by the Feynman-Kac formula for $u$ (e.g. Theorem II.2.3 of \cite{Freidlin85}) applied on the time interval $[s,t]$, we have
that if $\|y\|\neq R_t$,
\begin{align}
 u(y,t) & = e^{t - s}  \E_y\left[  u(B_{t - s},s) \indic{\tau_{t}' \ge t - s} \right], \label{tubackward} 
\end{align}
where 
\[
\tau'_{t}=\inf\big\{t'\in (0,t]:\|B_{t'}\| \geq R_{t-t'}\big\}
\]
as defined in~\eqref{eq:tau'def}. This is a stopping time with respect to the usual right-continuous filtration $\mathcal{F}_{t'}^+$ for $B_{t'}$.  Since $s > 0$, we know that $x \mapsto u(x,s)$ is a bounded and measurable function (it is continuous and compactly supported). Applying Lemma~\ref{lem:timereverseFK}, with $g(x) = u(x,s)$, we see that for $s \in(0,t)$,~\eqref{tubackward} implies that
\begin{align}
 u(y,t) & = e^{t-s} \int_{\R^d}  u(x,s) p(x,s,y,t) \,\diffd x \label{tuforwards}
\end{align}
for almost every $y\in \R^d$.

We now want to take the limit $s \to 0$ of the expression \eqref{tuforwards}.
For $0\le s<t$ and $x,y\in \R^d$, define
\begin{equation}
h(x,s,y,t) = \Phi(x,s,y,t) - p(x,s,y,t) \geq 0.  \label{hdef}
\end{equation}
For any $0 < s < \epsilon < t$, for a.e.~$y\in \R^d$, the Chapman-Kolmogorov equation and then Fubini's theorem imply that
\begin{align}
 u(y,t) &  =  e^{t-s} \int_{\R^d}   \left( \int_{\R^d} u(x,s) p(x,s,z,\epsilon)  \,\diffd x \right) p(z,\epsilon,y,t) \,\diffd z \nonumber \\
& = e^{t-s} \int_{\R^d}   \left( \int_{\R^d} u(x,s) \Phi(x,s,z,\epsilon)  \,\diffd x \right) p(z,\epsilon,y,t) \,\diffd z  \nonumber \\
& \quad - e^{t-s} \int_{\R^d}   \left( \int_{\R^d} u(x,s) h(x,s,z,\epsilon)  \,\diffd x \right) p(z,\epsilon,y,t) \,\diffd z, \label{error1}
\end{align}
where $h$ was defined above in \eqref{hdef}. Now we want to let $s \to 0$, and then $\epsilon \to 0$, to recover the representation \eqref{udef}. For $\epsilon > 0$ fixed, we have uniform convergence of $\Phi(x,s,z,\epsilon)$ to $\Phi(x,0,z,\epsilon)$ as $s \to 0$:
\[
\lim_{s \to 0} \sup_{x,z\in \R^d} \big|\Phi(x,s,z,\epsilon) - \Phi(x,0,z,\epsilon)\big| = 0.
\]
Therefore,
since $ u(\cdot,s) \to  \mu_0$ weakly as measures on $\R^d$ as $s \to 0$, and
 using~\eqref{eq:uintnear1}, this implies that for the first integral on the right hand side of~\eqref{error1} we have
\begin{align}
& \lim_{s \to 0} e^{t-s} \int_{\R^d}   \left( \int_{\R^d} u(x,s) \Phi(x,s,z,\epsilon)  \,\diffd x \right) p(z,\epsilon,y,t) \,\diffd z \nonumber\\
&\quad  = e^{t} \int_{\R^d}   \left( \int_{\R^d} \mu_0(\diffd x) \Phi(x,0,z,\epsilon)  \right) p(z,\epsilon,y,t) \,\diffd z. \nonumber
\end{align}
Using the definition of $h$ in~\eqref{hdef},
let us write this last integral as
\begin{align}
\int_{\R^d}   \left( \int_{\R^d} \mu_0(\diffd x) \Phi(x,0,z,\epsilon) \right) p(z,\epsilon,y,t) \,\diffd z &  =   \int_{\R^d}   \left( \int_{\R^d} \mu_0(\diffd x) p(x,0,z,\epsilon)  \right) p(z,\epsilon,y,t) \,\diffd z \nonumber \\
& \qquad  + \int_{\R^d}   \left( \int_{\R^d} \mu_0(\diffd x) h(x,0,z,\epsilon) \right) p(z,\epsilon,y,t) \,\diffd z \nonumber \\
 & =    \int_{\R^d} \mu_0(\diffd x) p(x,0,y,t) \nonumber \\
& \qquad  +  \int_{\R^d}   \left( \int_{\R^d} \mu_0(\diffd x) h(x,0,z,\epsilon) \right) p(z,\epsilon,y,t) \,\diffd z,  \label{error2}
\end{align}
by Fubini's theorem and the Chapman-Kolmogorov equation.
Notice that for $0\le s <t$ and $x\in \R^d$, for any Borel set $A \subseteq \R^d$,
\begin{align}
0 \leq \int_A h(x,s,y,t) \,\diffd y & =  \P( B_t \in A \;|\; B_s = x) - \P( B_t \in A, \; \tau_{s} \geq t  \;|\; B_s = x) \nonumber \\
& = \P( B_t \in A, \; \tau_{s} < t  \;|\; B_s = x). \label{hbound1}
\end{align}
For $\epsilon < t/2$, $p(z,\epsilon,y,t)\le \Phi(z,\epsilon,y,t)\le  C_t := (2\pi t )^{-d/2}$.  Thus, for $\epsilon < t/2$, the second integral on the right hand side of~\eqref{error2} is controlled by
\begin{align*}
0 \leq  \int_{\R^d} \left( \int_{\R^d} \mu_0(\diffd x) h(x,0,z,\epsilon) \right) p(z,\epsilon,y,t) \,\diffd z  & \leq C_t \int_{\R^d} \int_{\R^d} \mu_0(\diffd x) h(x,0,z,\epsilon)  \,\diffd z  \nonumber \\
& =  C_t \P_{\!\mu_0} ( \tau < \epsilon),  
\end{align*}
where the last line follows by~\eqref{hbound1} and since $\tau=\tau_{0}$, by the definition in~\eqref{tauRdef}.
Similarly, for $\epsilon < t/2$, the second integral on the right hand side of~\eqref{error1} is controlled by
\begin{align}
0  \leq  \int_{\R^d}   \left( \int_{\R^d} u(x,s) h(x,s,z,\epsilon)  \,\diffd x \right) p(z,\epsilon,y,t) \,\diffd z & \leq C_t \int_{\R^d}    u(x,s) \int_{\R^d}  h(x,s,z,\epsilon) \,\diffd z \,\diffd x \nonumber \\
& = C_t \int_{\R^d}    u(x,s) \P \big( \tau_{s} < \epsilon \;\big|\; B_s = x \big) \,\diffd x \nonumber 
\end{align}
by~\eqref{hbound1}. In summary, these computations prove that for $t>0$, there is a constant $C_t<\infty$ such that for any $\epsilon \in (0,t/2)$, for a.e.~$y\in \R^d$,
\begin{align}
\left|u(y,t) - e^t  \int_{\R^d} \mu_0(\diffd x) p(x,0,y,t) \right| & \leq e^t C_t \P_{\!\mu_0} ( \tau < \epsilon)  
+  e^{t} C_t  \limsup_{s \to 0}  f(s,\epsilon), \label{massloss}
\end{align}
where 
$$
f(s,\epsilon):=\int_{\R^d}    u(x,s) \int_{\R^d}  h(x,s,z,\epsilon) \,\diffd z\, \diffd x
=\int_{\R^d}    u(x,s) \P \big( \tau_{s} < \epsilon \;\big|\; B_s = x \big) \,\diffd x.
$$

Now we argue that the right hand side of \eqref{massloss} vanishes as $\epsilon \to 0$. 
By \eqref{tuforwards} again, for $s\in (0,\epsilon)$ we have
\begin{align*}
\int_{\R^d} u(z,\epsilon) \,\diffd z 
&=e^{\epsilon-s}\int_{\R^d}\int_{\R^d}u(x,s)p(x,s,z,\epsilon)\,\diffd z\, \diffd x\\
&=e^{\epsilon-s}\int_{\R^d}\int_{\R^d}u(x,s)\Big(\Phi(x,s,z,\epsilon)-h(x,s,z,\epsilon)\Big)\,\diffd z \,\diffd x \\
&=e^{\epsilon-s}\left( \int_{\R^d}u(x,s)\,\diffd x -f(s,\epsilon)\right),
\end{align*}
which implies that
\[
f(s,\epsilon)=\int_{\R^d}u(x,s)\, \diffd x-e^{s-\epsilon}\int_{\R^d} u(z,\epsilon) \,\diffd z .
\]
Therefore, by~\eqref{eq:uintnear1},
\[
\lim_{s \to 0} f(s,\epsilon)= 1-e^{-\epsilon}\int_{\R^d} u(z,\epsilon) \,\diffd z\to 0 \quad \text{as } \epsilon \to 0. 
\]
We now have that the last term on the right hand side of~\eqref{massloss} vanishes as $\epsilon \to 0$;
recalling the definition of $f$, we have shown that:
\begin{equation}
\lim_{\epsilon \to 0} \lim_{s \to 0} \int_{\R^d}u(x,s)\P \big(\tau_{s} < \epsilon \;\big|\; B_s =x \big)\,\diffd x = 0. \label{tauRseps}
\end{equation}
Finally, we argue that 
\begin{align}
\lim_{\epsilon \to 0} \P_{\!\mu_0} ( \tau < \epsilon) = 0 \label{tauR0}
\end{align}
which, by monotone convergence, is equivalent to the statement that
\begin{equation}
\P_{\!\mu_0} ( \tau > 0) = 1.\label{tauR02}
\end{equation}
Since $u(\cdot,t)$ converges weakly to $\mu_0$ as $t\searrow 0$, and $u(x,t)$ vanishes outside $\B(R_t)$, $\mu_0$ must be supported on $\{x:\|x\|\le R_0\}$.
If it happens that $\mu_0$ puts no mass on the boundary of its support, meaning that $\mu_0( \{ \|x\| < R_0\}) = 1$, then \eqref{tauR02} is an immediate consequence of the almost-sure continuity of Brownian motion and the continuity of $R_t$.  The more delicate case is when $R_0<\infty$ and $\mu_0( \{ \|x\| = R_0\}) = m$ for some $m > 0$. Suppose this is the case, but that \eqref{tauR02} does not hold --- we will derive a contradiction.  
We are going to show that if $R_t$ is such that~\eqref{tauR02} does not hold, then there cannot be a function $u$ as in the statement of the proposition: specifically,~\eqref{t0conv} cannot hold because the boundary would instantaneously absorb some mass from $u$.

If \eqref{tauR02} does not hold, then by the Blumenthal 0-1 law, it follows that
\begin{equation}
\P \big( \tau = 0 \;\big|\; \|B_0\|  = R_0 \big) = 1, \label{contra1}
\end{equation}
since the event $\{\tau > 0\}$ is measurable with respect to the germ $\sigma$-field $\mathcal{F}_0^+$ and must have probability $0$ or $1$, given $\|B_0\| = R_0$. Our argument will be that if \eqref{contra1} holds and if $\mu_0( \{ \|x\| = R_0\}) = m$ for some $m > 0$, then some fraction  of the mass must exit the domain before time $\epsilon$, violating \eqref{tauRseps} in the limit $\epsilon\to 0$.  Recall the definition of $\tau$ in~\eqref{tauRdef}:
\[
\tau = \inf \big\{ t > 0 \;:\; \|B_t\| \geq R_t \big\}. 
\]
Let us also define
\begin{align}
\tau^+ = \inf \big\{ t > 0 \;:\; \|B_t\| > R_t \big\}. \label{tauRdefP}
\end{align}
Clearly $\tau^+ \geq \tau$ must hold. With the function $t\mapsto R_t$ being
deterministic, continuous and positive, one could show that
$\tau^+=\tau$ almost surely. However, to keep the argument short we only prove
what we need here.
We claim that \eqref{contra1} implies
\begin{equation}
\P\big(\tau^+ = 0 \;\big|\; \|B_0\| = R_0\big) = 1.  \label{contra2}
\end{equation}
To see why this is true, observe that if $\tau =0$,  then with probability one, $\|B_{t_n}\| \geq R_{t_n}$ holds along a infinite sequence of (random) times $t_n \to 0$. Hence, if $\theta \in \R^d$ is any fixed unit vector, $\|B_{t_n}+\theta t_n\|=\big(\|B_{t_n}\|^2+ t_n^2+ 2t_n \theta \cdot B_{t_n}\big ){}^{1/2}>R_{t_n}$ if $\theta\cdot B_{t_n} >0$.  Let $e_1,\dots,e_d$ denote the standard orthonormal basis for $\R^d$.  For each $t_n$, one can find at least one vector $\theta \in \{ \pm e_k, \; k = 1,\dots,d\}$ such that $\theta \cdot B_{t_n} > 0$ holds. Then, there must be at least one vector $\theta\in \{ \pm e_k, \; k = 1,\dots,d\}$ such that the relation $\theta \cdot B_{t_n} > 0$ holds for infinitely many $t_n$.  Therefore, there is a vector $\theta \in \{ \pm e_k, \; k = 1,\dots,d\}$ such that 
\[
\max_{s \in [0,\epsilon]} \big(\big\|B_s + \theta s \big\| - R_s\big) > 0, \quad \forall \; \epsilon > 0.
\]
By Girsanov's theorem, for $\theta\in \R^d$, the laws of the processes $Y^\theta_s := B_s + \theta s$  and $B_s$ are mutually absolutely continuous, so 
\[
\P\Big( \max_{s \in [0,\epsilon]} \big(\|B_s\| -R_s\big) > 0 ,\ \forall\epsilon>0 \, \Big| \, \|B_0\|=R_0 \Big)>0
\]
must also hold.  By Blumenthal's 0-1 law, this event must have probability 1 since it cannot have probability 0.  This proves that \eqref{contra2} must hold.

Now we show that \eqref{contra2} contradicts \eqref{tauRseps}. The statement \eqref{contra2} means that $\|B_s\|$ exceeds $R_s$ infinitely often as $s \to 0$.  In other words, for any $\epsilon>0$,
$$\P\bigg(\max_{s \in [0,\epsilon/2]} \big(\|B_s\| - R_s\big) > 0\;\bigg|\;\|B_0\|=R_0\bigg)=1.$$
It follows that for any $\epsilon > 0$, there exists $\delta > 0$ such that
\[
\P\bigg( \max_{s \in [0,\epsilon/2]} \big( \|B_s\| - R_s\big) > \delta \;\bigg|\; \|B_0\| = R_0\bigg) \geq \frac23.
\]
Hence for any $x \in \R^d$ such that $\big|\|x\| - R_0\big| < \delta/2$,
\[
\P_{\!x}\bigg( \max_{s \in [0,\epsilon/2]} \big(\|B_s\| - R_s\big) > \delta/2\bigg ) \geq \frac23.
\]
Since the function $R_s$ is uniformly continuous on $[0,\epsilon]$, there is $r_0 \in(0,\epsilon/2)$ small enough (depending on $\epsilon$ and $\delta$) such that  for any $r \in [0,r_0]$, we have $\max_{s\in[0,\epsilon/2]}|R_s-R_{s+r}|<\delta/4$. This implies that for $r\le r_0$ and $x \in \R^d$ such that $\big|\|x\| - R_0\big| < \delta/2$,
\begin{equation}
\P_{\!x}\bigg(\max_{s \in [0,\epsilon/2]} \big(\|B_s\| - R_{s+r}\big) > \delta/4 \bigg) \geq \frac23. \label{BRexceed}
\end{equation}
Because $u(\cdot,r)$ converges weakly to $\mu_0$ as $r \to 0$, and because we assumed that $\mu_0( \{ \|x\| = R_0\}) = m>0$, we have that for $r_0$ small enough,
\[
\inf_{r \in [0,r_0]} \int_{\{|\|x\| - R_0| < \delta/2 \} } u(x,r) \,\diffd x \geq \frac{m}{2}.
\]
Therefore, for any $r \in (0,r_0]$, the integral in \eqref{tauRseps} can be bounded below by writing
\begin{align}
\int_{\R^d}u(x,r)\P(\tau_{r}< \epsilon \; | \; B_r=x)\, \diffd x 
& \geq \frac m 2 \inf_{\big|\|x\| - R_0\big| < \delta/2}  \P(\tau_{r}< \epsilon \; | \; B_r=x) \nonumber \\
 & \geq \frac m 2 \inf_{\big|\|x\| - R_0\big| < \delta/2}  \P(\tau_{r}< \tfrac 12 \epsilon + r \; | \; B_r=x) \label{uintlower1}
\end{align}
because $r\le r_0<\epsilon/2$.
By the definition of $\tau_{r}$ in~\eqref{eq:tauRsdefn}, 
\begin{align*}
\P(\tau_{r}<\tfrac 12  \epsilon + r \; | \; B_r=x) & = \P\big( \|B_{r + s} \| \geq R_{r + s} \;\;\text{for some $s \in (0,\epsilon/2)$} \; | \; B_r=x\big) \\
& = \P_{\!x}\big( \|B_{s} \| \geq R_{r + s} \;\;\text{for some $s \in (0,\epsilon/2)$}\big)  \\
& \geq \P_{\!x}\Big( \max_{s \in [0,\epsilon/2]}\big( \|B_{s} \| - R_{r + s}\big) > 0 \Big). 
\end{align*}
In particular, this is bounded below by \eqref{BRexceed} and we have that
\begin{align*}
\inf_{\big|\|x\| - R_0\big| < \delta/2}  \P(\tau_{r}< \tfrac 12 \epsilon + r \, | \, B_r=x) 
& \geq \inf_{\big|\|x\| - R_0\big| < \delta/2} \P_{\!x}\bigg(\max_{s \in [0,\epsilon/2]} \big(\|B_s\| - R_{s+r}\big) > \delta/4 \bigg) \geq \frac{2}{3}.
\end{align*}
Returning to \eqref{uintlower1}, we now see that
\begin{align*}
\int_{\R^d}u(x,r)\P(\tau_{r}< \epsilon \; | \; B_r=x)\, \diffd x  & \geq  \frac{m}{3}
\end{align*}
holds for all $r \in (0,r_0]$. Letting $r\to 0$ and then $\epsilon \to 0$, this contradicts \eqref{tauRseps}.  We conclude that~\eqref{tauR0} holds, which, together with~\eqref{massloss} and~\eqref{tauRseps}, completes the proof of Proposition \ref{prop:utilderho}.
\end{proof}

\subsection{Proof of Proposition~\ref{prop:ufb}}

Suppose that $v$ solves the obstacle problem \eqref{pbv} with the non-decreasing initial condition $v_0$ given by~\eqref{v0def}.  
For $t>0,$ let $R_t=\inf\{x:v(x,t)=1\}$, and let $R_0=\lim_{t\searrow 0}R_t$.
Define the function
\begin{equation}
\tilde u(y,t) = \frac{1}{d\,\omega_d \|y\|^{d-1}}\partial_x v(\|y\|,t) \geq 0, \quad \quad y \in \R^d \setminus\{0\}, \, t>0, \label{utildedef}
\end{equation}
and $\tilde u(0,t)=\lim_{y\to 0}\tilde u(y,t)$ for $t>0$,
where $\omega_d = |\B(1)|$, so that $d \, \omega_d = |\partial \mathcal{B}(1)|$  is the $(d-1)$-dimensional measure of the unit sphere in dimension $d$. 
(We shall show below that $\tilde u(0,t)$ is well-defined.)
Observe that the function $y \mapsto \tilde u(y,t)$ has rotational symmetry; as our analysis will show, this function is a symmetrization of the desired solution $u$. Therefore, it will be convenient to define a symmetrization operator, as follows.  

Let $C_b=C_b(\R^d)$ denote the space of continuous and bounded functions $f:\R^d \to \R$. We define the symmetrization map $\Gamma:C_b \to C_b$ by $\Gamma f(0) = f(0)$ and
\begin{align*}
\Gamma f(x) = \dashint_{\partial \B(\|x\|)} f(y)\,\diffd S(y), \quad x \in \R^d, & \quad x\neq 0.
\end{align*}
That is, the symmetrized function $\Gamma f(x)$ is the average of $f$ over the sphere of radius $\|x\|$.  By duality, this map on $C_b$ induces a map $\Gamma^*$ on probability measures: given a probability measure $\mu$ on $\R^d$, let $\Gamma^* \mu$ denote the probability measure defined by
\begin{equation}
\int_{\R^d} f(x) \Gamma^* \mu(\diffd x) = \int_{\R^d} \Gamma f(x)\, \mu(\diffd x), \quad \forall \; f \in C_b(\R^d). \label{GammaStardef}
\end{equation}
The probability measure $\Gamma^* \mu$ is the unique measure which is invariant under any orthogonal transformation of $\R^d$ and which satisfies
\[
\Gamma^* \mu\big( \B(r) \big) =  \mu\big(\B(r)\big), \quad \forall r > 0.
\]
If $\mu(\diffd x)$ has a density with respect to Lebesgue measure, $\mu(\diffd x) = h(x)\,\diffd x$, then $\Gamma^* \mu$ has the density $\Gamma h$.

Given the probability measure $\mu_0$, define $\tilde \mu_0 = \Gamma^* \mu_0$, the symmetrized initial measure. Recalling \eqref{v0def}, we have
\begin{equation*}
v_0(x) = \mu_0\big(\mathcal{B}(x)\big) = \tilde \mu_0\big(\mathcal{B}(x)\big) 
\end{equation*}
for all $x > 0$.

\begin{lem} \label{lem:utilde}
The function $\tilde u$ is well-defined at $y=0$ and satisfies the PDE \eqref{upde}, the boundary condition \eqref{uboundary}, the continuity condition \eqref{continuity}, and the mass constraint \eqref{umass}. Moreover, $\tilde u$ satisfies the initial condition~\eqref{t0conv} with the symmetrized initial measure $\tilde \mu_0$, and $\tilde u$ is given by the representation
\begin{equation}
\tilde u(y,t) = e^t \int_{\R^d} \tilde \mu_0(\diffd x) \,\rho_t(x,y) \quad \text{for }y\in \R^d \text{ and }t>0, \label{tildeurep2}
\end{equation}
where $\rho_t(x,y)$ is defined in~\eqref{eq:rhotdef}.  If $\|y\| < R_t$, then $\tilde u(y,t) > 0$.
\end{lem}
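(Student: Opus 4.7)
The plan is to verify that $\tilde u$ satisfies every condition of~\eqref{pbugivenR} with the symmetrized initial measure $\tilde\mu_0$ and then invoke Proposition~\ref{prop:utilderho} to get the representation~\eqref{tildeurep2}; positivity is handled separately using Lemma~\ref{lem:vxpos} and the strong maximum principle. First I would verify the PDE, boundary condition, and mass constraint on $\{0<\|y\|<R_t\}$ by direct computation. Using Proposition~\ref{prop:vsmoothpde} (smoothness of $v$ where $v<1$) together with the radial formula $\Delta f = \partial_r^2 f + \frac{d-1}{r}\partial_r f$ for radially symmetric $f$, a short differentiation shows $\partial_t\tilde u = \Delta\tilde u+\tilde u$ on $\{0<\|y\|<R_t\}$. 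The boundary condition~\eqref{uboundary} is immediate from $v(x,t)=1$ on $[R_t,\infty)$, which forces $\partial_x v = 0$ there. The mass constraint follows by spherical-shell integration:
\begin{equation*}
\int_{\B(R_t)} \tilde u(y,t)\,\diffd y = \int_0^{R_t} \partial_x v(r,t)\,\diffd r = v(R_t,t)-v(0,t)=1.
\end{equation*}

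Next I would establish joint continuity. Away from $\{y=0\}$ and the moving boundary, continuity is immediate from Lemma~\ref{lem:jointcontin}. On the boundary $\|y\| = R_t$, if $(y_n,t_n)\to(y_0,t_0)$ with $\|y_0\|=R_{t_0}$, then by the joint continuity of $\partial_x v$ and the continuity of $R_t$ from Proposition~\ref{prop:Rtcontinuous}, $\partial_x v(\|y_n\|,t_n)\to \partial_x v(R_{t_0},t_0)=0$, and hence $\tilde u(y_n,t_n)\to 0$. The delicate case is continuity at $y=0$ when $d\geq 2$: here I would use the bound~\eqref{eq:vxNew}, which gives $|\tilde u(y,t)|\leq C'/(d\omega_d)$ on a neighborhood of $\{0\}\times[t_1,\infty)$, together with a standard removable-singularity result for bounded solutions of a uniformly parabolic equation (the singular set $\{0\}\times(0,\infty)$ has spatial codimension $d\geq 2$). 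For $d=1$, no such argument is needed, since $\tilde u(y,t)=\tfrac12 \partial_x v(|y|,t)$ is directly continuous on $\R\times(0,\infty)$ by Proposition~\ref{prop:v is C1}. This simultaneously shows that $\tilde u(0,t)$ is well defined and that the PDE extends to $y=0$.

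For the initial condition, the key observation is that $\tilde u(\cdot,t)$ is spherically symmetric by construction, with radial distribution function $x\mapsto \int_{\B(x)}\tilde u(y,t)\,\diffd y = v(x,t)$, while $\tilde\mu_0$ has radial distribution function $v_0$. Since $v(\cdot,t)\to v_0$ in $L^1_{\text{loc}}$ by Theorem~\ref{thm:exists v} and $v_0$ is monotone, the CDFs converge at every continuity point of $v_0$; this upgrades to weak convergence of the probability measures with densities $\tilde u(\cdot,t)$ to $\tilde\mu_0$ exactly as in the proof of Lemma~\ref{lem:vfromu}. All the hypotheses of Proposition~\ref{prop:utilderho} now hold with initial measure $\tilde\mu_0$, and that proposition yields~\eqref{tildeurep2}.

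For positivity, Lemma~\ref{lem:vxpos} gives $\partial_x v(x,t)>0$ on $\{0<x<R_t\}$, which immediately yields $\tilde u(y,t)>0$ for $0<\|y\|<R_t$. To handle $y=0$, I would apply the strong maximum principle to the nonnegative solution $\tilde u$ of $\partial_t u=\Delta u+u$ on a small backward parabolic cylinder around $(0,t)$ contained in $\{\|y\|<R_s\}$ (which exists by continuity and strict positivity of $R_t$ from Proposition~\ref{prop:Rtcontinuous}): if $\tilde u(0,t)=0$ then $\tilde u$ would vanish identically on that cylinder, contradicting positivity at nearby points $y\neq 0$. The main technical obstacle throughout is the behaviour at the origin when $d\geq 2$, where the defining formula is of the indeterminate form $0/0$ and continuity relies on the removable-singularity argument; every other step is direct verification using results already established.
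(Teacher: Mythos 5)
Your overall architecture is the same as the paper's: verify all of \eqref{pbugivenR} plus \eqref{umass} for $\tilde u$ with the symmetrized measure $\tilde\mu_0$, then invoke Proposition~\ref{prop:utilderho} for the representation \eqref{tildeurep2}, and handle positivity by the strong maximum principle. The PDE away from the origin via Proposition~\ref{prop:vsmoothpde}, the boundary condition, the mass constraint, continuity up to the free boundary via Lemma~\ref{lem:jointcontin}, and the initial condition via the CDF/Portmanteau argument (this is exactly Lemma~\ref{lem:weakconv} in the paper) all match. Where you genuinely diverge is at the origin for $d\ge 2$. The paper does not use a removable-singularity theorem: it solves an auxiliary Dirichlet problem $\hat u$ on a fixed cylinder $\B(\epsilon)\times[t_1,t_2]$ with boundary data $\tilde u$ (using \eqref{eq:vxNew} only to ensure the $t=t_1$ data is in $L^\infty$), forms $\hat v(x,t)=\int_{\B(x)}\hat u$, and proves $\hat v=v$ by a weighted energy estimate ($\partial_t\int_0^\epsilon x^{-(d-1)}\phi^2\le 0$, with the integration by parts justified by $\partial_x\phi=\mathcal O(x^{d-1})$ from \eqref{eq:vxNew}); this identifies $\tilde u$ with $\hat u$ near $y=0$ and is entirely self-contained. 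Your route — boundedness near the origin from \eqref{eq:vxNew} plus removability of $\{0\}\times(t_1,t_2)$ for bounded caloric functions when $d\ge2$ — is valid (e.g., via the fact that Brownian motion in $d\ge2$ does not hit points, so the backward Feynman--Kac representation on the punctured cylinder coincides with the one on the full cylinder), but it imports a result the paper deliberately avoids, and you should either cite it precisely or give the short probabilistic proof.

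One small gap: for $d=1$ you claim "no such argument is needed" and only address continuity of $\tilde u$ at $y=0$, but the lemma also asserts the PDE \eqref{upde} there, and the even extension of a smooth function on $(0,\epsilon)$ need not be twice differentiable at $0$. You need an extra line — e.g., extend $v(\cdot,t)$ oddly across $x=0$ (legitimate since $v(0,t)=0$ and for $d=1$ the operator has no drift), note the extension is caloric hence smooth near $x=0$, and conclude that $\partial_x v$ is smooth and even so that $\tilde u$ solves \eqref{upde} at $y=0$. The paper's energy argument covers all $d\ge1$ uniformly, which is one advantage of its approach.
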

The proof of Lemma~\ref{lem:utilde} will use the following technical lemma.

\begin{lem} \label{lem:weakconv}
Let $\big\{\mu_t(\diffd y)\big\}_{t \geq 0}$ be a family of probability measures on $\R^d$, and let
\[
v(x,t) = \mu_t\big(\mathcal{B}(x)\big), \quad x \geq 0, \, t\ge 0
\]
be the radial distribution functions. If $v(x,t) \to v(x,0)$ in $L^1_\text{loc}$ as $t\searrow 0$, then $\Gamma^* \mu_t \to \Gamma^* \mu_0$ weakly as $t \searrow 0$. 
\end{lem}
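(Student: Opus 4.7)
The plan is to reduce the statement to convergence of the one-dimensional radial pushforward measures and then use convergence of distribution functions at continuity points.

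First, I would observe that for any $f\in C_b(\R^d)$, the symmetrised function $\Gamma f$ depends only on $\|x\|$, so we can write $\Gamma f(x)=g(\|x\|)$ for some bounded continuous $g:[0,\infty)\to\R$. Letting $\nu_t$ denote the pushforward of $\mu_t$ under the map $x\mapsto\|x\|$, which is a probability measure on $[0,\infty)$ with distribution function $\nu_t\big([0,x)\big)=\mu_t\big(\B(x)\big)=v(x,t)$, the definition of $\Gamma^*$ gives
\[
\int_{\R^d} f \,\diffd(\Gamma^*\mu_t)=\int_{\R^d}\Gamma f\,\diffd\mu_t=\int_0^\infty g(r)\,\diffd\nu_t(r).
\]
So it suffices to prove that $\nu_t\to\nu_0$ weakly on $[0,\infty)$.

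The main step is then to upgrade $L^1_\text{loc}$ convergence of the monotone distribution functions $v(\cdot,t)$ to pointwise convergence at every continuity point of $v(\cdot,0)$. I would fix such a continuity point $x_0\ge 0$ and, for small $\delta>0$, use monotonicity of $v(\cdot,t)$ to bound
\[
\frac{1}{\delta}\int_{x_0-\delta}^{x_0} v(y,t)\,\diffd y \;\le\; v(x_0,t) \;\le\; \frac{1}{\delta}\int_{x_0}^{x_0+\delta} v(y,t)\,\diffd y.
\]
By $L^1_\text{loc}$ convergence the two bracketing averages converge as $t\searrow 0$ to the corresponding averages of $v(\cdot,0)$, which by continuity at $x_0$ (and monotonicity) differ from $v(x_0,0)$ by at most $v(x_0+\delta,0)-v(x_0-\delta,0)$, which can be made arbitrarily small. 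This sandwich gives $v(x_0,t)\to v(x_0,0)$.

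Finally, tightness of $\{\nu_t\}_{t\ge 0}$ follows because $v(x,0)\to 1$ as $x\to\infty$, so we can choose a continuity point $x_\epsilon$ with $v(x_\epsilon,0)>1-\epsilon/2$, and then for $t$ small we have $\nu_t\big([0,x_\epsilon]\big)\ge v(x_\epsilon,t)>1-\epsilon$. Combined with the previous step and the classical fact that convergence of distribution functions at every continuity point of the limit (together with tightness) implies weak convergence of the underlying probability measures, this yields $\nu_t\to\nu_0$ weakly on $[0,\infty)$, and hence $\Gamma^*\mu_t\to\Gamma^*\mu_0$ weakly on $\R^d$, as required. The only mildly delicate step is the monotone sandwich argument promoting $L^1_\text{loc}$ convergence to pointwise convergence at continuity points; everything else is standard.
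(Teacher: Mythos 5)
Your proposal is correct and follows essentially the same route as the paper: pass to the radial pushforward measures, upgrade $L^1_{\text{loc}}$ convergence of the monotone distribution functions to pointwise convergence at continuity points of $v(\cdot,0)$, and conclude by the Portmanteau theorem. The only difference is cosmetic — you spell out the averaging sandwich that the paper leaves implicit when it asserts pointwise convergence at continuity points, and you make the tightness step explicit.
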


\begin{proof}[Proof of Lemma~\ref{lem:weakconv}]
For $t\ge 0$, let $\tilde \mu_t = \Gamma^* \mu_t$ denote the symmetrized measure on $\R^d$.  We must show that 
\[
\lim_{t \to 0} \int_{\R^d} f(y)\, \tilde \mu_t(\diffd y) - \int_{\R^d} f(y)\, \tilde \mu_0(\diffd y) = 0
\]
holds for any continuous and bounded function $f$ on $\R^d$. Because of the rotational symmetry of $\tilde \mu_t$, it suffices to assume that $f$ has the form $f(y) = g(\|y\|)$, where $g:[0,\infty) \to \R$ is continuous and bounded.  Then,
\[
\int_{\R^d} g(\|y\|)\, \tilde \mu_t(\diffd y) - \int_{\R^d} g(\|y\|)\, \tilde \mu_0(\diffd y) = \int_{[0,\infty)} g(r)\, \eta_t(\diffd r) - \int_{[0,\infty)} g(r) \,\eta_0(\diffd r) 
\]
where $\eta_t(\diffd r) = M_{\#} (\tilde \mu_t(\diffd y) )$ is the push-forward of the measure $\tilde \mu_t(\diffd y) $ on $\R^d$ under the map $M:\R^d \to [0,\infty)$ defined by $M(y) = \|y\|$. That is, $\eta_t(C)=\tilde\mu_t(\{x\in\R^d\;:\:\|x\|\in C\})$ for all Borel sets $C \subseteq [0,\infty)$.  Thus, $\eta_t$ is a family of probability measures on $[0,\infty)$, and $v(r,t) =\eta([0,r))$ for all $r \geq 0$.  Since $v(r,t)$ is non-decreasing in $r$ for all $t \geq 0$, the convergence of $v(\cdot,t)$ to $v(\cdot,0)$ in $L^1_{\text{loc}}$ implies that if $r$ is any point of continuity for $v(r,0)$, then pointwise convergence $v(r,t) \to v(r,0)$ must hold at $r$, as $t \to 0$. Therefore, by the Portmanteau theorem (see Chapter~1, Section~2 of~\cite{Bill99}), $\eta_t \to \eta_0$ weakly as $t \to 0$. Hence, 
\[
\lim_{t \to 0}  \int_{[0,\infty)} g(r)\, \eta_t(\diffd r) - \int_{[0,\infty)} g(r)\, \eta_0(\diffd r) = 0 
\] 
as desired.
\end{proof}
\begin{proof}[Proof of Lemma~\ref{lem:utilde}]
Since $R_t>0$ for $t>0$ by Proposition~\ref{prop:Rtcontinuous}, the
continuity \eqref{continuity} of $\tilde u$ at the boundary $\{\|y\| = R_t\}$ for $t>0$ follows from Lemma~\ref{lem:jointcontin}.
 Since $\partial_x v(x,t)=0$ for $x\ge R_t$ and $t>0$, $\tilde u$ satisfies the boundary condition~\eqref{uboundary}.
By Proposition \ref{prop:vsmoothpde}, $v(x,t)$ is a smooth function on $\{(x,t) \;:\; 0 < x < R_t, \;\; t > 0 \}$; hence $\tilde u$ is smooth on $\{(y,t) \in \R^d \times (0,\infty) \;:\; 0 < \|y\| < R_t \}$
and satisfies the PDE~\eqref{upde} there.  We claim that $\tilde u$ is well-defined at $y = 0$ for all $t>0$ and satisfies the PDE \eqref{upde} at $y = 0$, as well. To see this, suppose that $R_t > \epsilon > 0$ for $t \in [t_1,t_2]$, for some $t_1 > 0$.  Let $\hat u$ solve the PDE \eqref{upde} on the fixed cylindrical domain $\{ \|y\| < \epsilon \} \times [t_1,t_2]$ with  boundary condition $\hat u(y,t) = \tilde u(y,t)$ on the parabolic boundary of $\{ \|y\| < \epsilon \} \times [t_1,t_2]$.  This boundary condition is continuous, except possibly at the single point $(0,t_1)$.  Nevertheless, the initial condition $y \mapsto \tilde u(y,t_1)$ is in $L^\infty(\B(\epsilon))$, due to \eqref{utildedef} and \eqref{eq:vxNew}. It follows that the function 
\[
\hat v(x,t) = \int_{\B(x) } \hat u(y,t) \,\diffd y
\]
satisfies the same linear PDE as $v(x,t)$ satisfies on the domain $\{ 0<x < \epsilon \} \times [t_1,t_2]$, with the same initial condition and boundary condition (Neumann at $x = \epsilon$, Dirichlet at $x = 0$). Specifically, the function $\phi(x,t) = e^{-t}(v - \hat v)$ satisfies
\begin{alignat*}{3}
& \partial_t \phi = \partial_x^2 \phi - \frac{d-1}{x} \partial_x \phi, \quad && x \in (0,\epsilon), \;\; t \in [t_1,t_2], \\
& \phi(0,t) = 0, \quad && t \in [t_1,t_2],\\
& \partial_x \phi(\epsilon,t) = 0, \quad && t \in [t_1,t_2],\\
& \phi(x,t_1) = 0, \quad && x \in [0,\epsilon].
\end{alignat*}
This problem has a unique solution: $\phi \equiv 0$.  Indeed, multiplying the PDE (which has the form $\partial_t \phi = x^{(d-1)} \partial_x ( x^{-(d-1)} \partial_x \phi)$) by $x^{-(d-1)} \phi$ and integrating by parts in $x$, one derives:
\[
\partial_t \int_0^\epsilon x^{-(d-1)} \phi^2(x,t) \,\diffd x = - 2 \int_0^\epsilon x^{-(d-1)} (\partial_x \phi(x,t))^2 \,\diffd x \leq 0 , \quad t \in [t_1,t_2].
\]
The integration by parts is justified since $\partial_x \phi(x,t) = \mathcal{O}(x^{d-1})$ as $x \to 0$, uniformly over $t \in [t_1,t_2]$; indeed, as $\hat u$ is bounded we have $\partial_x \hat v(x,t)=\mathcal O(x^{d-1})$, and~\eqref{eq:vxNew} in Proposition~\ref{prop:v is C1} gives us that $\partial_x v(x,t)=\mathcal O(x^{d-1})$. This inequality shows that $\int_0^\epsilon x^{-(d-1)} \phi^2 \,\diffd x$ is non-increasing in $t$.  Since $x^{-(d-1)} \phi^2$ is non-negative and $\phi(\cdot,t_1) \equiv 0$, this implies $\phi \equiv 0$ on $[0,\epsilon] \times [t_1,t_2]$.  Hence $\hat v$ and $v$ coincide on this domain.  Thus, $\tilde u$ coincides with $\hat u$ on $\{\|y\|<\epsilon\}\times [t_1,t_2]$; in particular, $\tilde u$ is well-defined at $y = 0$, $t>0$ by continuity, and it is a solution to \eqref{upde} also at $y=0$, $t>0$.  The fact that $\tilde u$ satisfies the mass constraint \eqref{umass} follows immediately from integration over $\B( R_t)$ and the fact that $v(R_t,t) = 1$ for $t>0$. 
Since $v(\cdot,t)\to v_0(x)$ in $L^1_{\text{loc}}$ as $t\searrow 0$, the
 fact that~\eqref{t0conv} holds with the initial condition $\tilde \mu_0$ is an immediate consequence of Lemma \ref{lem:weakconv}, applied to the family of measures $\mu_t(\diffd y) = \tilde u(y,t) \,\diffd y$ which are invariant under rotation (i.e. $\Gamma^* \mu_t = \mu_t$).

In view of these properties of $\tilde u$, Proposition \ref{prop:utilderho} implies that $\tilde u$ must be given by the representation~\eqref{tildeurep2}. The statement that $\tilde u(y,t) > 0$ if $\|y\| < R_t$ is an immediate consequence of the strong maximum principle, as in the proof of Lemma~\ref{lem:vfromu}. 
\end{proof}

The fact that $\tilde u$ as defined in~\eqref{utildedef} satisfies the Dirichlet boundary condition at $\|y\| = R_t$ will be useful in proving the following lemma, which is needed to show that $u$ as defined in~\eqref{udef} satisfies the boundary condition~\eqref{uboundary}. Let us define the space-time domains
\begin{equation}
D=\big\{(x,t)\;:\; t > 0, \;\; \;\|x\|<R_t\big\}, \label{domainD0}
\end{equation}
and, for $0<a<b$,
\begin{equation}
D_{a,b}=\big\{(x,t)\;:\; t \in (a,b), \;\; \;\|x\|<R_t\big\}. \label{domainD}
\end{equation}
Recall from~\eqref{eq:tau'def} that for $t>0$,
\[
\tau_{t}' = \inf \big\{ s \in (0,t] \;:\; \|B_s\| \geq R_{t - s} \big\}.
\]
Recall that $\P_{\!y}$ is the Wiener measure under which $B$ is a Brownian motion with $B_0 = y$. Under  $\P_{\!y}$, the stopping time $\tau_{t}'$ is the time when the Brownian path hits the moving boundary $R$, moving backwards in time from the point $(y,t)$.

The following result says that under $\P_{\!y}$, $\tau_{t}' \to 0$ in probability as $(y,t)$ approaches any boundary point $(y_*,t_*)$ with $t_*>0$. 

\begin{lem}  \label{lem:taucontin}
Let $(y_*,t_*)\in\partial D$ with $t_*>0$. Then for all $t_1\in(0,t^*)$,
$$\P_{\!y}(\tau'_t\ge t-t_1)\to0\quad\text{as $(y,t)\to(y_*,t_*).$}$$
\end{lem}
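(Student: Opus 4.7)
The key idea is geometric: a Brownian motion started near a boundary point of a ball exits (or reaches) the ball in arbitrarily small time with probability approaching one. I would reduce to a one-dimensional problem by projecting onto the radial direction $\theta=y_*/\|y_*\|$, and then invoke the reflection principle for 1D Brownian motion.

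First I would observe that since $R_t$ is positive, finite, and continuous for $t>0$ (Proposition~\ref{prop:Rtcontinuous}), and $D=\{(x,t):t>0,\|x\|<R_t\}$ is open, any point $(y_*,t_*)\in\partial D$ with $t_*>0$ must satisfy $\|y_*\|=R_{t_*}>0$. Set $\theta:=y_*/\|y_*\|$, a unit vector. Fix any $\epsilon\in(0,t_*-t_1)$; then for $(y,t)$ sufficiently close to $(y_*,t_*)$ we have $t-t_1>\epsilon$, and hence
$$\{\tau'_t\ge t-t_1\}\subseteq\{\tau'_t\ge\epsilon\}=\bigcap_{s\in(0,\epsilon)}\{\|B_s\|<R_{t-s}\}\subseteq\bigcap_{s\in(0,\epsilon)}\{B_s\cdot\theta<R_{t-s}\},$$
where the last inclusion uses $\|B_s\|\ge B_s\cdot\theta$.

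Next, by continuity of $R$ at $t_*$, for any $\eta>0$ and $(y,t)$ in a sufficiently small neighborhood of $(y_*,t_*)$, we have $R_{t-s}\le R_{t_*}+\eta/2$ for all $s\in[0,\epsilon]$, and also $|y\cdot\theta-R_{t_*}|\le\|y-y_*\|<\eta/2$. The process $X_s:=B_s\cdot\theta-y\cdot\theta$ is, under $\P_{\!y}$, a one-dimensional Brownian motion with diffusivity $\sqrt{2}$ starting from $0$. Combining these bounds yields
$$\P_{\!y}(\tau'_t\ge t-t_1)\le \P\Bigl(\sup_{s\in[0,\epsilon]}(y\cdot\theta+X_s)\le R_{t_*}+\eta/2\Bigr)\le \P\Bigl(\sup_{s\in[0,\epsilon]}X_s\le \eta\Bigr).$$

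Finally, the 1D reflection principle gives $\P(\sup_{s\in[0,\epsilon]}X_s<\eta)=1-2\P(X_\epsilon\ge\eta)$ where $X_\epsilon\sim N(0,2\epsilon)$, and this tends to $0$ as $\eta\searrow 0$ (for $\epsilon$ fixed). Since $\eta>0$ was arbitrary, the desired conclusion $\P_{\!y}(\tau'_t\ge t-t_1)\to 0$ as $(y,t)\to(y_*,t_*)$ follows. There is no substantial obstacle: the projection onto $\theta$ is the only real trick, and it trivializes the estimate by reducing to a standard one-dimensional hitting probability.
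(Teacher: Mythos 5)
The decisive step in your argument is the claim that, for a \emph{fixed} $\epsilon\in(0,t_*-t_1)$ and an arbitrarily small $\eta>0$, one has $R_{t-s}\le R_{t_*}+\eta/2$ for all $s\in[0,\epsilon]$ once $(y,t)$ is close to $(y_*,t_*)$. This is not what continuity of $R$ at $t_*$ gives you: it only yields $R_{t'}\le R_{t_*}+\eta/2$ for $t'$ in a window around $t_*$ whose width shrinks with $\eta$, whereas here $t-s$ ranges over the whole interval $[t-\epsilon,t]$ of fixed length $\epsilon$. As $\eta\searrow 0$ your claim would force $R_{t'}\le R_{t_*}$ on all of $[t_*-\epsilon,t_*]$, and nothing rules out that $R$ is strictly decreasing there (the free boundary does decrease, e.g.\ $\underline R_t\searrow R_\infty$ for the initial condition $\underline v_0$). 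Nor can you repair this by letting $\epsilon=\epsilon(\eta)$ shrink: the bound $\P\big(\sup_{s\le\epsilon}X_s\le\eta\big)=\P\big(|X_\epsilon|\le\eta\big)$ tends to $0$ only if $\eta\ll\sqrt{\epsilon}$, and the only control available on how fast $R$ can \emph{decrease} is bare continuity (the H\"older estimate $R_t-R_s\le C_\alpha(t-s)^\alpha$ bounds increases only, i.e.\ it bounds $R_{t-s}$ from \emph{below} by $R_t-C_\alpha s^\alpha$, which is useless here).

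The gap is not cosmetic: the statement is false for a general continuous moving boundary, so no argument using only continuity of $R$ and generic Brownian estimates can succeed. Take, say, $R_{t'}=R_{t_*}+(t_*-t')^{1/10}$ for $t'\le t_*$. Then the backward barrier $R_{t-s}$ recedes like $s^{1/10}\gg\sqrt{s}$; since almost surely $\sup_{r\le s}\|B_r-B_0\|=o(s^{1/10})$ as $s\to0$, a Brownian motion started at $\|y\|\approx R_{t_*}$ stays inside for a macroscopic time with probability bounded away from $0$, and $\P_{\!y}(\tau'_t\ge t-t_1)$ does not tend to $0$. The lemma therefore genuinely requires the PDE structure of this particular free boundary, which is how the paper proceeds: it splits $\P_{\!y}(\tau'_t\ge t-t_1)$ according to whether $\|B_{t-t_1}\|$ lands in the thin annulus $(R_{t_1}-\epsilon,R_{t_1}]$ (probability $\mathcal O(\epsilon)$ uniformly in $y$) or in $\overline{\B(R_{t_1}-\epsilon)}$; for the latter it uses the Feynman--Kac identity $\tilde u(y,t)=e^{t-t_1}\E_y\big[\tilde u(B_{t-t_1},t_1)\indic{\tau'_t\ge t-t_1}\big]$ for the symmetrized solution $\tilde u$ of Lemma~\ref{lem:utilde}, together with $\inf_{\|x\|\le R_{t_1}-\epsilon}\tilde u(x,t_1)>0$ and $\tilde u(y,t)\to0$ as $(y,t)\to(y_*,t_*)$. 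You would need an input of this kind (or a two-sided modulus of continuity for $t\mapsto R_t$, which the paper does not establish) before your projection-and-reflection estimate can be brought to bear.
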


\begin{proof}
Let $\epsilon \in (0,R_{t_1}/2)$.  We start with the following simple bound:
for $y\in \R^d$ and $t>t_1$, by the continuity of $R$ and the almost sure continuity of Brownian motion,
\begin{align*}
\P_{\!y}\big( \tau_{t}' \geq t - t_1 \big) & \leq  \P_{\!y}\big(\|B_{t-t_1}\|  - R_{t_1} \in (-\epsilon,0] \big) + \P_{\!y}\big( \|B_{t - t_1}\| \leq R_{t_1} - \epsilon, \;  \tau_{t}' \geq t - t_1  \big). 
\end{align*}
Assuming $t > (t_* + t_1)/2$, we have that $t-t_1 \geq (t_*-t_1)/2>0$, and the first term on the right hand side is bounded by
\[
\sup_{y\in \R^d} \P_{\!y}\big( \|B_{t - t_1}\| -R_{t_1} \in(-\epsilon,0] \big) \leq \frac{1}{(4 \pi (t - t_1))^{d/2}} \,\omega_d  \big[R_{t_1}^d-(R_{t_1}-\epsilon)^d\big] =  \mathcal O(\epsilon )
\]
as $\epsilon \to 0$, where $\omega_d$ is the volume of the $d$-dimensional unit ball. This shows that for any $\delta > 0$, we may choose $\epsilon > 0$  small enough (independent of $y$ and $t$) so that
\begin{equation}
\P_{\!y}\big( \tau_{t}' \geq t - t_1 \big) \leq \delta + \P_{\!y}\big( \|B_{t - t_1}\| \leq R_{t_1} - \epsilon, \;  \tau_{t}' \geq t - t_1  \big) \label{Pdelta1}
\end{equation}
holds as $(y,t) \to (y_*,t_*)$.

By Lemma~\ref{lem:utilde}, the function $\tilde u(x,s)$ is continuous on the closed set $\overline{D_{t_1,t}}$ (recall \eqref{domainD}), satisfies  $\partial_s \tilde u = \Delta \tilde u+\tilde u$ in $D_{t_1,t}$, and satisfies $\tilde u(y,s)=0$ for $s>0$, $\|y\|\ge R_s$.  Applying the usual Feynman-Kac formula to $\tilde u$ in this region, we have that for $\|y\|<R_t$,
\[
\tilde u(y,t) = e^{t-t_1}\E_y\left[ \tilde u(B_{t-t_1}, t_1) \indic{\tau_{t}' \ge t - t_1} \right],
\]
since $\tilde u$ vanishes on the spatial boundary of the domain.  In particular, 
\[
\tilde u(y,t) \geq e^{t-t_1}\P_{\!y}\big( \|B_{t - t_1}\| \leq R_{t_1} - \epsilon, \; \tau_{t}' \geq t - t_1 \big) \inf_{\|x\| \leq R_{t_1} - \epsilon}  \tilde u(x,t_1).
\]
By Lemma~\ref{lem:utilde}, $x \mapsto \tilde u(x,t_1)$ is positive on the compact set $\{ x \in \R^d\;:\; \|x\| \leq R_{t_1} - \epsilon\} $.  Hence
\[
\inf_{ \|x\| \leq R_{t_1} - \epsilon}  \tilde u(x,t_1) > 0.
\]
Therefore, since $\tilde u(y,t) \to 0$ as $(y,t) \to (y_*,t_*) \in \partial D$, we conclude that 
\[
\lim_{(y,t) \to (y_*,t_*)} \P_{\!y}\big( \|B_{t - t_1}\| \leq  R_{t_1} - \epsilon,\;  \tau_{t}' \geq t - t_1  \big) = 0.
\]
This, together with~\eqref{Pdelta1}, implies the desired result, since $\delta > 0$ is arbitrary.
\end{proof}

The following lemma will be useful in proving that the function $u$ as defined in~\eqref{udef} satisfies the mass constraint~\eqref{umass}.
Recall the definition of $\tau$ in~\eqref{tauRdef}.
\begin{lem} \label{lem:rhomass}
For $\mu_0$-almost every $x$,
\[
\lim_{t \to 0} \int_{\R^d} \rho_t(x,y) \,\diffd y =  \P_{\!x}(\tau > 0 )  = 1.
\]
\end{lem}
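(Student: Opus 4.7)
}

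First I would rewrite the integral: since $\rho_t(x,\cdot)$ is the sub-probability density on $\R^d$ of $B_t$ killed upon exiting through the moving boundary, we have
\[
\int_{\R^d} \rho_t(x,y)\,\diffd y = \P_{\!x}(\tau \geq t).
\]
The events $\{\tau \geq t\}$ decrease as $t \searrow 0$ to $\{\tau > 0\}$, so monotone convergence gives $\lim_{t\to 0}\P_{\!x}(\tau \geq t) = \P_{\!x}(\tau > 0)$ for every $x \in \R^d$. It therefore suffices to prove the second equality, $\P_{\!x}(\tau > 0) = 1$, for $\mu_0$-a.e.~$x$. By the Blumenthal $0$-$1$ law applied to the germ $\sigma$-field of $B$, $\P_{\!x}(\tau > 0) \in \{0,1\}$ for every $x$, so the problem reduces to ruling out the null value $\mu_0$-almost surely.

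The main step is to exploit the symmetrized solution $\tilde u$ produced in Lemma \ref{lem:utilde}, which satisfies the mass constraint $\int_{\R^d} \tilde u(y,t)\,\diffd y = 1$ for all $t>0$ and admits the representation
\[
\tilde u(y,t) = e^t \int_{\R^d} \tilde\mu_0(\diffd x)\,\rho_t(x,y),\qquad \tilde\mu_0 = \Gamma^*\mu_0.
\]
Integrating in $y$, applying Fubini, and using the identity from the first paragraph yields
\[
1 = \int_{\R^d}\tilde u(y,t)\,\diffd y = e^t \int_{\R^d} \P_{\!x}(\tau \geq t)\,\tilde\mu_0(\diffd x).
\]
Letting $t \searrow 0$, monotone convergence under the integral and the bound $\P_{\!x}(\tau \geq t) \leq 1$ give $\int_{\R^d} \P_{\!x}(\tau > 0)\,\tilde\mu_0(\diffd x) = 1$. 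Since the integrand is bounded by $1$, we conclude that $\P_{\!x}(\tau > 0) = 1$ for $\tilde\mu_0$-a.e.~$x$.

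To transfer this from $\tilde\mu_0$ back to $\mu_0$, I would use spherical symmetry: because $R_t$ depends only on $\|x\|$ and Brownian motion is rotationally invariant, the quantity $\P_{\!x}(\tau > 0)$ depends on $x$ only through $\|x\|$. Since $\tilde\mu_0 = \Gamma^*\mu_0$ has the same pushforward under $x\mapsto \|x\|$ as $\mu_0$, the set $N = \{x : \P_{\!x}(\tau > 0) = 0\}$ is rotationally invariant and satisfies $\mu_0(N) = \tilde\mu_0(N) = 0$. This establishes $\P_{\!x}(\tau>0)=1$ for $\mu_0$-a.e.~$x$.

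The only delicate point is that $\mu_0$ may place positive mass on the sphere $\{\|x\|=R_0\}$, which is where the event $\{\tau>0\}$ could fail; the argument above handles this uniformly through the symmetrized measure, so no separate case analysis is required. The main conceptual obstacle is simply recognizing that the symmetrized $\tilde u$ already carries enough information to prove the claim, because only the radial distribution of the starting point matters.
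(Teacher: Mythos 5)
Your proposal is correct and follows essentially the same route as the paper: identify $\int_{\R^d}\rho_t(x,y)\,\diffd y$ with $\P_{\!x}(\tau\ge t)$, use the mass constraint and representation of the symmetrized solution $\tilde u$ together with Fubini and monotone convergence to get $\P_{\!x}(\tau>0)=1$ for $\tilde\mu_0$-a.e.\ $x$, and transfer to $\mu_0$ via the fact that $x\mapsto\P_{\!x}(\tau>0)$ depends only on $\|x\|$ (which is exactly what the paper's closing appeal to the definition of $\Gamma^*\mu_0$ means). One trivial slip: the events $\{\tau\ge t\}$ \emph{increase} (not decrease) as $t\searrow 0$, with union $\{\tau>0\}$, but the monotone-convergence conclusion is unaffected.
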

The interesting case of this lemma is when $\|x\|=R_0$; it is possible that $\mu_0(\{x:\|x\|=R_0\})>0$.
\begin{proof}
Define $m(x,t) = \int_{\R^d} \rho_t(x,y) \,\diffd y$ for $x\in \R^d$ and $t>0$, and observe that
\[
m(x,t) = \P_{\!x}(\tau \geq t ), 
\]
by the definition of $\rho_t(x,\cdot)$ in~\eqref{eq:rhotdef}.  In particular, $t \mapsto m(x,t)$ is non-increasing, so $m(x,t)$ has a well-defined limit as $t\to 0$:
\[
m(x,0) := \lim_{t \to 0} m(x,t) = \P_{\!x}(\tau > 0).
\]
Since, by Lemma~\ref{lem:utilde}, $\tilde u$ satisfies the mass constraint~\eqref{umass}, and then by the representation of $\tilde u$ in~\eqref{tildeurep2}
and Fubini's theorem, we know that for $t>0$,
\[
1 = \int_{\R^d} \tilde u(y,t) \,\diffd y = e^t \int_{\R^d} \tilde \mu_0(\diffd x) \left( \int_{\R^d} \rho_t(x,y) \,\diffd y \right) = e^t  \int_{\R^d} \tilde \mu_0(\diffd x)\, m(x,t).
\]
By the monotone convergence theorem, it follows that
\[
\int_{\R^d} \tilde \mu_0(\diffd x)\, m(x,0) = 1.
\]
Since $m(x,0) \leq 1$ and $\tilde \mu_0(\R^d) = 1$, we conclude that $m(x,0) = 1$ must hold $\tilde \mu_0$-almost everywhere.
By the definition of $\tilde \mu_0 = \Gamma^* \mu_0$, the result follows.
\end{proof}
We are now ready to prove Proposition~\ref{prop:ufb}.
\begin{proof}[Proof of Proposition \ref{prop:ufb}]
{\bf Step 1:} Our first task is to show that \eqref{upde} holds, at all points $(y,t) \in D$, where $D$ is the space-time domain defined in~\eqref{domainD0}. Consider the function 
\begin{equation} \label{eq:wfromudef}
w(y,t) = e^{-t} u(y,t) = \int_{\R^d} \mu_0(\diffd x) \,\rho_t(x,y) \quad \text{for }y\in \R^d, \, t>0.
\end{equation}
Recalling the definitions of $\rho_t(x,y)$ in~\eqref{eq:rhotdef} and of the stopping time  $\tau$ in~\eqref{tauRdef}, we have for any continuous function $f:\R^d\to \R$,
$$\int_{\R^d} f(y) w(y,t)\,\diffd y = \E_{\mu_0}\big[f(B_t)\indic{\tau\ge t}\big].$$
We claim that for any $t_*>0$ and any function $(y,t) \mapsto \phi(y,t)$ which is smooth and compactly supported in the region $D\cap \{(x,t): t \leq t_*\}$,
\begin{equation}
\int_{\R^d}  \phi(y,t_*) w(y,t_*) \,\diffd y   = \int_0^{t_*} \int_{\R^d} \big(\partial_t \phi + \Delta \phi \big)(y,t) w(y,t)\,\diffd y \,\diffd t.  \label{wweak1}
\end{equation}
The left hand side of \eqref{wweak1} is:
\begin{equation*}
\int_{\R^d}  \phi(y,t_*) w(y,t_*) \,\diffd y = \E_{\mu_0} \big[ \phi(B_{t_*},t_*) \indic{\tau \ge t_*} \big]
\end{equation*}
and, by Fubini's theorem and then by It\^o's formula, the right hand side of \eqref{wweak1} is:
\begin{align*}
\int_0^{t_*} \int_{\R^d} \left(\partial_t \phi + \Delta \phi \right)(y,t) w(y,t)\,\diffd y \,\diffd t & = \E_{\mu_0} \left[ \int_0^{t_*}   \left(\partial_t \phi + \Delta \phi \right) (B_t,t) \indic{\tau \ge t}   \,\diffd t\right] \\
& =  \E_{\mu_0} \left[ \int_0^{{t_*} \wedge \tau}   \left(\partial_t \phi + \Delta \phi \right) (B_t,t)   \,\diffd t\right] \\
& =  \E_{\mu_0} \big[ \phi(B_{t_*\wedge \tau},t_* \wedge \tau) - \phi(B_{0},0)    \big]. 
\end{align*}
Since $\phi = 0$ on $\partial D$, we conclude that
\begin{align*}
\int_0^{t_*} \int_{\R^d} \left(\partial_t \phi + \Delta \phi \right)(y,t) w(y,t)\,\diffd y \,\diffd t 
& = \E_{\mu_0} \big[\phi(B_{t_*},t_*) \indic{\tau \ge t_*} \big].
\end{align*}
This establishes \eqref{wweak1}.

The condition \eqref{wweak1} implies that the function $w \in L^\infty_{\text{loc}}(D)$ is actually a smooth function within $D$ and is a classical solution to $\partial_t w = \Delta w$ in $D$. This follows by a standard argument, as follows (for example, this is a particular case of the argument in \cite{JKO98}, starting at equation~(51) therein). Fix $(x_1,t_1) \in D$, and take $r > 0$ small enough that $\overline{\mathcal{C}_{4r}(x_1,t_1)} \subset D$, where $\mathcal{C}_r$ is the parabolic cylinder
\[
\mathcal{C}_r(x_1,t_1) = \big\{ (x,t) \; : \; \| x - x_1\| < r, \quad |t - t_1| < r^2 \big\}.
\] 
Let $\chi:D \to [0,1]$ be a smooth cut-off function that vanishes outside $\mathcal{C}_{3r}(x_1,t_1)$, such that $\chi = 1$ on $\mathcal{C}_{2r}(x_1,t_1)$. Let $(x_2,t_2) \in \mathcal{C}_{r}(x_1,t_1)$.  
Recall the definition of the transition density $\Phi$ in~\eqref{eq:Phitransitiondef}.
For $\delta > 0$, the function 
\[
\phi_\delta(y,t) = \chi(y,t) \Phi(y,t,x_2,t_2+\delta)
\] 
is smooth and compactly supported on $D\cap\{(x,t):t\le t_2\}$.  Using this test function in \eqref{wweak1}, and using the fact that $(\partial_t + \Delta_y) \Phi(y,t,x_2,t_2 + \delta) = 0$ for $t\le t_2$ and $y\in \R^d$, we obtain 
\begin{align}
\int_{\R^d}  \phi_\delta(y,t_2) w(y,t_2) \,\diffd y 
& = \int_0^{t_2} \int_{\R^d}  \bigg[ w(\partial_t\chi + \Delta \chi) \Phi(y,t, x_2, t_2 + \delta ) 
\nonumber \\ &\hphantom{= \int_0^{t_2} \int_{\R^d}  \bigg[} 
+    2 w \nabla \chi \cdot \nabla_{\!y} \Phi(y,t  ,x_2, t_2 + \delta)\bigg] \,\diffd y \,\diffd t. \nonumber
\end{align}
Recall from~\eqref{rhoupper} and the definition of $w$ in~\eqref{eq:wfromudef} that $w(y,t)\le (4\pi t)^{-d/2}$ for $y\in \R^d$, $t>0$. Hence the
functions $w(\partial_t \chi + \Delta \chi)$ and $2 w \nabla \chi$, which do not depend on $(x_2,t_2)$, are bounded, compactly supported, and vanish on $\mathcal{C}_{2r}(x_1,t_1)$ since $\chi = 1$ on that set. Moreover, the kernels $\Phi(y,t, x_2, t_2+\delta )$ and $\nabla_y \Phi(y,t, x_2, t_2+\delta)$ are bounded uniformly with respect to $(x_2,t_2) \in \mathcal{C}_{r}(x_1,t_1)$, $\delta \geq 0$, and $(y,t) \in (\R^d \times [0,t_2]) \setminus \mathcal{C}_{2r}(x_1,t_1)$. 
Then, letting $\delta\searrow0$, this implies that for each $t_2$ and almost every $x_2$ such that $(x_2,t_2) \in  \mathcal{C}_{r}(x_1,t_1)$, we have the identity
\[
w(x_2,t_2)  = \int \!\! \int_{(\R^d \times [0,t_2]) \backslash \mathcal{C}_{2r}(x_1,t_1)}  \bigg[ w(\partial_t\chi + \Delta \chi) \Phi(y,t, x_2, t_2) +   2 w \nabla \chi \cdot \nabla_{\!y} \Phi(y ,t, x_2, t_2 )\bigg] \,\diffd y \,\diffd t.
\] 
This integral expression varies smoothly with $(x_2,t_2) \in \mathcal{C}_{r}(x_1,t_1)$, which shows
that $w$ has a version which is infinitely differentiable at $(x_2,t_2)$.  Thus, $w$ is smooth on a neighbourhood of the arbitrarily chosen $(x_1,t_1) \in D$, and hence everywhere in $D$. Returning to \eqref{wweak1}, we can now integrate by parts, to conclude that
\begin{equation*}
0 = \int_0^\infty \int_{\R^d} \phi(y,t)( \partial_t w - \Delta w)(y,t)\,\diffd y \,\diffd t
\end{equation*}
holds for any $\phi \in C^\infty_0(D)$.
This implies $\partial_t w - \Delta w = 0$ inside $D$.

{\bf Step 2:} Next, we argue that the initial condition \eqref{t0conv} holds.  Let $\phi:\R^d \to \R$ be any bounded Lipschitz continuous function. Then for $t>0$,
\begin{align*}
\int_{\R^d} u(y,t) \phi(y) \,\diffd y - \int_{\R^d}  \phi(y) \,\mu_0(\diffd y) & = \int_{\R^d} \mu_0(\diffd x)\left(  \int_{\R^d}e^t \rho_t(x,y) \phi(y)\,\diffd y - \phi(x)  \right) \\
& = \int_{\R^d} \mu_0(\diffd x)\left( \int_{\R^d} e^t  \rho_t(x,y) \Big[\phi(y) - \phi(x)\Big]\,\diffd y  \right) \\
& \quad + \int_{\R^d} \mu_0(\diffd x)\,\phi(x) \left(  \int_{\R^d} e^t  \rho_t(x,y) \,\diffd y -1 \right) .
\end{align*}
Using \eqref{rhoupper}, we observe that
\begin{align*}
\left|\int_{\R^d} \mu_0(\diffd x)\left( \int_{\R^d} e^t  \rho_t(x,y) \Big[\phi(y) - \phi(x)\Big]\,\diffd y  \right)\right|
 & \leq \norm{\phi}_{\text{Lip}} e^t \int_{\R^d} \mu_0(\diffd x) \left(\int_{\R^d} \rho_t(x,y) \|y - x\| \,\diffd y \right)  \\
 & \leq \norm{\phi}_{\text{Lip}}e^t \int_{\R^d}\frac 1 {(4\pi t)^{d/2}} e^{-\frac{\|y\|^2}{4t}}\|y\|\,\diffd y\\
&\leq \norm{\phi}_{\text{Lip}} e^t C_d \sqrt t
\end{align*}
for some constant $C_d<\infty$. Note that for $x,y\in \R^d$ and $t>0$, $\int_{\R^d}\rho_t (x,y)\,\diffd y \le 1$. Hence,
using Lemma \ref{lem:rhomass} and the dominated convergence theorem, we also obtain
\[
\left|\int_{\R^d} \mu_0\,(\diffd x)\phi(x) \left(\int_{\R^d} e^t  \rho_t(x,y) \,\diffd y -1 \right)\right| \leq \norm{\phi}_{L^\infty}\int_{\R^d} \mu_0(\diffd x) \left|  \int_{\R^d} e^t  \rho_t(x,y) \,\diffd y -1 \right| \to 0
\]
as $t\to 0$.
Therefore, 
\[
\lim_{t \to 0} \left| \int_{\R^d} u(y,t) \phi(y) \,\diffd y - \int_{\R^d}  \phi(y) \,\mu_0(\diffd y)\right| =0
\]
holds for any bounded Lipschitz function $\phi$.  This proves \eqref{t0conv}.

{\bf Step 3:} We have already established that $u$ is smooth at points $(y,t)$ where $t > 0$ and $\|y\| < R_t$.  Next, we show that for every $t_0 > 0$ and $y_0\in \R^d$ with $\|y_0\|=R_{t_0}$,
\begin{equation}
\lim_{(y,t)\to (y_0,t_0)} u(y,t) = 0, \label{ucontin}
\end{equation}
from which the continuity condition~\eqref{continuity} and the boundary condition~\eqref{uboundary} follow.
For $0<s<t$, from
the Markov property
and the definition of $p(x,s,y,t)$ in~\eqref{pdensitydef}, we have that
\begin{align*}
u(y,t) & = e^t \int_{\R^d} \mu_0 (\diffd x') \int_{\R^d}\diffd x\, \rho_s (x',x)p(x,s,y,t)\\
&= e^{t - s}  \int_{\R^d} u(x,s) p(x,s,y,t) \,\diffd x.
\end{align*}
For $s > 0$, $t\mapsto R_t$ is continuous on $[s,\infty)$ and, using~\eqref{rhoupper}, $x \mapsto u(x,s)$ is a bounded, measurable function.  So, Lemma \ref{lem:timereverseFK} with $g(x)=u(x,s)$ implies that
\begin{equation} \label{eq:uytbackwards}
u(y,t)  = e^{t-s}\E_y\left[ u(B_{t-s}, s) \indic{\tau_{t}' \ge t - s} \right]
\end{equation}
for all a.e.~$y\in \R^d$. 
Since $u$ is smooth on the domain $D$, it follows that~\eqref{eq:uytbackwards} holds for all $y\in \R^d$ with $\|y\|<R_t$.
Fix $s\in (0,t_0)$.  Then by
Lemma \ref{lem:taucontin} (with $t_1 = s$), we have that $\E_y\left[ \indic{\tau_{t}' \ge t - s} \right] \to 0$ as $(y,t) \to (y_0,t_0)$. Since $y \mapsto u(y,s)$ is bounded, it follows that
\begin{equation*}
\lim_{(y,t)\to (y_0,t_0)} \E_y\left[ u(B_{t-s}, s) \indic{\tau_{t}' \ge t - s} \right] = 0.
\end{equation*}
Thus,~\eqref{ucontin} must hold.

{\bf Step 4:} Finally, we argue that $u$ satisfies the mass constraint~\eqref{umass}.
For $t>0$, by Lemma~\ref{lem:utilde}, and then since $\rho_t(x,\R^d) = \rho_t(x',\R^d)$ if $\|x\|=\|x'\|$, and since $\tilde \mu_0=\Gamma^* \mu_0$,
\begin{align*}
1 = \int_{\R^d} \tilde u(y,t) \,\diffd y & = e^t \int_{\R^d} \tilde \mu_0(\diffd x) \,\rho_t(x,\R^d) \\
& =  e^t \int_{\R^d} \mu_0(\diffd x)\, \rho_t(x,\R^d)\\
& =  \int_{\R^d} u(y,t) \,\diffd y.
\end{align*}
This establishes \eqref{umass}.

The proof of Proposition \ref{prop:ufb} is now complete.
\end{proof}

\begin{rmk} \label{rmk:ubounded}
If $u$ solves the free boundary problem \eqref{pbu}, then
 the Chapman-Kolmogorov equation and the fact that $\int_{\R^d} u(x,s)\,\diffd x = 1$ implies that for any $s > 0$ and $t \in [1,2]$,
\[
u(y,s + t) = e^{t} \int_{\R^d} u(x,s) p(x,s,y,s+t)\, \diffd x \leq e^{2} \int_{\R^d} u(x,s) \Phi(x,s,y,s + t) \,\diffd x \leq  \frac{e^2}{(4 \pi)^{d/2}}.
\]
This implies that $u$ is bounded by $\|u(\cdot,t)\|_{L^\infty} \leq  \frac{e^2}{(4 \pi)^{d/2}}$ for all $t \geq 1$.
\end{rmk}

\section{Proof of Theorem~\ref{thm:conv u}: Convergence to the steady state for \texorpdfstring{$u$}{u}} \label{sec:uconv}

Let $(u,R)$ solve the free boundary problem~\eqref{pbu} with initial condition $\mu_0$.
For $t>0$ and $x\ge 0$, let
\[
v(x,t) = \int_{\B(x)} u(y,t) \,\diffd y.
\]
By Lemma~\ref{lem:utilde}, $v$
solves the obstacle problem \eqref{pbv} with the initial condition $v_0(x) = \mu_0( \B(x))$. 
Also by Lemma~\ref{lem:utilde}, $R_t=\inf\{x:v(x,t)=1\}$ for all $t>0$.
We have already proved Theorem~\ref{thm:conv}, which shows that $R_t \to R_\infty$ and $v(\cdot,t) \to V(\cdot)$ uniformly as $t \to \infty$.  Hence, for any $x \ge 0$, 
\[
\int_{\B(x)} u(y,t) \,\diffd y \to V(x)  \quad \text{as $t \to \infty$}.
\]
However, this does not immediately imply $u(\cdot,t) \to U(\cdot)$.

We now prove Theorem~\ref{thm:conv u} in several steps.

\paragraph{Step 1:} Let $\tilde \mu_0 = \Gamma^* \mu_0$ denote the symmetrized initial condition (recall \eqref{GammaStardef}). Then by Lemma~\ref{lem:utilde}, the function
\[
\tilde u(y,t) = (\Gamma u)(y,t) = e^t \int_{\R^d} \tilde \mu_0(\diffd x)\, \rho_t(x,y)  
\]
satisfies the free boundary problem \eqref{pbu}, but with symmetrized initial condition $\tilde \mu_0$. Moreover,
\[
v(x,t) = \int_{\mathcal{B}(x)} \tilde u(y,t)\,\diffd y, \quad x > 0, \, t>0,
\]
and, by~\eqref{utildedef}, for $y\in \R^d\setminus \{0\}$ and $t>0$, $\tilde u(y,t) =\frac 1 {d \, \omega_d \|y\|^{d-1}}\partial_x v(\|y\|,t)$.  The fact that $v(\cdot,t) \to V(\cdot)$ in $L^1_\text{loc}$ as $t\to \infty$ implies, by Lemma~\ref{lem:weakconv}, that $\tilde u(\cdot,t) \to U(\cdot)$ weakly as measures on $\R^d$, as $t \to \infty$.  We claim that this convergence holds in $L^\infty$. 
Since Remark~\ref{rmk:ubounded} applies to $\tilde u$, as well, we have that $\tilde u$ is bounded on $\R^d \times [1,\infty)$.
Because $R_t > 0$ for $t > 0$ and $R_t \to R_\infty > 0$ as $t \to \infty$, we know that there exist $\epsilon > 0$
and $K<\infty$ such that $2\epsilon<R_t \le K$ for all $t \geq 1$. 
Thus, $\tilde u$ is a bounded solution to $\partial_t \tilde u = \Delta \tilde u + \tilde u$ on the cylinder $\mathcal{B}(2\epsilon) \times [1,\infty)$.  By standard parabolic regularity estimates, $\|\nabla \tilde u\|$ is uniformly bounded over $\mathcal{B}(\epsilon) \times [2,\infty)$.   By Proposition~\ref{prop:v is C1}, we also know that the family of functions $\{\partial_x v(\cdot,t)\}_{t \geq 2}$ is uniformly bounded and equicontinuous; hence $\{\tilde u(\cdot,t)\}_{t \geq 2}$ is equicontinuous on $\{ \|y\| > \epsilon/2 \}$, as well.   Combining these observations, we conclude that $\{\tilde u(\cdot,t)\}_{t \geq 2}$ is a uniformly bounded and equicontinuous family of functions on $\R^d$. If $\tilde u(\cdot,t)$ does not converge uniformly to $U$ as $t \to \infty$, then,
since $R_t\le K$ for all $t\ge 1$,
 there exist $\delta > 0$ and a diverging sequence of times $\{t_n\}_{n \geq 1}$ such that 
 \[ 
 \sup_{x\in \overline{\B(K)}}|\tilde u(x,t_n) - U(x) |> \delta
 \]
  for all $n$.  By the Arzel\`a–Ascoli Theorem, there must be a subsequence of times $\{t_{n_j}\}$ such that $\tilde u(\cdot,t_{n_j})$ converges uniformly on $\overline{\B(K)}$ to a limit. Since we already know that $\tilde u(\cdot,t) \to U$ weakly as measures, this limit must be $U$, a contradiction.  Thus, we conclude that $\tilde u$ converges uniformly to $U$:
\begin{equation} \label{eq:tildeuUconv}
\lim_{t \to \infty}  \big\|\tilde u(\cdot,t) - U(\cdot)\big\|_{L^\infty} = 0.
\end{equation}
Therefore, to prove that $u(\cdot,t) \to U(\cdot)$ uniformly as $t\to \infty$, it suffices to show that
\[
\lim_{t \to \infty} \big\|u(\cdot,t) - \tilde u(\cdot,t)\big\|_{L^\infty} = 0.
\]
The two functions $\tilde u$ and $u$ satisfy the same PDE and boundary condition, but with the initial measure $\tilde \mu_0$ (for $\tilde u$) being the symmetrization of the initial measure $\mu_0$ (for $u$). Hence, we must show that any solution of the free boundary problem becomes asymptotically spherically symmetric as $t \to \infty$. 

\paragraph{Step 2:} For $\epsilon > 0$, let $T_\epsilon \geq 1$ be large enough so that $|R_t - R_\infty| \leq \epsilon$ for all $t \geq T_\epsilon$. 
Recall the definition of $p(x,s,y,t)$ in~\eqref{pdensitydef}.
For any integer $n \geq 1$, 
by Proposition~\ref{prop:utilderho} and then by the Chapman-Kolmogorov equation,
$u$ and $\tilde u$ satisfy
\begin{align}
&u(y,n+1) -  \tilde u(y,n+1) \notag
\\&\qquad\qquad = e^{n+1}\left(\int_{\R^d} \mu_0(\diffd x)\,\rho_{n+1}(x,y)- \int_{\R^d} \tilde \mu_0(\diffd x)\,\rho_{n+1}(x,y) \right) \notag \\
&\qquad\qquad= e^1 \int_{\R^d}  \left( e^n \int_{\R^d}\mu_0(\diffd x')\,\rho_n(x',x)-e^n \int_{\R^d}\tilde \mu_0(\diffd x')\,\rho_n(x',x)\right)p(x,n,y,n+1)\,\diffd x \notag \\
&\qquad\qquad=e^1 \int_{\R^d} \big[u(x,n) - \tilde u(x,n)\big] p(x,n,y,n+1) \,\diffd x. \label{utildedif}
\end{align}
Recall that $p(x,n,y,n+1)$ is characterised by the relation
\[
\int_{A} p(x,n,y,n+1) \,\diffd y  = \P_{\!x} \big  ( B_1 \in A, \; \|B_t\| < R_{n + t}  \; \forall t \in (0,1) \big), 
\]
for all Borel sets $A \subseteq \R^d$. We now define the kernels $p^+_1(x,y)$ and $p^-_1(x,y)$ by the relations
\begin{equation}
\begin{aligned} 
\int_{A} p^+_1(x,y) \,\diffd y & = \P_{\!x} \big  ( B_1 \in A, \; \|B_t\| < R_{\infty} + \epsilon \; \forall t \in (0,1) \big) \\
\text{and }\quad \int_{A} p^-_1(x,y) \,\diffd y & = \P_{\!x} \big  ( B_1 \in A, \; \|B_t\| < R_{\infty} - \epsilon \; \forall t \in (0,1) \big)
\end{aligned} \label{eq:p+-def}
\end{equation}
for all Borel sets $A \subseteq \R^d$.  Therefore, for $n \geq T_\epsilon \geq 1$, we have
\begin{equation}
p^-_1(x,y) \leq p(x,n,y,n+1) \leq p^+_1(x,y) \label{rhopmcomp}
\end{equation}
for all $x,y \in \R^d$. (Note: $p^\pm_1(x,y) = 0$ for $\|x\| > R_\infty \pm \epsilon$ or $\|y\| > R_\infty \pm \epsilon$.) From \eqref{utildedif} we can now write the difference $u - \tilde u$ as
\begin{align}
u(y,n+1) -  \tilde u(y,n+1) & =   e^1\int_{\B(R_\infty + \epsilon)} \big[u(x,n) - \tilde u(x,n)\big] p^+_1(x,y) \,\diffd x \nonumber \\
& \quad +  e^1 \int_{\B(R_\infty+\epsilon)}\big [u(x,n) - \tilde u(x,n)\big]  \big[p(x,n,y,n+1) - p^+_1(x,y)\big] \,\diffd x. \label{utildedif2}
\end{align}

\paragraph{Step 3:}  Consider the first integral on the right hand side of~\eqref{utildedif2}: for $y\in \R^d$, let
\[
W_n(y) = e^1 \int_{\B(R_\infty + \epsilon)} \big[u(x,n) - \tilde u(x,n)\big] p^+_1(x,y) \,\diffd x. 
\]
We claim that there is a constant $\alpha > 0$ such that for $\epsilon>0$ sufficiently small,
\begin{equation}
\norm{W_n(\cdot)}_{L^2} \leq e^{-\alpha} \norm{ u(\cdot,n) - \tilde u(\cdot,n)}_{L^2} \label{Wnbound}
\end{equation}
holds for all $n$.  By the definition of $p^+_1$, we observe that $W_n(y) = e^1 w(y,n+1)$ where for $t \in [n,n+1]$, $w(y,t)$ is the unique solution to 
\begin{alignat}{2}
& \partial_t w = \Delta w, \quad && \text{for }\|y\| < R_\infty + \epsilon, \quad t > n, \label{wpde1} \\
& w(y,t) = 0, \quad && \text{for }\|y\| \geq R_\infty + \epsilon, \quad t \geq n,    \\
& w(y,n) = u(y,n) - \tilde u(y,n),  \quad && \text{for }\|y\| < R_\infty + \epsilon. \label{wpde3}
\end{alignat}

The claim \eqref{Wnbound} is an immediate consequence of the following spectral-gap estimate:

\begin{lem} \label{lem:L2exp}
There exists $\alpha > 0$ such that for all $\epsilon > 0$ sufficiently small, for all integer $n\ge 1$,
\[
\norm{ w(\cdot, n+1) }_{L^2}  \leq e^{- (1 + \alpha)} \norm{ u(\cdot,n) - \tilde u(\cdot,n)}_{L^2}.
\]
\end{lem}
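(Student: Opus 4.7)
The strategy is to exploit the key observation that $w(\cdot, n) = u(\cdot, n) - \tilde u(\cdot, n) = (I - \Gamma) u(\cdot, n)$ has vanishing spherical mean on every sphere centred at the origin, and to combine this with a spectral gap for the Dirichlet Laplacian on $\B(R_\infty + \epsilon)$ restricted to the orthogonal complement $H_0 \subset L^2(\B(R_\infty + \epsilon))$ of the spherically symmetric subspace. By Lemma~\ref{lem:utilde}, for $n \geq T_\epsilon$ both $u(\cdot, n)$ and $\tilde u(\cdot, n)$ are supported in $\overline{\B(R_\infty + \epsilon)}$, so $w(\cdot, n) \in H_0$, which makes it an admissible initial datum for the Dirichlet heat flow~\eqref{wpde1}--\eqref{wpde3} on $\B(R_\infty + \epsilon)$. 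Since the Dirichlet heat semigroup $P_t = e^{t\Delta}$ on this ball commutes with the $O(d)$-action and hence with the averaging operator $\Gamma$, $H_0$ is invariant under $P_t$, and in particular $w(\cdot, n + 1) = P_1 w(\cdot, n) \in H_0$.

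Next I would identify the smallest Dirichlet eigenvalue of $-\Delta$ on $\B(R_\infty + \epsilon)$ restricted to $H_0$. Separating variables via spherical harmonics on $\R^d$, the Dirichlet eigenvalues corresponding to spherical harmonics of degree $\ell$ are
\begin{equation*}
\lambda^R_{\ell, k} = \left(\frac{j_{\ell + (d-2)/2, k}}{R}\right)^2, \qquad k = 1, 2, \ldots,
\end{equation*}
where $j_{\nu, k}$ denotes the $k$-th positive zero of $J_\nu$; the $\ell = 0$ eigenfunctions span the spherically symmetric subspace, while the $\ell \geq 1$ eigenfunctions span $H_0$. Since $j_{\nu, k}$ is strictly increasing in both $\nu$ and $k$, the smallest Dirichlet eigenvalue on $H_0$ is
\begin{equation*}
\lambda_*(\epsilon) := \lambda^{R_\infty + \epsilon}_{1, 1} = \left(\frac{j_{d/2, 1}}{R_\infty + \epsilon}\right)^2.
\end{equation*}
The crucial estimate $\lambda_*(0) > 1$ follows from the identity $R_\infty = j_{d/2 - 1, 1}$, which in turn comes from the Bessel identity $(x^{d/2} J_{d/2}(x))' = x^{d/2} J_{d/2 - 1}(x)$ and the definition of $R_\infty$ as the first positive local maximum of $x^{d/2} J_{d/2}(x)$, combined with the classical interlacing $j_{\nu, 1} < j_{\nu + 1, 1}$ of consecutive Bessel zeros: these together give $\lambda_*(0) = (j_{d/2, 1} / j_{d/2 - 1, 1})^2 > 1$. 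By continuity in $\epsilon$ there exist $\alpha > 0$ and $\epsilon_0 > 0$ such that $\lambda_*(\epsilon) \geq 1 + \alpha$ for all $\epsilon \in (0, \epsilon_0]$.

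The conclusion then follows from the spectral theorem: for every $f \in H_0$, $\|P_1 f\|_{L^2} \leq e^{-\lambda_*(\epsilon)} \|f\|_{L^2} \leq e^{-(1 + \alpha)} \|f\|_{L^2}$, and applying this to $f = w(\cdot, n)$ proves the lemma. The main obstacle is not a difficult analytic step but the conceptual identification of the correct spectral gap: once one recognises that the difference $u - \tilde u$ is automatically orthogonal to all spherically symmetric modes, the proof reduces to the classical interlacing of Bessel zeros, which expresses exactly the fact that non-spherically-symmetric Dirichlet modes on a ball just slightly larger than $\B(R_\infty)$ decay strictly faster than $e^{-t}$.
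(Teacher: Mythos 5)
Your proof is correct, but it takes a somewhat different route from the paper's. The paper also expands the solution of the Dirichlet problem \eqref{wpde1}--\eqref{wpde3} in eigenfunctions of $-\Delta$ on $\B(R_\infty+\epsilon)$, but it only uses orthogonality of $w(\cdot,n)$ to the \emph{principal} eigenfunction $U_1^\epsilon$ (which is radial), so the decay rate it obtains is $e^{-\lambda_2^\epsilon}$ with $\lambda_2^\epsilon$ the second eigenvalue of the full Dirichlet Laplacian; the gap $\lambda_2^0>\lambda_1^0=1$ then comes for free from the simplicity of the principal eigenvalue, with no Bessel input beyond what is already in \eqref{eqU}. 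You instead use the full strength of the observation that $u-\tilde u$ has vanishing spherical means, work in the semigroup-invariant subspace $H_0$ orthogonal to \emph{all} radial functions, and identify the bottom of the spectrum there explicitly as $\big(j_{d/2,1}/(R_\infty+\epsilon)\big)^2$, deducing the gap from $R_\infty=j_{d/2-1,1}$ and the strict monotonicity of $j_{\nu,1}$ in $\nu$. By the interlacing $j_{d/2-1,1}<j_{d/2,1}<j_{d/2-1,2}$ the two rates in fact coincide, so nothing is gained or lost quantitatively; your version is more transparent about \emph{why} the gap exists (non-radial Dirichlet modes on a ball of radius near $R_\infty$ decay strictly faster than $e^{-t}$), at the cost of invoking classical Bessel-zero facts that the paper's softer argument avoids. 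One small point: you invoke $n\ge T_\epsilon$ to place $w(\cdot,n)$ in $H_0$, but the lemma is stated for all $n\ge 1$; this is harmless, since the restriction of $u(\cdot,n)-\tilde u(\cdot,n)$ to $\B(R_\infty+\epsilon)$ has vanishing spherical means for every $n$ regardless of support, and its $L^2$ norm on the ball is at most $\norm{u(\cdot,n)-\tilde u(\cdot,n)}_{L^2(\R^d)}$, which is all the conclusion requires.
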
		

\begin{proof}
By Theorem 7.1.3 of \cite{Evans2010}, the function $w(x,t)$ may be expanded as a series in terms of eigenfunctions of the Laplacian:
\[
w(x,n+t) = \sum_{k=1}^\infty a_k e^{- t \lambda^\epsilon_k} U^\epsilon_k(x), \quad t \in [0,1],\quad \|x\| \leq R_\infty + \epsilon,
\]
where the partial sums of the series converge weakly in $L^2\big([0,1];H^1_0\big)$. Here $\big\{\big(U_k^\epsilon(x),\lambda^\epsilon_k\big) \big\}_{k \geq 1}$ denote the Dirichlet eigenfunctions and eigenvalues for $-\Delta$ on the ball $\big\{\|x\| \leq R_\infty + \epsilon \big\}$:
\begin{alignat}{2}
 - \Delta U^{\epsilon}_k &= \lambda_k^\epsilon U_k^{\epsilon}, \quad && \text{for }\|x\| < R_\infty + \epsilon, \nonumber \\
U_k^{\epsilon}(x)& = 0, \quad &&\text{for }\|x\| = R_\infty + \epsilon. \nonumber 
\end{alignat}
These may be normalised to form an orthonormal basis in $L^2$ (orthogonal in $H^1_0$), and we may assume $0 < \lambda_1^\epsilon < \lambda_2^\epsilon \leq \dots$  (c.f. \cite{Evans2010}, Section 6.5.1). With $\epsilon = 0$, the principal eigenfunction is $U^0_1(x) = \|U\|^{-1}_{L^2} U(x)$, and the principal eigenvalue is $\lambda_1^0 = 1$, see~\eqref{eqU}. (Recall that we normalized $U$ and $U^0_1$ by $\|U\|_{L^1}=1$ and $\|U^0_1\|_{L^2}=1$, hence the constant scaling factor.)  By scaling, we have 
\begin{equation} \label{eq:lamUscaling}
\lambda^\epsilon_k  = \bigg(\frac{R_\infty }{R_\infty + \epsilon}\bigg)^2 \lambda^0_k\qquad\text{and}\qquad U^{\epsilon}_k(x) = \left(\frac{R_\infty}{R_\infty + \epsilon}\right)^{d/2} U^0_k\bigg(x\frac{R_{\infty}}{R_\infty + \epsilon}\bigg)
\end{equation}
 for all $k\in \N$ and $\epsilon > 0$. In particular, the principal eigenfunctions $U_1^\epsilon(x)$ depend only on the radial coordinate $r = \|x\|$. Therefore, $U_1^\epsilon(\cdot)$ and $w(\cdot,n)$ must be orthogonal in $L^2$ because the integral of $w(\cdot,n)$ over each spherical shell vanishes: 
\begin{align}
\int_{\B({R_\infty + \epsilon})} U_1^\epsilon(x) w(x,n) \,\diffd x  & = \int_{\B({R_\infty + \epsilon})} U_1^\epsilon(x) \big[u(x,n) - \tilde u(x,n)\big] \,\diffd x \nonumber \\
& = \int_0^{R_\infty + \epsilon} U_1^\epsilon(r) \bigg( \int_{\partial\B(r)}  \big[u(y,n) - \tilde u(y,n)\big] \,\diffd S(y) \bigg) \,\diffd r = 0. \label{orthogW}
\end{align}
Since the principal eigenvalue $\lambda_1^\epsilon$ is simple, and since $w(\cdot,n)$ is orthogonal to the principal eigenfunction $U_1^\epsilon$, we must have $a_1 = 0$, so that 
\[
w(x,n+t) = \sum_{k=2}^\infty a_k e^{- t \lambda^\epsilon_k} U^\epsilon_k(x), \quad t \in [0,1],\quad\|x\| \leq R_\infty + \epsilon,
\]
and
\[
\norm{w(\cdot,n+t)}_{L^2}^2 = \sum_{k \geq 2} e^{- 2 t \lambda_k^\epsilon } a_k^2 \leq e^{- 2t \lambda_2^\epsilon } \sum_{k \geq 2}  a_k^2 = e^{- 2 t  \lambda_2^\epsilon } \norm{w(\cdot,n)}_{L^2}^2.
\]
Let $\alpha =(\lambda^0_2-\lambda^0_1)/2=(\lambda_2^0-1)/2>0$.
By the scaling relation in~\eqref{eq:lamUscaling}, $\lambda_2^\epsilon = [R_\infty /(R_\infty + \epsilon)]^2 \lambda^0_2 = [R_\infty /(R_\infty + \epsilon)]^2 (1 + 2\alpha)$. Hence, $\lambda_2^\epsilon \geq 1 + \alpha$ if $\epsilon$ is small enough. Therefore,
\[
\norm{w(\cdot,n+1)}_{L^2} \leq e^{- (1 + \alpha)} \norm{w(\cdot,n)}_{L^2} 
\]
holds for all $\epsilon > 0$ sufficiently small.
\end{proof}

\paragraph{Step 4:} Now we estimate the second integral on the right hand side of~\eqref{utildedif2}: for $y\in \R^d$, let
\[
H_n(y) =  e^1\int_{\B(R_\infty + \epsilon)} \big[u(x,n) - \tilde u(x,n)\big]
\big[p(x,n,y,n+1) - p^+_1(x,y)\big] \,\diffd x.
\]
Then by~\eqref{rhopmcomp}, for $n\ge T_\epsilon$,
\[
\big|H_n(y)\big| \le  e^1\int_{\R^d} \big|u(x,n) - \tilde u(x,n)\big|
\;\big|p^+_1(x,y) - p^-_1(x,y)\big| \,\diffd x.
\]
By the Cauchy-Schwarz inequality, applied to $\big|u(x,n) - \tilde u(x,n)\big| \;\big |p_1^+(x,y) - p^-_1(x,y)\big|{}^{1/2} $ and $\big |p_1^+(x,y) - p^-_1(x,y)\big|{}^{1/2}$, 
for $n\ge T_\epsilon$ we have that
\begin{align*}
\big|H_n(y)\big|^2 
& \leq  e^2\int_{\R^d} \big|u(x,n) - \tilde u(x,n)\big|^2\, \big|p^+_1(x,y) - p^-_1(x,y)\big| \,\diffd x
\times
 \int_{\R^d} \big|p^+_1(x,y) - p^-_1(x,y)\big| \,\diffd x  \\
& \leq  e^2\int_{\R^d} \big|u(x,n) - \tilde u(x,n)\big|^2\, \big|p^+_1(x,y) - p^-_1(x,y)\big| \,\diffd x
 \times
\sup_{z\in\R^d} \int_{\R^d} \big|p^+_1(x,z) - p^-_1(x,z)\big| \,\diffd x.
\end{align*}
Integrating in $y$ and using Fubini's theorem, we conclude that for $n\ge T_\epsilon$,
\begin{align}
\norm{H_n(\cdot)}_{L^2}
&\leq e^1\norm{u(\cdot,n) - \tilde u(\cdot,n)}_{L^2} \notag \\
&\qquad  \times \left[ \sup_{x\in \R^d} \int_{\R^d} \big|p^+_1(x,y)
- p^-_1(x,y)\big| \,\diffd y \right]^{1/2} 
\left[ \sup_{z\in \R^d}  \int_{\R^d} \big|p^+_1(x,z) - p^-_1(x,z)\big| \,\diffd x \right]^{1/2} \notag \\
&= e^1\norm{u(\cdot,n) - \tilde u(\cdot,n)}_{L^2} \ \sup_{x\in \R^d} \int_{\R^d}\big |p^+_1(x,y) - p^-_1(x,y)\big| \,\diffd y \label{eq:HnL2bound}
\end{align}
since $p^{\pm}_1(x,z)=p^{\pm}_1(z,x)$ $\forall x,z\in \R^d$.

The following lemma will now allow us to bound $\|H_n\|_{L^2}$.
\begin{lem} \label{lem:p+p-}
As $\epsilon \to 0$,
\[
\sup_{x\in \R^d} \int_{\R^d} \big|p^+_1(x,y) - p^-_1(x,y)\big| \,\diffd y \to  0.
\]
\end{lem}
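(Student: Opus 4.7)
The plan is to reduce the statement to the uniform continuity of a single auxiliary function obtained by Brownian rescaling. First, since $\{\|B_t\|<R_\infty-\epsilon,\ \forall t\in(0,1]\}\subset\{\|B_t\|<R_\infty+\epsilon,\ \forall t\in(0,1]\}$, we have $p^+_1(x,y)\ge p^-_1(x,y)$ pointwise, so
\[
\int_{\R^d}\big|p^+_1(x,y)-p^-_1(x,y)\big|\,\diffd y=\P_{\!x}\big(\tau_\epsilon^+>1\big)-\P_{\!x}\big(\tau_\epsilon^->1\big),
\]
with $\tau_\epsilon^\pm:=\inf\{t>0:\|B_t\|\ge R_\infty\pm\epsilon\}$. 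By rotational symmetry this depends only on $\|x\|$, and by Brownian scaling (the process $Y_\tau:=B_{R^2\tau}/R$ is a Brownian motion of diffusivity $\sqrt 2$ starting at $x/R$),
\[
\P_{\!x}\big(\tau_\epsilon^\pm>1\big)=\indic{\|x\|<R_\infty\pm\epsilon}\cdot \bar F\!\left(\tfrac{\|x\|}{R_\infty\pm\epsilon},\tfrac{1}{(R_\infty\pm\epsilon)^2}\right),
\]
where $\bar F(\tilde s,\tilde t):=\P_{\!\tilde y}\big(\|B_\tau\|<1,\ \forall\tau\in(0,\tilde t]\big)$ for any $\tilde y\in\R^d$ with $\|\tilde y\|=\tilde s$.

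Next I would establish that $\bar F$ extends continuously to all of $[0,1]\times(0,\infty)$, with $\bar F(1,\tilde t)=0$ for every $\tilde t>0$. Interior continuity on $[0,1)\times(0,\infty)$ follows from standard parabolic regularity for the Bessel-type equation $\partial_{\tilde t}\bar F=\partial_{\tilde s}^2\bar F+\tfrac{d-1}{\tilde s}\partial_{\tilde s}\bar F$ with Dirichlet data at $\tilde s=1$ and initial datum equal to $1$. The delicate point is continuity at $\tilde s=1$, which follows from a one-dimensional projection: for $\theta=\tilde y/\|\tilde y\|$, the event $\{\|B_\tau\|<1,\,\forall\tau\in(0,\tilde t]\}$ entails $\sup_{\tau\in[0,\tilde t]}\theta\cdot B_\tau<1$, and since $\theta\cdot B$ is a one-dimensional Brownian motion of diffusivity $\sqrt 2$ starting at $\tilde s$, the reflection principle yields
\[
\bar F(\tilde s,\tilde t)\le 2\Phi_{2\tilde t}(1-\tilde s)-1,
\]
where $\Phi_{2\tilde t}$ is the CDF of $\mathcal N(0,2\tilde t)$. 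This upper bound tends to $0$ as $\tilde s\nearrow 1$, uniformly for $\tilde t$ in any compact subset of $(0,\infty)$.

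Finally, for $\epsilon\in(0,R_\infty/2)$ the argument pairs $\bigl(\|x\|/(R_\infty\pm\epsilon),1/(R_\infty\pm\epsilon)^2\bigr)$ lie in the fixed compact set $K:=[0,1]\times[\,4/(9R_\infty^2),\,4/R_\infty^2\,]$, on which $\bar F$ is uniformly continuous. Elementary estimates give
\[
\Big|\tfrac{\|x\|}{R_\infty+\epsilon}-\tfrac{\|x\|}{R_\infty-\epsilon}\Big|+\Big|\tfrac{1}{(R_\infty+\epsilon)^2}-\tfrac{1}{(R_\infty-\epsilon)^2}\Big|=O(\epsilon)
\]
uniformly in $\|x\|\in[0,R_\infty+\epsilon]$. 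For $\|x\|<R_\infty-\epsilon$ the two $\bar F$-values therefore differ by at most the modulus of continuity of $\bar F|_K$ at scale $O(\epsilon)$, which tends to $0$; for $\|x\|\in[R_\infty-\epsilon,R_\infty+\epsilon)$ the $(-\epsilon)$ term is zero and the $(+\epsilon)$ term is $\bar F$ evaluated at a point whose first coordinate lies in $[(R_\infty-\epsilon)/(R_\infty+\epsilon),1]$, collapsing to $\{1\}$, so the boundary continuity of $\bar F$ forces this term to vanish uniformly as well. The main obstacle is exactly this continuity of $\bar F$ up to the spatial boundary $\tilde s=1$: far from that boundary an interior-regularity plus compactness argument is routine, but near it one of the two $\bar F$-values is identically zero while the other is a priori only small, so the one-dimensional projection bound above is essential to make that smallness quantitative and uniform.
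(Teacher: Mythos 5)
Your proof is correct. The opening reduction is the same as the paper's: both write the integral as the difference of two exit probabilities, $\P_{\!x}\big(\max_{s\in[0,1]}\|B_s\|<R_\infty+\epsilon\big)-\P_{\!x}\big(\max_{s\in[0,1]}\|B_s\|<R_\infty-\epsilon\big)$, and both rescale by Brownian scaling to a single function on the unit ball solving the heat equation with Dirichlet data. Where you diverge is the final smallness estimate. The paper differentiates in the radius variable: interior gradient and time-derivative bounds for the heat equation give $\sup|\partial_z f(x,1,z)|<\infty$ uniformly for $z$ near $R_\infty$, whence the difference is $\mathcal O(\epsilon)$ in one stroke (the region $\|x\|\ge R_\infty-\epsilon$ is absorbed because $f(x,1,z)=0$ for $\|x\|\ge z$). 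You instead prove joint continuity of the rescaled function up to the lateral boundary $\tilde s=1$ via the one-dimensional projection $\theta\cdot B$ and the reflection principle, and then invoke uniform continuity on a compact set; this forces you to treat the annulus $R_\infty-\epsilon\le\|x\|<R_\infty+\epsilon$ separately, which you do correctly. Your route is more elementary (no parabolic gradient estimates) but only yields $o(1)$ rather than the paper's quantitative $\mathcal O(\epsilon)$; since the lemma only asserts convergence to $0$, that is sufficient here, though the explicit rate is the kind of information one might want downstream.
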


\begin{proof}
By the definition of $p_1^\pm$ in~\eqref{eq:p+-def} and by~\eqref{rhopmcomp}, for $x\in \R^d$,
\begin{align}
\int_{\R^d} \big|p^+_1(x,y) - p^-_1(x,y)\big| \,\diffd y 
& =
\P_{\!x} \big  (\|B_t\| < R_{\infty} + \epsilon \; \forall t \in (0,1) \big) 
- \P_{\!x} \big  (\|B_t\| < R_{\infty} - \epsilon \; \forall t \in (0,1) \big) 
\nonumber \\
& = f(x,1,R_\infty + \epsilon) - f(x,1,R_\infty - \epsilon), \label{cdfBmax}
\end{align}
where
\begin{align}
f(x,t,z)=\P_{\!x}\left(\| B_s\| <z \;\forall s\in (0,t)\right) = \P_{\!x}\Big(\max_{s\in[0,t]} \| B_s\| <z\Big). 
\end{align}
The second equality follows from the fact that $\P_x(\|B_t\| = z) = 0$. Thus, $f(x,t,z)$ is the distribution function for the running maximum of $\|B\|$ up to time $t$.  We claim that the right hand side of~\eqref{cdfBmax} is $\mathcal{O}(\epsilon)$, uniformly in $x$. By the scaling properties of Brownian motion, we have
\[
f(x,t,z) = f(x/z,t/z^2,1).
\]
The function $f(x,t,1)$ is the solution to the Cauchy problem for the heat equation:
\begin{align*}
& \partial_t f =\Delta_x f,  &&\kern-10em\|x\|<1, \, t>0, \\
& f(x,t,1)=0,&&\kern-10em \|x\| \geq 1, \, t > 0,\\
& f(x,0,1)=\indic{\|x\|<1}.
\end{align*}
Standard estimates for the heat equation imply that for any $\delta > 0$, there are constants $C_{1,\delta}$ and $C_{2,\delta}$ such that
\[
\sup_{\substack{\|x\| < 1 \\t \geq \delta} } \big\|\nabla_{\!x}
f(x,t,1)\big\| < C_{1,\delta} \quad \quad \text{and} \quad \quad
\sup_{\substack{\|x\| < 1  \\t \geq \delta} }  \big|\partial_t f(x,t,1)\big| < C_{2,\delta}. 
\]
Therefore, for $z \in [R_\infty - \epsilon, R_\infty + \epsilon]$ and $\|x\|<z$, the derivative $\partial_z f(x,1,z)$ is bounded by
\begin{align*}
\big|\partial_z f(x,1,z)\big| = \big|\frac{\diffd}{\diffd z} f(x/z,1/z^2,1)\big|
& = \left| \left(\frac{1}{z^2} x \cdot \nabla_{\!x} f + \frac{2}{z^3} \partial_t f\right) (x/z,1/z^2,1)\right| \nonumber \\
& \leq \frac{1}{z} C_{1,\delta} + \frac{2}{z^3} C_{2,\delta}
\end{align*}
with $\delta = (R_\infty +\epsilon)^{-2} > 0$. Thus, $|\partial_z
f(x,1,z)|$ is bounded uniformly over $x \in \R^d$, $\|x\| < z$, $z \in [R_\infty - \epsilon, R_\infty + \epsilon]$.  
Since $f(x,1,z)=0$ for $\|x\|\ge z$, this
 immediately implies that the right hand side of~\eqref{cdfBmax} is $\mathcal{O}(\epsilon)$; in particular,
\[
\lim_{\epsilon \to 0} \sup_{x \in \R^d} |f(x,1,R_\infty + \epsilon) - f(x,1,R_\infty - \epsilon)| = 0.
\]
This completes the proof.
\end{proof}
By~\eqref{eq:HnL2bound} and Lemma~\ref{lem:p+p-}, we have that for any $\delta>0$, there is $\epsilon_\delta>0$ such that
\begin{equation} \label{eq:Hnbound}
\norm{H_n(\cdot)}_{L^2} \leq \delta \norm{u(\cdot,n) - \tilde u(\cdot,n)}_{L^2}
\end{equation}
holds for all $n \geq T_\epsilon$ if $\epsilon < \epsilon_\delta$.

\paragraph{Step 5:} We now combine the preceding steps with
\eqref{utildedif2}. Let $\delta = e^{- \alpha/2} - e^{-\alpha} > 0$, where $\alpha>0$ is defined in Lemma~\ref{lem:L2exp}.  Taking $\epsilon_\delta$ sufficiently small that Lemma~\ref{lem:L2exp} holds with $\epsilon=\epsilon_\delta$, we conclude that for all $n \geq T_{\epsilon_\delta}$,
\begin{align*}
\norm{u(\cdot,n+1) - \tilde u(\cdot,n+1)}_{L^2} & \leq \norm{W_n}_{L^2}
+\norm{H_n}_{L^2} \nonumber \\
& \leq (e^{- \alpha} + \delta) \norm{u(\cdot,n) - \tilde u(\cdot,n)}_{L^2} \nonumber \\
& = e^{- \alpha/2} \norm{u(\cdot,n) - \tilde u(\cdot,n)}_{L^2},
\end{align*}
where the second inequality follows by~\eqref{Wnbound} and~\eqref{eq:Hnbound}.
Note that $u(\cdot,t)$ and $\tilde u(\cdot,t)$ are continuous and compactly supported on $\R^d$ for each $t>0$, and so
$\|u(\cdot,n)-\tilde u(\cdot, n)\|_{L^2}<\infty$ for each $n\in \N$.
Hence there is a constant $C_1<\infty$ such that
\begin{equation}
\norm{u(\cdot,n+1) - \tilde u(\cdot,n+1)}_{L^2} \leq C_1 e^{- n \alpha /2} \label{udifndecay}
\end{equation}
holds for all $n \geq 1$. 
Take $t\ge 2$ sufficiently large that $R_s \le R_\infty +1$ for all $s\ge t-2$.
 Now letting $n=\lfloor t \rfloor -1$, by the same argument as in~\eqref{utildedif} we have for all $y \in \R^d$:
\begin{align*}
\big|u(y,t)-\tilde u(y,t)\big|
&= e^{t-n}\left| \int_{\R^d}\big(u(x,n)-\tilde u(x,n)\big)p(x,n,y,t)\,\diffd x \right|\\
&\le e^2 \int_{\B(R_\infty +1)}\big|u(x,n)-\tilde u(x,n)\big|\, \big(4\pi (t-n)\big)^{-d/2}\, \diffd x \\
&\le e^2 (4\pi )^{-d/2} (R_\infty +1)^{d/2} \omega_d^{1/2}\big\|u(\cdot,n)-\tilde u(\cdot, n)\big\|_{L^2},
\end{align*}
where the second line follows since $R_n \le R_\infty +1$ and $p(x,n,y,t)\le \Phi(x,n,y,t) \le (4\pi (t-n))^{-d/2} $ $\forall x \in \R^d$, and the last line follows by Jensen's inequality and since $t-n \ge 1$.
By~\eqref{udifndecay}, it follows that there exists a constant $C_2<\infty$ such that 
\[
\norm{u(\cdot,t) - \tilde u(\cdot,t)}_{L^\infty} \leq C_2 e^{- t \alpha /2}, 
\]
for all $t$ large enough.
By~\eqref{eq:tildeuUconv}, we conclude that
$$
\lim_{t \to \infty}  \|u(\cdot,t) - U(\cdot)\|_{L^\infty} = 0.
$$
This completes the proof of Theorem~\ref{thm:conv u}.

\section{Appendix} \label{appendix}

\begin{proof}[Proof of Lemma \ref{bounds int g}]
Recall the definition of $\Phi(z_1,z_2,t)$ in~\eqref{eq:PhiSec3def}.
By the scaling property of $\Phi$ and the definition of $G$ in~\eqref{Gdef},
for $t,x,y>0$, 
\begin{equation}
\label{scaling gG}
G(y,x,t)=\frac1{\sqrt t} G\left(\frac{y}{\sqrt t},\frac x{\sqrt t},1\right).
\end{equation}
Hence it is sufficient to prove the integral bounds in~\eqref{eq:Gintbounds} and~\eqref{eq:Gintbound2} for $t=1$. Indeed, by the scaling relation~\eqref{scaling gG},
$$\int_0^\infty\diffd y\, G(y,x,t)=
\int_0^\infty\diffd y\, \frac1{\sqrt t} G\Big(\frac y{\sqrt t},\frac x {\sqrt t},1\Big)=
\int_0^\infty\diffd y\, G\Big(y,\frac x {\sqrt t},1\Big),$$
and similarly, for $y_0\ge 0$,
\begin{align}
& \int_{y_0}^\infty\diffd y\, \big|\partial_x G(y,x,t)\big|=
\frac1{\sqrt t} \int_{y_0/\sqrt{t}}^\infty\diffd y\, \Big|\partial_x G\Big(y,\frac x {\sqrt t},1\Big)\Big|,
\label{dxGscale1} \\
& \int_0^\infty\diffd y\, \big|\partial_x^2 G(y,x,t)\big|=
\frac1{t} \int_0^\infty\diffd y\, \Big|\partial_x^2 G\Big(y,\frac x {\sqrt t},1\Big)\Big|.
\end{align}

The first inequality in \eqref{eq:Gintbounds} is immediate from \eqref{Gyintegral}:
$$
\int_0^\infty G(y,x,1) \,\diffd y =  \P\big( \|B_1\| < x \, \big|\, B_0=0 \big)
= \int_{\B(x)} \Phi(0,z,1)\,\diffd z\le \omega_d x^d (4\pi)^{-d/2},
$$
where $\omega_d$ is the volume of the $d$-dimensional unit ball and where the last inequality follows by bounding $\Phi(0,z,1)$ by $ (4\pi)^{-d/2}$.
Furthermore, this first integral is a probability and is therefore smaller than 1.

We now turn to the second and third inequalities in~\eqref{eq:Gintbounds}.
From~\eqref{Gdef}, we have $G=-\partial_y w$ and so we can write $\partial_xG = -\partial_y (\partial_x w)$ and $\partial_x^2G = - \partial_x\partial_y (\partial_x w)$. 
Recall \eqref{wdef} and write
\begin{equation}
\partial_x w(y,x,1)
= \int_{\partial\B(x)} \Phi(y \,\mathbf e_1,z,1) \, \diffd S(z)
=  \frac{e^{- \frac{x^2 + y^2}{4}}}{ (4 \pi )^{d/2}} \int_{\partial\B(x)}
e^{\frac{ y {\bf e}_1 \cdot z}{2}} \, \diffd S(z),
\label{dxw}
\end{equation}
where $\mathbf e_1$ is an arbitrary fixed unit vector.

We first consider the case $d\ge2$.
Integrate in spherical coordinates: $\theta$ is the angle between $z$ and
$\mathbf e_1$,  $\omega_d$ is the volume of the $d$-dimensional unit ball, and
$C$ is an arbitrary constant depending only on $d$ which can change from line to line.
We obtain that for $x,y>0$,
\begin{align}
\partial_x w(y,x,1)&=
\frac{(d-1)\omega_{d-1}}{ (4 \pi )^{d/2}}{e^{- \frac{x^2 +y^2}{4}}} \int_0^\pi  x^{d-1}(\sin\theta)^{d-2} e^{\frac{x y \cos\theta}{2}}\,\diffd\theta
\notag\\&=
C  x^{d-1}\int_0^\pi (\sin\theta)^{d-2} {e^{- \frac{(x\cos\theta -y)^2}{4}}}e^{-\frac{x^2\sin^2\theta }{4}}
\,\diffd\theta .
\label{dxw2}
\end{align}
(Note: if $d > 2$, $(d-1)\omega_{d-1}$ is the $(d-2)$-dimensional surface measure of the unit sphere in dimension $d-1$; if $d = 2$, this constant is $2$.) Then, using that $\partial_x G= -\partial_y(\partial_x w)$,
\begin{align}
\partial_x G(y,x,1)
=C  x^{d-1} \int_0^\pi \diffd\theta\, (\sin\theta)^{d-2} \big(y-x\cos\theta\big)
{e^{- \frac{(x\cos\theta -y)^2}{4}}}e^{-\frac{x^2\sin^2\theta }{4}}.
\label{dXG}
\end{align}
Therefore,
\begin{align}
\int_0^\infty\diffd y\,\big|\partial_x G(y,x,1)\big| 
&\le C  x^{d-1}
 \int_0^\pi \diffd\theta\,(\sin\theta)^{d-2} e^{-\frac{x^2\sin^2\theta }{4}}
\int_0^\infty \diffd y \,\big|y-x\cos\theta\big|{e^{- \frac{(x\cos\theta -y)^2}{4}}} \notag 
\\&\le C x^{d-1}
 \int_0^\pi \diffd\theta\, (\sin\theta)^{d-2} e^{-\frac{x^2\sin^2\theta }{4}},
\label{noy}
\end{align}
where the last expression was obtained by extending the integral in $y$ to an integral over $\R$.

We now bound the right hand side of~\eqref{noy} in two different ways. First, bounding the integrand by 1 gives
$$\int_0^\infty\diffd y\,\big|\partial_x G(y,x,1)\big|\le C  x^{d-1}.$$
Second, we write the integral from 0 to $\pi$ in \eqref{noy} as twice the integral from 0 to $\pi/2$. On the interval $[0,\pi/2]$we have $\frac2\pi\theta\le\sin\theta\le\theta$; hence
$$\int_0^\infty\diffd y\,\big|\partial_x G(y,x,1)\big|\le C  x^{d-1}
 \int_0^\infty \diffd\theta\, \theta^{d-2} e^{-\frac{x^2\theta ^2}{\pi^2}}
\le C.
$$
Then, combining both bounds, for $x>0$,
$$\int_0^\infty\diffd y\,\big|\partial_x G(y,x,1)\big|\le C \min\Big(1,x^{d-1}\Big),$$
which, by \eqref{dxGscale1} with $(x/\sqrt{t})$ in place of $x$, gives us the second inequality in \eqref{eq:Gintbounds}, for $d \geq 2$.

We now turn to the third inequality in \eqref{eq:Gintbounds} in the case $d\ge 2$. Differentiate \eqref{dXG} to obtain
\begin{multline*}
\partial_x^2 G(y,x,1)=
C  x^{d-1} \int_0^\pi \diffd\theta\, (\sin\theta)^{d-2} 
{e^{- \frac{(x\cos\theta -y)^2}{4}}}e^{-\frac{x^2\sin^2\theta }{4}}
\\
\times
\bigg[-\cos\theta +(y-x\cos\theta)\bigg(\frac{d-1}x+\frac{y-x\cos\theta}2-x\frac{\sin^2\theta}2\bigg)\bigg],
\end{multline*}
and therefore
\begin{multline*}
\big|\partial_x^2 G(y,x,1)\big|\le
C  x^{d-1} \int_0^\pi \diffd\theta\, (\sin\theta)^{d-2} 
{e^{- \frac{(x\cos\theta -y)^2}{4}}}e^{-\frac{x^2\sin^2\theta }{4}}
\\
\times
\bigg[1 +|y-x\cos\theta|\bigg(\frac{d-1}x+\frac{|y-x\cos\theta|}2+x\frac{\sin^2\theta}2\bigg)\bigg].
\end{multline*}
By integrating in $y$, and then extending the integral in $y$ on the right hand side to an 
integral over $\R$,
$$
\int_0^\infty\big|\partial_x^2 G(y,x,1)\big|\,\diffd y\le
C  x^{d-1} \int_0^\pi \diffd\theta\, (\sin\theta)^{d-2} 
e^{-\frac{x^2\sin^2\theta }{4}}
\big[1 +x^{-1}+x\sin^2\theta\big].
$$
We bound this expression in two ways, as we bounded \eqref{noy}, to obtain
$$
\int_0^\infty\big|\partial_x^2 G(y,x,1)\big|\,\diffd y\le C (x^{d}+x^{d-1}+x^{d-2})
\qquad\text{and}\qquad
\int_0^\infty\big|\partial_x^2 G(y,x,1)\big|\,\diffd y\le C (1+x^{-1}).
$$
Therefore, for $d\ge2$, for $x>0$,
$$
\int_0^\infty\big|\partial_x^2 G(y,x,1)\big|\,\diffd y\le C \min\big(1,x^{d-2}\big),
$$
which is a more precise bound (for $d\ge2$) than the third inequality in \eqref{eq:Gintbounds}.

To complete the proof of the bounds in~\eqref{eq:Gintbounds}, it
remains to prove the second and third inequalities in the case $d=1$. 
For $d=1$,~\eqref{dxw} reads
\begin{equation} \label{dxw2d=1}
\partial_x w(y,x,1)
= \int_{\partial \B(x)} \Phi(y \,\mathbf e_1,z,1) \, \diffd S(z)  =  \frac{e^{- \frac{x^2 + y^2}{4}}}{ \sqrt{4 \pi }} \Big[e^{\frac{yx}2}+e^{-\frac{yx}2}\Big]
=C\Big[e^{-\frac{(x-y)^2}4}+e^{-\frac{(x+y)^2}4}\Big],
\end{equation}
and \eqref{dXG} becomes
\begin{equation}
\label{dxGd=1}
\partial_x G(y,x,1)=-\partial_x\partial_y w(y,x,1)=
C  \Big[(y-x) e^{-\frac{(x-y)^2}4}+(y+x)e^{-\frac{(x+y)^2}4}\Big].
\end{equation}
Then for $x>0$,
$$
\int_0^\infty \diffd y \, \big|\partial_x G(y,x,1)\big|\le
\int_{-\infty}^\infty \diffd y \,  C  \Big[|y-x| e^{-\frac{(x-y)^2}4}+(y+x)e^{-\frac{(x+y)^2}4}\Big],
$$
which is bounded by a constant. Similarly,
$$\big|\partial_x^2 G(y,x,1)\big|\le
C  \Big[\Big(1+\tfrac12(y-x)^2\Big) e^{-\frac{(x-y)^2}4}+\Big(1+\tfrac12(y+x)^2\Big)e^{-\frac{(x+y)^2}4}\Big],
$$
and the right hand side integrated in $y$ over $\R$ is also bounded by a constant.
This completes the proof of~\eqref{eq:Gintbounds}.

We now turn to \eqref{eq:Gintbound2}. For $d\ge2$, notice from \eqref{dXG} that $\partial_x G(y,x,1)=-\partial_y\partial_xw(y,x,1)>0$ for $x<y$. Then, for $x<y_0$,
\begin{equation}\label{conc21}
 \int_{y_0}^\infty |\partial_x G(y,x,1)|\,\diffd y
=\int_{y_0}^\infty  \partial_x G(y,x,1) \,\diffd y=\partial_x w(y_0,x,1)
\le C x^{d-1} e^{-\frac{(y_0-x)^2}4},
\end{equation}
where we used~\eqref{dxw2} for the final inequality. For $d=1$, we use~\eqref{dxGd=1} to notice that $\partial_x G(y,x,1)>0$ for $x<y$. Then, for $x<y_0$,~\eqref{conc21} still holds, this time using~\eqref{dxw2d=1}.
Hence~\eqref{conc21} holds for any dimension, and \eqref{eq:Gintbound2} follows by the scaling property.

To derive \eqref{Grint1}, we shall integrate the PDE in~\eqref{eqG} in $x$, and use integration by parts. By~\eqref{Gdef}, we have that for $y,t>0$ fixed, $G(y,x,t)=\mathcal O(x^d)$ as $x\to 0$.
Therefore, $G(y,x,t)\to 0$ and $\frac{d-1}x G(y,x,t)\to 0$ as $x\to 0$.  We can write the formula for $w$ in~\eqref{wdef} as
\[
w(y,x,t)= \P( \|B_t + y \mathbf e_1 \| < x \;|\; B_0 = 0) = \int_{\{z:\|z+y\mathbf e_1\|<x\}}\Phi(0,z,t) \, \diffd z.
\]
It follows that
\begin{align} \label{eq:Galtformula}
G(y,x,t)=-\partial_y w(y,x,t)&= \int_{\{z:\|z+y\mathbf e_1\|=x,\, (z+y\mathbf e_1)\cdot \mathbf e_1>0\}}\Phi(0,z,t) \, \diffd S(z) \notag \\
&\quad -\int_{\{z:\|z+y\mathbf e_1\|=x,\, (z+y\mathbf e_1)\cdot \mathbf e_1<0\}}\Phi(0,z,t) \, \diffd S(z).
\end{align}
Since $\Phi(0,z,t)$ is decreasing in $\|z\|$, \eqref{eq:Galtformula} implies that
\[
|G(x,y,t)| \leq |\partial \B(x)| \max_{z \in \partial \B(x - y)} \Phi(0,z,t) = x^{d-1} d \omega_d \frac{e^{- \frac{(x-y)^2}{4t}}}{(4 \pi t)^{d/2}}, \quad x, y > 0, \quad t > 0.
\] 
In particular, for $y,t>0$ fixed, $G(y,x,t) \to 0$ as $x\to \infty$. By the scaling relation in~\eqref{scaling gG} and the expression for $\partial_x G$ in~\eqref{dXG}, we also have that 
for $y,t>0$ fixed, $\partial_x G(y,x,t)\to 0$ as $x\to \infty$. 
Hence, by integrating~\eqref{eqG} in $x$, and using integration by parts,
\[
\partial_t  \int_0^\infty \diffd x \,  G(y,x,t) = - \int_0^\infty \diffd x  \, \frac{d-1}{x^2} G(y,x,t) - \partial_x G(y,0,t) \leq 0
\]
since $\partial_x G(y,0,t)\ge 0$ by~\eqref{eqG}.
Thus, $\int_0^\infty \diffd x \,  G(y,x,t)$ is non-increasing in $t$,
and so~\eqref{Grint1} follows from the initial condition in~\eqref{eqG}.

The inequality \eqref{eq:Gv0bound} is a direct consequence of the first inequality in \eqref{eq:Gintbounds}.

Finally, the statements about the convergence \eqref{GintConvv} holding in $L^p_\text{loc}$ or locally uniformly as $t\searrow 0$ follow from standard arguments (see e.g. \cite{Evans2010}, Appendix C.5).  
\end{proof}

\begin{proof}[Proof of Proposition \ref{FK}]
Fix $(x,t)\in \Omega$. For $s\in[0,\tau]$, let
\[
M_s=w(X_s,t-s) e^{I_s},
\qquad \text{where }\quad I_s=\int_0^s g(X_r,t-r) \, \diffd r.
\]
Since $w\in C^{2,1}(\Omega)$, we apply 
It\^o's formula for $s< \tau$ (with no leading $\frac12$ in front of the $\partial_x^2$
term because $(W_s)_{s\geq 0}$ has diffusivity $\sqrt 2$) to obtain
\begin{align*}
\diffd M_s  
&= \partial_x w(X_s,t-s)e^{I_s} \,\diffd X_s +  \partial^2_x w(X_s,t-s)e^{I_s} \,\diffd s-  \partial_t w(X_s,t-s)e^{I_s} \,\diffd s\\
&\qquad + w(X_s,t-s)e^{I_s}g(X_s,t-s) \,\diffd s
\\
&=
\partial_x  w(X_s,t-s)e^{I_s}\,\diffd W_s,
\end{align*}
where we used~\eqref{processX} and~\eqref{dirieq} in the last line, since $(X_s,t-s)\in \Omega$
for $s< \tau$. We see that $(M_s)_{s < \tau}$ is a local martingale, and, as it is bounded,
it is therefore a martingale.

For $\delta>0$, let
$$\tau^\delta =\inf\{s\geq 0:\text{dist}((X_s,t-s),\partial \Omega)\leq \delta \},
$$
the first
time that $(X_s,t-s)$ is at a distance
$\delta$ from $\partial\Omega$.
For $(x,t)\in \Omega$,
since $\tau^\delta<\tau$,
by the martingale property, 
$$w(x,t)=\E_x[M_0] = \E_x[M_{\tau^{\delta}}].$$
Then, letting $\delta \searrow 0$, by dominated convergence (since $X$ is continuous and so $\tau^\delta\to\tau$ as $\delta\searrow0$, and
$w,g\in C(\bar \Omega)$ are bounded), we conclude that $w(x,t) =\E_x[M_{\tau}]$, which completes the proof of \eqref{eq:FKgen}.
\end{proof}

\bibliography{globalrefs} 
\bibliographystyle{alpha}

\end{document}